\definecolor{my-linkcolor}{rgb}{0.75,0,0}
\definecolor{my-citecolor}{rgb}{0.1,0.57,0}
\definecolor{my-urlcolor}{rgb}{0,0,0.75}
\newtheorem{thm}{Theorem}[section]
\newtheorem{cor}[thm]{Corollary}
\newtheorem{lemma}[thm]{Lemma}
\newtheorem{prop}[thm]{Proposition}
\theoremstyle{definition}\newtheorem{defn}[thm]{Definition}
\theoremstyle{definition}\newtheorem{remark}[thm]{Remark}
\theoremstyle{definition}
\theoremstyle{definition}\newtheorem{assum}[thm]{Assumption}
\newcommand{\Id}{\mathrm{Id}}
\newcommand{\id}{\mathrm{Id}}
\newcommand{\tens}{\boxtimes}
\newcommand{\vac}{\mathbf{1}}
 \DeclareMathOperator{\im}{Im}
 \DeclareMathOperator{\coker}{Coker}
 \DeclareMathOperator{\rep}{Rep}
 \let\ker\relax
 \let\hom\relax
 \DeclareMathOperator{\ker}{Ker}
 \DeclareMathOperator{\hom}{Hom}
 \newcommand{\til}[1]{\widetilde{#1}}
\def\CC{\mathbb{C}}
\def\FF{\mathbb{F}}
\def\NN{\mathbb{N}}
\def\RR{\mathbb{R}}
\def\ZZ{\mathbb{Z}}
\newcommand\cA{\mathcal{A}}
\newcommand\cB{\mathcal{B}}
\newcommand\cC{\mathcal{C}}
\newcommand\cD{\mathcal{D}}
\newcommand\cE{\mathcal{E}}
\newcommand\cF{\mathcal{F}}
\newcommand\cG{\mathcal{G}}
\newcommand\cM{\mathcal{M}}
\newcommand\cP{\mathcal{P}}
\newcommand\cR{\mathcal{R}}
\newcommand\cS{\mathcal{S}}
\newcommand\cU{\mathcal{U}}
\newcommand\cV{\mathcal{V}}
\newcommand\cW{\mathcal{W}}
\newcommand\cX{\mathcal{X}}
\newcommand\cY{\mathcal{Y}}
\newcommand\tilM{\widetilde{M}}
\newcommand\tilW{\widetilde{W}}
\newcommand\tilX{\widetilde{X}}
\newcommand\tilg{\widetilde{g}}
\newcommand\tilh{\widetilde{h}}
\newcommand\tilk{\widetilde{k}}
\newcommand\tilm{\widetilde{m}}
\newcommand\tilw{\widetilde{w}}
\renewcommand\a\alpha
\renewcommand\b\beta
\newcommand\g\gamma
\renewcommand\d\delta
\newcommand\D\Delta
\begin{document}
    \bibliographystyle{alpha}

\title[Deligne products and vertex operator algebras]{Deligne tensor products of categories of modules for vertex operator algebras}
 \author{Robert McRae}
\date{}

 \address{Yau Mathematical Sciences Center, Tsinghua University, Beijing 100084, China}
  \email{rhmcrae@tsinghua.edu.cn}
  

 \numberwithin{equation}{section}

 \subjclass{Primary 17B69, 18M15, 81R10, 81T40}

\begin{abstract}
We show that if $\mathcal{U}$ and $\mathcal{V}$ are locally finite abelian categories of modules for vertex operator algebras $U$ and $V$, respectively, then the Deligne tensor product of $\mathcal{U}$ and $\mathcal{V}$ can be realized as a certain category $\mathcal{D}(\mathcal{U},\mathcal{V})$ of modules for the tensor product vertex operator algebra $U\otimes V$.  We also show that if $\mathcal{U}$ and $\mathcal{V}$ admit the braided tensor category structure of Huang--Lepowsky--Zhang, then $\mathcal{D}(\mathcal{U},\mathcal{V})$ does as well under mild additional conditions, and that this braided tensor structure is equivalent to the natural braided tensor structure on a Deligne tensor product category. These results hold in particular when $\mathcal{U}$ and $\mathcal{V}$ are the categories of $C_1$-cofinite $U$- and $V$-modules, if these categories are closed under contragredients, in which case we show that $\mathcal{D}(\mathcal{U},\mathcal{V})$ is the category of $C_1$-cofinite $U\otimes V$-modules. If $U$ and $V$ are $\mathbb{N}$-graded and $C_2$-cofinite, then we may take $\mathcal{U}$ and $\mathcal{V}$ to be the categories of all grading-restricted generalized $U$- and $V$-modules, respectively. Thus as an application, if the tensor categories of all modules for two $C_2$-cofinite vertex operator algebras are rigid, then so is the tensor category of all modules for the tensor product vertex operator algebra. We use this to prove that the representation categories of the even subalgebras of the symplectic fermion vertex operator superalgebras are non-semisimple modular tensor categories.
\end{abstract}

\maketitle

\tableofcontents

\allowdisplaybreaks

\section{Introduction}

Vertex operator algebras are algebraic structures which appear in several areas of mathematics and physics. They feature prominently among mathematically rigorous approaches to two-dimensional conformal quantum field theories, and through Huang's theorem \cite{Hu-rig-mod} that the representation category of a ``strongly rational'' vertex operator algebra is a semisimple modular tensor category, they also have connections to topological quantum field theories and $3$-manifold invariants. While Huang's theorem concerns vertex operator algebras with semisimple representation theory, non-semisimple categories of modules for vertex operator algebras are also of interest in logarithmic conformal field theory, $3$- and $4$-manifold invariants, and non-semisimple topological quantum field theories \cite{CR-review, HL-review, CG, FG, CDGG}. Understanding non-semisimple representation theory of a vertex operator algebra is difficult, however. For example, although Huang, Lepowsky, and Zhang \cite{HLZ1}-\cite{HLZ8} have found sufficient conditions for module categories for vertex operator algebras to admit natural braided tensor category structure, it is not known whether every vertex operator algebra has a non-trivial category of modules satisfying these conditions.

In this paper, we study tensor product vertex operator algebras. In particular, for vertex operator algebras $U$ and $V$, suppose $\cU$ and $\cV$ are categories of $U$-modules and $V$-modules which  admit the braided tensor category structure of Huang--Lepowsky--Zhang. Then one would expect there to be a braided tensor category of modules for the tensor product vertex operator algebra $U\otimes V$ which is determined somehow by $\cU$ and $\cV$, and whose braided tensor category structure is related in a natural way to that on $\cU$ and $\cV$. In fact, there is a natural braided tensor category determined by $\cU$ and $\cV$, namely, their Deligne tensor product \cite{De}; the question is whether this is equivalent to a category of $U\otimes V$-modules.

For any $\CC$-linear abelian categories $\cU$ and $\cV$, a Deligne tensor product of $\cU$ and $\cV$ is a $\CC$-linear abelian category $\cU\otimes\cV$ equipped with a bilinear functor $\otimes:\cU\times\cV\rightarrow\cC$ which is right exact in both variables, such that for any $\CC$-linear abelian category $\cC$, composition with $\otimes$ yields an equivalence between right exact $\CC$-linear functors $\cU\otimes\cV\rightarrow\cC$ and bilinear functors $\cU\times\cV\rightarrow\cC$ which are right exact in both variables. For example, if $\cU$ and $\cV$ are the categories of finite-dimensional modules for finite-dimensional algebras $A$ and $B$, respectively, then the category of finite-dimensional $A\otimes B$-modules is a Deligne tensor product of $\cU$ and $\cV$. If a Deligne tensor product of $\cU$ and $\cV$ exists, then it is unique up to equivalence. If $\cU$ and $\cV$ are locally finite, that is, morphism spaces are finite dimensional and all objects have finite length, then $\cU\otimes\cV$ indeed exists and is locally finite \cite{De, LF}.

Now if $\cU$ and $\cV$ are locally finite abelian categories of modules for vertex operator algebras $U$ and $V$, respectively, then one expects $\cU\otimes\cV$ to be equivalent to some category of $U\otimes V$-modules. Indeed, by \cite[Theorem 5.5]{CKM2}, if one of $\cU$ and $\cV$ is semisimple, then $\cU\otimes\cV$ is equivalent to the category of finite direct sums of $U\otimes V$-modules $M\otimes W$, where $M\in\mathrm{Ob}(\cU)$ and $W\in\mathrm{Ob}(\cV)$. However, if neither $\cU$ nor $\cV$ is semisimple, then this category of $U\otimes V$-modules need not be abelian. Thus we define a different category of $U\otimes V$-modules to obtain the first main result of this paper (see Theorem \ref{thm:D(U,V)_is_Del_prod} below):

\begin{thm}
Let $\cU$ and $\cV$ be locally finite abelian categories of grading-restricted generalized $U$- and $V$-modules, respectively, which are closed under subquotients, and define $\cD(\cU,\cV)$ to be the category of finite-length grading-restricted generalized $U\otimes V$-modules $X$ such that any vector in $X$ generates a $U$-submodule which is an object of $\cU$ and a $V$-submodule which is an object of $\cV$. Then $\cD(\cU,\cV)$ is a Deligne tensor product of $\cU$ and $\cV$, with $\otimes: \cU\times\cV\rightarrow\cD(\cU,\cV)$ given by the vector space tensor product of $U$- and $V$-modules.
\end{thm}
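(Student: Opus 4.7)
The plan is to verify that $\cD(\cU,\cV)$, together with the vector space tensor product $\otimes\colon\cU\times\cV\to\cD(\cU,\cV)$, satisfies a standard criterion for being a Deligne tensor product (see \cite{De,LF}): it suffices to establish (1) that $\cD(\cU,\cV)$ is a $\CC$-linear abelian category, (2) that $\otimes$ is well-defined, bilinear, and right exact in each variable, (3) that every object of $\cD(\cU,\cV)$ is a quotient of a finite direct sum $\bigoplus_i M_i\otimes W_i$ with $M_i\in\cU$ and $W_i\in\cV$, and (4) that the canonical map $\hom_\cU(M,M')\otimes\hom_\cV(W,W')\to\hom_{\cD(\cU,\cV)}(M\otimes W,M'\otimes W')$ is an isomorphism for all $M,M'\in\cU$ and $W,W'\in\cV$. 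Given these, the universal property follows in the standard way: for a bilinear right-exact $F\colon\cU\times\cV\to\cC$, set $\tilde F(X):=\coker(F(\phi))$ for any presentation $\bigoplus M_j\otimes W_j\xrightarrow{\phi}\bigoplus M_i\otimes W_i\to X\to 0$, with $F(\phi)$ assembled from $\phi$ via the hom-space decomposition (4).

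For (1), the crucial point is closure of $\cD(\cU,\cV)$ under subquotients of $U\otimes V$-modules: given $X\in\cD(\cU,\cV)$ and a subquotient $Y$, both grading-restriction and finite length pass to $Y$, and for any $y\in Y$ the submodule $U\cdot y$ is a subquotient of $U\cdot\tilde y$ for a lift $\tilde y$ of $y$, hence lies in $\cU$ by the closure hypothesis; the $V$-side is analogous. For (2) I would check that $M\otimes W$ is grading-restricted (inherited from $M$ and $W$), satisfies the generation condition (since $U\cdot\sum_i m_i\otimes w_i\subseteq\langle m_1,\ldots,m_k\rangle_U\otimes\mathrm{span}(w_1,\ldots,w_k)\in\cU$, using closure of $\cU$ under subobjects and finite direct sums), and has finite length over $U\otimes V$ (via composition series together with the fact that $S\otimes T$ has finite length whenever $S\in\cU$ and $T\in\cV$ are simple, which follows from a Schur-type density argument since $\cU,\cV$ are locally finite over the algebraically closed field $\CC$); right exactness of $\otimes$ in each variable is immediate from exactness of the vector space tensor product. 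For (3), given generators $x_1,\ldots,x_n$ of $X\in\cD(\cU,\cV)$, I would take $M_i:=U\cdot x_i\in\cU$, $W_i:=V\cdot x_i\in\cV$, and define $M_i\otimes W_i\to X$ by $(u\cdot x_i)\otimes(v\cdot x_i)\mapsto(u\otimes v)\cdot x_i$. Commutativity of the $U$- and $V$-actions on $X$ makes this well-defined (if $u\cdot x_i=u'\cdot x_i$ then $(u-u')\cdot v\cdot x_i=v\cdot(u-u')\cdot x_i=0$), and the direct sum of these maps surjects onto $X$.

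The main obstacle is (4). My approach is the adjunction
\[
\hom_{U\otimes V}(M\otimes W,Z)\cong\hom_V(W,\hom_U(M,Z))
\]
applied to $Z=M'\otimes W'$, which reduces (4) to showing $\hom_U(M,M'\otimes W')\cong\hom_U(M,M')\otimes W'$ (and symmetrically on the $V$-side). Since $M'\otimes W'$ is a direct sum of copies of $M'$ as a $U$-module, this amounts to compactness of $M\in\cU$: any $U$-module map $f\colon M\to\bigoplus_\alpha N_\alpha$ must factor through a finite sub-sum. I would derive compactness from finite generation of $M$ (implied by finite length): if $m_1,\ldots,m_k$ generate $M$ and each $f(m_i)$ lies in $\bigoplus_{\alpha\in S_i}N_\alpha$ for some finite $S_i$, then $f(M)\subseteq\bigoplus_{\alpha\in S}N_\alpha$ for $S=\bigcup_iS_i$. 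Verifying that this compactness argument, together with the adjunction, transports cleanly to the vertex operator algebra setting -- where ``finite generation'' must be interpreted with respect to the vertex algebra action rather than a classical associative algebra -- is the delicate technical step that I expect to require the most care.
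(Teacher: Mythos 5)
Your conditions (1)--(4) correctly isolate several ingredients that the paper also establishes: closure of $\cD(\cU,\cV)$ under subquotients, the fact that every object is a quotient of a finite direct sum $\bigoplus_i M_i\otimes W_i$ (Proposition \ref{prop:D(U,V)_surjections}), and the isomorphism $\hom_U(M,M')\otimes\hom_V(W,W')\cong\hom_{U\otimes V}(M\otimes W,M'\otimes W')$ (Proposition \ref{prop:proj_cover_homs}, proved there by a direct matrix-coefficient argument rather than your hom-adjunction, though your compactness argument is essentially sound since objects of $\cU$ are finitely generated). The genuine gap is the claim that the universal property then ``follows in the standard way'' by setting $\til{\cF}(X):=\coker(\cB(\phi))$ for an arbitrary presentation $\bigoplus_j M_j\otimes W_j\xrightarrow{\phi}\bigoplus_i M_i\otimes W_i\to X\to 0$. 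For this to yield a well-defined right exact functor you must show that the cokernel is independent of the chosen presentation and is functorial in $X$; with arbitrary presentations there is no mechanism for comparing two presentations of the same object, and the construction does not go through. The standard remedy is to use presentations by \emph{projective} objects, and this is exactly where the paper's main technical work lies: Theorem \ref{thm:proj_covers_in_C} shows that $P_M\otimes P_W$ is projective in $\cD(\cU,\cV)$ whenever $P_M$ and $P_W$ are projective covers in $\cU$ and $\cV$ --- a nontrivial vertex-algebraic argument that uses the unique maximal proper submodule of $P_M\otimes P_W$ (Lemma \ref{lem:PM_PW_max_prop}) and the commuting actions of $U$ and $V$ to construct the required lifting. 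Your proposal contains no substitute for this step.

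A second, related omission: since $\cU$ and $\cV$ are only locally finite, they need not have enough projectives globally, so even granting projectivity of $P_M\otimes P_W$ one cannot choose global projective presentations. The paper handles this by working inside the finite abelian full subcategories $\cD_{X_1,X_2,X_3}$ of \eqref{eqn:DX1X2X3_def}, which do have enough projectives, and then gluing the locally defined extensions coherently via the general Theorem \ref{thm:gen_extend_to_right_exact}. Any correct completion of your plan would have to address both of these points; the rest of your outline (abelianness, the surjections of point (3), and the hom-space computation) is consistent with the paper's argument.
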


The key step to prove this theorem is showing that if $P$ is projective in $\cU$ and $Q$ is projective in $\cV$, then $P\otimes Q$ is projective in $\cD(\cU,\cV)$. This is proved in Theorem \ref{thm:proj_covers_in_C}, and then one can use properties of projective objects to show that $\cD(\cU,\cV)$ satisfies the universal property of the Deligne tensor product in the special case that $\cU$ and $\cV$ have enough projectives. To prove that $\cD(\cU,\cV)$ is a Deligne tensor product in general, we use this special case together with the fact (see \cite[Proposition 2.14]{De}) that any locally finite abelian category is the union of abelian full subcategories which have enough projectives.

Now suppose that $\cU$ and $\cV$ are locally finite categories of $U$- and $V$-modules which admit the vertex algebraic braided tensor category structure of \cite{HLZ1}-\cite{HLZ8}, with fusion product bifunctors $\tens_U: \cU\times\cU\rightarrow\cU$ and $\tens_V: \cV\times\cV\rightarrow\cV$. (We call $\tens_U$ and $\tens_V$ fusion products to avoid confusion with vector space tensor product $U\otimes V$-modules.) Then the universal property of the Deligne tensor product $\cD(\cU,\cV)$ implies that $\cD(\cU,\cV)$ is also a braided tensor category (see \cite[Proposition 5.17]{De}, \cite[Section 4.6]{EGNO}, or the discussion in Section \ref{subsec:prelim_tens_cat} below), such that the fusion product bifunctor $\tens:\cD(\cU,\cV)\times\cD(\cU,\cV)\rightarrow\cD(\cU,\cV)$ satisfies
\begin{equation}\label{eqn:fus_dist_over_tens_prod}
(M_1\otimes W_1)\tens(M_2\otimes W_2) = (M_1\tens_U M_2)\otimes(W_1\tens_V W_2)
\end{equation}
for $M_1, M_2\in\mathrm{Ob}(\cU)$ and $W_1,W_2\in\mathrm{Ob}(\cV)$. But it is not clear that this braided tensor structure on $\cD(\cU,\cV)$ agrees with that of \cite{HLZ1}-\cite{HLZ8}. In particular, if we take $\tens$ to be the vertex algebraic fusion product of $U\otimes V$-modules, defined as in \cite[Definition 4.15]{HLZ3}, then the relation \eqref{eqn:fus_dist_over_tens_prod} has been proved before only in cases where one of $\cU$ and $\cV$ is semisimple (see \cite[Lemma 2.16]{Lin}, \cite[Proposition 3.3]{CKLR}, and \cite[Theorem 5.2]{CKM2}).

In Theorem \ref{thm:dist_fus_over_tens}, we prove that the vertex algebraic fusion product $\tens$ of $U\otimes V$-modules satisfies \eqref{eqn:fus_dist_over_tens_prod} under quite general conditions; especially, we do not assume either $\cU$ or $\cV$ is semisimple. This is the key step for showing that, under mild conditions, the braided tensor category structure on the Deligne tensor product $\cD(\cU,\cV)$ is indeed the vertex algebraic braided tensor structure specified in \cite{HLZ1}-\cite{HLZ8}; see Theorem \ref{thm:D(U,V)_braid_equiv} for the detailed statement. We also remark that if $\cU$ and $\cV$ are rigid tensor categories, with duals given by the contragredient modules of \cite{FHL}, then $\cD(\cU,\cV)$ is also rigid.

 Now take $\cU$ and $\cV$ to be the categories $\cC_U^1$ and $\cC_V^1$ of $C_1$-cofinite $U$- and $V$-modules, respectively. The $C_1$-cofiniteness condition on modules for a vertex operator algebra was first introduced by Nahm \cite{Na}, and results of Huang \cite{Hu-diff-eqns} and Miyamoto \cite{Mi-C1-cofin} suggest that the category of $C_1$-cofinite modules for a vertex operator algebra is likely to admit the braided tensor category structure of \cite{HLZ1}-\cite{HLZ8} in general. In fact, from \cite{CJORY, CY}, $\cC_U^1$ and $\cC_V^1$ are indeed braided tensor categories if they are closed under contragredient modules \cite{FHL}, in which case they are also locally finite abelian categories (see Theorem \ref{thm:C1_cofin_prop} below). In Theorem \ref{thm:D(U,V)_for_C1_cofinite}, we show that if $U$ and $V$ are $\NN$-graded by conformal weights and if $\cC_U^1$ and $\cC_V^1$ are locally finite abelian categories, then $\cD(\cC_U^1,\cC_V^1)$ is precisely the category of $C_1$-cofinite $U\otimes V$-modules. Then Theorem \ref{thm:D(U,V)_braid_equiv} becomes (see Theorem \ref{thm:C1_UxV}):
\begin{thm}\label{thm:intro_C1_braid_tens}
If $U$ and $V$ are $\NN$-graded by conformal weights and $\cC_U^1$ and $\cC_V^1$ are closed under contragredients, then $\cC_{U\otimes V}^1$ admits the braided tensor category structure of \cite{HLZ1}-\cite{HLZ8} and is braided tensor equivalent to $\cC_U^1\otimes\cC_V^1$.
\end{thm}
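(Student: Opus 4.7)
The plan is to prove the theorem by assembling the earlier results of the paper, with the key input being the identification of $\cD(\cC_U^1, \cC_V^1)$ with $\cC_{U\otimes V}^1$.

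First I would invoke Theorem \ref{thm:C1_cofin_prop}: since $\cC_U^1$ and $\cC_V^1$ are closed under contragredients, they are locally finite abelian categories of grading-restricted generalized $U$- and $V$-modules, closed under subquotients, and they admit the braided tensor category structure of \cite{HLZ1}-\cite{HLZ8} (via the \cite{CJORY, CY} results). This puts us in the setting needed for Theorem \ref{thm:D(U,V)_is_Del_prod}, so the Deligne tensor product is realized as $\cD(\cC_U^1, \cC_V^1)$, and in particular it is itself a locally finite abelian category with HLZ-type braided tensor structure inherited from the universal property (as described around equation \eqref{eqn:fus_dist_over_tens_prod}).

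Next, I would apply Theorem \ref{thm:D(U,V)_for_C1_cofinite}, which uses the hypothesis that $U$ and $V$ are $\NN$-graded by conformal weights to identify $\cD(\cC_U^1, \cC_V^1)$ with $\cC_{U\otimes V}^1$ as abelian subcategories of $U\otimes V$-modules. Since the fusion product on $\cD(\cC_U^1, \cC_V^1)$ from Theorem \ref{thm:D(U,V)_braid_equiv} is the vertex algebraic fusion product of $U\otimes V$-modules, this identification immediately endows $\cC_{U\otimes V}^1$ with the HLZ braided tensor structure, and the equivalence $\cC_{U\otimes V}^1 \simeq \cC_U^1 \otimes \cC_V^1$ follows from the universal property already established in Theorem \ref{thm:D(U,V)_is_Del_prod}.

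The step most likely to require care is verifying the ``mild additional conditions'' in Theorem \ref{thm:D(U,V)_braid_equiv} that allow the braided tensor structure on $\cD(\cC_U^1, \cC_V^1)$ to match the vertex algebraic one. These conditions essentially require that intertwining operators among objects of $\cD(\cC_U^1, \cC_V^1)$ behave well enough for the fusion product to distribute as in \eqref{eqn:fus_dist_over_tens_prod}. For $C_1$-cofinite modules this should follow from the fact that $\cC_U^1$ and $\cC_V^1$ are closed under contragredients (hence closed under fusion product by \cite{CJORY, CY}) together with the convergence and associativity results available in the $C_1$-cofinite setting; concretely, one checks that for $M_1, M_2 \in \cC_U^1$ and $W_1, W_2 \in \cC_V^1$, the fusion product $(M_1 \otimes W_1) \tens (M_2 \otimes W_2)$ computed inside $\cC_{U\otimes V}^1$ coincides with $(M_1 \tens_U M_2) \otimes (W_1 \tens_V W_2)$, which is the content of Theorem \ref{thm:dist_fus_over_tens} specialized to $C_1$-cofinite categories.

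Finally, I would record that the equivalence is braided, not merely tensor, because Theorem \ref{thm:D(U,V)_braid_equiv} already asserts this, and the braiding on $\cD(\cC_U^1, \cC_V^1)$ was transported from $\cC_U^1 \otimes \cC_V^1$ via the universal property; combined with the abelian category identification from Theorem \ref{thm:D(U,V)_for_C1_cofinite}, this gives the desired braided tensor equivalence $\cC_{U\otimes V}^1 \simeq \cC_U^1 \otimes \cC_V^1$. Thus the entire argument reduces to a clean application of the three main theorems, with the principal work already done in establishing Theorem \ref{thm:D(U,V)_for_C1_cofinite} and Theorem \ref{thm:dist_fus_over_tens}.
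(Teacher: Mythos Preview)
Your proposal is correct and follows essentially the same route as the paper: invoke Theorem~\ref{thm:C1_cofin_prop} to get the needed properties of $\cC_U^1$ and $\cC_V^1$, identify $\cD(\cC_U^1,\cC_V^1)=\cC_{U\otimes V}^1$ via Theorem~\ref{thm:D(U,V)_for_C1_cofinite}, and then apply Theorem~\ref{thm:D(U,V)_braid_equiv} (packaged in the paper as Corollary~\ref{cor:C1_subcat_braid_tens}) to obtain the HLZ braided tensor structure and the braided equivalence with $\cC_U^1\otimes\cC_V^1$. One small clarification: in your first paragraph you describe the Deligne-product braided structure as ``HLZ-type,'' but a priori it is only the abstract structure from the universal property; the content of Theorem~\ref{thm:D(U,V)_braid_equiv} (which you correctly invoke later) is precisely that this agrees with the vertex-algebraic HLZ structure, and the intertwining-operator image condition you flag is handled exactly as in the proof of Corollary~\ref{cor:UV_closed_under_fusion} via $C_1$-cofiniteness of images.
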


In this theorem, we can replace $\cC_U^1$ and $\cC_V^1$ with locally finite tensor subcategories $\cU\subseteq\cC_U^1$ and $\cV\subseteq\cC_V^1$, in which case we replace $\cC_{U\otimes V}^1$ with the locally finite abelian full subcategory $\cD(\cU,\cV)\subseteq\cC_{U\otimes V}^1$ (see Corollary \ref{cor:C1_subcat_braid_tens}). If $U$ and $V$ satisfy the $C_2$-cofiniteness condition (originally introduced by Zhu \cite{Zh} to prove modular invariance of characters of modules for a vertex operator algebra),  then every grading-restricted generalized $U$- and $V$-module is $C_1$-cofinite. Thus Theorem \ref{thm:intro_C1_braid_tens} yields:
\begin{cor}\label{cor:intro_C2_cofin}
If $U$ and $V$ are $\NN$-graded $C_2$-cofinite vertex operator algebras, then the category of grading-restricted generalized $U\otimes V$-modules is braided tensor equivalent to the Deligne tensor product of the categories of grading-restricted generalized $U$- and $V$-modules.
\end{cor}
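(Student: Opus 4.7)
The plan is to derive Corollary~\ref{cor:intro_C2_cofin} as an immediate consequence of Theorem~\ref{thm:intro_C1_braid_tens}. Concretely, I need to verify three facts that together reduce the corollary to its hypothesis: (a) for an $\NN$-graded $C_2$-cofinite vertex operator algebra $U$, the category $\cC_U^1$ of $C_1$-cofinite $U$-modules coincides with the full category of grading-restricted generalized $U$-modules; (b) this category is closed under contragredients; and (c) $U\otimes V$ is again $\NN$-graded and $C_2$-cofinite.

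For (a), I would invoke the well-known consequence of $C_2$-cofiniteness that every grading-restricted generalized $U$-module is finitely generated, $C_2$-cofinite, and hence $C_1$-cofinite; this goes back to work of Buhl, Miyamoto, and others, while the converse direction is immediate from the definition of $\cC_U^1$ (grading-restricted is built into $C_1$-cofiniteness as used in the paper). For (b), contragredients of $C_2$-cofinite modules are $C_2$-cofinite, which follows from the finite-dimensionality of the $C_2$-quotient together with the pairing between a module and its contragredient on each conformal weight space. For (c), $\NN$-gradedness of the tensor product is clear, and $C_2$-cofiniteness of $U\otimes V$ follows because $C_2(U\otimes V)$ contains $C_2(U)\otimes V+U\otimes C_2(V)$, so that $(U\otimes V)/C_2(U\otimes V)$ is a quotient of $(U/C_2(U))\otimes(V/C_2(V))$, which is finite-dimensional.

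Having established (a)--(c) for $U$, $V$, and $U\otimes V$, the conclusion is immediate: Theorem~\ref{thm:intro_C1_braid_tens} identifies $\cC_{U\otimes V}^1$ with the Deligne tensor product $\cC_U^1\otimes\cC_V^1$ as braided tensor categories, and by (a)--(c) the three $C_1$-cofinite categories involved are precisely the categories of grading-restricted generalized modules appearing in the statement of the corollary. The only substantive input is therefore the verification that $C_2$-cofiniteness is preserved under the tensor product of vertex operator algebras and forces $C_1$-cofiniteness of every grading-restricted generalized module; both points are standard, so I anticipate no serious obstacle beyond citing the correct references.
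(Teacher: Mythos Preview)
Your proposal is correct and matches the paper's approach: the corollary is deduced from Theorem~\ref{thm:intro_C1_braid_tens} by identifying $\cC_U^1$, $\cC_V^1$, and $\cC_{U\otimes V}^1$ with the full categories of grading-restricted generalized modules via the standard fact that $\rep(V)=\cC_V^1$ for $\NN$-graded $C_2$-cofinite $V$ (cf.\ Remark~\ref{rem:RepV=C1_for_C2}). Your explicit verification in (c) that $U\otimes V$ is again $\NN$-graded $C_2$-cofinite is the one step the paper leaves implicit, and it is exactly what is needed to apply Remark~\ref{rem:RepV=C1_for_C2} to $U\otimes V$ itself.
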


It is conjectured \cite{Hu-conj, CG, GR2} that the category of grading-restricted generalized modules for a simple $\NN$-graded self-contragredient $C_2$-cofinite vertex operator algebra is a (not necessarily semisimple) modular tensor category, generalizing Huang's result \cite{Hu-rig-mod} in the semisimple case. To prove this conjecture, by \cite[Main Theorem 1]{McR-rat}, it is enough to show that the tensor category of modules for such a vertex operator algebra is rigid. Thus by Corollary \ref{cor:intro_C2_cofin}, if the module categories for $U$ and $V$ are non-semisimple modular tensor categories, then so is the category of $U\otimes V$-modules. Further, by vertex operator algebra extension theory \cite{KO, HKL, CKM1}, if $A$ is a vertex operator algebra which contains $U\otimes V$ as a vertex operator subalgebra, then the category of grading-restricted generalized $A$-modules is also a non-semisimple modular tensor category under certain conditions (especially, the categorical dimension of $A$ in the category of $U\otimes V$-modules should be non-zero). This is in fact one of the main motivations of this paper, to make it possible to use extension theory to study the representations of a vertex operator algebra that contains a tensor product of well-understood vertex operator algebras as a subalgebra.

In Section \ref{sec:C2_cofinite_examples}, we look at a family of $C_2$-cofinite vertex operator algebras discussed in \cite[Section 4.1.2]{CKM1}. They are simple current extensions of tensor products of triplet $W$-algebras $\cW_p$ \cite{Ka, AM, TW}, and we show in Theorem \ref{thm:ext_of_Wp_tens} that their module categories are non-semisimple modular tensor categories. The best-known examples from this family are the even subalgebras $SF_d^+$ of the symplectic fermion superalgebras $SF_d$, $d\in\ZZ_+$, which were introduced in the physics literature \cite{Ka2, Ka3, GK} and are the affine vertex operator superalgebras associated to a $2d$-dimensional purely odd abelian Lie superalgebra equipped with a symplectic form. The algebra $SF_d^+$ is a simple current extension of $\cW_2^{\otimes d}$, with $SF_1^+\cong\cW_2$ itself \cite{Ka3, GK}. In Theorem \ref{thm:sym_ferm_properties}, we use the rigid tensor structure on the category of $\cW_2$-modules \cite{TW} (see also \cite{MY}), Corollary \ref{cor:intro_C2_cofin}, and vertex operator algebra extension theory to classify simple and projective $SF_d^+$-modules and to compute all fusion products involving simple and projective modules. Note that simple $SF_d^+$-modules were already classified in \cite{Ab}, and fusion rules involving simple $SF_d^+$-modules were calculated in \cite{AA}, though these fusion rules do not give the complete fusion products.

In \cite{Ru}, Runkel constructed a braided tensor category $\cS\cF_d$ which he conjectured to be braided tensor equivalent to the category of grading-restricted generalized $SF_d^+$-modules. The category $\cS\cF_d$ is braided tensor equivalent to the finite-dimensional representation category of a factorizable ribbon quasi-Hopf algebra \cite{GR, FGR}, and thus is a non-semisimple modular tensor category. So our result that the category of $SF_d^+$-modules is indeed a non-semisimple modular tensor category, stated as Corollary \ref{cor:symplectic_fermions} below, is consistent with Runkel's conjecture. We remark that this conjecture has recently been proved in the $d=1$ case \cite{GN, CLR}; it may be possible to use Corollary \ref{cor:symplectic_fermions} and Theorem \ref{thm:sym_ferm_properties} of the present work, together with the methods of \cite{CLR}, to prove Runkel's conjecture for all $d$.

There are also many non-$C_2$-cofinite vertex operator algebras which are known to satisfy the hypotheses of Theorem \ref{thm:intro_C1_braid_tens}, including all Virasoro vertex operator algebras \cite{CJORY}, many affine vertex operator algebras \cite{CY}, and the singlet $W$-algebras $\cM_p$, $p\in\ZZ_{\geq 2}$ \cite{CMY2, CMY-typ-sing}. Thus one can now use extension theory to study the representations of vertex operator (super)algebras which contain tensor products of such vertex operator algebras as subalgebras. For example, in upcoming joint work with Thomas Creutzig, Shashank Kanade, and Jinwei Yang, we plan to study the representation theory of many affine vertex operator superalgebras at level $1$, which are simple current extensions of tensor products of multiple copies of the singlet algebra $\cM_2$ and Heisenberg vertex operator algebras. 

Other interesting examples include the chiral universal centralizer algebras of \cite{Ar}, which contain tensor products of two affine $W$-algebras associated to a simple Lie algebra $\mathfrak{g}$ as subalgebras. For $\mathfrak{g}=\mathfrak{sl}_2$, the $W$-algebras are Virasoro vertex operator algebras. Thus once the Virasoro tensor categories from \cite{CJORY} are thoroughly understood, it should be possible to use Theorem \ref{thm:intro_C1_braid_tens} and extension theory to study the representation theory of chiral universal centralizer algebras of $\mathfrak{sl}_2$ at rational levels (see \cite{MY2} for the case of irrational levels and level $-1$, in which case the relevant Virasoro categories are semisimple). Since the chiral universal centralizer algebras are $\ZZ$-graded conformal vertex algebras (with infinite-dimensional conformal weight spaces and no lower bound on conformal weights), extension theory is perhaps the best way to study their representations; methods such as the Zhu algebra \cite{Zh}, which apply only to $\NN$-gradable modules, are not so useful here.

We note that almost all the results in this paper are, for simplicity, stated for tensor products of two vertex operator algebras, but these results apply to tensor products of any finite number $N$ of vertex operator algebras by induction on $N$.

\vspace{3mm}
 
 \noindent{\bf Acknowledgments.} I would like to thank Thomas Creutzig, Shashank Kanade, and Jinwei Yang for comments and discussions.

\section{Preliminaries}

In this section, we review basic definitions and results for (tensor) categories, and especially tensor categories of modules for vertex operator algebras.

\subsection{Locally finite abelian categories}\label{subsec:prelim_loc_fin}

For the definitions and properties here, we mainly use \cite{De, EGNO} as references. Recall that a category $\cC$ is $\CC$-linear additive if all morphism sets in $\cC$ are $\CC$-vector spaces such that composition of morphisms is bilinear, $\cC$ has a zero object, and every finite set of objects in $\cC$ has a direct sum. A functor between $\CC$-linear additive categories is $\CC$-linear if it induces $\CC$-linear maps on morphisms.

A $\CC$-linear additive category $\cC$ is \textit{abelian} if every morphism in $\cC$ has a kernel and cokernel, every monomorphism in $\cC$ is a kernel, and every epimorphism in $\cC$ is a cokernel. The category $\cC$ is in addition \textit{locally finite} if every morphism set in $\cC$ is a finite-dimensional $\CC$-vector space and every object in $\cC$ has finite length. A \textit{projective cover} of $W\in\mathrm{Ob}(\cC)$ is a surjection $p_W: P_W\twoheadrightarrow W$ such that $P_W$ is projective, and such that for any surjection $p: P\twoheadrightarrow W$ with $P$ projective, there is a surjection $f: P\twoheadrightarrow P_W$ such that the diagram
\begin{equation*}
\xymatrixcolsep{2pc}
\xymatrix{
& P \ar[d]^p  \ar[ld]_f\\
P_W \ar[r]_{p_W} & W \\
}
\end{equation*}
commutes. A locally finite $\CC$-linear abelian category $\cC$ is called \textit{finite} if $\cC$ has finitely many simple objects up to isomorphism and every simple object in $\cC$ has a projective cover.

\begin{remark}\label{rem:loc_fin_coalgebra}
Any essentially small locally finite $\CC$-linear abelian category $\cC$ is equivalent to the category of finite-dimensional comodules for some coalgebra $C$ \cite{Ta}; see also \cite[Section 1.10]{EGNO}. If $\cC$ is finite, then $C$ is finite dimensional, in which case $\cC$ is also equivalent to the category of modules for the algebra $C^*$.
\end{remark}

If $\cC$ is locally finite, then because every object in $\cC$ has finite length, every simple object of $\cC$ has a projective cover if and only if $\cC$ has \textit{enough projectives}, that is, every object of $\cC$ is the image of a morphism from a projective object, or equivalently every object in $\cC$ is a cokernel of some morphism between projective objects. The following proposition is elementary, and it implies that every projective object in a locally finite abelian category is a finite direct sum of projective covers:
\begin{prop}\label{prop:proj_cover_indecomposable}
Let $\cC$ be a locally finite $\CC$-linear abelian category, and let $W$ be a simple object of $\cC$. Then a surjection $p_W: P_W\twoheadrightarrow W$ is a projective cover of $W$ if and only if $P_W$ is an indecomposable projective object of $\cC$.
\end{prop}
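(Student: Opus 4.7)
The plan is to prove both directions separately, with the forward direction relying on a length estimate and the reverse direction on Fitting's lemma for endomorphisms of finite-length indecomposable objects.

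For the forward direction, I would assume $p_W\colon P_W\twoheadrightarrow W$ is a projective cover and argue by contradiction. Supposing $P_W=A\oplus B$ with $A,B\neq 0$, simplicity of $W$ forces each of the restrictions $p_W\vert_A$ and $p_W\vert_B$ to have image $0$ or $W$; they cannot both vanish since $p_W\neq 0$, so I may assume $p_W\vert_A\colon A\twoheadrightarrow W$ is surjective. Because $A$ is projective as a direct summand of $P_W$, the universal property of the projective cover furnishes a surjection $f\colon A\twoheadrightarrow P_W$. Epimorphisms do not increase length in a locally finite abelian category, so $\mathrm{length}(A)\geq\mathrm{length}(P_W)=\mathrm{length}(A)+\mathrm{length}(B)$, forcing $B=0$.

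For the reverse direction, I would start with an indecomposable projective $P_W$ equipped with a surjection $p_W\colon P_W\twoheadrightarrow W$, and verify the lifting property. Given any surjection $p\colon P\twoheadrightarrow W$ with $P$ projective, projectivity of $P$ lifts $p_W$ along $p$ to produce $f\colon P\to P_W$ with $p_W\circ f=p$, and dually projectivity of $P_W$ produces $g\colon P_W\to P$ with $p\circ g=p_W$. The composite $\phi:=f\circ g\in\Endo(P_W)$ then satisfies $p_W\circ\phi=p_W$, hence $p_W\circ\phi^n=p_W\neq 0$ for all $n\geq 1$, so $\phi$ is not nilpotent. Fitting's lemma for the indecomposable finite-length object $P_W$ then forces $\phi$ to be an isomorphism, so $f$ admits the right inverse $g\circ\phi^{-1}$ and is in particular surjective, establishing the universal property.

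The main technical point is the invocation of Fitting's lemma in the abstract locally finite abelian setting: finite length of $P_W$ ensures that the descending chain $\im(\phi)\supseteq\im(\phi^2)\supseteq\cdots$ and the ascending chain $\ker(\phi)\subseteq\ker(\phi^2)\subseteq\cdots$ both stabilize, yielding a direct sum decomposition $P_W=\ker(\phi^N)\oplus\im(\phi^N)$ for large $N$, whereupon indecomposability eliminates one of the two summands. All remaining ingredients—simplicity, projectivity of direct summands, and subadditivity of length—are immediate from the axioms, so the proof is otherwise routine.
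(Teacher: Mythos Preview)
Your proof is correct and follows essentially the same approach as the paper's: the forward direction uses the universal property of the projective cover applied to a projective summand together with a length comparison, and the reverse direction constructs $f$ and $g$ by projectivity, shows $f\circ g$ is not nilpotent because it commutes with $p_W$, and invokes Fitting's Lemma to conclude $f$ is surjective. The only difference is cosmetic---you spell out the Fitting decomposition explicitly, while the paper simply cites Fitting's Lemma.
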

\begin{proof}
If $p_W: P_W\twoheadrightarrow W$ is a projective cover of $W$, then $P_W$ is projective. To show $P_W$ is also indecomposable, suppose $P_W=P_1\oplus P_2$ with inclusions $i_1: P_1\rightarrow P$ and $i_2: P_2\rightarrow P$. Since $p_W\neq 0$, we have $p_W\circ i_k\neq 0$ for either $k=1$ or $k=2$. Without loss of generality assume $k=1$, so $p_W\circ i_1$ is surjective since $W$ is simple. Also, $P_1$ is projective since it is a direct summand of a projective object. Thus because $p_W: P_W\twoheadrightarrow W$ is a projective cover, there is a surjection $f: P_1\twoheadrightarrow P_W$ such that $p_W\circ f = p_W\circ i_1$. It follows that $P_1$ and $P_W$ have the same finite length, and thus $P_2$ has length $0$. Then $P_2=0$ and $P_W$ is indecomposable.

Conversely, suppose $p_W: P_W\twoheadrightarrow W$ is a surjection such that $P_W$ is projective and indecomposable. Then for any surjection $p: P\twoheadrightarrow W$ in $\cC$ with $P$ projective, there are morphisms $f: P\rightarrow P_W$ and $g: P_W\rightarrow P$ such that
\begin{equation*}
p_W\circ f =p,\qquad p\circ g = p_W,
\end{equation*}
because both $P_W$ and $P$ are projective. It follows that for any $N\in\NN$,
\begin{equation*}
p_W\circ(f\circ g)^N = p_W.
\end{equation*}
Thus $f\circ g$ is an endomorphism of $P_W$ which is not nilpotent since $p_W\neq 0$. Since $P_W$ is indecomposable and has finite length, Fitting's Lemma implies that $f\circ g$ is an isomorphism; in particular, $f$ is surjective. Thus $p_W: P_W\twoheadrightarrow W$ is a projective cover of $W$. 
\end{proof}

Any locally finite $\CC$-linear abelian category $\cC$ is the union of finite abelian full subcategories. Indeed, for any object $X$ in $\cC$, let $\langle X\rangle\subseteq\cC$ denote the full subcategory consisting of objects which are subquotients of $X^{\oplus n}$, $n\in\NN$. Then $\langle X\rangle$ is an abelian subcategory of $\cC$ with finitely many simple objects, and it also has enough projectives by \cite[Proposition 2.14]{De}. Thus $\langle X\rangle$ is a finite abelian subcategory, and any object of $\cC$ is contained in such a subcategory. So although $\cC$ may not have any non-zero projective objects, any object of $\cC$ (or indeed any finite collection of objects, or any morphism, in $\cC$) is contained in a subcategory which does have enough projectives.

Now let $\cC$ and $\cD$ be two $\CC$-linear abelian categories. A \textit{Deligne tensor product} of $\cC$ and $\cD$ \cite[Section 5]{De} (see also \cite[Section 1.11]{EGNO}) is a $\CC$-linear abelian category $\cC\otimes\cD$ equipped with a $\CC$-bilinear functor
\begin{equation*}
\otimes: \cC\times\cD\longrightarrow\cC\otimes\cD
\end{equation*}
which is right exact in both variables and satisfies the following universal property: For any $\CC$-linear abelian category $\cE$ and $\CC$-bilinear functor
\begin{equation*}
\cB: \cC\times\cD\longrightarrow\cE,
\end{equation*}
there is a unique (up to natural isomorphism) right exact $\CC$-linear functor
\begin{equation*}
\cF: \cC\otimes\cD\longrightarrow\cE
\end{equation*}
such that $\cF\circ\otimes = \cB$. Clearly $\cC\otimes\cD$ is unique up to equivalence if it exists. If $\cC$ and $\cD$ are locally finite, then indeed $\cC\otimes\cD$ exists, and it is also locally finite \cite[Proposition 5.13]{De}; see also \cite[Proposition 22]{LF} for a complete proof.

\begin{remark}\label{rem:Del_prod_union_of_finite}
If $\cC$ and $\cD$ are equivalent to the categories of finite-dimensional comodules for coalgebras $C$ and $D$, respectively, as in Remark \ref{rem:loc_fin_coalgebra}, then by \cite[Proposition 1.11.2]{EGNO}, we may take $\cC\otimes\cD$ to be the category of finite-dimensional $C\otimes D$-comodules.
\end{remark}

\begin{remark}
The Deligne tensor product of $\cC$ and $\cD$ is usually denoted $\cC\tens\cD$, but here we use the notation $\otimes$ instead of $\tens$ because we will soon consider locally finite $\CC$-linear abelian categories of modules for vertex operator algebras $U$ and $V$, and we will show that the Deligne tensor product of such categories is given by a certain category of modules for the tensor product vertex operator algebra $U\otimes V$. We will reserve $\tens$ to denote fusion products in braided monoidal (or tensor) categories of modules for vertex operator algebras.
\end{remark}

If $\cF:\cC\rightarrow\til{\cC}$ and $\cG: \cD\rightarrow\til{\cD}$ are right exact $\CC$-linear functors between locally finite $\CC$-linear abelian categories, then there is a unique (up to natural isomorphism) functor
\begin{equation*}
\cF\otimes\cG: \cC\otimes\cD\longrightarrow\til{\cC}\otimes\til{\cD}
\end{equation*}
such that the diagram
\begin{equation*}
\xymatrixcolsep{4pc}
\xymatrix{
\cC \times \cD \ar[r]^{\cF\times\cG} \ar[d]^{\otimes} & \til{\cC}\times\til{\cD} \ar[d]^{\otimes}\\
\cC\otimes\cD \ar[r]^{\cF\otimes\cG} & \til{\cC}\otimes\til{\cD}\\
}
\end{equation*}
commutes. Moreover, the Deligne tensor product is a commutative and associative operation. In particular, there is an equivalence $\sigma:\cC\otimes\cD\rightarrow\cD\otimes\cC$ such that the diagram
\begin{equation*}
\xymatrixcolsep{3pc}
\xymatrix{
\cC\times\cD \ar[r]^{\cong} \ar[d]^\otimes & \cD\times\cC \ar[d]^{\otimes}\\
\cC\otimes\cD \ar[r]^{\sigma} & \cD\otimes\cC \\
}
\end{equation*}
commutes, and there is an equivalence $\cA: \cC\otimes(\cD\otimes\cE)\rightarrow(\cC\otimes\cD)\otimes\cE$ such that the diagram
\begin{equation*}
\xymatrixcolsep{4pc}
\xymatrixrowsep{1.5pc}
\xymatrix{
\cC\times(\cD\times\cE) \ar[r]^{\cong} \ar[d]^{\Id_\cC\times\otimes} & (\cC\times\cD)\times\cE \ar[d]^{\otimes\times\Id_{\cE}}\\
\cC\times (\cD\otimes\cE) \ar[d]^{\otimes} & (\cC\otimes\cD)\times\cE \ar[d]^{\otimes}\\
\cC\otimes(\cD\otimes\cE) \ar[r]^{\cA} & (\cC\otimes\cD)\otimes\cE\\
}
\end{equation*}
commutes. To see the existence of $\cA$, one can use realizations of $\cC$, $\cD$, and $\cE$ as categories of finite-dimensional comodules for certain coalgebras, as in Remarks  \ref{rem:loc_fin_coalgebra} and \ref{rem:Del_prod_union_of_finite}. Thus from now on, we will suppress parentheses from the notation for Deligne tensor products of more than two locally finite $\CC$-linear abelian categories.

We end this subsection with a useful elementary lemma on right exact sequences; we provide a proof for completeness.
\begin{lemma}\label{lem:cokernel_of_tens_prod}
Let $\cC$, $\cD$, and $\cE$ be abelian categories, and let $\cB: \cC\times\cD\rightarrow\cE$ be a functor which is right exact in both variables. Then given two right exact sequences
\begin{equation*}
W \xrightarrow{f} \til{W} \xrightarrow{c} C\rightarrow 0,\qquad X\xrightarrow{g} \til{X}\xrightarrow{d} D\rightarrow 0
\end{equation*}
in $\cC$ and $\cD$, respectively, the following is a right exact sequence in $\cE$:
\begin{equation*}
\cB(W,\til{X})\oplus\cB(\til{W},X) \xrightarrow{F} \cB(\til{W},\til{X}) \xrightarrow{\cB(c,d)} \cB(C,D)\longrightarrow 0,
\end{equation*}
where $F=\cB(f,\Id_{\til{X}})\circ\pi_1+\cB(\Id_{\til{W}},g)\circ\pi_2$ and $\pi_1$, $\pi_2$ denote the obvious projections.
\end{lemma}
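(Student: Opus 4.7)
The plan is to reduce the lemma to two applications of one-variable right exactness of $\cB$ and splice them with a diagram chase. Applying $\cB(-,\til{X})$ to the first right exact sequence yields the right exact sequence
\begin{equation*}
\cB(W,\til{X}) \xrightarrow{\cB(f,\Id)} \cB(\til{W},\til{X}) \xrightarrow{\cB(c,\Id)} \cB(C,\til{X}) \longrightarrow 0,
\end{equation*}
and similarly, applying $\cB(C,-)$ to the second yields
\begin{equation*}
\cB(C,X) \xrightarrow{\cB(\Id,g)} \cB(C,\til{X}) \xrightarrow{\cB(\Id,d)} \cB(C,D) \longrightarrow 0.
\end{equation*}
Analogous right exact sequences come from applying $\cB(-,X)$ and $\cB(\til{W},-)$. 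These four sequences will be the only inputs beyond naturality of $\cB$.

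Surjectivity of $\cB(c,d)$ follows by factoring $\cB(c,d) = \cB(\Id_C,d)\circ\cB(c,\Id_{\til{X}})$ as a composition of two surjections from the above sequences. That $\cB(c,d)\circ F = 0$ is immediate from functoriality: $\cB(c,d)\circ\cB(f,\Id_{\til{X}}) = \cB(c\circ f,d) = 0$ since $c\circ f = 0$, and symmetrically for the other component.

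The main content is the inclusion $\ker\cB(c,d)\subseteq\im F$, which I would verify by a diagram chase (valid in any abelian category via pseudoelements or the Freyd--Mitchell embedding). Given $\phi$ with $\cB(c,d)(\phi) = 0$, the factorization above forces $\cB(c,\Id_{\til{X}})(\phi)\in\ker\cB(\Id_C,d) = \im\cB(\Id_C,g)$, so write $\cB(c,\Id_{\til{X}})(\phi) = \cB(\Id_C,g)(\psi)$ for some $\psi\in\cB(C,X)$. Since $\cB(c,\Id_X)$ is surjective, lift $\psi = \cB(c,\Id_X)(\psi')$ for some $\psi'\in\cB(\til{W},X)$. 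By naturality of $\cB$,
\begin{equation*}
\cB(c,\Id_{\til{X}})\bigl(\cB(\Id_{\til{W}},g)(\psi')\bigr) = \cB(c,g)(\psi') = \cB(\Id_C,g)\bigl(\cB(c,\Id_X)(\psi')\bigr) = \cB(c,\Id_{\til{X}})(\phi),
\end{equation*}
so $\phi - \cB(\Id_{\til{W}},g)(\psi')\in\ker\cB(c,\Id_{\til{X}}) = \im\cB(f,\Id_{\til{X}})$ by the first sequence. Writing this difference as $\cB(f,\Id_{\til{X}})(\chi)$ gives $\phi = F(\chi,\psi')\in\im F$.

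The only subtlety is invoking elementwise reasoning in an abstract abelian category; I do not expect this to pose a real obstacle, since the chase is standard and the final statement does not depend on choices. Everything else reduces to functoriality of $\cB$ and the two right exact sequences obtained from separate-variable right exactness.
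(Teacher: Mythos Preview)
Your proof is correct and is essentially the same diagram chase as the paper's, just traversed in the mirror direction: the paper first pushes an element along $\cB(\Id_{\til{W}},d)$ and corrects using the $\cB(f,-)$ column, while you first push along $\cB(c,\Id_{\til{X}})$ and correct using the $\cB(-,g)$ row. The paper likewise acknowledges the elementwise reasoning by assuming ``for convenience that objects of $\cE$ have the structure of vector spaces,'' which is your Freyd--Mitchell remark.
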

\begin{proof}
Since $c$ and $d$ are surjective and $\cB$ is right exact in both variables, $\cB(c,d)$ is also surjective. Also, $\cB(c,d)\circ F=0$, so $\mathrm{Im}\,F\subseteq\ker\,\cB(c,d)$. To show $\ker\,\cB(c,d)\subseteq\mathrm{Im}\,F$, consider the following commutative diagram with right exact rows and columns:
\begin{equation*}
\xymatrixcolsep{5pc}
\xymatrixrowsep{2pc}
\xymatrix{
\cB(W,X) \ar[r]^{\cB(\Id_W,g)} \ar[d]^{\cB(f,\Id_X)} & \cB(W,\til{X}) \ar[r]^{\cB(\Id_W,d)} 
\ar[d]^{\cB(f,\Id_{\til{X}})} & \cB(W, D) \ar[r] \ar[d]^{\cB(f,\Id_D)} & 0\\
\cB(\til{W},X) \ar[r]^{\cB(\Id_{\til{W}}, g)} \ar[d]^{\cB(c,\Id_X)} & \cB(\til{W},\til{X}) \ar[r]^{\cB(\Id_{\til{W}}, d)} \ar[d]^{\cB(c,\Id_{\til{X}})} & \cB(\til{W}, D) \ar[r] \ar[d]^{\cB(c,\Id_{D})} & 0\\
\cB(C, X) \ar[r]^{\cB(\Id_{C}, g)} \ar[d] & \cB(C, \til{X}) \ar[r]^{\cB(\Id_{C}, d)} \ar[d] & \cB(C, D) \ar[r] \ar[d] & 0\\
0 & 0 & 0 & \\
}
\end{equation*}
Then
\begin{align*}
\cB(\Id_{\til{W}}, d)\left(\ker\,\cB(c,d)\right) \subseteq\ker\,\cB(c,\Id_D) =\mathrm{Im}\,\cB(f,\Id_D)=\mathrm{Im}\,\cB(f,d).
\end{align*}
Assuming for convenience that objects of $\cE$ have the structure of vector spaces, this means that for any $b\in\ker\,\cB(c, d)$, there exists $b_1\in \cB(W,\til{X})$ such that
\begin{equation*}
\cB(\Id_{\til{W}}, d)(b) = \cB(\Id_{\til{W}}, d)\big(\cB(f,\Id_{\til{X}})(b_1)\big).
\end{equation*}
That is,
\begin{equation*}
b-\cB(f,\Id_{\til{X}})(b_1)\in\ker\,\cB(\Id_{\til{W}}, d)=\mathrm{Im}\,\cB(\Id_{\til{W}}, g),
\end{equation*}
which implies that $b\in\mathrm{Im}\,F$ as required.
\end{proof}

\subsection{Braided tensor categories}\label{subsec:prelim_tens_cat}

Let $\cC$ be a monoidal category, which means there is a functor $\tens: \cC\times\cC\rightarrow\cC$, a unit $\vac\in\mathrm{Ob}(\cC)$, left and right unit isomorphisms $l: \vac\tens\bullet\rightarrow\Id_\cC$ and $r: \bullet\tens\vac\rightarrow\Id_\cC$, and associativity isomorphisms $\cA: \tens\circ(\Id_\cC\times\tens)\rightarrow\tens\circ(\tens\times\Id_\cC)$ which satisfy the triangle and pentagon identites (see for example \cite[Definition 2.1.1]{EGNO}). If there is also a natural braiding isomorphism $\cR:\tens\rightarrow\tens\circ\sigma$ (where $\sigma$ exchanges the factors of $\cC\times\cC$) which satisfies the hexagon identities (see for example \cite[Definition 8.1.1]{EGNO}), then $\cC$ is a \textit{braided monoidal category}. We call the functor $\tens$ in a (braided) monoidal category the ``fusion product'' on $\cC$, to avoid confusion with tensor products of vector spaces and Deligne tensor products of categories. In this paper, all monoidal categories will be $\CC$-linear and additive, and $\tens$ will be $\CC$-bilinear and right exact in both variables. If such a (braided) monoidal category is abelian, we call it a \textit{(braided) tensor category}.

Unlike in \cite{EGNO}, we do not use ``tensor category'' to refer only to monoidal categories that are rigid. However, rigidity is an important property of many of the tensor categories that we will consider. An object $X$ in a monoidal category is \textit{(left) rigid} if it has a \textit{(left) dual}  $(X^*, e_X, i_X)$ where $X^*\in\mathrm{Ob}(\cC)$, $e_X: X^*\tens X\rightarrow\vac$ is the \textit{evaluation} morphism, $i_X:\vac\rightarrow X\tens X^*$ is the \textit{coevaluation} morphism, and the two rigidity compositions
\begin{align*}
X\xrightarrow{l_X} \vac\tens X \xrightarrow{i_X\tens\Id_X} & (X\tens X^*)\tens X\nonumber\\
&\xrightarrow{\cA_{X,X^*,X}^{-1}} X\tens(X^*\tens X)\xrightarrow{\Id_X\tens e_X} X\tens\vac\xrightarrow{r_X} X
\end{align*}
and
\begin{align*}
X^*\xrightarrow{r_{X^*}} X^*\tens\vac & \xrightarrow{\Id_{X^*}\tens i_X} X^*\tens(X\tens X^*)\nonumber\\
&\xrightarrow{\cA_{X^*,X,X^*}} (X^*\tens X)\tens X^*\xrightarrow{e_X\tens\Id_{X^*}} \vac\tens X^*\xrightarrow{l_{X^*}} X^*
\end{align*}
are both identities. There are analogous definitions of right duals and right rigidity, but we will only consider categories in which left and right duals may be taken to be the same, so we only consider left duals in this paper. A monoidal category $\cC$ is \textit{rigid} if every object has a dual. In a rigid monoidal category, duals induce a contravariant endofunctor with the dual $f^*$ of a morphism $f: W\rightarrow X$ given by the composition
\begin{align*}
X^*\xrightarrow{r_{X^*}} X^*\tens\vac  \xrightarrow{\Id_{X^*}\tens i_W} & X^*\tens(W\tens W^*) \xrightarrow{\cA_{X^*,W,W^*}} (X^*\tens W)\tens W^*  \nonumber\\
& \xrightarrow{(\Id_{X^*}\tens f)\tens \Id_{W^*})} (X^*\tens X)\tens W^* \xrightarrow{e_X\tens\Id_{W^*}} \vac\tens W^*\xrightarrow{l_{W^*}} W^*.
\end{align*}
A \textit{finite tensor category} is a finite abelian category which is also a rigid tensor category.

A \textit{braided ribbon category} is a rigid braided tensor category $\cC$ equipped with a natural \textit{twist} isomorphism $\theta: \Id_{\cC}\rightarrow\Id_{\cC}$ which satisfies $\theta_\vac=\Id_\vac$, $\theta_{X^*}=\theta_X^*$ for $X\in\mathrm{Ob}(\cC)$, and the \textit{balancing equation} 
\begin{equation}\label{eqn:balancing}
\theta_{X_1\tens X_2} =\cR^2_{X_1,X_2}\circ(\theta_{X_1}\tens\theta_{X_2})
\end{equation}
for $X_1, X_2\in\mathrm{Ob}(\cC)$, where $\cR^2_{X_1,X_2}=\cR_{X_2,X_1}\circ\cR_{X_1,X_2}$ is the double braiding. A \textit{modular tensor category} is a finite braided ribbon category whose braiding is non-degenerate, that is, if $\cR^2_{W,X}=\Id_{W\tens X}$
for all $X\in\mathrm{Ob}(\cC)$, then $W\cong\vac^{\oplus n}$ for some $n\in\NN$. Since modular tensor categories are often assumed to be semisimple as well, we will sometimes refer to ``not necessarily semisimple modular tensor categories'' or ``non-semisimple modular tensor categories'' to emphasize that we are not assuming semisimplicity here.

Now let $\cC$ and $\cD$ be two locally finite braided tensor categories. Then the Deligne tensor product $\cC\otimes\cD$ is also a locally finite braided tensor category (see \cite[Proposition 5.17]{De} or \cite[Section 4.6]{EGNO}). In more detail, since the functors $\tens_\cC: \cC\times\cC\rightarrow\cC$ and $\tens_\cD: \cD\times\cD\rightarrow\cD$ are right exact in both variables, they induce right exact functors
\begin{equation*}
T_\cC: \cC\otimes\cC\longrightarrow\cC,\qquad T_\cD:\cD\otimes\cD\longrightarrow\cD
\end{equation*}
such that $T_\cC\circ\otimes = \tens_\cC$ and $T_\cD\circ\otimes =\tens_\cD$. We then get a commutative diagram:
\begin{equation*}
\xymatrixcolsep{4.5pc}
\xymatrix{
\cC\times\cD\times\cC\times\cD  \ar[d]^{\otimes\times\otimes} \ar[r]^{\Id_\cC\times\sigma\times\Id_\cD} & \cC\times \cC\times\cD\times \cD \ar[d]^{\otimes\times\otimes} \ar[rd]^{\tens_\cC\times\tens_\cD} & \\
(\cC\otimes\cD)\times(\cC\otimes\cD) \ar[d]^\otimes & (\cC\otimes\cC)\times(\cD\otimes\cD) \ar[d]^\otimes \ar[r]^(.6){T_\cC\times T_\cD} & \cC\times\cD \ar[d]^\otimes\\
\cC\otimes\cD\otimes\cC\otimes\cD \ar[r]^{\Id_\cC\otimes\sigma\otimes\Id_\cD} & \cC\otimes\cC\otimes\cD\otimes\cD \ar[r]^(.6){T_\cC\otimes T_\cD}  & \cC\otimes\cD\\
}
\end{equation*} 
The fusion product $\tens$ on $\cC\otimes\cD$ is then the composition
\begin{equation*}
\tens =(T_\cC\otimes T_\cD)\circ(\Id_\cC\otimes\sigma\otimes\Id_\cD)\circ\otimes,
\end{equation*}
and the commutative diagram implies that for $M_1, M_2\in\mathrm{Ob}(\cC)$ and $W_1, W_2\in\mathrm{Ob}(\cD)$, 
\begin{equation}\label{eqn:fus_prod_in_Del_prod}
(M_1\otimes W_1)\tens(M_2\otimes W_2) = (M_1\tens_\cC M_2)\otimes(W_1\tens_\cD W_2).
\end{equation}
If $f_i: M_i\rightarrow\til{M}_i$ and $g_i: W_i\rightarrow\til{W}_i$ for $i=1,2$ are morphisms in $\cC$ and $\cD$, respectively, then
\begin{equation}\label{eqn:fus_prod_of_morph_in_Del_prod}
(f_1\otimes g_1)\tens(f_2\otimes g_2) =(f_1\tens_\cC f_2)\otimes(g_1\tens_\cD g_2).
\end{equation}
Since
\begin{equation}\label{eqn:homs_between_Del_prods}
\hom_{\cC\otimes\cD}(M_1\otimes W_1, M_2\otimes W_2)\cong\hom_{\cC}(M_1,M_2)\otimes_\CC \hom_\cD(W_1,W_2)
\end{equation}
(see \cite[Proposition 5.13]{De} or \cite[Proposition 1.11.2]{EGNO}), \eqref{eqn:fus_prod_of_morph_in_Del_prod} and $\CC$-bilinearity completely determine the fusion product of morphisms between objects in the image of $\otimes: \cC\times\cD\rightarrow\cC\otimes\cD$. The unit object of $\cC\otimes\cD$ is $\vac_\cC\otimes\vac_\cD$, and under the identification \eqref{eqn:fus_prod_in_Del_prod},
\begin{equation}\label{eqn:unit_in_Del_prod}
l_{M\otimes W} = l_M\otimes l_W,\qquad r_{M\otimes W} =r_M\otimes r_W
\end{equation}
for $M\in\mathrm{Ob}(\cC)$ and $W\in\mathrm{Ob}(\cD)$. Associativity and braiding isomorphisms are given by
\begin{equation}\label{eqn:assoc_in_Del_prod}
\cA_{M_1\otimes W_1,M_2\otimes W_2,M_3\otimes W_3}=\cA_{M_1,M_2,M_3}\otimes\cA_{W_1,W_2,W_3}
\end{equation}
and
\begin{equation}\label{eqn:braid_in_Del_prod}
\cR_{M_1\otimes W_1,M_2\otimes W_2} =\cR_{M_1,M_2}\otimes\cR_{W_1,W_2}
\end{equation}
for $M_1,M_2,M_3\in\mathrm{Ob}(\cC)$ and $W_1, W_2, W_3\in\mathrm{Ob}(\cD)$.

Let $\til{\cC\otimes\cD}\subseteq\cC\otimes\cD$ be the full subcategory whose objects are isomorphic to finite direct sums of Deligne tensor product objects $M\otimes W$. Because $\tens$ is $\CC$-bilinear, if $X_1=\bigoplus_{i_1\in I_1} M_{i_1}\otimes W_{i_1}$ and $X_2=\bigoplus_{i_2\in I_2} M_{i_2}\otimes W_{i_2}$, then we can identify
\begin{equation}\label{eqn:fus_prod_in_P}
X_1\tens X_2=\bigoplus_{(i_1,i_2)\in I_1\times I_2} (M_{i_1}\tens_\cC M_{i_2})\otimes(W_{i_1}\tens_\cD W_{i_2})
\end{equation}
via the natural isomorphism
\begin{equation*}
F_{X_1,X_2}=\sum_{(i_1,i_2)\in I_1\times I_2} q^{(i_1,i_2)}_{X_1\tens X_2}\circ(\pi_{X_1}^{i_1}\tens\pi_{X_2}^{i_2}),
\end{equation*}
where  $\pi^{i_k}_{X_k}: X_k\rightarrow M_{i_k}\otimes W_{i_k}$ for $k=1,2$ is the projection and $q_{X_1\tens X_2}^{(i_1,i_2)}$ is the inclusion into the direct sum on the right side of \eqref{eqn:fus_prod_in_P}. Thus $\til{\cC\otimes\cD}$ is a braided monoidal subcategory of $\cC\otimes\cD$. Moreover, the fusion product of morphisms in $\til{\cC\otimes\cD}$, as well as the unit, associativity, and braiding isomorphisms, are completely determined from \eqref{eqn:fus_prod_of_morph_in_Del_prod}, \eqref{eqn:unit_in_Del_prod}, \eqref{eqn:assoc_in_Del_prod}, and \eqref{eqn:braid_in_Del_prod} using the natural isomorphism $F$, the $\CC$-bilinearity of $\tens$, and the naturality of the unit, associativity, and braiding isomorphisms in $\cC\otimes\cD$.

If $M\in\mathrm{Ob}(\cC)$ and $W\in\mathrm{Ob}(\cD)$ are rigid with duals $M^*$ and $W^*$, respectively, then $M\otimes W$ in $\cC\otimes\cD$ is rigid with dual $M^*\otimes W^*$, evaluation $e_{M\otimes W}=e_M\otimes e_W$, and coevaluation $i_{M\otimes W}=i_M\otimes i_W$, because we can identify
\begin{align*}
(M^*\otimes W^*)\tens(M\otimes W) & =(M^*\tens_\cC M)\otimes(W^*\tens_\cC W),\nonumber\\
 (M\otimes W)\tens(M^*\otimes W^*) & =(M\tens_\cC M^*)\otimes(W\tens_\cC W^*).
\end{align*}
 Moreover, if $\cC$ and $\cD$ have ribbon twists $\theta_\cC$ and $\theta_{\cD}$, then $\cC\otimes\cD$ has a ribbon twist $\theta$ characterized by $\theta_{M\otimes W}=(\theta_\cC)_M\otimes(\theta_{\cD})_W$ for $M\in\mathrm{Ob}(\cC)$, $W\in\mathrm{Ob}(\cD)$.

\subsection{Vertex operator algebras and tensor categories}\label{subsec:prelim_VOA}

We use the definition of vertex operator algebra from \cite{FLM, LL}. In particular, a vertex operator algebra $V$ is a graded vector space $V=\bigoplus_{n\in\ZZ} V_{(n)}$ with vertex operator map
\begin{align*}
Y_V: V & \rightarrow (\mathrm{End}\,V)[[x,x^{-1}]]\nonumber\\
 v & \mapsto Y_V(v,x) =\sum_{n\in\ZZ} v_n\,x^{-n-1},
\end{align*}
vacuum vector $\vac\in V_{(0)}$ such that $Y_V(\vac,x)=\Id_V$, and conformal vector $\omega\in V_{(2)}$ whose vertex operator $Y_V(\omega,x)=\sum_{n\in\ZZ} L(n)\,x^{-n-2}$ generates a representation of the Virasoro Lie algebra on $V$. For $n\in\ZZ$, $V_{(n)}$ is the $L(0)$-eigenspace with eigenvalue $n$, and we say that any non-zero vector in $V_{(n)}$ has \textit{conformal weight} $n$.

Given a vertex operator algebra $V$, a \textit{weak $V$-module} is a module for $V$ considered as a vertex algebra, as in \cite[Definition 4.1.1]{LL}, with no conformal weight grading assumed. In particular, a weak $V$-module $W$ is a vector space equipped with a vertex operator
\begin{align*}
Y_W: V & \rightarrow (\mathrm{End}\,W)[[x,x^{-1}]]\nonumber\\
 v & \mapsto Y_W(v,x) =\sum_{n\in\ZZ} v_n\,x^{-n-1}.
\end{align*}
Among other axioms, the vertex operator satisfies $Y_W(\vac,x)=\Id_W$, and $Y_W(\omega,x)=\sum_{n\in\ZZ} L(n)\,x^{-n-2}$ generates an action of the Virasoro algebra on $W$. A \textit{generalized $V$-module}, as in \cite{HLZ1}, is a weak $V$-module $W$ which decomposes as the direct sum of generalized $L(0)$-eigenspaces, that is, $W=\bigoplus_{h\in\CC} W_{[h]}$ where $W_{[h]}$ is the generalized $L(0)$-eigenspace with generalized eigenvalue $h$. We call $h$ the conformal weight of non-zero vectors in $W_{[h]}$. A \textit{grading-restricted generalized $V$-module} is a generalized $V$-module $W$ such that for any $h\in\CC$, $\dim W_{[h]}<\infty$ and $W_{[h+n]}=0$ for all sufficiently negative $n\in\ZZ$. 

The conformal weights of any simple, or more generally indecomposable, generalized $V$-module $W$ are contained in a single coset of $\CC/\ZZ$; see \cite[Remark 2.20]{HLZ1}. Thus if $W$ is in addition grading restricted, its conformal weights are contained in $h+\NN$ for some $h\in\CC$. More generally, the conformal weights of any finite-length generalized $V$-module are contained in the union of finitely many cosets of $\CC/\ZZ$.

If $W=\bigoplus_{h\in\CC} W_{[h]}$ is a graded vector space, then $W'=\bigoplus_{h\in\CC} W_{[h]}^*$ is its \textit{graded dual}. If $W$ is a grading-restricted generalized $V$-module, then $W'$ also has a $V$-module structure, called the \textit{contragredient} \cite{FHL}, given by
\begin{equation}\label{eqn:contra}
\langle Y_{W'}(v,x)w', w\rangle = \langle w', Y_W(e^{xL(1)}(-x^{-2})^{L(0)} v,x^{-1})w\rangle
\end{equation}
for $v\in V$, $w\in W$, and $w'\in W'$.

To obtain braided tensor categories of modules for vertex operator algebras, we need the notion of intertwining operator from \cite{FHL}; see also \cite{HLZ2}.
\begin{defn}
Let $V$ be a vertex operator algebra, and let $W_1$, $W_2$, and $W_3$ be weak $V$-modules. An \textit{intertwining operator} of type $\binom{W_3}{W_1\,W_2}$ is a linear map
\begin{align*}
\cY: W_1\otimes W_2 & \rightarrow W_3[\log x]\lbrace x\rbrace\nonumber\\
w_1\otimes w_2 & \mapsto \cY(w_1,x)w_2=\sum_{h\in\CC}\sum_{k\in\NN} (w_1)^{\cY}_{h;k} w_2\,x^{-h-1}(\log x)^k
\end{align*}
which satisfies the following properties:
\begin{enumerate}
\item \textit{Lower truncation}: For all $w_1\in W_1$, $w_2\in W_2$, and $h\in\CC$, there exists $N\in\ZZ$ such that if $n\in\ZZ_{\geq N}$, then $(w_1)^\cY_{h+n;k} w_2=0$ for all $k\in\NN$.

\item The \textit{Jacobi identity}: For $v\in V$ and $w_1\in W_1$,
 \begin{align*}
  x_0^{-1}\delta\left(\frac{x_1-x_2}{x_0}\right)Y_{W_3}(v,x_1)\cY(w_1,x_2) & -x_0^{-1}\delta\left(\frac{-x_2+x_1}{x_0}\right)\cY(w_1,x_2)Y_{W_2}(v,x_1)\nonumber\\
  & = x_2^{-1}\delta\left(\frac{x_1-x_0}{x_2}\right)\cY(Y_{W_1}(v,x_0)w,x_2),
 \end{align*}
 where $\delta(x)=\sum_{n\in\ZZ} x^n$ is the formal delta function.
 
 \item The \textit{$L(-1)$-derivative property}: For $w_1\in W_1$,
 \begin{equation*}
  \dfrac{d}{dx}\cY(w_1,x)=\cY(L(-1)w_1,x).
 \end{equation*}
\end{enumerate}
\end{defn}

An intertwining operator $\cY$ of type $\binom{W_3}{W_1\,W_2}$ is \textit{surjective} if $W_3$ is spanned by the vectors $ (w_1)^{\cY}_{h;k} w_2$ for $w_1\in W_1$, $w_2\in W_2$, $h\in\CC$, and $k\in\NN$, though actually, if $\cY$ is surjective, then $W_3$ is already spanned by the vectors $(w_1)^\cY_{h;0} w_2$ for $w_1\in W_1$, $w_2\in W_2$, and $h\in\CC$ (see for example \cite[Lemma 2.2]{MY}). Equivalently, when $W_1$, $W_2$, and $W_3$ are generalized $V$-modules with conformal weight space decompositions, then $\cY$ is surjective if and only if $W_3$ is spanned by projections of $\cY(w_1,1)w_2$ to the conformal weight spaces of $W_3$, for $w_1\in W_1$, $w_2\in W_2$. Here, $\cY(\cdot,z)\cdot$ for $z\in\CC^\times$ denotes the \textit{$P(z)$-intertwining map} (in the terminology of \cite{HLZ3}) obtained by substituting the formal variable $x$ in $\cY$ with the complex number $z$, using some choice of branch of logarithm to evaluate powers of $\log z$ and non-integral powers of $z$. The image of a $P(z)$-intertwining map is contained in the \textit{algebraic completion}
\begin{equation*}
\overline{W}_3=\prod_{h\in\CC} (W_3)_{[h]}.
\end{equation*}
If $z$ is a positive real number, we always substitute $x\mapsto z$ in an intertwining operator using the real-valued branch of logarithm $\ln$.

Now let $\cC$ be a category of generalized modules for a vertex operator algebra $V$. A \textit{fusion product} of two objects $W_1$ and $W_2$ in $\cC$ is a pair $(W_1\tens W_2,\cY_{W_1,W_2})$, where $W_1\tens W_2\in\mathrm{Ob}(\cC)$ and $\cY_{W_1,W_2}$ is an intertwining operator of type $\binom{W_1\tens W_2}{W_1\,W_2}$, which satisfies the following universal property: For any $W_3\in\mathrm{Ob}(\cC)$ and intertwining operator $\cY$ of type $\binom{W_3}{W_1\,W_2}$, there is a unique $V$-module homomorphism $f: W_1\tens W_2\rightarrow W_3$ such that $f\circ\cY_{W_1,W_2}=\cY$. We use the term ``fusion product'' here rather than ``tensor product'' to avoid confusion with tensor products of vector spaces and Deligne tensor products of categories. If a fusion product of $W_1$ and $W_2$ in $\cC$ exists, then the fusion product intertwining operator $\cY_{W_1,W_2}$ is surjective \cite[Proposition 4.23]{HLZ3}.

If fusion products of all pairs of objects in $\cC$ exist, then fusion products define a $\CC$-bilinear functor $\tens: \cC\times\cC\rightarrow\cC$ which is right exact in both variables \cite[Proposition 4.26]{HLZ3}. The fusion product of two morphisms $f_1: W_1\rightarrow X_1$ and $f_2: W_2\rightarrow X_2$ in $\cC$ is the unique $V$-module homomorphism
\begin{equation*}
f_1\tens f_2: W_1\tens W_2\longrightarrow X_1\tens X_2,
\end{equation*}
induced by the universal property of $(W_1\tens W_2,\cY_{W_1,W_2})$, such that
\begin{equation}\label{eqn:fus_prod_of_morph_char}
(f_1\tens f_2)\circ\cY_{W_1,W_2} =\cY_{X_1,X_2}\circ(f_1\otimes f_2).
\end{equation}
We also get natural left and right unit isomorphisms
\begin{equation*}
l_W: V\tens W\longrightarrow W,\qquad r_W: W\tens V\longrightarrow W
\end{equation*}
for any object $W\in\mathrm{Ob}(\cC)$
such that 
\begin{align}\label{eqn:unit_char}
l_W(\cY_{V, W}(v, x)w)  = Y_W(v, x)w,\qquad r_W(\cY_{W,V}(w,x)v)  =e^{x L(-1)} Y_W(v, -x)w
\end{align}
for all $v\in V$ and $w\in W$. There is also a natural braiding isomorphism 
\begin{equation*}
\cR_{W_1,W_2}: W_1\tens W_2\longrightarrow W_2\tens W_1
\end{equation*}
for all $W_1, W_2\in\mathrm{Ob}(\cC)$ such that
\begin{equation}\label{eqn:braid_char}
\cR_{W_1,W_2}(\cY_{W_1,W_2}(w_1,x)w_2)=e^{xL(-1)}\cY_{W_2,W_1}(w_2,e^{\pi i} x)w_1
\end{equation}
for all $w_1\in W_1$ and $w_2\in W_2$. The category $\cC$ also has a ribbon twist given by $\theta_W=e^{2\pi i L(0)}$ for any $W\in\mathrm{Ob}(\cC)$.

Thus $\cC$ will be a braided monoidal category (and a braided tensor category if it is abelian) if it has a natural associativity isomorphism $\cA: \tens\circ(\Id_\cC\times\tens)\rightarrow\tens\circ(\tens\times\Id_{\cC})$ which satisfies the triangle, pentagon, and hexagon identities. Conditions for the existence and a construction of such an associativity isomorphism are given in \cite{HLZ6}-\cite{HLZ8}. When the conditions are satisfied, the associativity isomorphism
\begin{equation*}
\cA_{W_1,W_2,W_3}: W_1\tens(W_2\tens W_3)\longrightarrow(W_1\tens W_2)\tens W_3
\end{equation*}
for $W_1, W_2, W_3\in\mathrm{Ob}(\cC)$ is given by 
\begin{align}\label{eqn:assoc_char}
\overline{\cA_{W_1,W_2,W_3}} & \left(\cY_{W_1,W_2\tens W_3}(w_1, r_1)\cY_{W_2,W_3}(w_2,r_2)w_3\right)\nonumber\\
& = \cY_{W_1\tens W_2,W_3}(\cY_{W_1,W_2}(w_1,r_1-r_2)w_2, r_2)w_3
\end{align}
for any $w_1\in W_1$, $w_2\in W_2$, and $w_3\in W_3$, and for any choice of $r_1, r_2\in\RR_+$ such that $r_1>r_2>r_1-r_2$. We substitute positive real numbers for formal variables in intertwining operators using the branch of logarithm $\ln$, and it is assumed that the compositions of intertwining maps on both sides of \eqref{eqn:assoc_char} converge absolutely to elements of the algebraic completions $\overline{W_1\tens(W_2\tens W_3)}$ and $\overline{(W_1\tens W_2)\tens W_3}$, respectively. Then $\overline{\cA_{W_1,W_2,W_3}}$ denotes the natural extension of the graded linear map $\cA_{W_1,W_2,W_3}$ to algebraic completions.

Since the conditions in \cite{HLZ6}-\cite{HLZ8} for existence of the associativity isomorphisms in $\cC$ are rather technical, we mention some examples where they are satisfied. First, suppose that $V$ is $\NN$-graded and $C_2$-cofinite, that is, $\dim V/C_2(V)<\infty$ where 
\begin{equation*}
C_2(V)=\mathrm{span}\lbrace u_{-2} v\mid u,v\in V\rbrace.
\end{equation*}
It follows from \cite{Hu-C2} (see also the discussion in \cite[Section 3.1]{CM} and \cite[Lemma 2.10]{McR-rat}) that the category $\rep(V)$ of all grading-restricted generalized $V$-modules is a finite abelian category and a braided tensor category. In particular, if $V$ is $\NN$-graded and $C_2$-cofinite, then $\rep(V)$ is a finite braided ribbon category if and only if it is rigid. Huang showed in \cite{Hu-rig-mod} that if $V$ is in addition simple and self-contragredient, and if $\rep(V)$ is semisimple, then $\rep(V)$ is a modular tensor category. If $V$ is simple and self-contragredient but $\rep(V)$ is not semisimple, then it is not known in general whether $\rep(V)$ is rigid; however, if it is rigid, then it is a non-semisimple modular tensor category \cite[Main Theorem 1]{McR-rat}.

When $V$ is not necessarily $C_2$-cofinite, we consider the category $\cC_V^1$ of \textit{$C_1$-cofinite} $V$-modules, which satisfy $\dim W/C_1(W) <\infty$ where
\begin{equation*}
C_1(W)=\mathrm{span}\lbrace v_{-1} w\mid w\in W,\,v\in V,\,\mathrm{wt}\,v>0\rbrace.
\end{equation*}
Any $C_1$-cofinite $V$-module is finitely generated (see for example \cite[Proposition 2.1]{CMY1}). Specifically, if $W$ is $C_1$-cofinite, so that $W=T+C_1(W)$ for some finite-dimensional graded subspace $T$, then $W$ is spanned by vectors of the form
\begin{equation}\label{eqn:C1_span_set}
v^{(1)}_{-1}v^{(2)}_{-1}\cdots v^{(k)}_{-1} t
\end{equation}
for $t\in T$ and homogeneous $v^{(1)},v^{(2)},\ldots, v^{(k)}\in V$ of positive conformal weight; thus $W$ is generated by a finite spanning set for $T$. 

\begin{thm}\label{thm:C1_cofin_prop}
If $\cC_V^1$ is closed under contragredients, then:
\begin{enumerate}
\item Any submodule of a $C_1$-cofinite $V$-module is $C_1$-cofinite, and $\cC_V^1$ is a locally finite abelian category.

\item The category $\cC_V^1$ admits the vertex algebraic braided tensor category structure of \cite{HLZ1}-\cite{HLZ8}, with a ribbon twist.
\end{enumerate}
\end{thm}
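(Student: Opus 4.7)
The plan is to prove (1) essentially by hand, since it is the substantive part, and then reduce (2) to the results of \cite{CJORY,CY}. For (1), the key point is to upgrade the trivial observation that quotients of $C_1$-cofinite modules are $C_1$-cofinite (a finite generating set for the source modulo $C_1$ maps to a finite generating set for the quotient modulo $C_1$) to the analogous statement for submodules, which is exactly where the contragredient hypothesis enters. Given a submodule $M\hookrightarrow W$ with $W\in\cC_V^1$, dualizing the inclusion yields a surjection $W'\twoheadrightarrow M'$; since $W'\in\cC_V^1$ by hypothesis, $M'$ is a quotient of a $C_1$-cofinite module and hence $C_1$-cofinite. Because $M$ is grading-restricted as a submodule of the grading-restricted $W$, we have $M\cong(M')'$, and a second application of the contragredient hypothesis then places $M$ back in $\cC_V^1$.

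With stability under submodules and quotients in hand, the abelian structure follows: $\cC_V^1$ is closed under finite direct sums, kernels, and cokernels inside the abelian ambient category of grading-restricted generalized $V$-modules. Finite-dimensionality of $\hom$-spaces can be extracted from the spanning set \eqref{eqn:C1_span_set}: if $W\in\cC_V^1$ is generated by a finite-dimensional graded subspace $T\subseteq W$ with $W=T+C_1(W)$, then any $V$-module map $W\to N$ is determined by its restriction to $T$, whose image lies in the finite-dimensional sum of generalized conformal weight spaces of $N$ corresponding to the weights occurring in $T$. For finite length, I would argue that each $W\in\cC_V^1$ is Noetherian because it is finitely generated with every submodule $C_1$-cofinite (hence finitely generated); applying this to $W'\in\cC_V^1$, the order-reversing bijection between submodules of $W$ and submodules of $W'$ coming from contragredient duality converts the Noetherian property of $W'$ into the Artinian property of $W$, so $W$ has finite length.

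For part (2), I would invoke \cite{CJORY,CY}, which verify the HLZ axioms for $\cC_V^1$ precisely under the hypothesis that this category is closed under contragredients. The essential inputs are Miyamoto's construction \cite{Mi-C1-cofin} of the fusion product $W_1\tens W_2$ as a generalized $V$-module, Huang's differential equations \cite{Hu-diff-eqns} arising from $C_1$-cofiniteness, which supply the absolute convergence of products and iterates of intertwining operators needed for associativity, and the contragredient closure, which combined with part (1) ensures that fusion products are grading-restricted and remain in $\cC_V^1$. The ribbon twist is the natural automorphism $\theta_W=e^{2\pi iL(0)}$ of the identity functor on $\cC_V^1$; it is well defined on generalized modules and satisfies the balancing equation by standard manipulations with the $L(0)$- and $L(-1)$-commutation properties of intertwining operators.

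The main obstacle, or rather the place requiring the most care, is the submodule step in (1): without the contragredient hypothesis, $C_1$-cofiniteness is not in general inherited by submodules, and this failure alone would prevent $\cC_V^1$ from being an abelian subcategory. All subsequent structural statements, including the verifications needed in (2), ultimately rest on this double-contragredient trick.
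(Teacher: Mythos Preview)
Your argument is correct, and for the submodule step, the finite-dimensionality of $\hom$-spaces, and part (2), it matches the paper essentially verbatim. The one genuine divergence is in the finite-length argument. The paper argues more concretely: it shows that any $C_1$-cofinite $W$ contains a simple submodule by analyzing singular vectors (homogeneous $w$ annihilated by all weight-lowering modes), using the $C_1$-cofiniteness of $W'$ to bound the conformal weights at which singular vectors can occur, and then invoking the Zhu algebra $A(V)$ to extract a simple $A(V)$-submodule which generates a simple $V$-submodule. From there it builds an ascending chain with simple successive quotients and uses the Noetherian property (your observation that every submodule is $C_1$-cofinite, hence finitely generated) to force stabilization. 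Your route---Noetherian for both $W$ and $W'$, plus the order-reversing bijection $M\mapsto M^\perp$ between graded submodules (which is a genuine bijection because the pairing is perfect on each finite-dimensional weight space)---yields Noetherian and Artinian simultaneously and hence finite length, bypassing the Zhu algebra entirely. Your argument is shorter and more elementary; the paper's has the minor advantage of explicitly exhibiting the simple composition factors via $A(V)$, which is sometimes useful downstream, but is not needed for the statement at hand.
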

\begin{proof}
For (1), it is easy to see that $\cC_V^1$ is closed under quotients and finite direct sums. For submodules, suppose $W\subseteq X$ where $X$ is $C_1$-cofinite. Then $W'$ is a quotient of $X'$, and $X'$ is $C_1$-cofinite because $\cC_V^1$ is closed under contragredients. Thus $W'$ is $C_1$-cofinite, and then so is $W\cong (W')'$. So $\cC_V^1$ is closed under submodules. Moreover, morphism spaces in $\cC_V^1$ are finite dimensional because any homomorphism $f: W\rightarrow X$ in $\cC_V^1$ is completely determined by its values on a finite generating set $S$ for $W$, and $f(S)$ is contained in the direct sum of finitely many of the finite-dimensional conformal weight spaces of $X$.

To complete the proof of (1), it remains to show that every $C_1$-cofinite $V$-module $W$ has finite length. We first prove that $W$ has a simple submodule; for this, we define a \textit{singular vector} to be a non-zero homogeneous $w\in W$ such that $v_n w= 0$ if $v_n$ is a vertex operator mode that lowers conformal weights. We claim $W$ has a singular vector of conformal weight $h$ for only finitely many $h\in\CC$. Indeed, since $W'$ is $C_1$-cofinite by hypothesis, there is a finite-dimensional graded subspace $T\subseteq W'$ such that $W'=T+C_1(W)$, and $W'$ is spanned by vectors as in \eqref{eqn:C1_span_set}. Then if $w\in W$ is a singular vector,
\begin{equation*}
\langle v^{(1)}_{-1}v^{(2)}_{-1}\cdots v^{(k)}_{-1} t, w\rangle = 0
\end{equation*}
unless $k=0$, since $v^{(1)}_{-1}$ raises conformal weight and hence its adjoint (from \eqref{eqn:contra}) is a linear combination of vertex operator modes that lower conformal weight. It follows that the conformal weight of $w$ is one of the finitely many conformal weights occurring in $T$.

Let $h$ have maximal real part such that $W$ has a singular vector of conformal weight $h$. The space of singular vectors of conformal weight $h$ is a finite-dimensional module for the Zhu algebra $A(V)$ defined in \cite{Zh}. Thus this space of singular vectors contains a simple $A(V)$-submodule, and this simple $A(V)$-submodule generates a simple $V$-submodule of $W$ (it is simple because any non-zero proper submodule would contain a singular vector whose weight would have greater real part than $h$). Thus any $C_1$-cofinite $V$-module contains a simple submodule when $\cC_V^1$ is closed under contragredients.

As a result, for any $C_1$-cofinite $W$, we can obtain an ascending chain of submodules
\begin{equation*}
W_1\subseteq W_2\subseteq W_3\subseteq\cdots\subseteq W
\end{equation*}
such that  $W_{i+1}/W_i$ is a simple submodule of the $C_1$-cofinite $V$-module $W/W_i$ if $W_i\neq W$, and $W_{i+1}=W$ otherwise. Let $X=\bigcup_{i\geq 1} W_i$; it is $C_1$-cofinite and thus finitely generated because $\cC_V^1$ is closed under submodules. Since the finitely many generators of $X$ are contained in some $W_i$, the ascending chain of submodules stabilizes, and thus $W_i=W$ for all sufficiently large $i$. So $W$ has finite length, completing the proof of part (1) of the theorem.

Part (2) was essentially proved in \cite[Section 4]{CJORY} and \cite[Section 3]{CY}; see also \cite[Theorem 2.3]{McR-cosets}.
\end{proof}

\begin{remark}\label{rem:RepV=C1_for_C2}
If $V$ is $\NN$-graded and $C_2$-cofinite, then any grading-restricted generalized $V$-module has finite length \cite{Hu-C2} and thus is finitely generated. Then the spanning set of \cite[Lemma 2.4]{Mi-mod-inv} for singly-generated weak $V$-modules implies that every $V$-module is $C_1$-cofinite. Thus $\rep(V)=\cC_V^1$ for $\NN$-graded $C_2$-cofinite $V$.
\end{remark}

\begin{remark}\label{rem:C1_BTC_examples}
Non-$C_2$-cofinite vertex operator algebras $V$ such that $\cC_V^1$ is closed under contragredients, and thus satisfies the conclusions of Theorem \ref{thm:C1_cofin_prop}, include all Virasoro vertex operator algebras \cite{CJORY}, many affine vertex operator (super)algebras \cite{CY}, and the singlet vertex operator algebras $\cM(p)$ for $p\in\mathbb{Z}_{\geq 2}$ \cite{CMY-typ-sing}.
\end{remark}

\subsection{Tensor product vertex operator algebras}\label{subsec:prelim_tens_prod_VOAs}

We now discuss modules for tensor product vertex operator algebras; for simplicity, we focus on tensor products of two vertex operator algebras, but everything can be iterated to yield results for tensor products of more than two vertex operator algebras. Thus let $U$ and $V$ be vertex operator algebras. Then $U\otimes V$ is also a vertex operator algebra with vertex operator given by
\begin{equation*}
Y_{U\otimes V}(u_1\otimes v_1, x)(u_2\otimes v_2) = Y_U(u_1,x)u_2\otimes Y_V(v_1,x)v_2,
\end{equation*}
vacuum vector $\vac=\vac_U\otimes\vac_V$, and conformal vector $\omega=\omega_U\otimes\vac_V+\vac_U\otimes\omega_V$. Similarly, if $M$ is a generalized $U$-module and $W$ is a generalized $V$-module, then $M\otimes W$ is a generalized $U\otimes V$-module. By \cite[Theorem 4.7.4]{FHL}, every simple grading-restricted $U\otimes V$-module is the vector space tensor product of a simple $U$-module and a simple $V$-module.

Any generalized $U\otimes V$-module $X$ restricts to both a weak $U$-module and a weak $V$-module, with commuting actions of $U$ and $V$. We use $L_U(n)$ and $L_V(n)$ to denote the commuting Virasoro operators on $X$ associated to the conformal vectors $\omega_U$ and $\omega_V$, respectively, while $L(n)$ denotes the Virasoro operator associated to $\omega$.

If $\cU$ and $\cV$ are locally finite abelian categories of $U$-modules and $V$-modules, respectively, then there is a right exact functor from the Deligne tensor product $\cU\otimes\cV$ to the category of all generalized $U\otimes V$-modules which sends Deligne tensor product objects in $\cU\otimes\cV$ to vector space tensor products of $U$-modules and $V$-modules. This explains our use of $\otimes$ to denote both Deligne and vector space tensor products. Indeed, a major goal of this paper is to show that the Deligne tensor product $\cU\otimes\cV$ can be identified with a suitable locally finite abelian category of $U\otimes V$-modules. To prove this, we will need in particular that homomorphisms between vector space tensor product $U\otimes V$-modules come from tensor products of $U$-module and $V$-module homomorphisms (recall \eqref{eqn:homs_between_Del_prods}):
\begin{prop}\label{prop:proj_cover_homs}
 Let $M_1$, $M_2$ be weak $U$-modules and let $W_1$, $W_2$ be weak $V$-modules. Then the natural linear map
 \begin{equation*}
  \hom_U(M_1,M_2)\otimes\hom_V(W_1,W_2)\longrightarrow\hom_{U\otimes V}(M_1\otimes W_1,M_2\otimes W_2)
 \end{equation*}
is injective. It is also surjective if $M_1$, $M_2$ are grading-restricted generalized $U$-modules and $\hom_V(W_1,W_2)$ is finite dimensional.
\end{prop}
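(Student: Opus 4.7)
For injectivity, the map factors through the vector-space map
$\hom(M_1, M_2) \otimes \hom(W_1, W_2) \to \hom(M_1 \otimes W_1, M_2 \otimes W_2)$,
which is injective for elementary reasons: given a nonzero $\sum_i f_i \otimes g_i$, one may arrange the $g_i$ to be linearly independent and then choose $w \in W_1$ with $g_i(w) \neq 0$ for some $i$; a matrix-coefficient argument against linear independence of the $f_i$ then produces some $m \in M_1$ for which $\sum_i f_i(m) \otimes g_i(w) \neq 0$ in $M_2 \otimes W_2$. Restricting to the subspace $\hom_U(M_1,M_2) \otimes \hom_V(W_1,W_2)$ preserves injectivity since vector-space tensor is exact.

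For surjectivity, let $F: M_1 \otimes W_1 \to M_2 \otimes W_2$ be a $U \otimes V$-module homomorphism. The essential first observation is that, relative to the conformal vector $\omega = \omega_U \otimes \vac_V + \vac_U \otimes \omega_V$, the Virasoro operator $L_U(0)$ on $M_i \otimes W_i$ acts as $L(0)|_{M_i} \otimes \Id_{W_i}$, whose generalized $h$-eigenspace is precisely $(M_i)_{[h]} \otimes W_i$. Since $F$ commutes with $L_U(0)$, it preserves this decomposition and restricts to $V$-module maps $F_h: (M_1)_{[h]} \otimes W_1 \to (M_2)_{[h]} \otimes W_2$ for each $h \in \CC$.

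Next, I would use that $(M_1)_{[h]}$ and $(M_2)_{[h]}$ are finite-dimensional (by grading-restriction) to obtain the natural identification
$\hom_V((M_1)_{[h]} \otimes W_1, (M_2)_{[h]} \otimes W_2) \cong \hom((M_1)_{[h]}, (M_2)_{[h]}) \otimes \hom_V(W_1, W_2)$,
valid because $V$ acts trivially on the $(M_i)_{[h]}$-factor and all of the relevant vector spaces are finite-dimensional. Fixing a basis $\{g_1, \ldots, g_N\}$ of $\hom_V(W_1, W_2)$, this lets me write $F_h = \sum_{j=1}^N f_j^{(h)} \otimes g_j$ for some linear maps $f_j^{(h)}: (M_1)_{[h]} \to (M_2)_{[h]}$, and assembling over $h$ yields linear maps $f_j: M_1 \to M_2$ satisfying $F(m \otimes w) = \sum_{j=1}^N f_j(m) \otimes g_j(w)$.

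The last step is to verify each $f_j$ is $U$-linear, which uses the intertwining property of $F$ with the subalgebra $U \otimes \vac_V \subseteq U \otimes V$. For $u \in U$, $n \in \ZZ$, and $m \in M_1$, comparing $F(u_n m \otimes w) = \sum_j f_j(u_n m) \otimes g_j(w)$ with $(u_n \otimes \Id)F(m \otimes w) = \sum_j (u_n f_j(m)) \otimes g_j(w)$ yields $\sum_j (f_j(u_n m) - u_n f_j(m)) \otimes g_j(w) = 0$ in $M_2 \otimes W_2$ for all $w \in W_1$. Expanding each coefficient in a basis of $M_2$ and invoking linear independence of $\{g_j\}$ in $\hom_V(W_1, W_2)$ forces each to vanish, so $f_j$ is $U$-linear and $F = \sum_{j=1}^N f_j \otimes g_j$ lies in the image. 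The main obstacle is keeping the finite-dimensional bookkeeping clean: without both grading-restriction on $M_1, M_2$ and finite-dimensionality of $\hom_V(W_1, W_2)$, the natural identification used to decompose $F_h$ fails to be surjective, and $F$ would only split as a formal (possibly infinite) sum rather than an element of the algebraic tensor product.
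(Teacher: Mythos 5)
Your proof is correct and follows essentially the same route as the paper's: both arguments expand $F$ against a fixed finite basis of $\hom_V(W_1,W_2)$, use the grading restriction on $M_1$, $M_2$ to ensure the resulting coefficient maps actually land in $M_2$, and verify $U$-linearity of those coefficients by the same linear-independence argument that gives injectivity. The only difference is organizational: you localize to generalized $L_U(0)$-eigenspaces and split $\hom_V$ over the resulting finite direct sums, whereas the paper contracts against $M_2'\otimes M_1$ and recovers the maps $f_i$ via $(M_2')'\cong M_2$ --- the same computation in different packaging.
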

\begin{proof}
To show that the natural linear map is injective, we show that if $\sum_i f_i\otimes g_i$ vanishes as a $U\otimes V$-homomorphism $M_1\otimes W_1\rightarrow M_2\otimes W_2$, then it also vanishes as a vector in $\hom_U(M_1,M_2)\otimes\hom_V(W_1,W_2)$. We may assume that the morphisms $g_i: W_1\rightarrow W_2$ are linearly independent, and then it suffices to show that each $f_i=0$. In fact, for any $m_1\in M_1$, $w_1\in W_1$, and $m_2^*\in M_2^*$, we get
 \begin{align*}
  \sum_i \langle m_2^*, f_i(m_1)\rangle g_i(w_1) &=(m_2^*\otimes\id_{W_2})\bigg(\sum_i f_i(m_1)\otimes g_i(w_1)\bigg)\nonumber\\
  &=(m_2^*\otimes\id_{W_2})\bigg(\bigg(\sum_i f_i\otimes g_i\bigg)(m_1\otimes w_1)\bigg) =0.
 \end{align*}
Thus $\sum_i \langle m_2^*,f_i(m_1)\rangle g_i=0$ for any $m_2^*\in M_2^*$ and $m_1\in M_1$; since the $g_i$ are linearly independent, this means $\langle m_2^*,f_i(m_1)\rangle=0$ for each $i$. Then each $f_i=0$ since all of its matrix coefficients vanish. This proves the desired injectivity.

To prove that the natural linear map is also surjective when $M_1$, $M_2$ are grading-restricted generalized $U$-modules and $\hom_V(W_1,W_2)$ is finite dimensional, let $F: M_1\otimes W_1\rightarrow M_2\otimes W_2$ be a $U\otimes V$-module homomorphism, and let $\lbrace g_i\rbrace$ be a (finite) basis of $\hom_{V}(W_1,W_2)$. Then for any $m_2'\in M_2'$ and $m_1\in M_1$, we can write
\begin{equation*}
 (m_2'\otimes\id_{W_2})\circ F(m_1\otimes\bullet)=\sum_i c_i(m_2'\otimes m_1)\cdot g_i
\end{equation*}
for certain $c_i\in(M_2'\otimes M_1)^*$. Since $F$ is in particular a $U$-homomorphism that commutes with $L_U(0)$, and since the $g_i$ are linearly independent, $c_i(m_2'\otimes m_1)=0$ if $m_2'$ and $m_1$ have different conformal weights. Thus for each $i$ and each $m_1\in M_1$, $c_i(\bullet\otimes m_1)$ defines an element of $(M_2')'\cong M_2$, and we get a linear map $f_i: M_1\rightarrow M_2$ characterized by
\begin{equation*}
 \langle m_2',f_i(m_1)\rangle=c_i(m_2'\otimes m_1)
\end{equation*}
for $m_2'\in M_2'$, $m_1\in M_1$. We show that each $f_i$ is a $U$-module homomorphism: for $u\in U$ and $n\in\ZZ$,
\begin{align*}
\sum_i \langle m_2', f_i(u_n m_1)\rangle g_i & =\sum_i c_i(m_2'\otimes u_n m_1)\cdot g_i =(m_2'\otimes\id_{W_2})\circ F(u_n m_1\otimes\bullet)\nonumber\\
& = (m_2'\otimes\id_{W_2})\circ(u_n\otimes\id_{W_2})\circ F(m_1\otimes\bullet)\nonumber\\
&= (u_n^o m_2'\otimes\id_{W_2})\circ F(m_1\otimes\bullet) = \sum_i\langle u_n^o m_2',f_i(m_1)\rangle g_i\nonumber\\
& =\sum_i\langle m_2',u_n f_i(m_1)\rangle g_i
\end{align*}
where $u_n^o$ is the operator on $M_2'$ adjoint to the operator $u_n$ on $M_2$.
Since the $g_i$ are linearly independent and $m_2'\in M_2'$ is arbitrary, it follows that $f_i(u_n m_1)=u_nf_i(m_1)$ for each $m_1\in M_1$, that is, each $f_i$ is a $U$-module homomorphism. Finally, we show that $F=\sum_i f_i\otimes g_i$: for all $m_1\in M_1$, $w_1\in W_1$, $m_2'\in M_2'$, and $w_2^*\in W_2^*$, we have
\begin{align*}
 \langle m_2'\otimes w_2^*, F(m_1 & \otimes w_1)\rangle  = \langle w_2^*,(m_2'\otimes\id_{W_2})\circ F(m_1\otimes w_1)\rangle\nonumber\\
 & =\sum_i \langle m_2', f_i(m_1)\rangle\langle w_2^*, g_i(w_1)\rangle =\sum_i\langle m_2'\otimes w_2^*,(f_i\otimes g_i)(m_1\otimes w_1)\rangle,
\end{align*}
as required.
\end{proof}

\section{Deligne tensor products}

Let $U$ and $V$ be vertex operator algebras. In this section, we show that the Deligne tensor product of locally finite abelian categories of $U$-modules and $V$-modules can be identified with a certain category of $U\otimes V$-modules.

\subsection{The \texorpdfstring{$U\otimes V$}{U x V}-module category \texorpdfstring{$\cD(\cU,\cV)$}{D(U,V)}}

If $\cU$ and $\cV$ are locally finite abelian categories of $U$- and $V$-modules, respectively, and at least one of $\cU$ and $\cV$ is semisimple, then \cite[Theorem 5.5]{CKM2} shows that their Deligne tensor product is equivalent to the category of $U\otimes V$-modules which are isomorphic to finite direct sums of $M\otimes W$ for $M\in\mathrm{Ob}(\cU)$, $W\in\mathrm{Ob}(\cV)$. However, if neither $\cU$ nor $\cV$ is semisimple, then this category of $U\otimes V$-modules is not necessarily abelian. Thus in general, we need a different category of $U\otimes V$-modules:
\begin{defn}\label{def:D(U,V)}
Let $\cU$ and $\cV$ be additive categories of grading-restricted generalized $U$-modules and $V$-modules, respectively. Then define $\cD(\cU,\cV)$ to be the category of finite-length grading-restricted generalized $U\otimes V$-modules $X$ such that the $U$-submodule generated by any vector in $X$ is an object of $\cU$, and the $V$-submodule generated by any vector in $X$ is an object of $\cV$.
\end{defn}

From the definition, it is clear that $\cD(\cU,\cV)$ is closed under submodules and finite direct sums. It is also closed under quotients if $\cU$ and $\cV$ are. Since every object of $\cD(\cU,\cV)$ has finite length, every object of $\cD(\cU,\cV)$ is finitely generated, as well as grading restricted; thus morphism spaces in $\cD(\cU,\cV)$ are finite dimensional. Thus if $\cU$ and $\cV$ are additive categories of grading-restricted generalized $U$- and $V$-modules which are closed under quotients, then $\cD(\cU,\cV)$ is a locally finite abelian category.

\begin{prop}\label{prop:D(U,V)_surjections}
Any object of $\cD(\cU,\cV)$ is the homomorphic image of a finite direct sum of modules $M\otimes W$ such that $M\in\mathrm{Ob}(\cU)$ and $W\in\mathrm{Ob}(\cV)$.
\end{prop}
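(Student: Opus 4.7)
The plan is to exhibit, for $X \in \mathrm{Ob}(\cD(\cU,\cV))$ and a finite set of $U\otimes V$-module generators $x_1,\ldots,x_n$ of $X$ (which exists because $X$ has finite length, hence is finitely generated), a family of $U\otimes V$-module homomorphisms $\phi_i\colon M_i\otimes W_i \to X$ where $M_i := U\cdot x_i$ and $W_i := V\cdot x_i$. By the very definition of $\cD(\cU,\cV)$, one has $M_i\in\mathrm{Ob}(\cU)$ and $W_i\in\mathrm{Ob}(\cV)$, and if each $\phi_i$ sends $x_i\otimes x_i$ to $x_i$, then the induced map $\bigoplus_i\phi_i\colon \bigoplus_i M_i\otimes W_i \to X$ has image containing every generator $x_i$ and so is surjective.

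The crucial step is the construction of $\phi_i$. Every element of $M_i$ is a finite sum of vectors of the form $u^{(1)}_{k_1}\cdots u^{(r)}_{k_r}\cdot x_i$ with $u^{(j)}\in U$ and $k_j\in\ZZ$; abbreviating such a composition of $U$-modes by $p$, a typical element of $M_i$ has the form $\sum_a p_a\cdot x_i$. Likewise, elements of $W_i$ are finite sums $\sum_b q_b\cdot x_i$ with $q_b$ a composition of modes of elements of $V$. I would define $\phi_i$ by bilinearly extending
\[
 (p\cdot x_i)\otimes(q\cdot x_i) \;\longmapsto\; pq\cdot x_i \;=\; qp\cdot x_i \in X,
\]
the equality on the right holding because in any $U\otimes V$-module the actions of $U$-modes and $V$-modes commute. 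The same commutativity is the key to well-definedness: if $\sum_a p_a\cdot x_i = 0$ in $X$, then for any $V$-mode composition $q$ one has $\sum_a p_a q\cdot x_i = q\bigl(\sum_a p_a\cdot x_i\bigr) = 0$, and symmetrically in the second tensor factor; hence $\phi_i$ descends unambiguously to a linear map $M_i\otimes W_i\to X$.

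To verify that $\phi_i$ is a $U\otimes V$-module homomorphism, one uses that $U\otimes V$ is generated as a vertex algebra by $U\otimes\vac_V$ together with $\vac_U\otimes V$, so it suffices to check that $\phi_i$ intertwines the actions of modes of these generators; this is immediate from the construction, since such modes act on $M_i\otimes W_i$ through a single tensor factor. Then $\phi_i(x_i\otimes x_i)=x_i$, so $\bigoplus_i \phi_i$ surjects onto $X$ as required. The main (mild) obstacle is the well-definedness of $\phi_i$, whose content is precisely the commutativity of the $U$- and $V$-actions on $X$; equivalently, one may rephrase the construction by observing that the natural action map to $X$ from the universal singly-generated $U\otimes V$-module on $x_i$ factors, thanks to right exactness of the vector space tensor product and this commutativity, through the quotient $M_i\otimes W_i$.
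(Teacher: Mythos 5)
Your proof is correct and takes essentially the same approach as the paper: reduce to a finite generating set, take for each generator $x_i$ the $U$- and $V$-submodules it generates (objects of $\cU$ and $\cV$ by the definition of $\cD(\cU,\cV)$), and define $M_i\otimes W_i\to X$ by applying modes, with both well-definedness and the homomorphism property resting on the commutativity of the $U$- and $V$-actions on $X$. The only cosmetic differences are that the paper spans the cyclic submodule by single modes $u_n x_i$ (via \cite[Proposition 4.5.6]{LL}) and defines the map one-sidedly, whereas you use compositions of modes and a symmetric formula.
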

\begin{proof}
Any $X\in\mathrm{Ob}(\cD(\cU,\cV))$ is finitely generated, so it is enough to consider the case that $X$ is generated by a single vector $b$. Let $M$ be the $U$-submodule generated by $b$, and let $W$ be the $V$-submodule generated by $b$. By the definition of $\cD(\cU,\cV)$, $M$ is an object of $\cU$ and $W$ is an object of $\cV$. Since $M$ is spanned by the vectors $u_n b$ for $u\in U$ and $n\in\ZZ$ (see \cite[Proposition 4.5.6]{LL}), we attempt to define a surjection
\begin{align*}
p: M\otimes W & \rightarrow X\\
\sum_i u^{(i)}_{n_i} b\otimes w & \mapsto \sum_i u^{(i)}_{n_i} w.
\end{align*}
For $p$ to be well defined, we need to show that if $\sum_i u^{(i)}_{n_i} b=0$, then also $\sum_i u^{(i)}_{n_i} w=0$ for all $w\in W$. Indeed, $w=\sum_j v^{(j)}_{m_j} b$ for certain $v^{(j)}\in V$, $m_j\in\ZZ$, so if $\sum_i u^{(i)}_{n_i} b=0$, then
\begin{equation}\label{eqn:p_well_defined}
\sum_i u^{(i)}_{n_i} w =\sum_{i,j} u^{(i)}_{n_i} v^{(j)}_{m_j} b = \sum_{i,j} v^{(j)}_{m_j} u^{(i)}_{n_i} b =0
\end{equation}
since the actions of $U$ and $V$ on $X$ commute. 

We still need to show that $p$ is a $U\otimes V$-module homomorphism; it is enough to show that $p$ is both a $U$-module and a $V$-module homomorphism. First, for $v\in V$, $n\in\ZZ$, $m=\sum_i u^{(i)}_{n_i} b\in M$, and $w\in W$, we have
\begin{equation*}
v_n p(m\otimes w) =\sum_i v_n u^{(i)}_{n_i} w =\sum_i u^{(i)}_{n_i} v_n w= p(m\otimes v_n w),
\end{equation*}
so $p$ is a $V$-module homomorphism. Then writing $w=\sum_j v^{(j)}_{m_j} b$, the calculation in \eqref{eqn:p_well_defined} shows that $p(m\otimes w)=\sum_j v^{(j)}_{m_j} m$. Thus for $u\in U$ and $n\in\ZZ$,
\begin{equation*}
u_n p(m\otimes w) =\sum_j u_n v^{(j)}_{m_j} m = \sum_j v^{(j)}_{m_j} u_n m = p(u_n m\otimes w),
\end{equation*}
so $p$ is also a $U$-module homomorphism.
\end{proof}

\begin{prop}\label{prop:tens_prod_in_D(U,V)}
Suppose that $\cU$ and $\cV$ are closed under subquotients and that all objects in $\cU$ and $\cV$ have finite length. If $M\in\mathrm{Ob}(\cU)$ and $W\in\mathrm{Ob}(\cV)$, then $M\otimes W\in\mathrm{Ob}(\cD(\cU,\cV))$. Conversely, if $M\otimes W$ is an object of $\cD(\cU,\cV)$ such that $M$ and $W$ are finitely generated, then $M\in\mathrm{Ob}(\cU)$ and $W\in\mathrm{Ob}(\cV)$.
\end{prop}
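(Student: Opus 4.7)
The plan is to verify the two directions of the biconditional separately, in each case exploiting that the $U$- and $V$-actions on the vector space tensor product $M \otimes W$ decouple onto the two tensor factors: $u_n(m\otimes w)=(u_n m)\otimes w$ and $v_n(m\otimes w)=m\otimes(v_n w)$.

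For the forward direction, assuming $M\in\cU$ and $W\in\cV$, I would check the three defining conditions of $\cD(\cU,\cV)$. That $M\otimes W$ is a grading-restricted generalized $U\otimes V$-module is routine: the generalized $L(0)=L_U(0)+L_V(0)$-eigenspaces decompose as finite direct sums of tensor products of generalized weight spaces of $M$ and $W$, each of which is finite-dimensional, and conformal weights are bounded below. Finite length follows by stacking composition series for $M$ and $W$; the subquotients in the resulting filtration of $M\otimes W$ have the form $S\otimes T$ with $S$ and $T$ simple, and \cite[Theorem 4.7.4]{FHL} (together with its complementary direction that tensor products of simple modules are simple $U\otimes V$-modules) identifies these as simple. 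The substantive step is the submodule condition: given $x\in M\otimes W$, write $x=\sum_{i=1}^k m_i\otimes w_i$ with the $w_i$ linearly independent in $W$. Because $U$ acts only on the first tensor factor, the $U$-submodule generated by $x$ lies inside $M\otimes\mathrm{span}(w_1,\ldots,w_k)$, which as a $U$-module is isomorphic to $M^{\oplus k}$; since $\cU$ is additive and closed under subquotients, this submodule belongs to $\cU$. The claim for the $V$-submodule generated by $x$ follows by the symmetric argument.

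For the converse, suppose $M\otimes W\in\cD(\cU,\cV)$ with $M$ and $W$ finitely generated. I may assume both are nonzero, since otherwise $M\otimes W=0$ and the statement carries no content beyond $0\in\cU\cap\cV$. Fix a nonzero $w\in W$ and a finite generating set $m_1,\ldots,m_k$ of $M$. For each $i$, the defining property of $\cD(\cU,\cV)$ says that the $U$-submodule of $M\otimes W$ generated by $m_i\otimes w$ belongs to $\cU$. By the decoupling of the actions this submodule equals $(U\cdot m_i)\otimes\CC w$, isomorphic as a $U$-module to $U\cdot m_i$; hence $U\cdot m_i\in\cU$ for each $i$. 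Then $M=\sum_{i=1}^k U\cdot m_i$ is a quotient of $\bigoplus_i U\cdot m_i\in\cU$, and closure under subquotients gives $M\in\cU$. Swapping the roles of $U$ and $V$ yields $W\in\cV$.

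The only place where a genuine vertex-algebraic input is required is the finite-length assertion in the forward direction; this uses the classification of simple $U\otimes V$-modules from \cite[Theorem 4.7.4]{FHL} in both directions, whereas the preliminaries cited only one. Everything else is a bookkeeping exercise, made straightforward once the decoupling of the $U$- and $V$-actions on $M\otimes W$ is exploited to reduce submodule-generation questions over $U\otimes V$ to submodule-generation questions over each factor separately.
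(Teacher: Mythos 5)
Your proof is correct and takes essentially the same approach as the paper: the forward direction bounds the $U$-submodule generated by $\sum_i m_i\otimes w_i$ (with the $w_i$ independent) inside a finite direct sum of copies of objects of $\cU$, and the converse realizes $M$ as a quotient of $\bigoplus_i \langle m_i\rangle_U$ with each $\langle m_i\rangle_U\cong\langle m_i\otimes w\rangle_U\in\mathrm{Ob}(\cU)$. The only cosmetic difference is that you also spell out the grading-restriction and finite-length checks (for which one only needs the direction of \cite[Proposition 4.7.2]{FHL} that tensor products of simples are simple, not the full classification), which the paper states without elaboration.
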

\begin{proof}
If $M\in\mathrm{Ob}(\cU)$ and $W\in\mathrm{Ob}(\cV)$, then $M\otimes W$ has finite length, equal to the product of the lengths of of $M$ and $W$. Now for any $b=\sum_i m_i\otimes w_i\in M\otimes W$, let $\langle b\rangle_U$ be the $U$-submodule of $M\otimes W$ generated by $b$. Then, assuming that the $w_i$ are linearly independent,
\begin{equation*}
\langle b\rangle_U \subseteq\sum_i \langle m_i\rangle_U\otimes w_i \cong\bigoplus_i \langle m_i\rangle_U.
\end{equation*} 
Since $\cU$ is closed under submodules and finite direct sums, $\langle b\rangle_U\in\mathrm{Ob}(\cU)$. Similarly, the $V$-submodule of $M\otimes W$ generated by $b$ is an object of $\cV$, showing that $M\otimes W\in\mathrm{Ob}(\cD(\cU,\cV))$.

Conversely, if $M\otimes W\in\mathrm{Ob}(\cD(\cU,\cV))$, then for any non-zero $m\in M$ and $w\in W$, $m\otimes w$ generates a $U$-submodule which is isomorphic to a submodule of $M$ and is also an object of $\cU$. Since $M$ is finitely generated, it is thus the homomorphic image of the direct sum of finitely many of these submodules. Then $M\in\mathrm{Ob}(\cU)$ because $\cU$ is closed under finite direct sums and quotients, and $W\in\mathrm{Ob}(\cV)$ similarly.
\end{proof}

Let $\til{\cD}(\cU,\cV)$ be the additive full subcategory of $\cD(\cU,\cV)$ consisting of modules which are isomorphic to finite direct sums of $M\otimes W$ for $M\in\mathrm{Ob}(\cU)$, $W\in\mathrm{Ob}(\cV)$. 
\begin{cor}\label{cor:cokernel_of_D_tilde}
If $\cU$ and $\cV$ are closed under subquotients and all objects of $\cU$ and $\cV$ have finite length, then every object in $\cD(\cU,\cV)$ is a cokernel of a morphism in $\til{\cD}(\cU,\cV)$.
\end{cor}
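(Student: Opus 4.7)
The plan is to use Proposition \ref{prop:D(U,V)_surjections} twice: once to present $X$ as a quotient of an object in $\til{\cD}(\cU,\cV)$, and once more to present the kernel of this quotient map as another such quotient. Concretely, given $X \in \mathrm{Ob}(\cD(\cU,\cV))$, Proposition \ref{prop:D(U,V)_surjections} supplies an object $P \in \mathrm{Ob}(\til{\cD}(\cU,\cV))$ together with a surjection $p: P \twoheadrightarrow X$ of $U \otimes V$-modules.

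Next, I would set $K = \ker p \subseteq P$. Since $\cD(\cU,\cV)$ is closed under submodules (noted just after Definition \ref{def:D(U,V)}), $K$ is itself an object of $\cD(\cU,\cV)$. Applying Proposition \ref{prop:D(U,V)_surjections} again to $K$ yields an object $Q \in \mathrm{Ob}(\til{\cD}(\cU,\cV))$ and a surjection $q: Q \twoheadrightarrow K$. Composing with the inclusion $K \hookrightarrow P$, I obtain a morphism $\iota \circ q : Q \to P$ inside $\til{\cD}(\cU,\cV)$, whose image is precisely $K$ and whose cokernel is therefore isomorphic to $P/K \cong X$.

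The only thing to verify is that nothing in this argument leaves the ambient category $\cD(\cU,\cV)$: the surjection $p$ exists by Proposition \ref{prop:D(U,V)_surjections}, the submodule $K$ lies in $\cD(\cU,\cV)$ by the closure under submodules, and Proposition \ref{prop:D(U,V)_surjections} again supplies $q$. I do not foresee any genuine obstacle here; the corollary is essentially a direct two-step application of Proposition \ref{prop:D(U,V)_surjections} combined with the elementary fact that $\cD(\cU,\cV)$ is closed under kernels.
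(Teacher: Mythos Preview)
Your proof is correct and follows essentially the same approach as the paper: apply Proposition~\ref{prop:D(U,V)_surjections} to $X$, use closure under submodules to place $\ker p$ in $\cD(\cU,\cV)$, and apply Proposition~\ref{prop:D(U,V)_surjections} again. The only minor omission is that you should cite Proposition~\ref{prop:tens_prod_in_D(U,V)} to justify that the finite direct sums $\bigoplus_i M_i\otimes W_i$ produced by Proposition~\ref{prop:D(U,V)_surjections} actually lie in $\cD(\cU,\cV)$ (and hence in $\til{\cD}(\cU,\cV)$), which is where the hypotheses on $\cU$ and $\cV$ are used.
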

\begin{proof}
For any $X\in\mathrm{Ob}(\cD(\cU,\cV))$, there is a surjection $p: \bigoplus_i M_i\otimes W_i \rightarrow X$ by Proposition \ref{prop:D(U,V)_surjections}, where $\bigoplus_i M_i\otimes W_i\in\mathrm{Ob}(\til{\cD}(\cU,\cV)$ by Proposition \ref{prop:tens_prod_in_D(U,V)}. Then since $\cD(\cU,\cV)$ is closed under submodules, there is also a surjection $q: \bigoplus_j \til{M}_j\otimes\til{W}_j\rightarrow \ker p$, where again $\bigoplus_j \til{M}_j\otimes\til{W}_j\in\mathrm{Ob}(\til{\cD}(\cU,\cV))$. Thus $(X,p)$ is a cokernel of $q$.
\end{proof}

The conditions of the preceding corollary imply that $\cU$ and $\cV$ are locally finite abelian categories, in which case $\cD(\cU,\cV)$ is also locally finite abelian. In this setting, we can determine all simple objects in $\cD(\cU,\cV)$:
\begin{prop}\label{prop:D(U,V)_simple}
If $\cU$ and $\cV$ are closed under subquotients and all objects of $\cU$ and $\cV$ have finite length, then a $U\otimes V$-module is a simple object of $\cD(\cU,\cV)$ if and only if it is isomorphic to $M\otimes W$ for simple $M\in\mathrm{Ob}(\cU)$, $W\in\mathrm{Ob}(\cV)$.
\end{prop}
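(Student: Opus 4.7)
The plan is to derive both directions from the FHL classification of simple $U\otimes V$-modules (already invoked in Section \ref{subsec:prelim_tens_prod_VOAs}) together with Proposition \ref{prop:tens_prod_in_D(U,V)}, after first observing that $\cD(\cU,\cV)$ is closed under $U\otimes V$-submodules under the hypotheses of the proposition. This submodule closure is what reduces simplicity as an object of $\cD(\cU,\cV)$ to simplicity as a $U\otimes V$-module.

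To check submodule closure, I would take $X\in\mathrm{Ob}(\cD(\cU,\cV))$ and a $U\otimes V$-submodule $Y\subseteq X$. Then $Y$ is grading restricted since each of its generalized weight spaces sits inside the corresponding finite-dimensional weight space of $X$, and $Y$ has finite length by Jordan--Hölder applied to the finite-length module $X$. For any $y\in Y$, the $U$-submodule of $Y$ generated by $y$ coincides with the $U$-submodule of $X$ generated by $y$ (since $Y$ is $U$-stable), which is an object of $\cU$ by the definition of $\cD(\cU,\cV)$; the analogous statement holds on the $V$-side. Hence $Y\in\mathrm{Ob}(\cD(\cU,\cV))$, and in particular every $U\otimes V$-submodule of a simple object of $\cD(\cU,\cV)$ is a subobject in $\cD(\cU,\cV)$.

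For the ``if'' direction, given simple $M\in\mathrm{Ob}(\cU)$ and $W\in\mathrm{Ob}(\cV)$, Proposition \ref{prop:tens_prod_in_D(U,V)} places $M\otimes W$ in $\cD(\cU,\cV)$, while \cite[Theorem 4.7.4]{FHL} guarantees that $M\otimes W$ is a simple $U\otimes V$-module. Combined with the submodule closure just verified, this makes $M\otimes W$ a simple object of $\cD(\cU,\cV)$.

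For the ``only if'' direction, if $X$ is a simple object of $\cD(\cU,\cV)$, then by submodule closure $X$ is simple as a $U\otimes V$-module, so \cite[Theorem 4.7.4]{FHL} yields an isomorphism $X\cong M\otimes W$ for some simple grading-restricted $U$-module $M$ and simple grading-restricted $V$-module $W$. Each of $M$ and $W$ is singly generated (being simple), so the converse statement in Proposition \ref{prop:tens_prod_in_D(U,V)} gives $M\in\mathrm{Ob}(\cU)$ and $W\in\mathrm{Ob}(\cV)$; they remain simple as objects of $\cU$ and $\cV$ because these categories are full subcategories of the corresponding module categories. I do not expect any genuine obstacle here: every step is a direct appeal to a previously established proposition or external result, and the only point requiring care is the elementary verification of submodule closure of $\cD(\cU,\cV)$.
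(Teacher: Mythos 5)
Your proof is correct and follows essentially the same route as the paper: both directions rest on the FHL classification of simple $U\otimes V$-modules combined with Proposition \ref{prop:tens_prod_in_D(U,V)}, with the reduction from simplicity in $\cD(\cU,\cV)$ to simplicity as a $U\otimes V$-module handled by closure of $\cD(\cU,\cV)$ under submodules (which the paper records as immediate from Definition \ref{def:D(U,V)} and which you verify explicitly). The only cosmetic difference is that the paper cites \cite[Proposition 4.7.2]{FHL} for simplicity of $M\otimes W$ and reserves \cite[Theorem 4.7.4]{FHL} for the converse classification.
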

\begin{proof}
If $M\in\mathrm{Ob}(\cU)$ and $W\in\mathrm{Ob}(\cV)$ are simple, then $M\otimes W$ is a simple $U\otimes V$-module by \cite[Proposition 4.7.2]{FHL} and an object of $\cD(\cU,\cV)$ by Proposition \ref{prop:tens_prod_in_D(U,V)}. Conversely, by \cite[Theorem 4.7.4]{FHL}, any simple object of $\cD(\cU,\cV)$ is isomorphic to $M\otimes W$ where $M$ is a simple $U$-module and $W$ is a simple $V$-module. Then $M\in\mathrm{Ob}(\cU)$ and $W\in\mathrm{Ob}(\cV)$ by Proposition \ref{prop:tens_prod_in_D(U,V)}.
\end{proof}

Under mild conditions, $\cD(\cU,\cV)$ is closed under contragredients if $\cU$ and $\cV$ are:
\begin{prop}\label{prop:D(U,V)_contra}
If $\cU$ and $\cV$ are closed under subquotients and contragredients, then $\cD(\cU,\cV)$ is closed under contragredients.
\end{prop}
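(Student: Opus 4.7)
The plan is to realize the contragredient $X'$ as a submodule of a direct sum of $U\otimes V$-modules of the form $M'\otimes W'$ with $M\in\mathrm{Ob}(\cU)$, $W\in\mathrm{Ob}(\cV)$, and then invoke closure of $\cD(\cU,\cV)$ under submodules. The pivotal identification is
\[
(M\otimes W)'\cong M'\otimes W'
\]
as $U\otimes V$-modules for any grading-restricted generalized $U$-module $M$ and $V$-module $W$. One proves this by first using finite-dimensionality of the weight spaces to identify the graded duals as vector spaces via $(M\otimes W)_{[h]}^*=\bigoplus_{h_1+h_2=h}M_{[h_1]}^*\otimes W_{[h_2]}^*$, and then checking that the contragredient $U\otimes V$-action agrees with the tensor product of the contragredient $U$- and $V$-actions. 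The latter is a direct computation with the formula \eqref{eqn:contra}, using that $L(n)=L_U(n)+L_V(n)$ and that $Y_{M\otimes W}(u\otimes\vac_V,x) = Y_M(u,x)\otimes\mathrm{Id}_W$ for $u\in U$ (and symmetrically for $V$).

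With this in hand, let $X\in\mathrm{Ob}(\cD(\cU,\cV))$. Proposition~\ref{prop:D(U,V)_surjections} provides a surjection
\[
p:\bigoplus_i M_i\otimes W_i\twoheadrightarrow X
\]
with $M_i\in\mathrm{Ob}(\cU)$ and $W_i\in\mathrm{Ob}(\cV)$. Because the contragredient functor is contravariant and exact, dualizing $p$ yields an injection
\[
X'\hookrightarrow\bigoplus_i(M_i\otimes W_i)'\cong\bigoplus_i M_i'\otimes W_i'.
\]
By hypothesis, each $M_i'\in\mathrm{Ob}(\cU)$ and each $W_i'\in\mathrm{Ob}(\cV)$, so Proposition~\ref{prop:tens_prod_in_D(U,V)} places the codomain in $\cD(\cU,\cV)$. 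Closure of $\cD(\cU,\cV)$ under submodules, together with the observation that $X'$ is automatically finite length and grading restricted (since the contragredient functor is exact and preserves graded dimensions), then yields $X'\in\mathrm{Ob}(\cD(\cU,\cV))$.

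The main obstacle is the verification of $(M\otimes W)'\cong M'\otimes W'$ as $U\otimes V$-modules; however, this is a standard consequence of the definitions of contragredient modules and the tensor product vertex operator algebra, so the remainder of the argument is essentially formal, leveraging the two preceding propositions.
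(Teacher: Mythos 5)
Your argument is correct, but it takes a genuinely different route from the paper's. You dualize the presentation $\bigoplus_i M_i\otimes W_i\twoheadrightarrow X$ supplied by Proposition \ref{prop:D(U,V)_surjections}, identify $(M_i\otimes W_i)'\cong M_i'\otimes W_i'$, and then invoke Proposition \ref{prop:tens_prod_in_D(U,V)} together with closure of $\cD(\cU,\cV)$ under submodules. The paper instead argues pointwise: for $b'\in X'$ it reduces to $b'$ bihomogeneous for $L_U(0)$ and $L_V(0)$, shows the cyclic module $\langle b'\rangle_U$ is grading restricted, dualizes the inclusion $\langle b'\rangle_U\hookrightarrow X'$ to a surjection $X\twoheadrightarrow\langle b'\rangle_U'$, and uses a composition series of $X$ to exhibit $\langle b'\rangle_U'$ as a finitely generated quotient of a direct sum of cyclic $U$-submodules of $X$, whence $\langle b'\rangle_U'\in\mathrm{Ob}(\cU)$ and $\langle b'\rangle_U\cong\langle b'\rangle_U''\in\mathrm{Ob}(\cU)$ by closure under contragredients. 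Your version is shorter and more conceptual; its cost is the identification $(M\otimes W)'\cong M'\otimes W'$, which the paper never proves but which does hold by the computation you sketch: since $M$ and $W$ have finite length, their conformal weights lie in finitely many cosets of $\CC/\ZZ$, so each $(M\otimes W)_{[h]}$ is the \emph{finite} sum $\bigoplus_{h_1+h_2=h}M_{[h_1]}\otimes W_{[h_2]}$ of finite-dimensional spaces, and the factorizations $e^{xL(1)}(-x^{-2})^{L(0)}=e^{xL_U(1)}(-x^{-2})^{L_U(0)}\otimes e^{xL_V(1)}(-x^{-2})^{L_V(0)}$ and $Y_{M\otimes W}=Y_M\otimes Y_V$ match the two contragredient actions. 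Two points worth making explicit rather than asserting in passing: injectivity of $p'$ (i.e., exactness of the contragredient functor) follows because $p$ preserves generalized $L(0)$-weights and $B'$ separates points of $B$; and the appeal to Proposition \ref{prop:tens_prod_in_D(U,V)} uses that objects of $\cU$ and $\cV$ have finite length, which is part of the standing local-finiteness hypothesis in this section even though it is not repeated in the statement of the proposition. Neither is a genuine gap.
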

\begin{proof}
For $X\in\mathrm{Ob}(\cD(\cU,\cV))$, its contragredient $X'$ is a finite-length grading-restricted generalized $U\otimes V$-module whose composition factors are the contragredients of the composition factors of $X$. It remains to show that any $b'\in X'$ generates a $U$-submodule $\langle b'\rangle_U$ and $V$-submodule $\langle b'\rangle_V$ which are objects of $\cU$ and $\cV$, respectively. 
 
 Since the operators $L(0)$, $L_U(0)$, and $L_V(0)$ on $X'$ all commute, each finite-dimensional generalized $L(0)$-eigenspace of $X'$ is the direct sum of simultaneous generalized eigenspaces for $L_U(0)$ and $L_V(0)$. Thus $b'=\sum_i b_i'$ where each $b_i'$ is homogeneous for both $L_U(0)$ and $L_V(0)$. Then $\langle b'\rangle_U$ is a subquotient of $\bigoplus_i \langle b_i'\rangle_U$, so because $\cU$ is closed under subquotients and finite direct sums, $\langle b'\rangle_U\in\mathrm{Ob}(\cU)$ if each $\langle b_i'\rangle_U\in\mathrm{Ob}(\cU)$. Thus we are reduced to the case that $b'$ is a generalized eigenvector for both $L_U(0)$ and $L_V(0)$ with generalized eigenvalues $h_U$ and $h_V$, respectively. In this case, $\langle b'\rangle_U$ is a generalized $U$-module with $L_U(0)$-weights contained in $h_U+\ZZ$, where for $n\in\ZZ$, the $L_U(0)$-weight space of generalized $L_U(0)$-eigenvalue $h_U+n$ is contained in the $L(0)$-weight space $(X')_{[h_U+h_V+n]}=X_{[h_U+h_V+n]}^*$. Since $X$ is a grading-restricted generalized $U\otimes V$-module, $\langle b'\rangle_U$ is a grading-restricted generalized $U$-module. Thus $\langle b'\rangle_U$ has a $U$-module contragredient $\langle b'\rangle_U'$.
 
 Now, the $U$-module inclusion $i: \langle b'\rangle_U\rightarrow X'$ dualizes to a $U$-module map $i': X\rightarrow\langle b'\rangle_U'$ such that for $b\in X$ and $m\in\langle b'\rangle_U$,
 \begin{equation*}
 \langle i'(b), m\rangle =\langle i(m), b\rangle.
 \end{equation*}
Since there exists $b\in X$ such that $\langle i(m),b\rangle\neq 0$ for any non-zero $m\in\langle b'\rangle_U\subseteq X'$, the annihilator in $\langle b'\rangle_U$ of $\mathrm{Im}\,i'\subseteq \langle b'\rangle_U'$ is $0$. This implies $i'$ is surjective because $\langle b'\rangle_U$ is grading restricted. Then since $X$ has finite length, any finite $U\otimes V$-module filtration
\begin{equation*}
0=X_0\subseteq X_1\subseteq X_2\subseteq\cdots\subseteq X_n = X,
\end{equation*}
where each $X_j/X_{j-1}$ is simple, induces a $U$-module filtration
\begin{equation*}
0 = i'(X_0)\subseteq i'(X_1)\subseteq i'(X_2)\subseteq\cdots\subseteq i'(X_n)=\langle b'\rangle_U'
\end{equation*}
such that $i'(X_j)/i'(X_{j-1})$ is a homomorphic image of $X_j/X_{j-1}$. By \cite[Theorem 4.7.4]{FHL}, $X_j/X_{j-1}\cong M_j\otimes W_j$ for some simple $U$-module $M_j$ and simple $V$-module $W_j$; thus because $i'(X_j)/i'(X_{j-1})$ is grading restricted, it is isomorphic to a finite direct sum of copies of $M_j$. It follows that $\langle b'\rangle_U'$ is a finite-length $U$-module and thus is finitely generated.

For each of finitely many generators $\lbrace b_i\rbrace$ of $\langle b'\rangle_U'$, choose $c_i\in X$ such that $i'(c_i)=b_i$. Then $\langle b'\rangle_U'$ is a quotient of $\bigoplus_i \langle c_i\rangle_U$, and each $\langle c_i\rangle_U\in\mathrm{Ob}(\cU)$ because $X\in\mathrm{Ob}(\cD(\cU,\cV))$. Thus $\langle b'\rangle_U'\in\mathrm{Ob}(\cU)$, and then so is $\langle b'\rangle_U''\cong\langle b'\rangle_U$ since $\cU$ is closed under contragredients. Similarly, $\langle b'\rangle_V\in\mathrm{Ob}(\cV)$, completing the proof that $X'\in\mathrm{Ob}(\cD(\cU,\cV))$.
\end{proof}

For the next key result, $\cU$ and $\cV$ will be the categories of $C_1$-cofinite $U$- and $V$-modules:
\begin{thm}\label{thm:D(U,V)_for_C1_cofinite}
Assume that $U$ and $V$ are $\NN$-graded by conformal weights. If the categories $\cC_U^1$ and $\cC_V^1$ of $C_1$-cofinite grading-restricted generalized $U$- and $V$-modules are closed under submodules, and if all objects in $\cC_U^1$ and $\cC_V^1$ have finite length, then $\cD(\cC_U^1,\cC_V^1)=\cC_{U\otimes V}^1$.
\end{thm}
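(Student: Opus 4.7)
The plan is to prove the two inclusions $\cD(\cC_U^1,\cC_V^1)\subseteq\cC_{U\otimes V}^1$ and $\cC_{U\otimes V}^1\subseteq\cD(\cC_U^1,\cC_V^1)$ separately.

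For the first inclusion, any $X\in\cD(\cC_U^1,\cC_V^1)$ has finite length and hence is finitely generated, so Proposition \ref{prop:D(U,V)_surjections} exhibits it as a quotient of a finite direct sum $\bigoplus_i\langle b_i\rangle_U\otimes\langle b_i\rangle_V$ with $\langle b_i\rangle_U\in\cC_U^1$ and $\langle b_i\rangle_V\in\cC_V^1$. It therefore suffices to check that $M\otimes W$ is $C_1$-cofinite as a $U\otimes V$-module whenever $M\in\cC_U^1$ and $W\in\cC_V^1$. Because $(u\otimes\vac)_{-1}(m\otimes w)=u_{-1}m\otimes w$ and $(\vac\otimes v)_{-1}(m\otimes w)=m\otimes v_{-1}w$, one has the inclusion $C_1(M)\otimes W+M\otimes C_1(W)\subseteq C_1^{U\otimes V}(M\otimes W)$, yielding a surjection from the finite-dimensional space $(M/C_1(M))\otimes(W/C_1(W))$ onto $(M\otimes W)/C_1^{U\otimes V}(M\otimes W)$; since $C_1$-cofiniteness is preserved under quotients, $X\in\cC_{U\otimes V}^1$.

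For the reverse inclusion, given $X\in\cC_{U\otimes V}^1$, the central claim is that for every $b\in X$ one has $\langle b\rangle_U\in\cC_U^1$ and $\langle b\rangle_V\in\cC_V^1$. Once this is proved, a finite generating set $b_1,\ldots,b_k$ for $X$ gives a surjection $\bigoplus_i\langle b_i\rangle_U\otimes\langle b_i\rangle_V\twoheadrightarrow X$ via Proposition \ref{prop:D(U,V)_surjections}, exhibiting $X$ as a quotient of a finite-length object in $\cD(\cC_U^1,\cC_V^1)$ by Proposition \ref{prop:tens_prod_in_D(U,V)}, and closure of $\cD(\cC_U^1,\cC_V^1)$ under quotients then yields $X\in\cD(\cC_U^1,\cC_V^1)$. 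I would prove the $V$-side of the claim by induction on the length of a spanning word for $b$ (the $U$-side is symmetric). Writing $X=T+C_1^{U\otimes V}(X)$ for a finite-dimensional $T$, every element of $X$ is a sum of vectors $(u^{(1)}\otimes v^{(1)})_{-1}\cdots(u^{(k)}\otimes v^{(k)})_{-1}\,t$ with $t\in T$ and $u^{(i)}\otimes v^{(i)}\in(U\otimes V)_+$, and the key normal-ordering identity
\[
(u\otimes v)_{-1}=\sum_{i\geq 0}u_{-1-i}v_{-1+i}+\sum_{i\geq 0}v_{-2-i}u_i,
\]
valid because the modes of $u\otimes\vac$ and $\vac\otimes v$ commute on any $U\otimes V$-module, combined with the commutativity of the $U$- and $V$-actions, gives $\langle(u\otimes v)_{-1}b'\rangle_V\subseteq\sum_{m\in S}u_m\langle b'\rangle_V$ for a finite set $S\subset\ZZ$. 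Each operator $u_m$ is $V$-linear, so $u_m\langle b'\rangle_V$ is a $V$-module quotient of $\langle b'\rangle_V$; if $\langle b'\rangle_V\in\cC_V^1$, closure of $\cC_V^1$ under quotients, finite sums, and submodules forces $\langle(u\otimes v)_{-1}b'\rangle_V\in\cC_V^1$, completing the inductive step.

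The main obstacle is the base case $\langle t\rangle_V\in\cC_V^1$ for $t\in T$. I would handle it by choosing $T$ to be homogeneous for the commuting operators $L_U(0)$ and $L_V(0)$: for a joint generalized eigenvector $t$ with eigenvalues $(h_U,h_V)$, the submodule $\langle t\rangle_V$ lies inside the $L_U(0)$-generalized eigenspace of $X$ at $h_U$, and its $L_V(0)$-weight spaces inject into the finite-dimensional joint weight spaces of $X$, so $\langle t\rangle_V$ is a grading-restricted generalized $V$-module. To deduce its $C_1$-cofiniteness, I would compare $C_1^V(\langle t\rangle_V)$ with $\langle t\rangle_V\cap C_1^{U\otimes V}(X)$: since $\langle t\rangle_V/(\langle t\rangle_V\cap C_1^{U\otimes V}(X))$ injects into the finite-dimensional $X/C_1^{U\otimes V}(X)$, what remains is to bound the kernel $(\langle t\rangle_V\cap C_1^{U\otimes V}(X))/C_1^V(\langle t\rangle_V)$ using the spanning-set description of $C_1^{U\otimes V}(X)$ together with the closure hypotheses on $\cC_V^1$. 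Controlling this kernel is the technical heart of the proof.
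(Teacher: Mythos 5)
Your first inclusion $\cD(\cC_U^1,\cC_V^1)\subseteq\cC_{U\otimes V}^1$ is correct and takes a mildly different route from the paper: you reduce to quotients of sums $M\otimes W$ via Proposition \ref{prop:D(U,V)_surjections} and use $C_1(M)\otimes W+M\otimes C_1(W)\subseteq C_1(M\otimes W)$ plus preservation of $C_1$-cofiniteness under quotients, whereas the paper reduces to composition factors and invokes the fact that an extension of $C_1$-cofinite modules is $C_1$-cofinite. Your version is slightly more self-contained; both work. Your inductive step for the reverse inclusion is also sound: the identity $(u\otimes v)_{-1}=\sum_{j\in\ZZ}u_{-2-j}v_j$, lower truncation, and the $V$-linearity of each $u_m$ do give $\langle(u\otimes v)_{-1}b'\rangle_V\subseteq\sum_{m\in S}u_m\langle b'\rangle_V$ with $S$ finite, and the closure hypotheses on $\cC_V^1$ finish that step.

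The genuine gap is the base case $\langle t\rangle_V\in\cC_V^1$, which you correctly identify as the technical heart but for which your proposed strategy does not work. The quotient $\langle t\rangle_V/(\langle t\rangle_V\cap C_1^{U\otimes V}(X))$ being finite dimensional carries essentially no information about $\langle t\rangle_V/C_1^V(\langle t\rangle_V)$, because $C_1^{U\otimes V}(X)$ contains all vectors $(u\otimes\vac)_{-1}x=u_{-1}x$ for positive-weight $u\in U$; these have nothing to do with $C_1^V$, so the ``kernel'' $(\langle t\rangle_V\cap C_1^{U\otimes V}(X))/C_1^V(\langle t\rangle_V)$ you would need to bound can a priori be all of $\langle t\rangle_V$ modulo $C_1^V(\langle t\rangle_V)$ (for instance, $\langle t\rangle_V$ may sit entirely inside $C_1^{U\otimes V}(X)$), and there is no finite-dimensional space in sight to control it. Note also that your outline never uses the hypothesis that $U$ and $V$ are $\NN$-graded, which is a warning sign. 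The paper's resolution is a different device: it first upgrades the spanning set of $X$ to one of the separated form $u^{(1)}_{-1}\cdots u^{(k)}_{-1}v^{(1)}_{-1}\cdots v^{(l)}_{-1}t$ with all $u^{(i)}\in U$ and $v^{(j)}\in V$ of strictly positive weight, by an induction on conformal weight that rewrites $(u\otimes v)_{-1}=u_{-1}v_{-1}+\sum_{j>0}\frac{1}{j!}\big((L_U(-1)^ju)_{-1}v_{j-1}+(L_V(-1)^jv)_{-1}u_{j-1}\big)$; the $\NN$-grading is exactly what guarantees the shifted vectors still have positive weight and the remaining modes lower degree. From this spanning set each generalized $L_V(0)$-eigenspace of $X$ is visibly a $C_1$-cofinite $U$-module (it is spanned by $u_{-1}$-words applied to a finite-dimensional space), and closure of $\cC_U^1$ under submodules then gives $\langle b\rangle_U\in\cC_U^1$ for every $b\in X$ directly, with no word-length induction needed. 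You would need to prove this spanning-set lemma (or an equivalent substitute) to close your base case.
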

\begin{proof}
To show that $\cD(\cC_U^1,\cC_V^1)\subseteq\cC_{U\otimes V}^1$, note that $\cD(\cC_U^1,\cC_V^1)$ is closed under subquotients. So the composition factors of any object of $\cD(\cC_U^1,\cC_V^1)$ are objects of $\cD(\cC_U^1,\cC_V^1)$ and thus by Proposition \ref{prop:D(U,V)_simple} have the form $M\otimes W$ where $M\in\mathrm{Ob}(\cC_U^1)$ and $W\in\mathrm{Ob}(\cC_V^1)$. The tensor product of $C_1$-cofinite $U$- and $V$-modules is a $C_1$-cofinite $U\otimes V$-module, so the composition factors of any object of $\cD(\cU,\cV)$ are objects of $\cC_{U\otimes V}^1$. Now if $X_1$ is a $U\otimes V$-module with a $C_1$-cofinite submodule $X_2$ such that $X_1/X_2$ is $C_1$-cofinite, then $X_1$ is also $C_1$-cofinite (see \cite[Lemma 2.11]{Hu-C2}). Thus any object of $\cD(\cU,\cV)$ is $C_1$-cofinite.

Conversely, suppose $X$ is any $C_1$-cofinite $U\otimes V$-module, so that $X= T+C_1(X)$ for some finite-dimensional subspace $T$. We may assume that $T$ is doubly graded by generalized eigenvalues for $L_U(0)$ and $L_V(0)$. 
We claim that $X$ is spanned by the set of all vectors
\begin{equation}\label{eqn:tensor_C1_span}
u^{(1)}_{-1}u^{(2)}_{-1}\cdots u^{(k)}_{-1} v^{(1)}_{-1}v^{(2)}_{-1}\cdots v^{(l)}_{-1} t
\end{equation}
such that $t\in T$, $u^{(i)}\in U$ is homogeneous with $\mathrm{wt}\,u^{(i)}>0$ for $i=1,\ldots k$, and $v^{(j)}\in V$ is homogeneous with $\mathrm{wt}\,v^{(j)}>0$ for $j=1,\ldots, l$. To prove the claim, we use an $\NN$-grading $X=\bigoplus_{n=0}^\infty X(n)$ such that for $m\in\ZZ$ and homogeneous $u\in U$, $v \in V$,
\begin{equation*}
(u\otimes v)_m X(n)\subseteq X(n+\mathrm{wt}\,u+\mathrm{wt}\,v-m-1).
\end{equation*} 
Specifically, since $X$ is grading restricted, we may take $X(n)=\bigoplus_{\mu\in\CC/\ZZ} X_{[h_\mu+n]}$ where for any coset $\mu\in\CC/\ZZ$ such that $X$ has conformal weights in $\mu$, $h_\mu$ is the conformal weight of $X$ in $\mu$ that has minimal real part (and $h_\mu$ is arbitrary if no conformal weights of $X$ are contained in $\mu$). We will prove by induction on $n$ that $X(n)$ is spanned by the vectors \eqref{eqn:tensor_C1_span}.

Indeed, we may write any $b\in X(n)$ as $b=t+\sum_i (u^{(i)}\otimes v^{(i)})_{-1} b^{(i)}$ for $t\in T$ and homogeneous $u^{(i)}\in U$, $v^{(i)}\in V$, and $b^{(i)}\in X$ such that $\mathrm{wt}\,u^{(i)}+\mathrm{wt}\,v^{(i)}>0$. If $b\in X(0)$, then each $b^{(i)}=0$ since $(u^{(i)}\otimes v^{(i)})_{-1}$ is an operator of degree $\mathrm{wt}\,u^{(i)}+\mathrm{wt}\,v^{(i)}>0$, so $b\in T$ in this case. For general $n$, we have
\begin{align*}
b & = t+\sum_i (u^{(i)}\otimes v^{(i)})_{-1} b^{(i)} = t+\sum_i\sum_{j\in\ZZ} u^{(i)}_{-j-1} v^{(i)}_{j-1} b^{(i)}\nonumber\\
& = t+\sum_i u^{(i)}_{-1} v^{(i)}_{-1}b^{(i)} +\sum_i\sum_{j>0} \left(u^{(i)}_{-j-1} v^{(i)}_{j-1}+v^{(i)}_{-j-1} u^{(i)}_{j-1}\right)b^{(i)}\nonumber\\
& = t+\sum_i u^{(i)}_{-1} v^{(i)}_{-1}b^{(i)} +\sum_i\sum_{j>0} \frac{1}{j!}\left((L_U(-1)^j u^{(i)})_{-1} v^{(i)}_{j-1}+(L_V(-1)^j v^{(i)})_{-1} u^{(i)}_{j-1}\right)b^{(i)},
\end{align*}
using the $L(-1)$-derivative property for $U$- and $V$-modules in the last step. Since $U$ and $V$ are $\NN$-graded and $\mathrm{wt}\,u^{(i)}+\mathrm{wt}\,v^{(i)}>0$ for each $i$, one of $u^{(i)}$ and $v^{(i)}$ has strictly positive conformal weight and the other has non-negative conformal weight. Thus for $j>0$, $L_U(-1)^j u^{(i)}$ and $L_V(-1)^j v^{(i)}$ have strictly positive weight, while $v^{(i)}_{j-1} b^{(i)}$ and $u^{(i)}_{j-1} b^{(i)}$ have degree less than $n$. Similarly, if $\mathrm{wt}\,u^{(i)}>0$, then $\deg v^{(i)}_{-1} b^{(i)} <n$, while if $\mathrm{wt}\,v^{(i)}>0$, then $\deg u^{(i)}_{-1} b^{(i)}<n$. Thus by induction on $n$, any $b\in X(n)$ is in the span of the vectors \eqref{eqn:tensor_C1_span}.

Now that we know $X$ is spanned by the vectors \eqref{eqn:tensor_C1_span}, consider the subspace
\begin{equation*}
W=\mathrm{span}\lbrace v^{(1)}_{-1} v^{(2)}_{-1}\cdots v^{(l)}_{-1} t\mid t\in T,\,v^{(j)}\in V,\,\mathrm{wt}\,v^{(j)}>0,\,j=1,\ldots, l\rbrace.
\end{equation*}
This space is $L_V(0)$-stable and graded by generalized $L_V(0)$-eigenvalues because $T$ is. For any $h\in\CC$, the generalized $L_V(0)$-eigenspace $W_{[V,h]}$ of $W$ with generalized eigenvalue $h$ is finite dimensional because $T$ and each conformal weight space of $V$ are finite dimensional, and because $v_{-1}$ strictly raises conformal weights when $\mathrm{wt}\,v>0$. Now because vertex operator modes $u_n$ for $u\in U$, $n\in\ZZ$ preserve generalized $L_V(0)$-eigenvalues, the spanning set \eqref{eqn:tensor_C1_span} implies that
\begin{equation*}
X_{[V,h]} = \mathrm{span}\lbrace u^{(1)}_{-1}u^{(2)}_{-1}\cdots u^{(k)}_{-1} w\mid w\in W_{[V,h]},\,u^{(i)}\in U,\,\mathrm{wt}\,u^{(i)}>0,\,i=1,\ldots, k\rbrace
\end{equation*}
is the generalized $L_V(0)$-eigenspace of $X$ with generalized eigenvalue $h$. Since $W_{[V,h]}$ is finite dimensional, each $X_{[V,h]}$ is thus a $C_1$-cofinite $U$-submodule of $X$. Then because $\cC_U^1$ is closed under submodules by assumption, any vector in $X$ generates a $C_1$-cofinite $U$-submodule. Similarly, any vector in $X$ generates a $C_1$-cofinite $V$-submodule.

It remains to show that $X$ has finite length. Since $X$ is $C_1$-cofinite, it has a finite generating set $\lbrace b_i\rbrace$ (which could be a spanning set for the subspace $T$). Each $b_i$ generates a $C_1$-cofinite $U$-submodule $\langle b_i\rangle_U$ and a $C_1$-cofinite $V$-submodule $\langle b_i\rangle_V$, and the same proof as for Proposition \ref{prop:D(U,V)_surjections} shows that $X$ is a homomorphic image of $\bigoplus_i \langle b_i\rangle_U\otimes\langle b_i\rangle_V$. Since each $\langle b_i\rangle_U$ and $\langle b_i\rangle_V$ have finite length by assumption, so does $X$. This completes the proof that any $C_1$-cofinite $U\otimes V$-module is an object of $\cD(\cC_U^1,\cC_V^1)$.
\end{proof}

\begin{cor}\label{cor:C1_UxV}
If $U$ and $V$ are $\NN$-graded and $\cC_U^1$ and $\cC_V^1$ are closed under contragredients, then $\cC_{U\otimes V}^1$ is closed under submodules and contragredients, is a locally finite abelian category, and admits the braided tensor category structure of \cite{HLZ1}-\cite{HLZ8}.
\end{cor}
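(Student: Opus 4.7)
The plan is to reduce this corollary to an assembly of previously established results by identifying $\cC_{U\otimes V}^1$ with the category $\cD(\cC_U^1,\cC_V^1)$ and then reading off the required structural properties.

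First, I would apply Theorem \ref{thm:C1_cofin_prop} to $U$ and to $V$ separately: the hypothesis that $\cC_U^1$ and $\cC_V^1$ are closed under contragredients gives that each is a locally finite abelian category (in particular closed under subquotients, with every object of finite length) and admits the HLZ braided tensor structure. Together with the hypothesis that $U$ and $V$ are $\NN$-graded, this verifies all the assumptions of Theorem \ref{thm:D(U,V)_for_C1_cofinite}, so that
\begin{equation*}
\cC_{U\otimes V}^1 = \cD(\cC_U^1,\cC_V^1).
\end{equation*}

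Next I would transfer properties across this equality. The general discussion after Definition \ref{def:D(U,V)} observes that $\cD(\cU,\cV)$ is always closed under submodules and finite direct sums, and is a locally finite abelian category as soon as $\cU$ and $\cV$ are closed under quotients. Applying this with $\cU=\cC_U^1$ and $\cV=\cC_V^1$ (both abelian, hence closed under quotients) gives that $\cC_{U\otimes V}^1$ is closed under submodules and is a locally finite abelian category. Furthermore, because $\cC_U^1$ and $\cC_V^1$ are closed under subquotients and contragredients, Proposition \ref{prop:D(U,V)_contra} yields that $\cD(\cC_U^1,\cC_V^1)$, and hence $\cC_{U\otimes V}^1$, is closed under contragredients.

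Finally, now that $\cC_{U\otimes V}^1$ is known to be closed under contragredients, I would apply Theorem \ref{thm:C1_cofin_prop}(2) to the vertex operator algebra $U\otimes V$ itself to conclude that $\cC_{U\otimes V}^1$ admits the braided tensor category structure of \cite{HLZ1}-\cite{HLZ8}. There is essentially no obstacle in this argument: the content of the corollary is packaged in the identification $\cC_{U\otimes V}^1 = \cD(\cC_U^1,\cC_V^1)$, which is the nontrivial ingredient and is already established as Theorem \ref{thm:D(U,V)_for_C1_cofinite}; once that identification is in hand, the remaining claims follow formally from Proposition \ref{prop:D(U,V)_contra}, the general properties of $\cD(\cU,\cV)$, and a second invocation of Theorem \ref{thm:C1_cofin_prop} applied to $U\otimes V$.
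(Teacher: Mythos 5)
Your proposal is correct and follows essentially the same route as the paper's own proof: identify $\cC_{U\otimes V}^1$ with $\cD(\cC_U^1,\cC_V^1)$ via Theorems \ref{thm:C1_cofin_prop} and \ref{thm:D(U,V)_for_C1_cofinite}, read off closure under submodules from the definition of $\cD(\cU,\cV)$ and closure under contragredients from Proposition \ref{prop:D(U,V)_contra}, and then invoke Theorem \ref{thm:C1_cofin_prop} a second time for $U\otimes V$ to obtain local finiteness and the braided tensor structure. No gaps.
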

\begin{proof}
By Theorems \ref{thm:C1_cofin_prop} and \ref{thm:D(U,V)_for_C1_cofinite}, $\cC^1_{U\otimes V}=\cD(\cC_U^1,\cC_V^1)$, and then $\cC_{U\otimes V}^1$ is closed under submodules (by the definition of $\cD(\cC_U^1,\cC_V^1)$) and contragredients (by Proposition \ref{prop:D(U,V)_contra}). Then $\cC_{U\otimes V}^1$ is a locally finite braided tensor category by Theorem \ref{thm:C1_cofin_prop}.
\end{proof}

\subsection{Projective \texorpdfstring{$U\otimes V$}{U x V}-modules}

In this subsection, $\cU$ and $\cV$ will be locally finite abelian categories of grading-restricted generalized $U$- and $V$-modules, respectively, which are closed under subquotients, so that the category $\cD(\cU,\cV)$ of $U\otimes V$-modules from Definition \ref{def:D(U,V)} is also a locally finite abelian category. We will show that if simple $U$- and $V$-modules $M$ and $W$ have projective covers in $\cU$ and $\cV$, respectively, then so does $M\otimes W$ in $\cD(\cU,\cV)$. Thus if $\cU$ and $\cV$ have enough projectives, then so does $\cD(\cU,\cV)$. Recall from Proposition \ref{prop:tens_prod_in_D(U,V)} that if $M\in\mathrm{Ob}(\cU)$ and $W\in\mathrm{Ob}(\cV)$, then $M\otimes W\in\mathrm{Ob}(\cD(\cU,\cV))$.

Suppose a simple $U$-module $M\in\mathrm{Ob}(\cU)$ has a projective cover $p_M: P_M\twoheadrightarrow M$ in $\cU$. It is not difficult to show that $P_M$ is singly-generated by any $\overline{m}\in P_M\setminus\ker p_M$ (see for example \cite[Lemma 5.13]{McR-rat}). That is, $\ker p_M$ is the unique maximal proper submodule of $P_M$. This means that if $\tilM$ is any simple $U$-module, then any homomorphism $P_M\rightarrow\tilM$ is either $0$ or has the same kernel as $p_M$. Thus
\begin{equation}\label{eqn:proj_cover_homs}
 \hom_U(P_M,\tilM) =\left\lbrace\begin{array}{ccc}
                                 \CC \cdot p_M & \text{if} & \tilM=M\\
                                 0 & \text{if} & \tilM\ncong M
                                \end{array}
\right. .
\end{equation}
The same considerations hold for projective covers of irreducible $V$-modules in $\cV$.

\begin{lemma}\label{lem:PM_PW_max_prop}
 Suppose $p_M: P_M\twoheadrightarrow M$ and $p_W: P_W\twoheadrightarrow W$ are projective covers of simple modules in $\cU$ and $\cV$, respectively. Then $P_M\otimes P_W$ has a unique maximal proper submodule, given by the kernel of $p_M\otimes p_W: P_M\otimes P_W\twoheadrightarrow M\otimes W$.
\end{lemma}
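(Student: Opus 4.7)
The plan is to use the known structure of projective covers in locally finite abelian categories together with Proposition \ref{prop:proj_cover_homs} to show that every simple quotient of $P_M\otimes P_W$ must be isomorphic to $M\otimes W$ via (a scalar multiple of) the map $p_M\otimes p_W$.

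First I would set $J_M=\ker p_M$ and $J_W=\ker p_W$, and let $K=\ker(p_M\otimes p_W)=J_M\otimes P_W+P_M\otimes J_W$. Since $P_M/J_M\cong M$ and $P_W/J_W\cong W$ are simple and $(P_M\otimes P_W)/K\cong M\otimes W$, the latter is a simple object of $\cD(\cU,\cV)$ (by Proposition \ref{prop:D(U,V)_simple}, or directly by \cite[Proposition 4.7.2]{FHL}), so $K$ is at least \emph{a} maximal proper submodule. Also $P_M\otimes P_W$ is an object of $\cD(\cU,\cV)$ by Proposition \ref{prop:tens_prod_in_D(U,V)}, and hence has finite length, so every proper submodule of $P_M\otimes P_W$ is contained in some maximal proper submodule. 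Thus it is enough to prove that $K$ is the \emph{only} maximal proper submodule.

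To do this, let $N\subsetneq P_M\otimes P_W$ be any maximal proper submodule and write $S=(P_M\otimes P_W)/N$, which is a simple object of $\cD(\cU,\cV)$. By Proposition \ref{prop:D(U,V)_simple}, $S\cong \tilM\otimes\tilW$ for some simple $\tilM\in\mathrm{Ob}(\cU)$ and simple $\tilW\in\mathrm{Ob}(\cV)$. The quotient map gives a non-zero $U\otimes V$-module homomorphism $P_M\otimes P_W\to \tilM\otimes\tilW$. Since $\cV$ is locally finite, $\hom_V(P_W,\tilW)$ is finite-dimensional, so Proposition \ref{prop:proj_cover_homs} supplies an isomorphism
\[
\hom_{U\otimes V}(P_M\otimes P_W,\tilM\otimes\tilW)\;\cong\;\hom_U(P_M,\tilM)\otimes\hom_V(P_W,\tilW).
\]
Applying \eqref{eqn:proj_cover_homs} to each tensor factor, this space vanishes unless $\tilM\cong M$ and $\tilW\cong W$, in which case it is one-dimensional and spanned by $p_M\otimes p_W$. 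In particular, the quotient map $P_M\otimes P_W\twoheadrightarrow S$ must be a non-zero scalar multiple of $p_M\otimes p_W$, which forces $N=\ker(p_M\otimes p_W)=K$.

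The only non-routine ingredient is identifying the simple quotient, and this reduces immediately to a $\hom$-computation once Proposition \ref{prop:proj_cover_homs} is invoked; so there is no real obstacle here, merely the need to verify that the simple quotient $\tilM\otimes\tilW$ has both factors in $\cU$ and $\cV$ so that Proposition \ref{prop:proj_cover_homs} applies, which is exactly the content of Proposition \ref{prop:D(U,V)_simple}.
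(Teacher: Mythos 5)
Your proof is correct, but it takes a genuinely different route from the paper's. You reduce everything to the Hom-space computation by invoking the surjectivity part of Proposition \ref{prop:proj_cover_homs}: once you know $\hom_{U\otimes V}(P_M\otimes P_W,\tilM\otimes\tilW)\cong\hom_U(P_M,\tilM)\otimes\hom_V(P_W,\tilW)$, the one-variable formula \eqref{eqn:proj_cover_homs} immediately forces $\tilM\cong M$, $\tilW\cong W$, and pins the quotient map down to a scalar multiple of $p_M\otimes p_W$. The paper instead argues by hand: it restricts the quotient map to the slice $P_M\otimes\overline{w}$ to identify $\tilM\cong M$ via \eqref{eqn:proj_cover_homs}, then uses complete reducibility of $M\otimes\tilW$ as a weak $U$-module together with the Jacobson Density Theorem to show every $U$-map $M\to M\otimes\tilW$ is $m\mapsto m\otimes\tilw$, symmetrizes in the $V$-variable, and combines the two factorizations with a short linear-algebra argument. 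The content is essentially the same — the density-type reasoning you avoid is exactly what drives the proof of the surjectivity assertion in Proposition \ref{prop:proj_cover_homs} — but your packaging is shorter and cleaner, at the cost of needing the hypotheses of that surjectivity statement ($P_M$, $\tilM$ grading-restricted and $\hom_V(P_W,\tilW)$ finite-dimensional, the latter supplied either by local finiteness of $\cV$ once Proposition \ref{prop:D(U,V)_simple} places $\tilW$ in $\cV$, or directly by \eqref{eqn:proj_cover_homs}); these hold under the standing assumptions of the subsection, so there is no gap.
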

\begin{proof}
 Since $P_M\otimes P_W$ is a finite-length $U\otimes V$-module, any maximal proper submodule is the kernel of a surjection $p: P_M\otimes P_W\rightarrow\tilM\otimes\tilW$, where $\tilM$ and $\tilW$ are irreducible $U$- and $V$-modules, respectively. We need to show that $\ker p =\ker(p_M\otimes p_W)$.
 
 Let $\overline{m}\in P_M\setminus\ker p_M$ and $\overline{w}\in P_W\setminus\ker p_W$ be generating vectors for $P_M$ and $P_W$. Then
 \begin{equation*}
  p_{\overline{w}}: P_M\xrightarrow{\cong} P_M\otimes\overline{w}\hookrightarrow P_M\otimes P_W\xrightarrow{p} \tilM\otimes\tilW
 \end{equation*}
is a homomorphism of weak $U$-modules whose image is a direct sum of finitely many copies of $\tilM$. Since $p$ is non-zero and $P_M\otimes\overline{w}$ generates $P_M\otimes P_W$, we have $p_{\overline{w}}\neq 0$, so that $\mathrm{Im}\,p_{\overline{w}}\cong M$ by \eqref{eqn:proj_cover_homs}. So we may assume $\tilM=M$. It is then easy to show from the complete reducibility of $M\otimes\tilW$ as a weak $U$-module and the Jacobson Density Theorem that every $U$-module homomorphism $M\rightarrow M\otimes\tilW$ has the form $m\mapsto m\otimes\widetilde{w}$ for some $\widetilde{w}\in\tilW$. Consequently, we have a factorization
\begin{equation*}
 p_{\overline{w}}: P_M \xrightarrow{c\cdot p_M} M \xrightarrow{m\mapsto m\otimes\widetilde{w}} M\otimes\tilW
\end{equation*}
for some $c\in\CC$ and $\tilw\in\tilW$, so that $p(\overline{m}\otimes\overline{w})=p_M(\overline{m})\otimes(c\cdot\tilw)$.

By the same argument, we may assume $\tilW=W$, and then $p(\overline{m}\otimes\overline{w})=\tilm\otimes p_W(\overline{w})$ for some $\tilm\in M$. Expanding $\tilm$ in a basis for $M$ that includes $p_M(\overline{m})$ and using
\begin{equation*}
 p_M(\overline{m})\otimes(c\cdot\tilw)=\tilm\otimes p_W(\overline{w}),
\end{equation*}
we see that $\tilm$ is a multiple of $p_M(\overline{m})$, so that $p(\overline{m}\otimes\overline{w})\in\CC\cdot(p_M(\overline{m})\otimes p_W(\overline{w}))$. Thus $p\in\CC^\times\cdot(p_M\otimes p_M)$ since $\overline{m}\otimes\overline{w}$ generates $P_M\otimes P_W$, and then $\ker p=\ker(p_M\otimes p_W)$ as required.
\end{proof}

The preceding lemma shows that the submodule generated by any vector in $(P_M\otimes P_W)\setminus\ker(p_M\otimes p_W)$ cannot be proper, since it is not contained in the unique maximal proper submodule $\ker(p_M\otimes p_W)$. Thus we have:
\begin{cor}\label{cor:PM_PW_sing_gen}
 In the setting of Lemma \ref{lem:PM_PW_max_prop}, $P_M\otimes P_W$ is singly-generated by any vector in $(P_M\otimes P_W)\setminus\ker(p_M\otimes p_W)$.
\end{cor}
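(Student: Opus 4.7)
The plan is that this corollary follows essentially immediately from Lemma \ref{lem:PM_PW_max_prop} by a short argument by contradiction; the author's parenthetical remark preceding the statement already summarizes the idea. No new technical machinery is needed.

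Concretely, I would fix an arbitrary vector $b \in (P_M \otimes P_W) \setminus \ker(p_M \otimes p_W)$ and let $N := \langle b \rangle$ denote the $U \otimes V$-submodule of $P_M \otimes P_W$ generated by $b$. The goal is to show $N = P_M \otimes P_W$. Suppose instead, for contradiction, that $N$ is a proper submodule. The first step is to note that $P_M \otimes P_W$ has finite length as an object of $\cD(\cU, \cV)$: by Proposition \ref{prop:tens_prod_in_D(U,V)} it lies in $\cD(\cU, \cV)$, and its length is bounded by the product of the lengths of $P_M$ in $\cU$ and $P_W$ in $\cV$, both of which are finite because $\cU$ and $\cV$ are locally finite. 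Consequently any proper submodule of $P_M \otimes P_W$ is contained in a maximal proper submodule.

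The second step is to invoke Lemma \ref{lem:PM_PW_max_prop}, which guarantees that the unique such maximal proper submodule is $\ker(p_M \otimes p_W)$. Therefore $N \subseteq \ker(p_M \otimes p_W)$, and in particular $b \in \ker(p_M \otimes p_W)$, contradicting the choice of $b$. Hence $N = P_M \otimes P_W$, as required.

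There is no real obstacle here: the content of the corollary is entirely absorbed by Lemma \ref{lem:PM_PW_max_prop}, and the only ingredient beyond that lemma is the elementary fact that a proper submodule of a finite-length module is always contained in some maximal proper submodule. The only thing worth being slightly careful about is to cite the finite-length property of $P_M \otimes P_W$ (via Proposition \ref{prop:tens_prod_in_D(U,V)}) to justify the existence of a maximal proper submodule containing $N$.
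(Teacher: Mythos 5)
Your proof is correct and is essentially the paper's own argument: the paper deduces the corollary in one sentence from Lemma \ref{lem:PM_PW_max_prop}, observing that the submodule generated by such a vector cannot be proper since it is not contained in the unique maximal proper submodule $\ker(p_M\otimes p_W)$. Your only addition is to make explicit the (correct) finite-length justification for why a proper submodule must sit inside some maximal proper submodule.
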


Using this corollary, we now prove:
\begin{thm}\label{thm:proj_covers_in_C}
Let $\cU$ and $\cV$ be locally finite abelian categories of grading-restricted generalized $U$- and $V$-modules which are closed under subquotients, and suppose $p_M: P_M\twoheadrightarrow M$ and $p_W: P_W\twoheadrightarrow W$ are projective covers of simple objects in $\cU$ and $\cV$, respectively. Then $p_M\otimes p_W: P_M\otimes P_W\twoheadrightarrow M\otimes W$ is a projective cover of $M\otimes W$ in $\cD(\cU,\cV)$. In particular, if $\cU$ and $\cV$ have enough projectives, then so does $\cD(\cU,\cV)$, and every projective object in $\cD(\cU,\cV)$ is a finite direct sum of tensor products of projective objects in $\cU$ and $\cV$.
\end{thm}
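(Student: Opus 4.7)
The theorem has three claims: (1) $P_M \otimes P_W$ is projective in $\cD(\cU,\cV)$ with $p_M \otimes p_W$ a projective cover of $M \otimes W$, (2) if $\cU$ and $\cV$ have enough projectives, then so does $\cD(\cU,\cV)$, and (3) every projective in $\cD(\cU,\cV)$ decomposes as a finite direct sum of tensor products of projective objects in $\cU$ and $\cV$. Given claim (1), the rest is formal. By Lemma \ref{lem:PM_PW_max_prop}, $\ker(p_M \otimes p_W)$ is the unique maximal proper submodule of $P_M \otimes P_W$, so any nontrivial direct sum decomposition would produce one summand surjecting onto $M \otimes W$ and hence equal to the whole module, forcing $P_M \otimes P_W$ to be indecomposable; Proposition \ref{prop:proj_cover_indecomposable} then identifies $p_M \otimes p_W$ as a projective cover. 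Claim (2) follows since Proposition \ref{prop:D(U,V)_simple} identifies simple objects of $\cD(\cU,\cV)$ as tensor products of simples in $\cU$ and $\cV$, and claim (3) from the general fact (Section \ref{subsec:prelim_loc_fin}) that every projective in a locally finite abelian category is a finite direct sum of projective covers of simples.

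The main task is therefore to prove projectivity. Given a surjection $\pi\colon X \twoheadrightarrow Y$ in $\cD(\cU,\cV)$ and a morphism $f\colon P_M \otimes P_W \to Y$, after replacing $X, Y$ with $\pi^{-1}(\mathrm{Im}\,f)$ and $\mathrm{Im}\,f$ (both in $\cD(\cU,\cV)$ since it is closed under subquotients), one may assume $f$ and $\pi$ are surjective, so $Y$ is singly generated by $y := f(\overline{m}\otimes\overline{w})$, where $\overline{m},\overline{w}$ are cyclic generators of $P_M,P_W$ (by the discussion preceding \eqref{eqn:proj_cover_homs}). The plan is to choose a preimage $x \in \pi^{-1}(y)$ carefully by coordinating the two projectivity hypotheses. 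First, pick any $x_0 \in \pi^{-1}(y)$; since $X \in \cD(\cU,\cV)$, the $V$-submodule $\langle x_0\rangle_V$ lies in $\cV$, and $\pi$ restricts to a surjection $\langle x_0\rangle_V \twoheadrightarrow \langle y\rangle_V$. Apply projectivity of $P_W$ in $\cV$ to lift the $V$-module map $f(\overline{m}\otimes{-})\colon P_W \to \langle y\rangle_V$ to $\Psi\colon P_W \to \langle x_0\rangle_V$, and set $x := \Psi(\overline{w})$. Then $\pi(x)=y$ and, crucially, every element of $\mathrm{Ann}_V(\overline{w})$ annihilates $x$. Next, apply projectivity of $P_M$ in $\cU$ to the surjection $\langle x\rangle_U \twoheadrightarrow \langle y\rangle_U$ to lift $f({-}\otimes\overline{w})\colon P_M \to \langle y\rangle_U$ to a $U$-module map $\Phi\colon P_M \to \langle x\rangle_U$.

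The payoff of doing the two lifts in this order is that $\Phi(m) \in \langle x\rangle_U$, so $\Phi(m) = a_m \cdot x$ for some expression $a_m$ in $U$-vertex-operator modes. Since $U$- and $V$-vertex operators commute on any $U\otimes V$-module, any $\delta \in \mathrm{Ann}_V(\overline{w})$ satisfies $\delta\cdot\Phi(m) = a_m \delta\cdot x = 0$. Therefore $\overline{w} \mapsto \Phi(m)$ extends uniquely to a $V$-module homomorphism $\chi_m\colon P_W \to X$ (using that $P_W$ is singly generated by $\overline{w}$), and the bilinear assignment $(m,w)\mapsto \chi_m(w)$ descends to a linear map $\tilf\colon P_M \otimes P_W \to X$. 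Its $V$-linearity is immediate from $V$-linearity of each $\chi_m$; its $U$-linearity follows because both $\chi_{u_n m}$ and $u_n\chi_m$ are $V$-module maps $P_W \to X$ sending $\overline{w}$ to $u_n \Phi(m) = \Phi(u_n m)$, so they agree on the generator of $P_W$ and hence everywhere. Finally $\pi\tilf$ and $f$ are $U\otimes V$-module maps agreeing on $\overline{m}\otimes\overline{w}$, so they coincide on the singly generated module $P_M \otimes P_W$ (Corollary \ref{cor:PM_PW_sing_gen}).

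The main obstacle the plan addresses is precisely the coordination of the two projectivity hypotheses: invoking them independently would produce two possibly distinct preimages of $y$ in $X$, and no single choice of preimage need be compatible with both $\mathrm{Ann}_U(\overline{m})$ and $\mathrm{Ann}_V(\overline{w})$. Choosing $x$ via $P_W$-projectivity first installs the $V$-annihilation $\mathrm{Ann}_V(\overline{w})\cdot x=0$, which then propagates along the $U$-orbit $\Phi(P_M) \subseteq U\cdot x$ by the commutation of vertex operators, giving the uniform definition of $\chi_m$ that makes $\tilf$ well defined.
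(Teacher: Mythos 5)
Your proposal is correct and follows essentially the same strategy as the paper's proof: lift in one tensor factor first to obtain a preimage of $f(\overline{m}\otimes\overline{w})$ with controlled annihilator, then lift in the other factor inside the cyclic submodule generated by that vector, and use the commutativity of the $U$- and $V$-actions to spread the second lift over all of $P_M\otimes P_W$; the only difference is that you perform the $V$-lift first and the $U$-lift second, whereas the paper does the reverse, and your packaging via the family of $V$-module maps $\chi_m$ is just a reformulation of the paper's direct well-definedness check for $F(m\otimes w)=\sum_i u^{(i)}_{n_i}g(w)$. The reductions and the indecomposability/projective-cover bookkeeping at the end likewise match the paper's use of Lemma \ref{lem:PM_PW_max_prop} and Proposition \ref{prop:proj_cover_indecomposable}.
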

\begin{proof}
The second conclusion follows from the first because $\cD(\cU,\cV)$ is a locally finite abelian category.
Now consider a diagram
\begin{equation*}
 \xymatrix{
 & P_M\otimes P_W \ar[d]^q\\
 X \ar[r]^p & Y\\
 }
\end{equation*}
in $\cD(\cU,\cV)$ with $p$ surjective. We fix generating vectors $\overline{m}\in P_M\setminus\ker p_M$ and $\overline{w}\in P_W\setminus\ker p_W$. Then because $p$ is surjective, there exists $b\in X$ such that
\begin{equation*}
 p(b)=q(\overline{m}\otimes\overline{w}).
\end{equation*}
Let $\langle b\rangle_U\subseteq X$ and $\langle q(\overline{m}\otimes\overline{w})\rangle_U\subseteq Y$ be the $U$-submodules generated by $b$ and $q(\overline{m}\otimes\overline{w})$, respectively; they are objects of $\cU$ by the definition of $\cD(\cU,\cV)$. Thus because $P_M$ is projective in $\cU$, there is a $U$-module homomorphism $f: P_M\rightarrow\langle b\rangle_U$ such that the diagram
\begin{equation*}
\xymatrixcolsep{3pc}
 \xymatrix{
 & P_M \ar[ld]_f \ar[d]^{m\mapsto q(m\otimes\overline{w})}\\
 \langle b\rangle_U \ar[r]_(.4){p\vert_{\langle b\rangle_U}} & \langle q(\overline{m}\otimes\overline{w})\rangle_U\\
 }
\end{equation*}
commutes. Next, let $\langle f(\overline{m})\rangle_V\subseteq X$ and $\langle q(\overline{m}\otimes\overline{w})\rangle_V\subseteq Y$ be the $V$-submodules generated by the indicated vectors; they are objects of $\cV$ since $X$ and $Y$ are objects of $\cD(\cU,\cV)$. Thus there is a $V$-module homomorphism $g: P_W\rightarrow\langle f(\overline{m})\rangle_V$ such that the diagram
\begin{equation*}
\xymatrixcolsep{3pc}
 \xymatrix{
 & P_W \ar[ld]_g \ar[d]^{w\mapsto q(\overline{m}\otimes w)}\\
 \langle f(\overline{m})\rangle_V \ar[r]_(.45){p\vert_{\langle f(\overline{m})\rangle_V}} & \langle q(\overline{m}\otimes\overline{w})\rangle_V\\
 }
\end{equation*}
commutes.

We can now construct a map $F: P_M\otimes P_W\rightarrow X$ as follows. Since $\overline{m}$ generates $P_M$,
\begin{equation*}
 P_M=\mathrm{span}\lbrace u_n\overline{m}\,\vert\,u\in U, n\in\ZZ\rbrace
\end{equation*}
by \cite[Proposition 4.5.6]{LL}. Thus for $m=\sum_i u^{(i)}_{n_i}\overline{m}\in P_M$ and $w\in P_W$, we attempt to set
\begin{equation*}
 F(m\otimes w) = \sum_i u^{(i)}_{n_i}g(w).
\end{equation*}
As in the proof of Proposition \ref{prop:D(U,V)_surjections}, we need to show that  if $\sum_i u^{(i)}_{n_i}\overline{m}\in P_M$ vanishes, then so does $\sum_i u^{(i)}_{n_i}g(w)$ for any $w\in P_W$. In fact, since $g(w)\in\langle f(\overline{m})\rangle_V$ for any such $w$, we can write $g(w)=\sum_j v^{(j)}_{k_j}f(\overline{m})$ for certain $v^{(j)}\in V$ and $k_j\in\ZZ$. Then
\begin{equation}\label{eqn:q_well_def}
 \sum_i u^{(i)}_{n_i} g(w) =\sum_{i,j} u^{(i)}_{n_i} v^{(j)}_{k_j}f(\overline{m}) =\sum_j v^{(j)}_{k_j}\sum_i f(u^{(i)}_{n_i}\overline{m})= 0
\end{equation}
since the vertex operators $Y_{X}(u^{(i)},x)$ and $Y_{X}(v^{(j)},x)$ commute, and since $f$ is a $U$-module homomorphism. This shows that $F$ is well defined.

We now show that $F$ is a $U\otimes V$-module homomorphism. First, for $v\in V$, $n\in\ZZ$, $m=\sum_i u^{(i)}_{n_i}\overline{m}\in P_M$, and $w\in P_W$, we have
\begin{equation*}
 v_n F(m\otimes w)=\sum_i v_n u^{(i)}_{n_i}g(w)=\sum_i u^{(i)}_{n_i}g(v_n w)=F(m\otimes v_n w)
\end{equation*}
since $g$ is a $V$-module homomorphism. This shows that $F$ is a $V$-module homomorphism. To show that $F$ is also a $U$-module homomorphism, fix any $w\in P_W$ and write $g(w)=\sum_j v^{(j)}_{k_j} f(\overline{m})$. Then the calculation in \eqref{eqn:q_well_def} shows that for any $m\in P_M$, 
\begin{equation*}
 F(m\otimes w)=\sum_j v^{(j)}_{k_j}f(m).
\end{equation*}
Consequently, for any $u\in U$, $n\in\ZZ$, $m\in P_M$, and for our arbitrary fixed $w\in P_W$, 
\begin{equation*}
 u_n F(m\otimes w)=\sum_j u_n v^{(j)}_{k_j}f(m) =\sum_j v^{(j)}_{k_j}f(u_n m)=F(u_n m\otimes w),
\end{equation*}
showing that $F$ is also a $U$-module homomorphism.

We now compute $p\circ F$: for any $m=\sum_{i} u^{(i)}_{n_i}\overline{m}\in P_M$ and $w\in P_W$, the definitions imply
\begin{equation*}
 (p\circ F)(m\otimes w) =\sum_i p(u^{(i)}_{n_i}g(w)) = \sum_i u^{(i)}_{n_i}(p\circ g)(w) =\sum_i u^{(i)}_{n_i}q(\overline{m}\otimes w)=q(m\otimes w).
\end{equation*}
This completes the proof that $P_M\otimes P_W$ is projective in $\cD(\cU,\cV)$. To show that it is a projective cover of $M\otimes W$, note that $p_M\otimes p_W: P_M\otimes P_W\rightarrow M\otimes W$ is surjective; then by Proposition \ref{prop:proj_cover_indecomposable}, it is enough to show that $P_M\otimes P_W$ is indecomposable. If indeed
\begin{equation*}
 P_M\otimes P_W= X_1\oplus X_2
\end{equation*}
for non-zero summands $X_1$ and $X_2$, then there would be surjections
\begin{equation*}
 p_1: X_1\rightarrow M_1\otimes W_1,\qquad p_2: X_2\rightarrow M_2\otimes W_2
\end{equation*}
for simple $U$-modules $M_1$, $M_2$ and simple $V$-modules $W_1$, $W_2$. But this is impossible because then $\ker p_1\oplus X_2$ and $X_1\oplus\ker p_2$ would be two distinct maximal proper submodules of $P_M\otimes P_W$,  contradicting Lemma \ref{lem:PM_PW_max_prop}.
\end{proof}

\begin{remark}\label{rem:C2_D(U,V)}
 If $U$ and $V$ are $\NN$-graded and $C_2$-cofinite and we take $\cU=\rep(U)$ and $\cV=\rep(V)$ to be the full categories of grading-restricted generalized $U$- and $V$-modules, then \cite[Theorem 3.24]{Hu-C2} implies that $\cU$ and $\cV$ have enough projectives. In this case, recalling Remark \ref{rem:RepV=C1_for_C2} and Theorem \ref{thm:D(U,V)_for_C1_cofinite}, $\cD(\cU,\cV)=\rep(U\otimes V)$, and thus Theorem \ref{thm:proj_covers_in_C} shows that projective covers of simple $U\otimes V$-modules are tensor products of projective covers of simple $U$- and $V$-modules.
\end{remark}

\subsection{\texorpdfstring{$\cD(\cU,\cV)$}{D(U,V)} as a Deligne tensor product}

Let $\cU$ and $\cV$ be locally finite abelian categories of $U$- and $V$-modules which are closed under subquotients. Then the Deligne tensor product $\cU\otimes\cV$ of $\cU$ and $\cV$ exists; in this subsection, we will show that $\cU\otimes\cV$ is equivalent to the category $\cD(\cU,\cV)$ of Definition \ref{def:D(U,V)}, equipped with the obvious $\CC$-bilinear functor
\begin{equation*}
 \otimes: \cU\times\cV\rightarrow\cD(\cU,\cV)
\end{equation*}
(recall Proposition \ref{prop:tens_prod_in_D(U,V)}) which is exact in both variables.

Recall from Section \ref{subsec:prelim_loc_fin} that the abelian full subcategory $\langle M\rangle$ generated by any $M\in\mathrm{Ob}(\cU)$ is a finite abelian subcategory of $\cU$, and similarly for $\cV$. We will use these subcategories of $\cU$ and $\cV$ to realize $\cD(\cU,\cV)$ as the union of finite abelian full subcategories; since we will need to consider right exact sequences 
$X_1\rightarrow X_2\rightarrow X_3\rightarrow 0$ in $\cD(\cU,\cV)$ to show that $\cD(\cU,\cV)$ is a Deligne tensor product, we will associate such a subcategory to any unordered triple $X_1, X_2, X_3\in\mathrm{Ob}(\cD(\cU,\cV))$. Thus for $i=1,2,3$, we use Proposition \ref{prop:D(U,V)_surjections} to fix a surjection $\bigoplus_{n_i} M_{n_i}\otimes W_{n_i} \rightarrow X_i$ where $M_{n_i}\in\mathrm{Ob}(\cU)$, $W_{n_i}\in\mathrm{Ob}(\cV)$. We then set
\begin{equation}\label{eqn:DX1X2X3_def}
\cD_{X_1,X_2,X_3} = \cD(\langle\oplus_{i=1,2,3}\oplus_{n_i} M_{n_i}\rangle,\langle\oplus_{i=1,2,3} \oplus_{n_i} W_{n_i}\rangle ).
\end{equation}
This is an abelian full subcategory of $\cD(\cU,\cV)$, and it contains $X_1$, $X_2$, and $X_3$ by Proposition \ref{prop:tens_prod_in_D(U,V)}. Also, because $\langle\oplus_{i=1,2,3}\oplus_{n_i} M_{n_i}\rangle$ and $\langle\oplus_{i=1,2,3}\oplus_{n_i} W_{n_i}\rangle$ are finite abelian categories, Proposition \ref{prop:D(U,V)_simple} implies that $\cD_{X_1,X_2,X_3}$ has finitely many simple objects up to isomorphism, and Theorem \ref{thm:proj_covers_in_C} implies that any simple module in $\cD_{X_1,X_2,X_3}$ has a projective cover. Thus $\cD_{X_1,X_2,X_3}$ is a finite abelian full subcategory of $\cD(\cU,\cV)$ that contains $X_1$, $X_2$, and $X_3$. For brevity, we use the notation $\cD_{X_1,X_2} = \cD_{X_1,X_1,X_2}$ (which is equal to $\cD_{X_1,X_2,X_2}$) for $X_1,X_2\in\mathrm{Ob}(\cD(\cU,\cV))$, and $\cD_X=\cD_{X,X,X}$ for $X\in\mathrm{Ob}(\cD(\cU,\cV))$. It is easy to see that
\begin{equation*}
\cD_{X_i}\subseteq\cD_{X_i,X_j}\subseteq\cD_{X_1,X_2,X_3}
\end{equation*}
for $i,j=1,2,3$.

For $\cD(\cU,\cV)$ to be a Deligne tensor product of $\cU$ and $\cV$, we need to show that bilinear functors $\cU\times\cV\rightarrow\cC$ which are right exact in both variables uniquely induce suitable right exact functors $\cD(\cU,\cV)\rightarrow\cC$. The idea is to first use properties of projective objects to show that each $\cD_X\subseteq\cD(\cU,\cV)$ is a Deligne tensor product of the subcategories $\langle\oplus_n M_n\rangle$ and $\langle\oplus_n W_n\rangle$, and thus a functor $\cU\times\cV\rightarrow\cC$ which is right exact in both variables induces a right exact functor $\cD_X\rightarrow\cC$. We then want to ``glue'' the functors defined on these subcategories into a functor $\cD(\cU,\cV)\rightarrow\cC$; we will use the subcategories $\cD_{X_i,X_j}$ and $\cD_{X_1,X_2,X_3}$ to show that these functors can be glued together coherently. Most of the proof has nothing particularly to do with vertex operator algebras, so we put most of the work into the proof of the following general theorem on abelian categories; probably the result is known or straightforward to experts, but for completeness we include a proof in Appendix \ref{app:extend_to_right_exact}:
\begin{thm}\label{thm:gen_extend_to_right_exact}
Let $\cD$ be a $\CC$-linear abelian category, let $\til{\cD}\subseteq\cD$ be an additive full subcategory, and assume that for any unordered triple $X_1,X_2,X_3\in\mathrm{Ob}(\cD)$, there is an abelian full subcategory $\cD_{X_1,X_2,X_3}\subseteq\cD$  that contains $X_1$, $X_2$, and $X_3$ and has enough projectives, such that every projective object in $\cD_{X_1,X_2,X_3}$ is in $\til{\cD}$, and such that for all $i,j=1,2,3$,
\begin{equation*}
\cD_{X_i,X_i,X_i}\subseteq\cD_{X_i,X_i,X_j}=\cD_{X_i,X_j,X_j}\subseteq\cD_{X_1,X_2,X_3}.
\end{equation*}
Let $\cG:\til{\cD}\rightarrow\cC$ be a $\CC$-linear functor, where $\cC$ is an abelian category, such that for any $X_1,X_2,X_3\in\mathrm{Ob}(\cD)$ and $X\in\mathrm{Ob}(\til{\cD}\cap\cD_{X_1,X_2,X_3})$, there is a right exact sequence
\begin{equation*}
Q_{X}\xrightarrow{q_{X}} P_{X}\xrightarrow{p_{X}} X\longrightarrow 0,
\end{equation*}
where $P_{X}$ and $Q_{X}$ are projective in $\cD_{X_1,X_2,X_3}$, such that $(\cG(X),\cG(p_{X}))$ is a cokernel of $\cG(q_{X})$ in $\cC$. Then there is a unique (up to natural isomorphism) right exact functor $\cF:\cD\rightarrow\cC$ such that $\cF\vert_{\til{\cD}}\cong\cG$.
\end{thm}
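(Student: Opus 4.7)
The plan is to extend $\cG$ by defining $\cF(X)$, for each $X\in\mathrm{Ob}(\cD)$, as a cokernel of $\cG$ applied to a projective presentation of $X$ in the subcategory $\cD_X:=\cD_{X,X,X}$. Explicitly, I would pick a right exact sequence $Q_X\xrightarrow{q_X} P_X\xrightarrow{p_X} X\to 0$ with $P_X, Q_X$ projective in $\cD_X$ (using that $\cD_X$ has enough projectives), note that $P_X, Q_X\in\til\cD$ by hypothesis so that $\cG(P_X), \cG(Q_X)$ make sense, and define $\cF(X)$ via a cokernel sequence $\cG(Q_X)\xrightarrow{\cG(q_X)}\cG(P_X)\to \cF(X)\to 0$ in $\cC$. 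For a morphism $f:X\to Y$, the idea is to pass to $\cD_{X,Y}$, choose projective presentations of $X$ and $Y$ there, lift $f$ to a chain map between them via projectivity, apply $\cG$, and take the induced map on cokernels.

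The bulk of the work is to prove independence of choices and functoriality. First, within a fixed subcategory $\cD_{X_1,X_2,X_3}$, the standard lifting argument between projective presentations shows that any two projective presentations of $X$ give canonically isomorphic cokernels after applying $\cG$, and that any two lifts of $f$ differ by a chain-homotopy whose correction term $P_X\to Q_Y$ is killed by composition with the quotient to the cokernel, so $\cF(f)$ is independent of the lift. The subtler compatibility is between presentations in two nested subcategories $\cD_{X_1,X_2,X_3}\subseteq\cD_{Y_1,Y_2,Y_3}$: if $P, Q$ are projective in the smaller category, they need not be projective in the larger one, but since $P, Q\in\til\cD$, the hypothesis on $\cG$ furnishes a projective presentation of each in the larger category whose cokernel under $\cG$ is canonically $\cG(P)$, respectively $\cG(Q)$. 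Splicing these in horseshoe-lemma fashion yields a projective presentation of $X$ in the larger category whose $\cG$-cokernel is canonically identified with the one computed in the smaller category. Using this compatibility, the definition of $\cF$ on morphisms via $\cD_{X,Y}$ is well-posed, and functoriality $\cF(g\circ f)=\cF(g)\circ\cF(f)$ follows by performing the construction inside $\cD_{X,Y,Z}$.

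Right exactness of $\cF$ follows by choosing, for any right exact $A\xrightarrow{\alpha}B\xrightarrow{\beta}C\to 0$, a compatible triple of projective presentations in $\cD_{A,B,C}$ (building a presentation of $B$ from presentations of $A$ and $C$ via the horseshoe lemma so that the middle term is the direct sum of the outer ones), applying $\cG$, and invoking right exactness of cokernels together with Lemma \ref{lem:cokernel_of_tens_prod} applied to the double complex. The identification $\cF\vert_{\til\cD}\cong\cG$ is immediate from the hypothesis: for $X\in\til\cD$, the subcategory $\cD_X$ and the presentation guaranteed by hypothesis give $(\cG(X),\cG(p_X))$ as a cokernel of $\cG(q_X)$, so $\cF(X)\cong\cG(X)$, and the isomorphism is natural because morphisms can be lifted to the presentations. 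Finally, uniqueness is forced: any right exact $\cF':\cD\to\cC$ extending $\cG$ must send $Q_X\to P_X\to X\to 0$ to a right exact sequence $\cG(Q_X)\to\cG(P_X)\to\cF'(X)\to 0$, so $\cF'(X)\cong\cF(X)$ canonically, and the induced natural isomorphism extends from $\til\cD$ to all of $\cD$.

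The main obstacle is the compatibility across nested subcategories in the second step: projectivity is not preserved under enlarging the subcategory, so one cannot just directly lift between presentations in $\cD_X$ and $\cD_{X,Y}$; instead, one must leverage the hypothesis that $\cG$ already knows how to present the projectives of the smaller category as $\cG$-cokernels in the larger category, and then carefully collapse the resulting double complex. Once this compatibility is established, the rest of the argument follows the classical pattern for extending a functor by projective resolutions.
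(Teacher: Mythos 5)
Your construction is built on the same core idea as the paper's---define $\cF(X)$ as the cokernel of $\cG(q_X)$ for a projective presentation $Q_X\to P_X\to X\to 0$ taken inside $\cD_X$, and define $\cF$ on morphisms by lifting to presentations---but you take a genuinely different route through the hardest part, the compatibility across nested subcategories. The paper first proves the theorem in the special case where $\cD$ itself has enough projectives all lying in $\til{\cD}$ (Theorem \ref{thm:extend_to_right_exact}), including the strong uniqueness statement that a natural isomorphism of the restrictions to $\til{\cD}$ extends \emph{uniquely} to the whole extension (Remark \ref{rem:beta_extends_uniquely}); it then applies this to each $\cD_{X_1,X_2,X_3}$ and glues the resulting local functors purely formally, the coherence \eqref{eqn:nat_isos_coherent} needed for functoriality coming for free from uniqueness. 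You instead attack the nesting directly: since a projective $P$ of $\cD_X$ need not be projective in $\cD_{X,Y}$, you re-resolve $P$ and $Q$ by projectives of the larger subcategory using the hypothesis that $(\cG(P),\cG(p_P))$ is a cokernel of $\cG(q_P)$, and splice to obtain a presentation of $X$ in $\cD_{X,Y}$ with canonically the same $\cG$-cokernel. This does work: with $\tilde{q}:P_Q\to P_P$ a lift of $q\circ p_Q$ through $p_P$, the sequence $P_Q\oplus Q_P\to P_P\to X\to 0$ is right exact, and since $\cG(p_Q)$ is epi one gets $\mathrm{Im}\,\cG(q)=\cG(p_P)(\mathrm{Im}\,\cG(\tilde{q}))$, so the two cokernels are identified via $\cG(p_P)$. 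What each approach buys: the paper confines all diagram chasing to the enough-projectives case and makes the gluing formal at the cost of a two-stage structure; yours is single-stage but forces you to carry explicit comparison isomorphisms between the presentations computed in $\cD_X$, $\cD_{X,Y}$, and $\cD_{X,Y,Z}$ and to verify their transitivity by hand (the analogue of \eqref{eqn:nat_isos_coherent})---that is the one step you identify but should not wave at, since it is exactly what makes $\cF(g\circ f)=\cF(g)\circ\cF(f)$ well-posed.

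Two smaller caveats. Lemma \ref{lem:cokernel_of_tens_prod} is a statement about a bifunctor right exact in both variables applied to two right exact sequences, which is not literally the tool you need when collapsing your double complex; what you actually need is the elementary fact that a cokernel of a map between cokernels is computed by the total complex. And for right exactness, note that $A\to B\to C\to 0$ is only right exact, not short exact, so the horseshoe step must be the one-sided version: lift $P_C\to C$ through the epimorphism $B\to C$ and add $P_A\to A\to B$; this does give a surjection $P_A\oplus P_C\twoheadrightarrow B$ precisely because $\ker(B\to C)=\mathrm{Im}(A\to B)$.
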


\begin{remark}\label{rem:equal_vs_nat_iso}
In Theorem \ref{thm:gen_extend_to_right_exact}, we may replace the conclusion $\cF\vert_{\til{\cD}}\cong\cG$ with the conclusion that $\cF\vert_{\til{\cD}}$ is precisely equal to $\cG$. Indeed, given $\cF:\cD\rightarrow\cC$ and a natural isomorphism $\eta: \cF\vert_{\til{\cD}}\rightarrow\cG$, we can define a new functor $\til{\cF}: \cD\rightarrow\cC$ on objects by
\begin{equation*}
\til{\cF}(X) =\left\lbrace\begin{array}{ccc}
\cG(X) & \text{if} & X\in\mathrm{Ob}\,\til{\cD}\\
\cF(X) & \text{if} & X\notin\mathrm{Ob}\,\til{\cD}\\
\end{array}\right. ,
\end{equation*}
and on morphisms by
\begin{equation*}
\til{\cF}(f: X_1\rightarrow X_2) =\left\lbrace\begin{array}{lll}
\eta_{X_2}\circ\cF(f)\circ\eta_{X_1}^{-1} & \text{if} & X_1,X_2\in\mathrm{Ob}\,\til{\cD}\\
\cF(f)\circ\eta_{X_1}^{-1} & \text{if} & X_1\in\mathrm{Ob}\,\til{\cD},\, X_2\notin\mathrm{Ob}\,\til{\cD}\\
\eta_{X_2}\circ\cF(f) & \text{if} & X_1\notin\mathrm{Ob}\,\til{\cD},\, X_2\in\mathrm{Ob}\,\til{\cD}\\
\cF(f) & \text{if} & X_1,X_2\notin\mathrm{Ob}\,\til{\cD}
\end{array}\right. .
\end{equation*}
Then $\til{\cF}\vert_{\til{\cD}}=\cG$, and there is a natural isomorphism $\til{\eta}:\cF\rightarrow\til{\cF}$ given by
\begin{equation*}
\til{\eta}_{X} =\left\lbrace\begin{array}{lll}
\eta_X & \text{if} & X\in\mathrm{Ob}\,\til{\cD}\\
\Id_{\cF(X)} & \text{if} & X\notin\mathrm{Ob}\,\til{\cD}\\
\end{array}\right. ,
\end{equation*}
so $\til{\cF}$ is right exact if $\cF$ is.
\end{remark}

We will now use Theorem \ref{thm:gen_extend_to_right_exact} to prove the main theorem of this subsection:
\begin{thm}\label{thm:D(U,V)_is_Del_prod}
If $\cU$ and $\cV$ are locally finite abelian categories of grading-restricted generalized $U$- and $V$-modules, respectively, which are closed under subquotients, then $(\cD(\cU,\cV),\otimes)$ is a Deligne tensor product of $\cU$ and $\cV$.
\end{thm}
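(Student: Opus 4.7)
The plan is to verify the universal property by invoking Theorem \ref{thm:gen_extend_to_right_exact}. Given a $\CC$-bilinear functor $\cB:\cU\times\cV\rightarrow\cE$ which is right exact in both variables, I would first construct an auxiliary $\CC$-linear functor $\cG:\til{\cD}(\cU,\cV)\rightarrow\cE$, where $\til{\cD}(\cU,\cV)$ denotes the additive full subcategory of $\cD(\cU,\cV)$ consisting of finite direct sums of $U\otimes V$-modules of the form $M\otimes W$ with $M\in\mathrm{Ob}(\cU)$, $W\in\mathrm{Ob}(\cV)$. On tensor product objects, set $\cG(M\otimes W)=\cB(M,W)$ and extend additively to direct sums. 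On morphisms between tensor product objects, Proposition \ref{prop:proj_cover_homs} gives the identification
\[
\hom_{U\otimes V}(M_1\otimes W_1,M_2\otimes W_2)\cong\hom_U(M_1,M_2)\otimes\hom_V(W_1,W_2),
\]
so the bilinearity of $\cB$ determines $\cG(f)$ on $f=\sum_i f_i\otimes g_i$ as $\cG(f)=\sum_i\cB(f_i,g_i)$; this extends additively to morphisms between finite direct sums of tensor products.

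Next I would verify the hypotheses of Theorem \ref{thm:gen_extend_to_right_exact} using the subcategories $\cD_{X_1,X_2,X_3}$ introduced in \eqref{eqn:DX1X2X3_def}, which are finite abelian full subcategories of $\cD(\cU,\cV)$ with enough projectives and which satisfy the required nesting. Theorem \ref{thm:proj_covers_in_C}, applied to the finite abelian subcategories $\langle\oplus_{n_i}M_{n_i}\rangle$ and $\langle\oplus_{n_i}W_{n_i}\rangle$ of $\cU$ and $\cV$, shows that every projective object of $\cD_{X_1,X_2,X_3}$ is a finite direct sum of objects of the form $P_M\otimes P_W$ with $P_M,P_W$ projective covers; in particular every projective in $\cD_{X_1,X_2,X_3}$ lies in $\til{\cD}(\cU,\cV)$.

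The key step is to show that for every $X=\bigoplus_i M_i\otimes W_i\in\mathrm{Ob}(\til{\cD}(\cU,\cV)\cap\cD_{X_1,X_2,X_3})$ there is a right exact presentation $Q_X\rightarrow P_X\rightarrow X\rightarrow 0$ by projectives of $\cD_{X_1,X_2,X_3}$ whose image under $\cG$ remains right exact. For this I would choose projective covers $p_{M_i}:P_{M_i}\twoheadrightarrow M_i$ and $p_{W_i}:P_{W_i}\twoheadrightarrow W_i$ in the relevant finite subcategories of $\cU$ and $\cV$, along with surjections $K_{M_i}\twoheadrightarrow\ker p_{M_i}$ and $K_{W_i}\twoheadrightarrow\ker p_{W_i}$ from projectives. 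Applying Lemma \ref{lem:cokernel_of_tens_prod} to the bifunctor $\otimes:\cU\times\cV\rightarrow\cD(\cU,\cV)$ (which is even exact in each variable), and then summing over $i$, yields a right exact sequence
\[
\bigoplus_i\bigl((K_{M_i}\otimes P_{W_i})\oplus(P_{M_i}\otimes K_{W_i})\bigr)\xrightarrow{q_X}\bigoplus_i P_{M_i}\otimes P_{W_i}\xrightarrow{p_X}X\longrightarrow 0
\]
in $\cD_{X_1,X_2,X_3}$ with $Q_X$ and $P_X$ projective and lying in $\til{\cD}$. Applying Lemma \ref{lem:cokernel_of_tens_prod} a second time, now to the bifunctor $\cB$, shows that $\cG$ sends this sequence to the right exact sequence
\[
\bigoplus_i\bigl(\cB(K_{M_i},P_{W_i})\oplus\cB(P_{M_i},K_{W_i})\bigr)\longrightarrow\bigoplus_i\cB(P_{M_i},P_{W_i})\longrightarrow\bigoplus_i\cB(M_i,W_i)\longrightarrow 0
\]
in $\cE$, as required.

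Theorem \ref{thm:gen_extend_to_right_exact} then yields a right exact $\CC$-linear functor $\cF:\cD(\cU,\cV)\rightarrow\cE$ with $\cF\vert_{\til{\cD}}\cong\cG$, which by Remark \ref{rem:equal_vs_nat_iso} may be arranged to satisfy $\cF\circ\otimes=\cB$ strictly. Uniqueness up to natural isomorphism follows from the uniqueness clause in Theorem \ref{thm:gen_extend_to_right_exact}, since any right exact $\widetilde{\cF}:\cD(\cU,\cV)\rightarrow\cE$ with $\widetilde{\cF}\circ\otimes=\cB$ necessarily restricts to a functor naturally isomorphic to $\cG$ on $\til{\cD}$. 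The main point requiring care is the well-definedness and functoriality of $\cG$ on $\til{\cD}(\cU,\cV)$: since an object of $\til{\cD}$ has many inequivalent decompositions into tensor product summands, one must use the naturality of the isomorphism in Proposition \ref{prop:proj_cover_homs} together with the $\CC$-bilinearity of $\cB$ to check that $\cG(f)$ is independent of the choice of direct sum decompositions, and that $\cG$ respects composition.
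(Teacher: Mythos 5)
Your proposal is correct and follows essentially the same route as the paper's proof: the same auxiliary functor $\cG$ on $\til{\cD}(\cU,\cV)$ built from Proposition \ref{prop:proj_cover_homs} and the bilinearity of $\cB$, the same double application of Lemma \ref{lem:cokernel_of_tens_prod} (once to $\otimes$ and once to $\cB$) to produce projective presentations in $\cD_{X_1,X_2,X_3}$ whose images under $\cG$ stay right exact, and the same invocation of Theorems \ref{thm:proj_covers_in_C} and \ref{thm:gen_extend_to_right_exact} together with Remark \ref{rem:equal_vs_nat_iso}. The well-definedness concern you raise at the end is handled in the paper simply by fixing one direct sum decomposition per object of $\til{\cD}$ once and for all.
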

\begin{proof}
We need to show that for any abelian category $\cC$ and any $\CC$-bilinear functor $\cB: \cU\times\cV\rightarrow\cC$ which is right exact in both variables, there is a unique (up to natural isomorphism) right exact functor $\cF:\cD(\cU,\cV)\rightarrow\cC$ such that the diagram
\begin{equation*}
 \xymatrix{
 \cU\times\cV \ar[d]_{\otimes} \ar[r]^(.57){\cB} & \cC\\
 \cD(\cU,\cV) \ar[ru]_{\cF} &\\
 }
\end{equation*}
commutes. We will apply Theorem \ref{thm:gen_extend_to_right_exact} in the case that $\cD=\cD(\cU,\cV)$ and $\til{\cD}$ is the additive full subcategory consisting of all $U\otimes V$-modules in $\cD(\cU,\cV)$ which are isomorphic to finite direct sums of modules $M\otimes W$ for $M\in\mathrm{Ob}(\cU)$, $W\in\mathrm{Ob}(\cV)$. By Theorem \ref{thm:proj_covers_in_C}, the finite abelian subcategories $\cD_{X_1,X_2,X_3}$ defined in \eqref{eqn:DX1X2X3_def} satisfy the assumptions in Theorem \ref{thm:gen_extend_to_right_exact}.

 We now construct a functor $\cG:\til{\cD}\rightarrow\cC$ such that $\cG\circ\otimes=\cB$. For any $X\in\mathrm{Ob}(\til{\cD})$, we fix an isomorphism $X\cong\bigoplus_i M_i\otimes W_i$, where each $M_i\in\mathrm{Ob}(\cU)$ and each $W_i\in\mathrm{Ob}(\cV)$, via a system of inclusions and projections
\begin{equation*}
 q^{(i)}_X: M_i\otimes W_i\longrightarrow X,\qquad\pi^{(i)}_X: X\longrightarrow M_i\otimes W_i
\end{equation*}
such that
\begin{equation*}
 \pi^{(i)}_X\circ q^{(j)}_X =\delta_{i,j}\Id_{M_i\otimes W_i},\qquad\sum_i q^{(i)}_X\circ\pi^{(i)}_X=\Id_X.
\end{equation*}
Then on objects of $\til{\cD}$, we define
\begin{equation*}
 \cG(X)=\bigoplus_i \cB(M_i,W_i).
\end{equation*}
For a morphism $f: X\rightarrow\til{X}\cong\bigoplus_j\til{M}_j\otimes\til{W}_j$ in $\til{\cD}$, we have a morphism
\begin{equation*}
 \pi_{\til{X}}^{(j)}\circ f\circ q^{(i)}_X: M_i\otimes W_i\longrightarrow\til{M}_j\otimes\til{W}_j
\end{equation*}
for all $i,j$. We would like to apply $\cB$ to these morphisms and then define
\begin{equation*}
 \cG(f)=\sum_{i,j} q^{(j)}_{\cG(\til{X})}\circ\cB(\pi^{(j)}_{\til{X}}\circ f\circ q_X^{(i)})\circ\pi^{(i)}_{\cG(X)}.
\end{equation*}
However, $\pi_{\til{X}}^{(j)}\circ f\circ q^{(i)}_X$ is a morphism in $\til{\cD}$, not in $\cU\times\cV$, so we need to determine how to interpret $\cB$ as a map on morphisms in $\til{\cD}$.

In fact, because $\cB$ is by assumption bilinear on morphisms, we have, for any $M_1, M_2\in\mathrm{Ob}(\cU)$ and $W_1, W_2\in\mathrm{Ob}(\cV)$, a linear map
\begin{align*}
 \hom_\cU(M_1,M_2)\otimes\hom_\cV(W_1,W_2) &\rightarrow\hom_\cC(\cB(M_1,W_1),\cB(M_2,W_2))\nonumber\\
 f\otimes g & \mapsto \cB(f,g).
\end{align*}
Combining this with Proposition \ref{prop:proj_cover_homs} yields a linear map
\begin{equation*}
\cB: \hom_{\cD(\cU,\cV)}(M_1\otimes W_1,M_2\otimes W_2)\longrightarrow\hom_\cC(\cB(M_1,W_1),\cB(M_2,W_2)).
\end{equation*}
 We use this map $\cB$ to define $\cG: \til{\cD}\rightarrow\cC$ on morphisms, and then it is straightforward to show that $\cG$ is a $\CC$-linear functor. Moreover, if $X=M\otimes W$ is in the image of $\otimes: \cU\times\cV\rightarrow\til{\cD}$, then we can choose $q_X^{(i)}=\pi_X^{(i)}=\Id_X$, so we may assume $\cG\circ\otimes=\cB$.
 
Now fix $X_1,X_2,X_3\in\mathrm{Ob}(\cD)$ and $X\cong\bigoplus_i M_i\otimes W_i\in\mathrm{Ob}(\til{\cD}\cap\cD_{X_1,X_2,X_3})$. Recall from \eqref{eqn:DX1X2X3_def} that $\cD_{X_1,X_2,X_3}=\cD(\til{\cU},\til{\cV})$ for certain finite abelian subcategories $\til{\cU}\subseteq\cU$ and $\til{\cV}\subseteq\cV$. By Proposition \ref{prop:tens_prod_in_D(U,V)}, each $M_i\in\mathrm{Ob}(\til{\cU})$ and each $W_i\in\mathrm{Ob}(\til{\cV})$. Thus for any $i$,
there are right exact sequences
\begin{equation*}
Q_M\xrightarrow{q_M} P_M\xrightarrow{p_M} M_i\longrightarrow 0,\qquad Q_W\xrightarrow{q_W} P_W\xrightarrow{p_W} W_i\longrightarrow 0
\end{equation*}
such that $P_M$, $Q_M$ are projective in $\til{\cU}$ and $P_W$, $Q_W$ are projective in $\til{\cV}$. Then
\begin{equation*}
(Q_M\otimes P_W)\oplus(P_M\otimes Q_W) \xrightarrow{F} P_M\otimes P_W\xrightarrow{p_M\otimes p_W} M_i\otimes W_i\longrightarrow 0
\end{equation*}
is right exact in $\cD_{X_1,X_2,X_3}$, where $F=(q_M\otimes\Id_{P_W})\circ\pi_1+(\Id_{P_M}\otimes q_W)\circ\pi_2$, and the domains of $F$ and $p_M\otimes p_W$ are projective in $\cD_{X_1,X_2,X_3}$ by Theorem \ref{thm:proj_covers_in_C}.

Now because $\cB$ is right exact in both variables, Lemma \ref{lem:cokernel_of_tens_prod} implies that
\begin{equation*}
\cB(Q_M, P_W)\oplus\cB(P_M, Q_W) \xrightarrow{\til{F}} \cB(P_M, P_W)\xrightarrow{\cB(p_M, p_W)} \cB(M_i, W_i)\longrightarrow 0
\end{equation*}
is right exact in $\cC$, where $\til{F}=\cB(q_M,\Id_{P_W})\circ\pi_1+\cB(\Id_{P_M},q_W)\circ\pi_2$. Because $\cG\circ\otimes=\cB$, $\cG(P_M\otimes P_W)=\cB(P_M,P_W)$, $\cG(M_i\otimes W_i)=\cB(M_i,W_i)$, and $\cG(p_M\otimes p_W)=\cB(p_M,p_W)$, Because $\cG$ is also $\CC$-linear and thus preserves direct sums up to natural isomorphism, there is an isomorphism
\begin{equation*}
G: \cG((Q_M\otimes P_W)\oplus(P_M\otimes Q_W))\longrightarrow \cB(Q_M,P_W)\oplus\cB(P_M,Q_W)
\end{equation*}
such that $\cG(F)=\til{F}\circ G$. Thus $(\cG(M_i\otimes W_i), \cG(p_M\otimes p_W))$ is a cokernel of $\cG(F)$ for each $i$. Since $\cG$ is $\CC$-linear and thus preserves direct sums, we can then take the direct sum over $i$ of the above right exact sequences to get a right exact sequence
\begin{equation*}
Q_X \xrightarrow{q_X} P_X \xrightarrow{p_X} X \longrightarrow 0
\end{equation*}
such that $Q_X$ and $P_X$ are projective objects of $\cD_{X_1,X_2,X_3}$ and $(\cG(X),\cG(p_X))$ is a cokernel of $\cG(q_X)$ in $\cC$. It now follows from Theorem \ref{thm:gen_extend_to_right_exact} that there is a unique right exact $\CC$-linear functor $\cF:\cD(\cU,\cV)\rightarrow \cC$ such that $\cF\vert_{\til{\cD}}\cong\cG$.

As in Remark \ref{rem:equal_vs_nat_iso}, we can modify $\cF$ if necessary so that $\cF\vert_{\til{\cD}}$ is precisely equal to $\cG$, so
\begin{equation*}
\cF\circ\otimes=\cF\vert_{\til{\cD}}\circ\otimes = \cG\circ\otimes =\cB.
\end{equation*}
It remains to show that if $\til{\cF}: \cD(\cU,\cV)\rightarrow\cC$ is any right exact functor such that $\til{\cF}\circ\otimes =\cB$, then $\til{\cF}\cong\cF$. By the uniqueness of $\cF$, it is enough to show that $\til{\cF}\circ\otimes=\cB$ implies $\til{\cF}\vert_{\til{\cD}}\cong \cG$. Indeed, since all objects in $\til{\cD}$ are finite direct sums of objects in the image of $\otimes$, and since both $\til{\cF}\vert_{\til{\cD}}$ and $\cG$ are $\CC$-linear and thus preserve finite direct sums up to natural isomorphism, both $\til{\cF}\vert_{\til{\cD}}$ and $\cG$ are uniquely determined by their restrictions to the image of $\otimes$. In particular, it is straightforward to construct a natural isomorphism $\til{\cF}\vert_{\til{\cD}}\rightarrow\cG$. This completes the proof that $(\cD(\cU,\cV),\otimes)$ is a Deligne tensor product of $\cU$ and $\cV$.
\end{proof}

\section{Tensor categories for tensor product vertex operator algebras}

Let $U$ and $V$ be vertex operator algebras. In this section, we construct tensor categories of $U\otimes V$-modules from tensor categories of $U$- and $V$-modules. The goal is to show that if $\cU$ and $\cV$ are locally finite braided tensor categories of $U$- and $V$-modules, then the category $\cD(\cU,\cV)$ of $U\otimes V$-modules admits vertex algebraic braided tensor category structure, as in \cite{HLZ1}-\cite{HLZ8}, which is equivalent to the braided tensor structure on the Deligne tensor product $\cU\otimes\cV$ inherited from the braided tensor categories $\cU$ and $\cV$.

\subsection{Fusion product modules}

In this subsection, we prove that under fairly general conditions, the fusion product of two vector space tensor product $U\otimes V$-modules is a tensor product of two fusion products. We will prove this in the following setting:
\begin{assum}\label{assum:fus_1}
Let $\cU$, $\cV$, and $\cD$ be categories of generalized $U$-modules, $V$-modules, and $U\otimes V$-modules, respectively, which satisfy the following conditions:
\begin{itemize}

\item All objects of $\cD$ decompose into generalized eigenspaces for $L_U(0)$ and $L_V(0)$, that is, objects of $\cD$ are generalized $U$-modules and generalized $V$-modules.

\item If $M\in\mathrm{Ob}(\cU)$ and $W\in\mathrm{Ob}(\cV)$, then $M\otimes W\in\mathrm{Ob}(\cD)$.

\item The conformal weights of any object in $\cU$ or $\cV$ are contained in finitely many cosets of $\CC/\ZZ$.

\item If $\mathcal{Y}$ is a $U$-module intertwining operator of type $\binom{X}{M_1\,M_2}$ where $M_1, M_2\in\mathrm{Ob}(\cU)$ and $X\in\mathrm{Ob}(\mathcal{D})$, then $\im\,\cY\in\mathrm{Ob}(\mathcal{U})$.
Similarly, if $\mathcal{Y}$ is a $V$-module intertwining operator of type $\binom{X}{W_1\,W_2}$ where $W_1, W_2\in\mathrm{Ob}(\cV)$ and $X\in\mathrm{Ob}(\mathcal{D})$, then $\im\,\cY\in\mathrm{Ob}(\mathcal{V})$.

\end{itemize}
\end{assum}

The third assumption in Assumption \ref{assum:fus_1} guarantees that for any $U$-module intertwining operator $\cY_U$ of type $\binom{M_3}{M_1\,M_2}$ and any $V$-module intertwining operator $\cY_V$ of type $\binom{W_3}{W_1\,W_2}$, where $M_i\in\mathrm{Ob}(\cU)$ and $W_i\in\mathrm{Ob}(\cV)$ for $i=1,2,3$, then
\begin{align*}
\cY_U\otimes\cY_V: (M_1\otimes W_1)\otimes(M_2\otimes W_2) & \rightarrow (M_3\otimes W_3)[\log x]\lbrace x\rbrace\nonumber\\
(m_1\otimes w_1)\otimes(m_2\otimes w_2) & \mapsto\cY_U(m_1,x)m_2\otimes\cY_V(w_1,x)w_2
\end{align*}
is a well-defined linear map which satisfies lower truncation, and is moreover a $U\otimes V$-module intertwining operator of type $\binom{M_3\otimes W_3}{M_1\otimes W_1\,M_2\otimes W_2}$; see \cite[Section 2.2]{ADL}. In particular, if $(M_1\tens_U M_2,\cY_{M_1,M_2})$ is a fusion product of $M_1$ and $M_2$ in $\cU$, and if $(W_1\tens_V W_2,\cY_{W_1,W_2})$ is a fusion product of $W_1$ and $W_2$ in $\cV$, then 
\begin{equation*}
\cY_{M_1,M_2}\otimes\cY_{W_1,W_2}: (M_1\otimes W_1)\otimes(M_2\otimes W_2)\longrightarrow(M_1\tens_U M_2)\otimes(W_1\tens_V W_2)[\log x]\lbrace x\rbrace
\end{equation*}
is a $U\otimes V$-module intertwining operator, where all three $U\otimes V$-modules are objects of $\cD$.

We will show that $\cY_{M_1,M_2}\otimes\cY_{W_1,W_2}$ is a fusion product intertwining operator. To do so, we need a lemma which is a version of \cite[Proposition 13.18]{DL} and \cite[Proposition 2.11]{ADL}; it is proved the same way as \cite[Lemma B.1]{McR-cosets}. In the statement of the lemma, $X_{[h]}$ denotes the generalized $L_V(0)$-eigenspace of eigenvalue $h$ inside a weak $U\otimes V$-module $X$ that decomposes into generalized $L_V(0)$-eigenspaces, and $P_h$ denotes the projection onto $X_{[h]}$ with respect to the generalized $L_V(0)$-eigenspace decomposition of $X$. Since the actions of $U$ and $V$ commute on weak $U\otimes V$-modules, each $X_{[h]}$ is a weak $U$-submodule of $X$.
\begin{lemma}\label{lem:V-mod_intw_op}
Let $M_1$ and $M_2$ be weak $U$-modules, $W_1$ and $W_2$ generalized $V$-modules, and $X$ a weak $U\otimes V$-module that decomposes into generalized $L_V(0)$-eigenspaces. If $\cY$ is a $U\otimes V$-module intertwining operator of type $\binom{X}{M_1\otimes W_1\,M_2\otimes W_2}$, then for $w_1\in W_1$, $w_2\in W_2$, $h\in\CC$,
\begin{equation*}
\cY_{w_1,w_2}^{(h)}=x^{-L_V(0)} P_h\cY(\cdot\otimes x^{L_V(0)}w_1,x)(\cdot\otimes x^{L_V(0)}w_2)
\end{equation*}
is a weak $U$-module intertwining operator of type $\binom{X_{[h]}}{M_1\,M_2}$.
\end{lemma}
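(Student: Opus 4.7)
The plan is to verify directly the three defining axioms of a $U$-module intertwining operator for $\cY_{w_1,w_2}^{(h)}$, which is a vertex-algebraic analogue of standard results (cf.\ \cite[Proposition 13.18]{DL} and \cite[Proposition 2.11]{ADL}). The key structural inputs are the decomposition $\omega=\omega_U\otimes\vac_V+\vac_U\otimes\omega_V$ of the $U\otimes V$ conformal vector, which makes $u\otimes\vac_V$ annihilated by both $L_V(0)$ and $L_V(-1)$ for any $u\in U$, together with the fact that $L_V(0)$ commutes with the $U$-action on every weak $U\otimes V$-module.

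First I would check that $\cY_{w_1,w_2}^{(h)}(m_1,x)m_2$ is a well-defined element of $X_{[h]}[\log x]\lbrace x\rbrace$. Since $w_1$ and $w_2$ lie in finitely many generalized $L_V(0)$-eigenspaces, each $x^{L_V(0)}w_i$ expands as a finite sum $\sum_{h_i,j_i} x^{h_i}(\log x)^{j_i}w_i^{(h_i,j_i)}$, so applying $\cY$ and then $P_h$ yields an element of $X_{[h]}[\log x]\lbrace x\rbrace$. The outer $x^{-L_V(0)}$ acts on $X_{[h]}$ as $x^{-h}\exp(-(L_V(0)-h)\log x)$, and since $L_V(0)-h$ is locally nilpotent there, this produces only finitely many extra powers of $\log x$ per $x$-monomial. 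Lower truncation then follows from lower truncation of $\cY$ applied to the finitely many pairs $(m_1\otimes w_1^{(h_1,j_1)},m_2\otimes w_2^{(h_2,j_2)})$, since the finite prefactors $x^{h_1+h_2}(\log x)^{j_1+j_2}$ preserve lower-boundedness in $x$.

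For the Jacobi identity, I would substitute $v=u\otimes\vac_V$ into the $U\otimes V$ Jacobi identity for $\cY$. The three vertex operators $Y_X(u\otimes\vac_V,x_1)$, $Y_{M_1\otimes W_1}(u\otimes\vac_V,x_0)$, and $Y_{M_2\otimes W_2}(u\otimes\vac_V,x_1)$ act trivially on the $V$-factor and are identified with $Y_X(u,x_1)$, $Y_{M_1}(u,x_0)\otimes\Id_{W_1}$, and $Y_{M_2}(u,x_1)\otimes\Id_{W_2}$ respectively. Because $L_V(0)(u\otimes\vac_V)=L_V(-1)(u\otimes\vac_V)=0$, the standard commutator formula gives $[L_V(0),Y_X(u\otimes\vac_V,x_1)]=0$; hence $P_h$ and $x_2^{-L_V(0)}$ commute with $Y_X(u\otimes\vac_V,x_1)$, which also preserves $X_{[h]}$ and restricts there to $Y_{X_{[h]}}(u,x_1)$. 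Applying $x_2^{-L_V(0)}P_h$ to the Jacobi identity for $\cY$ therefore produces precisely the required Jacobi identity for $\cY_{w_1,w_2}^{(h)}$.

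The main technical obstacle is the $L_U(-1)$-derivative property, because differentiating $\cY_{w_1,w_2}^{(h)}(m_1,x)m_2$ in $x$ picks up contributions from three sources: the $L(-1)$-derivative axiom for $\cY$ itself (producing both an $L_U(-1)m_1$ term and an $L_V(-1)x^{L_V(0)}w_1$ term), the explicit $x$-dependence of the two inputs $x^{L_V(0)}w_i$, and the outer conjugation $x^{-L_V(0)}$. Using the commutations $L_V(-1)x^{L_V(0)}=x^{-1}x^{L_V(0)}L_V(-1)$ (derived from $[L_V(0),L_V(-1)]=L_V(-1)$) and $\partial_x x^{L_V(0)}=x^{-1}x^{L_V(0)}L_V(0)$, all the extra $V$-contributions collect into $x^{-1}$ times $\cY$ evaluated with $(L_V(-1)+L_V(0))w_1$ at the first slot and $L_V(0)w_2$ at the second. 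The standard commutator formula $[L_V(0),\cY(a,x)]=\cY(L_V(0)a,x)+x\,\cY(L_V(-1)a,x)$, applied with $a=m_1\otimes x^{L_V(0)}w_1$, then identifies this sum as $x^{-1}L_V(0)\Phi(x)$, where $\Phi(x):=\cY(m_1\otimes x^{L_V(0)}w_1,x)(m_2\otimes x^{L_V(0)}w_2)$. This $x^{-1}L_V(0)\Phi(x)$ contribution cancels precisely against the term coming from $\partial_x(x^{-L_V(0)})=-x^{-1}L_V(0)x^{-L_V(0)}$, leaving exactly $\cY_{w_1,w_2}^{(h)}(L_U(-1)m_1,x)m_2$ as required.
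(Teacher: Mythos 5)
Your proposal is correct, and it supplies exactly the argument that the paper delegates to its citations: the paper gives no proof of this lemma beyond stating that it "is proved the same way as \cite[Lemma B.1]{McR-cosets}" (itself a variant of \cite[Proposition 13.18]{DL} and \cite[Proposition 2.11]{ADL}), and that standard proof is precisely the direct verification you carry out — Jacobi identity via substituting $u\otimes\vac_V$ and commuting $x^{-L_V(0)}P_h$ past the $U$-action, and the $L_U(-1)$-derivative property via the cancellation of the $x^{-1}L_V(0)\Phi(x)$ terms using the commutator formula $[L_V(0),\cY(a,x)]=\cY(L_V(0)a,x)+x\,\cY(L_V(-1)a,x)$. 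Your bookkeeping of the three sources of $x$-dependence and their cancellation against $\partial_x(x^{-L_V(0)})$ is accurate.
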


Using this lemma, we now prove the main result of this subsection, which generalizes \cite[Lemma 2.16]{Lin}, \cite[Proposition 3.3]{CKLR}, and \cite[Theorem 5.2(1)]{CKM2}:
\begin{thm}\label{thm:dist_fus_over_tens}
Under Assumption \ref{assum:fus_1}, let $(M_1\tens_U M_2,\cY_{M_1,M_2})$ be a fusion product of $M_1$ and $M_2$ in $\cU$ and let $(W_1\tens_V W_2,\cY_{W_1,W_2})$ be a fusion product of $W_1$ and $W_2$ in $\cV$. Then
\begin{equation*}
((M_1\tens_U M_2)\otimes(W_1\tens_V W_2),\cY_{M_1,M_2}\otimes\cY_{W_1,W_2})
\end{equation*}
is a fusion product of $M_1\otimes W_1$ and $M_2\otimes W_2$ in $\cD$.
\end{thm}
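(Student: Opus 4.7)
The plan is to exploit Lemma \ref{lem:V-mod_intw_op} to convert the $U\otimes V$-module intertwining operator $\cY$ into a family of $U$-module intertwining operators, invoke the universal property of $M_1\tens_U M_2$ in $\cU$, reassemble the resulting homomorphisms into a $V$-module intertwining operator, and then invoke the universal property of $W_1\tens_V W_2$ in $\cV$. Given $X\in\mathrm{Ob}(\cD)$ and a $U\otimes V$-module intertwining operator $\cY$ of type $\binom{X}{M_1\otimes W_1\,M_2\otimes W_2}$, the goal is to produce a unique $U\otimes V$-module homomorphism $f:(M_1\tens_U M_2)\otimes(W_1\tens_V W_2)\to X$ satisfying $f\circ(\cY_{M_1,M_2}\otimes\cY_{W_1,W_2})=\cY$.

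Concretely, for each $w_1\in W_1$, $w_2\in W_2$, and $h\in\CC$, Lemma \ref{lem:V-mod_intw_op} provides a $U$-module intertwining operator $\cY^{(h)}_{w_1,w_2}$ of type $\binom{X_{[h]}}{M_1\,M_2}$, where $X_{[h]}$ is the generalized $L_V(0)$-eigenspace of $X$ with eigenvalue $h$. Its image lies in $\mathrm{Ob}(\cU)$ by the fourth bullet of Assumption \ref{assum:fus_1}, so the universal property of $M_1\tens_U M_2$ yields a unique $U$-module homomorphism $F^{(h)}_{w_1,w_2}:M_1\tens_U M_2\to X_{[h]}$ with $F^{(h)}_{w_1,w_2}\circ\cY_{M_1,M_2}=\cY^{(h)}_{w_1,w_2}$. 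For fixed $\tilm\in M_1\tens_U M_2$, these homomorphisms are reassembled by inverting the formula of Lemma \ref{lem:V-mod_intw_op}:
\begin{equation*}
\cZ_{\tilm}(w_1,x)w_2=\sum_{h} x^{L_V(0)}\, F^{(h)}_{x^{-L_V(0)}w_1,\,x^{-L_V(0)}w_2}(\tilm).
\end{equation*}
One then shows that $\cZ_{\tilm}$ is a $V$-module intertwining operator of type $\binom{X}{W_1\,W_2}$; since $X\in\mathrm{Ob}(\cD)$, Assumption \ref{assum:fus_1} gives $\im\,\cZ_{\tilm}\in\mathrm{Ob}(\cV)$, so the universal property of $W_1\tens_V W_2$ produces a unique $V$-module homomorphism $G_{\tilm}:W_1\tens_V W_2\to X$ with $G_{\tilm}\circ\cY_{W_1,W_2}=\cZ_{\tilm}$. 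Setting $f(\tilm\otimes\tilw)=G_{\tilm}(\tilw)$, linearity in $\tilm$ follows from the uniqueness built into the universal property of $W_1\tens_V W_2$; $V$-linearity is immediate; $U$-linearity follows from the $U$-linearity of each $F^{(h)}_{w_1,w_2}$ together with the fact that the commuting actions of $U$ and $V$ make each $X_{[h]}$ a $U$-submodule of $X$; and $f\circ(\cY_{M_1,M_2}\otimes\cY_{W_1,W_2})=\cY$ is the composite of the two universal factorizations, traced through Lemma \ref{lem:V-mod_intw_op}. Uniqueness of $f$ follows from the surjectivity of $\cY_{M_1,M_2}$ and $\cY_{W_1,W_2}$ as fusion product intertwining operators, combined with the $L_U(0)$- and $L_V(0)$-grading decompositions, which separate individual coefficients and pin down $f$ on a spanning set.

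The principal technical obstacle is proving that $\cZ_{\tilm}$ is genuinely a $V$-module intertwining operator. Because $L_V(0)$ need not act semisimply, the formal expressions $x^{\pm L_V(0)}$ contribute honest $\log x$ terms, and one must first show that the sum over $h$ reduces to finitely many nonzero terms on $L_V(0)$-homogeneous inputs (using the grading-restriction hypothesis of Assumption \ref{assum:fus_1}). Lower truncation, the $V$-Jacobi identity, and the $L(-1)$-derivative property for $\cZ_{\tilm}$ must then be deduced from the corresponding properties of $\cY$ by running Lemma \ref{lem:V-mod_intw_op} in reverse; this bookkeeping with generalized eigenspaces and $\log x$-expansions is the most delicate part of the argument. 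Once $\cZ_{\tilm}$ is established as an intertwining operator, the rest of the proof is a formal consequence of the two universal properties in $\cU$ and $\cV$.
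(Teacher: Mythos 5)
Your proposal follows essentially the same route as the paper's proof: apply Lemma \ref{lem:V-mod_intw_op} to slice $\cY$ into $U$-module intertwining operators, factor through $M_1\tens_U M_2$, reassemble into a $V$-module intertwining operator, factor through $W_1\tens_V W_2$, and conclude by surjectivity of the fusion intertwining operators. The one step you leave as a sketch---that $\cZ_{\tilm}$ is genuinely a $V$-module intertwining operator---is handled in the paper exactly in the spirit you indicate: by surjectivity of $\cY_{M_1,M_2}$ one reduces to $\tilm$ a coefficient of $\cY_{M_1,M_2}(m_1,y)m_2$, computes that $\cZ_{\tilm}$ then equals $x^{-L_U(0)}P^{\eta}\cY(x^{L_U(0)}m_1\otimes\cdot\,,x)(x^{L_U(0)}m_2\otimes\cdot)$, and applies the lemma with the roles of $U$ and $V$ reversed, so no direct verification of the Jacobi identity is needed.
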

\begin{proof}
We need to show that for any $U\otimes V$-module intertwining operator $\cY$ of type $\binom{X}{M_1\otimes W_1\,M_2\otimes W_2}$ where $X\in\mathrm{Ob}(\cD)$, there is a unique $U\otimes V$-module map
\begin{equation*}
F: (M_1\tens_U M_2)\otimes(W_1\tens_V W_2)\longrightarrow X
\end{equation*}
such that $F\circ(\cY_{M_1,M_2}\otimes\cY_{W_1,W_2})=\cY$. For any such $\cY$, and for any $w_1\in W_1$, $w_2\in W_2$, $h\in\CC$, consider the intertwining operator $\cY^{(h)}_{w_1,w_2}$ of type $\binom{X_{[h]}}{M_1\,M_2}$ from the lemma. Since $X\in\mathrm{Ob}(\cD)$, Assumption \ref{assum:fus_1} implies that $\im\,\cY^{(h)}_{w_1,w_2}\in\mathrm{Ob}(\cU)$. Thus by the universal property of fusion products in $\cU$, there is a unique $U$-module map
\begin{equation*}
f^{(h)}_{w_1,w_2}: M_1\tens_U M_2\longrightarrow X_{[h]}
\end{equation*}
such that $f^{(h)}_{w_1,w_2}\circ\cY_{M_1,M_2}=\cY^{(h)}_{w_1,w_2}$. The map $(w_1,w_2)\mapsto f^{(h)}_{w_1,w_2}$ is bilinear, so for any $m\in M_1\tens_U M_2$, there is a linear map
\begin{align*}
\cY_m: W_1\otimes W_2 & \rightarrow X[\log x]\lbrace x\rbrace\nonumber\\
 w_1\otimes w_2 & \mapsto \sum_{h\in\CC} x^{L_V(0)} f^{(h)}_{x^{-L_V(0)} w_1,x^{-L_V(0)} w_2}(m),
\end{align*}
where for homogeneous $w_1\in W_1$, $w_2\in W_2$,
\begin{equation*}
f^{(h)}_{x^{-L_V(0)} w_1,x^{-L_V(0)} w_2} =x^{-\mathrm{wt}_V\,w_1-\mathrm{wt}_V\,w_2}\sum_{j,k\in\NN} \frac{(-1)^{j+k}}{j!k!} (\log x)^{j+k} f^{(h)}_{L_V^{nil}(0)^j w_1, L_V^{nil}(0)^k w_2},
\end{equation*}
 and $L_V^{nil}(0)$ denotes the locally nilpotent part of $L_V(0)$ acting on a generalized $V$-module.

We claim that $\cY_m$ is a $V$-module intertwining operator of type $\binom{X}{W_1\,W_2}$ for any $m\in M_1\tens_U M_2$. To prove this, we may take $m$ to be the coefficient of $y^{h}(\log y)^0$ in
$\cY_{M_1,M_2}(m_1,y)m_2$ for some $h\in\CC$ and homogeneous $m_1\in M_1$, $m_2\in M_2$; such coefficients span $M_1\tens_U M_2$ by \cite[Lemma 2.2]{MY} since $\cY_{M_1,M_2}$ is surjective. Thus $\cY_m(w_1,x)w_2$ is a coefficient of
\begin{align*}
\sum_{h\in\CC} x^{L_V(0)} & f^{(h)}_{x^{-L_V(0)} w_1,x^{-L_V(0)} w_2}(\cY_{M_1,M_2}(m_1,y)m_2)\nonumber\\
& =\sum_{h\in\CC} x^{L_V(0)}\cY^{(h)}_{x^{-L_V(0)} w_1, x^{-L_V(0)} w_2}(m_1,y)m_2\nonumber\\
& =\sum_{h\in\CC} \left(\frac{x}{y}\right)^{L_V(0)} P_h\cY\bigg(m_1\otimes\left(\frac{x}{y}\right)^{-L_V(0)}w_1,y\bigg)\bigg(m_2\otimes\left(\frac{x}{y}\right)^{-L_V(0)}w_2\bigg)\nonumber\\
& =\sum_{h\in\CC} x^{L_V(0)} P_h y^{L_U(0)}\cY\left( y^{-L_U(0)} m_1\otimes x^{-L_V(0)} w_1,1\right)(y^{-L_U(0)} m_2\otimes x^{-L_V(0)} w_2)\nonumber\\
& = x^{L_V(0)} y^{L_U(0)}\cY\left( y^{-L_U(0)} m_1\otimes x^{-L_V(0)} w_1,1\right)(y^{-L_U(0)} m_2\otimes x^{-L_V(0)} w_2)\nonumber\\
& =\left(\frac{y}{x}\right)^{L_U(0)} x^{L(0)}\cY\left( y^{-L_U(0)} m_1\otimes x^{-L_V(0)} w_1,1\right)(y^{-L_U(0)} m_2\otimes x^{-L_V(0)} w_2)\nonumber\\
& =\left(\frac{x}{y}\right)^{-L_U(0)} \cY\bigg( \left(\frac{x}{y}\right)^{L_U(0)} m_1\otimes w_1,x\bigg)\bigg(\left(\frac{x}{y}\right)^{L_U(0)} m_2\otimes w_2\bigg),
\end{align*}
using the $L(0)$-conjugation property \cite[Proposition 3.36(b)]{HLZ2} for the intertwining operator $\cY$. We now extract the coefficient of $y^{\eta-\mathrm{wt}\,m_1-\mathrm{wt}\,m_2}(\log y)^0$, that is, we are taking
\begin{equation*}
m=(m_1)^{\cY_{M_1,M_2}}_{\mathrm{wt}_U\,m_1+\mathrm{wt}_U\,m_2-\eta-1;0} m_2
\end{equation*}
for arbitrary $\eta\in\CC$. Since $\mathrm{wt}_U\,m=\eta$ and $f^{(h)}_{w_1,w_2}$ is a $U$-module homomorphism for any $w_1\in W_1$, $w_2\in W_2$, and $h\in\CC$, the definition of $\cY_m$ implies that $\mathrm{Im}\,\cY_m\subseteq X^{[\eta]}$, where $X^{[\eta]}$ is the generalized $L_U(0)$-eigenspace of $X$ with eigenvalue $\eta$. Let $P^\eta: X\rightarrow X^{[\eta]}$ denote the projection with respect to the generalized $L_U(0)$-eigenspace decomposition of $X$. Then it follows that $\cY_m(w_1,x)w_2$ is the constant term in $\log y$ in
\begin{equation*}
e^{L_U^{nil}(0) \log y} x^{-L_U(0)} P^\eta\cY(x^{L_U(0)} e^{-L_U^{nil}(0) \log y} m_1\otimes w_1,x)(x^{L_U(0)} e^{-L_U^{nil}(0) \log y} m_2\otimes w_2).
\end{equation*}
That is,
\begin{equation*}
\cY_m(w_1,x)w_2= x^{-L_U(0)} P^\eta\cY(x^{L_U(0)}  m_1\otimes w_1,x)(x^{L_U(0)} m_2\otimes w_2),
\end{equation*}
which is a $V$-module intertwining operator of type  $\binom{X^{[\eta]}}{W_1\,W_2}$ by Lemma \ref{lem:V-mod_intw_op} with the roles of $U$ and $V$ reversed.

Since each $\cY_m$ is an intertwining operator, its image in $X$ is an object of $\cV$ by assumption. Thus for each $m\in M_1\tens_U M_2$, there is a unique $V$-module homomorphism
\begin{equation*}
f_m: W_1\boxtimes_V W_2\longrightarrow X
\end{equation*}
such that $f_m\circ\cY_{W_1,W_2}=\cY_m$. Since $\cY_m$ is linear in $m$, so is $f_m$, and thus we can now define
\begin{equation*}
F: (M_1\tens_U M_2)\otimes(W_1\boxtimes_V W_2)\longrightarrow X
\end{equation*}
by
\begin{equation*}
F(m\otimes w) = f_m(w)
\end{equation*}
for $m\in M_1\tens_U M_2$ and $w\in W_1\tens_V W_2$. This map is a $V$-module homomorphism by definition, but we need to check that it is also a $U$-module homomorphism. Since $\cY_{W_1,W_2}$ is a surjective intertwining operator of type $\binom{W_1\tens_V W_2}{W_1\,W_2}$, it is enough to show that
\begin{equation*}
u_n F(m\otimes\cY_{W_1,W_2}(w_1,y)w_2)=F(u_n m\otimes\cY_{W_1,W_2}(w_1,y)w_2)
\end{equation*}
for $u\in U$, $n\in\ZZ$, $m\in M_1\tens_U M_2$, $w_1\in W_1$, and $w_2\in W_2$. Indeed,
\begin{align*}
u_n F(m & \otimes\cY_{W_1,W_2}(w_1,y)w_2)  = u_nf_m(\cY_{W_1,W_2}(w_1,y)w_2) =u_n\cY_m(w_1,y)w_2\nonumber\\
& =\sum_{h\in\CC} u_n y^{L_V(0)} f^{(h)}_{y^{-L_V(0)} w_1, y^{-L_V(0)} w_2}(m) =\sum_{h\in\CC} y^{L_V(0)} f^{(h)}_{y^{-L_V(0)} w_1, y^{-L_V(0)} w_2}(u_n m)\nonumber\\
& =\cY_{u_n m}(w_1,y)w_2=f_{u_n m}(\cY_{W_1,W_2}(w_1,y)w_2) = F(u_n m\otimes\cY_{W_1,W_2}(w_1,y)w_2),
\end{align*}
as required.

Finally, we need to show that $F_\cY\circ(\cY_{M_1,M_2}\otimes\cY_{W_1,W_2})=\cY$. For $m_1\in M_1$, $w_1\in W_1$, $m_2\in M_2$, and $w_2\in W_2$, we calculate
\begin{align*}
 F & \left((\cY_{M_1,M_2}\otimes\cY_{W_1,W_2})(m_1 \otimes w_1,x)(m_2\otimes w_2)\right) \nonumber\\ 
 &=F(\cY_{M_1,M_2}(m_1,x)m_2\otimes\cY_{W_1,W_2}(w_1,x)w_2) = f_{\cY_{M_1,M_2}(m_1,x)m_2}(\cY_{W_1,W_2}(w_1,x)w_2)\nonumber\\
& =\sum_{h\in\CC} x^{L_V(0)}f^{(h)}_{x^{-L_V(0)} w_1,x^{-L_V(0)} w_2}(\cY_{M_1,M_2}(m_1,x)m_2)\nonumber\\
& =\sum_{h\in\CC} x^{L_V(0)}\cY^{(h)}_{x^{-L_V(0)} w_1, x^{-L_V(0)} w_2}(m_1,x)m_2 = \cY(m_1\otimes w_1,x)(m_2\otimes w_2),
\end{align*}
using the definitions. This shows the existence of the required $F$, and the uniqueness follows because $\cY_{M_1,M_2}\otimes\cY_{W_1,W_2}$ is a surjective intertwining operator, which is proved exactly as in \cite[Theorem 5.2]{CKM2}.
\end{proof}

The above theorem implies that if fusion products exist in $\cU$ and $\cV$, then so do fusion products of vector space tensor product modules in $\cD$. We would like to extend existence of fusion products to more general objects of $\cD$. First, assume that $\cD$ is closed under finite direct sums, and define $\til{\cD}$ to be the full subcategory of $\cD$ consisting of finite direct sums of generalized $U\otimes V$-modules $M\otimes W$, where $M\in\mathrm{Ob}(\cU)$ and $W\in\mathrm{Ob}(\cV)$. By Theorem \ref{thm:dist_fus_over_tens} and \cite[Proposition 4.24]{HLZ3}, every pair of objects of $\til{\cD}$ has a fusion product in $\til{\cD}$ if $\cU$ and $\cV$ are both closed under fusion products. Indeed, there is a natural isomorphism
\begin{equation*}
\bigg(\bigoplus_{i_1\in I_1} M_{i_1}\otimes W_{i_1}\bigg)\tens\bigg(\bigoplus_{i_2\in I_2} M_{i_2}\otimes W_{i_2}\bigg)\cong\bigoplus_{(i_1,i_2)\in I_1\times I_2} (M_{i_1}\tens_U M_{i_2})\otimes(W_{i_1}\tens_V W_{i_2})
\end{equation*}
under these assumptions. To get fusion products for all pairs of objects in $\cD$, we now impose some additional conditions:
\begin{assum}\label{assum:fus_2}
The categories $\cU$, $\cV$, and $\cD$ satisfy:
\begin{itemize}

\item Every pair of objects in $\cU$ (or $\cV$) has a fusion product in $\cU$ (or $\cV$).

\item The category $\cD$ is closed under finite direct sums and cokernels, and every object of $\cD$ is isomorphic to a cokernel of a morphism in $\til{\cD}$.

\end{itemize}
\end{assum}

Under this assumption, fusion products of objects of $\til{\cD}$ exist, and we can also take fusion products of morphisms in $\til{\cD}$. That is, for morphisms $f_1: P_1\rightarrow Q_1$ and $f_2: P_2\rightarrow Q_2$ in $\til{\cD}$, 
$f_1\tens f_2: P_1\tens P_2 \rightarrow Q_1\tens Q_2$
is the unique morphism such that
\begin{equation}\label{eqn:fus_prod_of_morph}
(f_1\tens f_2)\circ\cY_{P_1,P_2} =\cY_{Q_1,Q_2}\circ(f_1\otimes f_2),
\end{equation}
induced by the universal property of the fusion product $(P_1\tens P_2,\cY_{P_1,P_2})$, as in \eqref{eqn:fus_prod_of_morph_char}.

\begin{thm}\label{thm:fus_prods_in_C}
Under Assumptions \ref{assum:fus_1} and \ref{assum:fus_2}, every pair of objects in $\cD$ has a fusion product in $\cD$.
\end{thm}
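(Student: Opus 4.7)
The strategy is to construct the fusion product of arbitrary $X_1, X_2 \in \mathrm{Ob}(\cD)$ as a cokernel, by resolving $X_1$ and $X_2$ by objects of $\til{\cD}$ where fusion products are already available. By Assumption \ref{assum:fus_2}, choose for $i=1,2$ morphisms $f_i \colon P_i' \rightarrow P_i$ in $\til{\cD}$ such that $X_i$ is a cokernel of $f_i$, with quotient maps $p_i \colon P_i \twoheadrightarrow X_i$. Since Theorem \ref{thm:dist_fus_over_tens} (together with bilinearity of intertwining operators) already provides fusion products in $\cD$ for every pair of objects in $\til{\cD}$, with the universal property against arbitrary targets in $\cD$, we may form $P_1 \tens P_2$, $P_1' \tens P_2$, and $P_1 \tens P_2'$. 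I would then set
\begin{equation*}
F = (f_1 \tens \Id_{P_2}) \circ \pi_1 + (\Id_{P_1} \tens f_2) \circ \pi_2 \colon (P_1' \tens P_2) \oplus (P_1 \tens P_2') \longrightarrow P_1 \tens P_2,
\end{equation*}
and define $X_1 \tens X_2$ to be a cokernel of $F$ in $\cD$, with cokernel map $q \colon P_1 \tens P_2 \twoheadrightarrow X_1 \tens X_2$; this cokernel exists in $\cD$ by Assumption \ref{assum:fus_2}.

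Next I would define the fusion product intertwining operator by descending $q \circ \cY_{P_1, P_2}$ through $p_1 \otimes p_2$, setting
\begin{equation*}
\cY_{X_1, X_2}(p_1(a), x) p_2(b) = q\bigl(\cY_{P_1, P_2}(a, x) b\bigr)
\end{equation*}
for $a \in P_1$, $b \in P_2$. Well-definedness requires that $q(\cY_{P_1, P_2}(a, x) b) = 0$ whenever $p_1(a) = 0$, and symmetrically. Writing $a = f_1(a')$ with $a' \in P_1'$, the naturality relation \eqref{eqn:fus_prod_of_morph} gives
\begin{equation*}
\cY_{P_1, P_2}(f_1(a'), x) b = (f_1 \tens \Id_{P_2})\bigl(\cY_{P_1', P_2}(a', x) b\bigr),
\end{equation*}
and $q \circ (f_1 \tens \Id_{P_2}) = 0$ because $f_1 \tens \Id_{P_2}$ factors through $F$ via the inclusion $\pi_1^\vee$; the second slot is handled identically. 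The intertwining operator axioms for $\cY_{X_1, X_2}$ then transfer directly from those of $\cY_{P_1, P_2}$ since $q$ is a $U \otimes V$-module map.

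Finally, I would verify the universal property. Given any intertwining operator $\cY$ of type $\binom{Y}{X_1\ X_2}$ with $Y \in \mathrm{Ob}(\cD)$, the composition $\cY \circ (p_1 \otimes p_2)$ is an intertwining operator of type $\binom{Y}{P_1\ P_2}$, so by the universal property of $P_1 \tens P_2$ in $\cD$ there is a unique morphism $h \colon P_1 \tens P_2 \rightarrow Y$ with $h \circ \cY_{P_1, P_2} = \cY \circ (p_1 \otimes p_2)$. Using naturality, $p_i \circ f_i = 0$, and the universal properties of $P_1' \tens P_2$ and $P_1 \tens P_2'$, one checks that $h \circ (f_1 \tens \Id_{P_2}) = 0$ and $h \circ (\Id_{P_1} \tens f_2) = 0$, so $h \circ F = 0$ and $h$ factors uniquely as $g \circ q$ for some $g \colon X_1 \tens X_2 \rightarrow Y$. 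Then $g \circ \cY_{X_1, X_2} = \cY$ by construction, and uniqueness of $g$ follows from surjectivity of $q$ together with surjectivity of $\cY_{P_1, P_2}$. The main obstacle I anticipate is the well-definedness step — verifying that the descent of $q \circ \cY_{P_1, P_2}$ kills $\ker(p_1 \otimes p_2)$ at the level of intertwining operators — which is precisely why the construction of $F$ is engineered to absorb both $f_1 \tens \Id_{P_2}$ and $\Id_{P_1} \tens f_2$; the remainder is a standard cokernel and universal-property diagram chase.
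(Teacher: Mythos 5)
Your proposal is correct and follows essentially the same route as the paper: resolve $X_1$ and $X_2$ by right exact sequences with terms in $\til{\cD}$, define $X_1\tens X_2$ as the cokernel of $(f_1\tens\Id_{P_2})\circ\pi_1+(\Id_{P_1}\tens f_2)\circ\pi_2$, descend $q\circ\cY_{P_1,P_2}$ through the surjection $p_1\otimes p_2$ using \eqref{eqn:fus_prod_of_morph}, and verify the universal property by the same cokernel diagram chase (including the use of surjectivity of the fusion product intertwining operators to get $h\circ F=0$ and the uniqueness of $g$). The only differences are notational.
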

\begin{proof}
Let $X_1, X_2\in\mathrm{Ob}(\cD)$. We already know that $(X_1\tens X_2,\cY_{X_1,X_2})$ exists if $X_1,X_2\in\mathrm{Ob}(\til{\cD})$. In general, by Assumption \ref{assum:fus_2},  there are right exact sequences
\begin{equation*}
Q_i \xrightarrow{q_i} P_i \xrightarrow{p_i} X_i\longrightarrow 0
\end{equation*}
for $i=1,2$ such that $P_i, Q_i\in\mathrm{Ob}(\til{\cD})$. We then define $X_1\tens X_2$ to be the cokernel of
\begin{equation}\label{eqn:F_tens}
(q_1\tens\Id_{P_2})\circ\pi_1 +(\Id_{P_1}\tens q_2)\circ\pi_2: (Q_1\tens P_2)\oplus(P_1\tens Q_2) \longrightarrow P_1\tens P_2,
\end{equation}
where $\pi_1$ and $\pi_2$ are the projections to the first and second summands. Let $p: P_1\tens P_2\rightarrow X_1\tens X_2$ be the surjective cokernel morphism. Now consider the diagram:
\begin{equation*}
\xymatrixcolsep{1.5pc}
\xymatrix{
(Q_1\otimes P_2)\oplus(P_1\otimes Q_2) \ar[r]^(.6){F_\otimes} \ar[d]^{\cY_{Q_1,P_2}\oplus\cY_{P_1,Q_2}} & P_1\otimes P_2 \ar[r]^(.475){p_1\otimes p_2} \ar[d]^{\cY_{P_1,P_2}}  & X_1\otimes X_2 \ar[r] \ar@{-->}[d]^{\exists\,!\,\cY_{X_1,X_2}} & 0\\
((Q_1\tens P_2)\oplus(P_1\tens Q_2))[\log x]\lbrace x\rbrace \ar[r]^(.6){F_\tens}  & (P_1\tens P_2)[\log x]\lbrace x\rbrace \ar[r]^(.475){p} & (X_1\tens X_2)[\log x]\lbrace x\rbrace \ar[r] & 0\\
}
\end{equation*}
where $F_\tens$ is the morphism \eqref{eqn:F_tens} and $F_\otimes$ is the linear map obtained by replacing all instances of $\tens$ in \eqref{eqn:F_tens} with $\otimes$. Both rows of the diagram are right exact, and the left square commutes by \eqref{eqn:fus_prod_of_morph}. So the unique linear map
\begin{equation*}
\cY_{X_1,X_2}: X_1\otimes X_2\longrightarrow(X_1\tens X_2)[\log x]\lbrace x\rbrace
\end{equation*}
satisfying
\begin{equation*}
p\circ\cY_{P_1,P_2} = \cY_{X_1,X_2}\circ(p_1\otimes p_2)
\end{equation*}
is induced by the universal property of the cokernel $(X_1\otimes X_2, p_1\otimes p_2)$. Because $\cY_{P_1,P_2}$ satisfies lower truncation, the Jacobi identity, and the $L(-1)$ derivative property, and because $p_1$, $p_2$, and $p$ are $U\otimes V$-module homomorphisms, it is straightforward to show that $\cY_{X_1,X_2}$ is an intertwining operator of type $\binom{X_1\tens X_2}{X_1\,X_2}$.

We show that $(X_1\tens X_2,\cY_{X_1,X_2})$ satisfies the universal property of a fusion product in $\cD$. First, $X_1\tens X_2$ is an object of $\cD$ because $\cD$ is closed under cokernels. Now if $\cY$ is an intertwining operator of type $\binom{X_3}{X_1\,X_2}$ where $X_3\in\mathrm{Ob}(\cD)$, then $\cY\circ(p_1\otimes p_2)$ is an intertwining operator of type $\binom{X_3}{P_1\,P_2}$, so there is a unique morphism $G: P_1\tens P_2\rightarrow X_3$ such that 
\begin{equation*}
G\circ\cY_{P_1,P_2}=\cY\circ(p_1\otimes p_2).
\end{equation*}
Then using the above commutative diagram,
\begin{align*}
G\circ F_\tens\circ(\cY_{Q_1,P_2}\oplus\cY_{P_1,Q_2}) & = G\circ\cY_{P_1,P_2}\circ F_\otimes =\cY\circ(p_1\otimes p_2)\circ F_\otimes =0.
\end{align*}
Since $\cY_{Q_1,P_2}\oplus\cY_{P_1,Q_2}$ is surjective, it follows that $G\circ F_\tens=0$ as well, and then the universal property of the cokernel $(X_1\tens X_2, p)$ implies that there is a unique map $F: X_1\tens X_2\rightarrow X_3$ such that $F\circ p = G$. Now we get
\begin{align*}
F\circ\cY_{X_1,X_2}\circ(p_1\otimes p_2) = F\circ p\circ\cY_{P_1,P_2} =G\circ\cY_{P_1,P_2}=\cY\circ(p_1\otimes p_2).
\end{align*}
Since $p_1\otimes p_2$ is surjective, it follows that $F\circ\cY_{X_1,X_2}=\cY$. Moreover, $F$ is the unique map with this property because $\cY_{X_1,X_2}$ is surjective, which follows from surjectivity of $p$ and $\cY_{P_1,P_2}$. This proves that $X_1$ and $X_2$ have a fusion product in $\cD$.
\end{proof}

We now discuss categories $\cU$, $\cV$, and $\cD$ which satisfy Assumptions \ref{assum:fus_1} and \ref{assum:fus_2}. Let $\cC^1_U$ and $\cC^1_V$ be the categories of $C_1$-cofinite grading-restricted generalized $U$- and $V$-modules, respectively. By \cite[Main Theorem]{Mi-C1-cofin}, every pair of objects in $\cC^1_U$ or $\cC^1_V$ has a $C_1$-cofinite fusion product. In general, $\cC^1_U$ or $\cC^1_V$ might not be locally finite abelian categories since it is not clear whether these categories are closed under submodules, or whether all objects in these categories have finite length (though by Theorem \ref{thm:C1_cofin_prop}, these properties hold if $\cC_U^1$ and $\cC_V^1$ are closed under contragredients). Thus we restrict to locally finite abelian subcategories $\cU\subseteq\cC^1_U$ and $\cV\subseteq\cC^1_V$, and then we take $\cD=\cD(\cU,\cV)$ as in Definition \ref{def:D(U,V)}.

\begin{cor}\label{cor:UV_closed_under_fusion}
If $\cU\subseteq\cC^1_U$ and $\cV\subseteq\cC^1_V$ are locally finite abelian subcategories which are closed under subquotients and fusion products, then every pair of objects in $\cD(\cU,\cV)$ has a fusion product in $\cD(\cU,\cV)$. In particular, for any $M_1, M_2\in\mathrm{Ob}(\cU)$ and $W_1, W_2\in\mathrm{Ob}(\cV)$,
\begin{equation*}
((M_1\tens_U M_2)\otimes(W_1\tens_V W_2),\cY_{M_1,M_2}\otimes\cY_{W_1,W_2})
\end{equation*}
is a fusion product of $M_1\otimes W_1$ and $M_2\otimes W_2$ in $\cD(\cU,\cV)$.
\end{cor}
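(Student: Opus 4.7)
The plan is to verify that $\cU$, $\cV$, and $\cD(\cU,\cV)$ together satisfy Assumptions \ref{assum:fus_1} and \ref{assum:fus_2} and then invoke Theorem \ref{thm:fus_prods_in_C}; the explicit formula $(M_1\otimes W_1)\tens(M_2\otimes W_2)=(M_1\tens_U M_2)\otimes(W_1\tens_V W_2)$, with intertwining operator $\cY_{M_1,M_2}\otimes\cY_{W_1,W_2}$, is then Theorem \ref{thm:dist_fus_over_tens}.  Most items are routine.  Assumption \ref{assum:fus_2} follows from the standing hypothesis that $\cU$ and $\cV$ are closed under fusion products together with the fact that $\cD(\cU,\cV)$ is locally finite abelian (and hence closed under finite direct sums and cokernels) in which every object is a cokernel of a morphism in $\til{\cD}(\cU,\cV)$ by Corollary \ref{cor:cokernel_of_D_tilde}.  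For Assumption \ref{assum:fus_1}, the first three bullets are immediate: the finite-dimensional generalized $L(0)$-eigenspaces of any $X\in\cD(\cU,\cV)$ decompose into simultaneous generalized eigenspaces for the commuting operators $L_U(0)$ and $L_V(0)$ summing to $L(0)$, as in the proof of Proposition \ref{prop:D(U,V)_contra}; vector space tensor products lie in $\cD(\cU,\cV)$ by Proposition \ref{prop:tens_prod_in_D(U,V)}; and conformal weights of finite-length grading-restricted generalized modules lie in finitely many cosets of $\CC/\ZZ$ by the preliminary discussion.

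The key step, and the main obstacle, is the fourth bullet of Assumption \ref{assum:fus_1}:  given a $U$-module intertwining operator $\cY$ of type $\binom{X}{M_1\,M_2}$ with $M_1,M_2\in\mathrm{Ob}(\cU)$ and $X\in\mathrm{Ob}(\cD(\cU,\cV))$, we must show $\im\cY\in\mathrm{Ob}(\cU)$.  The plan is to split $\cY$ using the generalized $L_V(0)$-eigenspace decomposition $X=\bigoplus_{h_V}X_{[V,h_V]}$.  Because $L_V(0)$ commutes with the action of $U$ on $X$, each $X_{[V,h_V]}$ is a $U$-submodule and each projection $P_{h_V}$ commutes with $U$-action, so $P_{h_V}\circ\cY$ is a $U$-module intertwining operator of type $\binom{X_{[V,h_V]}}{M_1\,M_2}$.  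Taking generalized $L_V(0)$-eigenspaces at $h_V$ of a $U\otimes V$-composition series of $X$ yields a $U$-module filtration of $X_{[V,h_V]}$ whose factors $M'\otimes(W')_{[h_V]}$ are finite direct sums of copies of $M'\in\mathrm{Ob}(\cU)$ (since each $(W')_{[h_V]}$ is finite-dimensional), so $X_{[V,h_V]}$ is a finite-length and hence finitely generated $U$-module.  Since every vector of $X$ generates a $U$-submodule in $\cU$, $X_{[V,h_V]}$ is a quotient of a finite direct sum of objects of $\cU$ and thus lies in $\cU$.  Closure of $\cU$ under fusion products now gives $M_1\tens_U M_2\in\cU$ with its surjective fusion product intertwining operator $\cY_{M_1,M_2}$, and the universal property applied in $\cU$ yields unique $U$-module homomorphisms $f_{h_V}\colon M_1\tens_U M_2\to X_{[V,h_V]}$ satisfying $f_{h_V}\circ\cY_{M_1,M_2}=P_{h_V}\circ\cY$.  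The sum $f=\sum_{h_V}f_{h_V}$ is well-defined as a map into $X$: on any coefficient $(m_1)^{\cY_{M_1,M_2}}_{h;k}m_2$ of $\cY_{M_1,M_2}$ it evaluates to the corresponding coefficient $(m_1)^\cY_{h;k}m_2$ of $\cY$, a single vector of $X$ with finite $L_V(0)$-support, and every element of $M_1\tens_U M_2$ is a finite linear combination of such coefficients by surjectivity of $\cY_{M_1,M_2}$.  Thus $f\circ\cY_{M_1,M_2}=\cY$, so $\im f=\im\cY$, exhibiting $\im\cY$ as a quotient of $M_1\tens_U M_2\in\mathrm{Ob}(\cU)$ and hence as an object of $\cU$ by closure under subquotients.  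The case of $V$-intertwining operators is symmetric.

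With both assumptions verified, Theorem \ref{thm:fus_prods_in_C} produces the desired fusion products in $\cD(\cU,\cV)$, and Theorem \ref{thm:dist_fus_over_tens} supplies the explicit formula, completing the proof.
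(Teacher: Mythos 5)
Your proposal is correct, and it follows the paper's overall architecture (verify Assumptions \ref{assum:fus_1} and \ref{assum:fus_2}, then apply Theorems \ref{thm:fus_prods_in_C} and \ref{thm:dist_fus_over_tens}); the routine bullets and Assumption \ref{assum:fus_2} are handled exactly as in the paper. Where you genuinely diverge is the fourth bullet of Assumption \ref{assum:fus_1}. The paper disposes of it in two lines by citing the fact that the image of any intertwining operator among $C_1$-cofinite modules is itself $C_1$-cofinite (\cite[Corollary 2.12]{CMY1}, a strengthening of Miyamoto's Key Theorem), which immediately yields a surjection $M_1\tens_U M_2\twoheadrightarrow\im\cY$. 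You instead project $\cY$ onto the generalized $L_V(0)$-eigenspaces $X_{[V,h_V]}$, prove each $X_{[V,h_V]}$ is a finite-length $U$-module lying in $\cU$ (via a composition series of $X$ and the defining property of $\cD(\cU,\cV)$), apply the universal property of $M_1\tens_U M_2$ to each piece, and reassemble the resulting maps $f_{h_V}$ into a single $f$ with $f\circ\cY_{M_1,M_2}=\cY$, so that $\im\cY=\im f$ is a quotient of $M_1\tens_U M_2$. This is a valid alternative: it trades the external input on images of intertwining operators for a hands-on eigenspace decomposition (a simpler cousin of Lemma \ref{lem:V-mod_intw_op} and of the technique in the proof of Theorem \ref{thm:dist_fus_over_tens}), at the cost of the finiteness bookkeeping needed to make $\sum_{h_V}f_{h_V}$ well defined --- which you do carry out correctly using the surjectivity of $\cY_{M_1,M_2}$ and the finite $L_V(0)$-support of each coefficient vector. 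The paper's route is shorter and also covers targets that are not a priori decomposed into $\cU$-objects, but yours is more self-contained given the machinery already developed in Section 3.
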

\begin{proof}
For the first condition in Assumption \ref{assum:fus_1}, any $X\in\mathrm{Ob}(\cD(\cU,\cV))$ decomposes into generalized $L_U(0)$-eigenspaces because $X$ is the union of $U$-submodules which are objects of $\cU$, and all objects of $\cU$ are generalized $U$-modules. Similarly, $X$ decomposes into generalized $L_V(0)$-eigenspaces. The second condition in Assumption \ref{assum:fus_1} holds by Proposition \ref{prop:tens_prod_in_D(U,V)}, and the third holds because all objects of $\cU$ and $\cV$ have finite length. For the fourth condition, suppose $\cY$ is a $U$-module intertwining operator of type $\binom{X}{M_1\,M_2}$ where $M_1, M_2\in\mathrm{Ob}(\cU)$ and $X\in\mathrm{Ob}(\cD(\cU,\cV))$. Since $\cU\subseteq\cC_U^1$, $\im\,\cY$ is $C_1$-cofinite by \cite[Corollary 2.12]{CMY1} (which is a slight strengthening of \cite[Key Theorem]{Mi-C1-cofin}). Thus there is a surjective morphism $M_1\tens_U M_2\rightarrow\im\cY$ where $M_1\tens_U M_2$ is the fusion product of $M_1$ and $M_2$ in $\cC^1_U$. Since $\cU$ is closed under fusion products and quotients, $\im\cY$ is an object of $\cU$. Thus $\cU$ and $\cD(\cU,\cV)$ satisfy the fourth condition of Assumption \ref{assum:fus_1}, and similarly so do $\cV$ and $\cD(\cU,\cV)$. So the second conclusion of the corollary follows from Theorem \ref{thm:dist_fus_over_tens}.

Now we are assuming the first condition in Assumption \ref{assum:fus_2}, and the second holds by Corollary \ref{cor:cokernel_of_D_tilde}. Thus the first conclusion of the corollary follows from Theorem \ref{thm:fus_prods_in_C}.
\end{proof}

\subsection{Braided tensor category structure}

We continue under Assumption \ref{assum:fus_1}, and we also assume that the category $\cD$ of generalized $U\otimes V$-modules is closed under finite direct sums, and that $\cU$ and $\cV$ both admit the vertex algebraic braided monoidal structure of \cite{HLZ1}-\cite{HLZ8}. In particular, $U\in\mathrm{Ob}(\cU)$ and $V\in\mathrm{Ob}(\cV)$, so that $U\otimes V\in\mathrm{Ob}(\cD)$. We will first show that the $\CC$-linear additive subcategory $\til{\cD}\subseteq\cD$ consisting of finite direct sums of modules $M\otimes W$ for $M\in\mathrm{Ob}(\cU)$, $W\in\mathrm{Ob}(\cV)$ admits the braided monoidal structure of \cite{HLZ1}-\cite{HLZ8}. Then we will show that under further conditions, $\cD$ is also a braided monoidal (or tensor) category.

First, by Theorem \ref{thm:dist_fus_over_tens} and \cite[Proposition 4.24]{HLZ3}, $\til{\cD}$ is closed under fusion products. Thus we automatically have fusion products of morphisms in $\til{\cD}$, natural unit isomorphisms, and natural braiding isomorphisms given by  \eqref{eqn:fus_prod_of_morph_char},  \eqref{eqn:unit_char}, and \eqref{eqn:braid_char}, respectively.  In particular, for objects $X_1=\bigoplus_{i_1\in I_1} M_{i_1}\otimes W_{i_1}$ and $X_2=\bigoplus_{i_2\in I_2} M_{i_2}\otimes W_{i_2}$ of $\til{\cD}$, we take
\begin{equation}\label{eqn:fus_prod_in_Ctilde}
X_1\tens X_2 =\bigoplus_{(i_1, i_2)\in I_1\times I_2} (M_{i_1}\tens_U M_{i_2})\otimes(W_{i_1}\tens_V W_{i_2})
\end{equation}
with fusion product intertwining operator $\cY_{X_1,X_2}=\sum_{(i_1,i_2)\in I_1\times I_2} \cY_{X_1,X_2}^{(i_1,i_2)}$, where $\cY_{X_1,X_2}^{(i_1,i_2)}$ is the composition
\begin{align*}
X_1\otimes X_2 & \xrightarrow{\pi_{X_1}^{(i_1)}\otimes\pi_{X_2}^{(i_2)}}  (M_{i_1}\otimes W_{i_1})\otimes(M_{i_2}\otimes W_{i_2})\nonumber\\ & \xrightarrow{\cY_{M_{i_1},M_{i_2}}\otimes\cY_{W_{i_1},W_{i_2}}} \left( (M_{i_1}\tens_U M_{i_2})\otimes (W_{i_1}\tens_V W_{i_2})\right)[\log x]\lbrace x\rbrace\nonumber\\
& \xrightarrow{q_{X_1\tens X_2}^{(i_1,i_2)}} (X_1\tens X_2)[\log x]\lbrace x\rbrace;
\end{align*}
here the $\pi$'s and $q$'s are the obvious projections and inclusions.

Under this identification of fusion products in $\til{\cD}$, it is straightforward from \eqref{eqn:fus_prod_of_morph_char}, \eqref{eqn:unit_char}, and \eqref{eqn:braid_char}, together with $e^{xL(-1)}=e^{xL_U(-1)}\otimes e^{xL_V(-1)}$ on a vector space tensor product module, that the fusion product of morphisms, unit isomorphisms, and braiding isomorphisms in $\til{\cD}$ are given as follows:
\begin{itemize}
\item For morphisms $\lbrace f_{i_1}\rbrace_{i_1\in I_1}$, $\lbrace f_{i_2}\rbrace_{i_2\in I_2}$ in $\cU$ and $\lbrace g_{i_1}\rbrace_{i_1\in I_2}$, $\lbrace g_{i_2}\rbrace_{i_2\in I_2}$ in $\cV$,
\begin{equation}\label{eqn:fus_prod_of_morph_in_Ctilde}
\bigg(\bigoplus_{i_1\in I_1} f_{i_1}\otimes g_{i_1}\bigg)\tens\bigg(\bigoplus_{i_2\in I_2} f_{i_2}\otimes g_{i_2}\bigg) = \bigoplus_{(i_1,i_2)\in I_1\times I_2} (f_{i_1}\tens_U f_{i_2})\otimes(g_{i_1}\tens_V g_{i_2}).
\end{equation}

\item For any object $X=\bigoplus_{i\in I} M_i\otimes W_i$ in $\til{\cD}$,
\begin{equation}\label{eqn:unit_in_Ctilde}
l_X=\bigoplus_{i\in I} l^U_{M_i}\otimes l^V_{W_i},\qquad r_X=\bigoplus_{i\in I} r^U_{M_i}\otimes r^V_{W_i},
\end{equation}
where $l_{M_i}^U$, $r_{M_i}^U$ and $l_{W_i}^V$, $r_{W_i}^V$ are unit isomorphisms in $\cU$ and $\cV$, respectively.

\item For any objects $X_1=\bigoplus_{i_1\in I_1} M_{i_1}\otimes W_{i_1}$ and $X_2=\bigoplus_{i_2\in I_2} M_{i_2}\otimes W_{i_2}$ in $\til{\cD}$,
\begin{equation}\label{eqn:braiding_in_Ctilde}
\cR_{X_1,X_2} =\bigoplus_{(i_1,i_2)\in I_1\times I_2}\cR^U_{M_{i_1},M_{i_2}}\otimes\cR^V_{W_{i_1},W_{i_2}},
\end{equation}
where $\cR^U_{M_{i_1},M_{i_2}}$ and $\cR^V_{W_{i_1},W_{i_2}}$ are braiding isomorphisms in $\cU$ and $\cV$, respectively.
\end{itemize}
We can define associativity isomorphisms in $\til{\cD}$ similarly. If $X_1=\bigoplus_{i_1\in I_1} M_{i_1}\otimes W_{i_1}$, $X_2=\bigoplus_{i_2\in I_2} M_{i_2}\otimes W_{i_2}$, and $X_3=\bigoplus_{i_3\in I_3} M_{i_3}\otimes W_{i_3}$ are objects of $\til{\cD}$, then
\begin{align*}
X_1\tens(X_2\tens X_3) & =\bigoplus_{(i_1,i_2,i_3)\in I_1\times I_2\times I_3} \left(M_{i_1}\tens_U(M_{i_2}\tens_U M_{i_3})\right)\otimes\left(W_{i_1}\tens_V(W_{i_2}\tens_V W_{i_3})\right),\nonumber\\
(X_1\tens X_2)\tens X_3 & =\bigoplus_{(i_1,i_2,i_3)\in I_1\times I_2\times I_3} \left((M_{i_1}\tens_U M_{i_2})\tens_U M_{i_3}\right)\otimes\left((W_{i_1}\tens_V W_{i_2})\tens_V W_{i_3}\right),
\end{align*}
so we can define
\begin{equation}\label{eqn:assoc_in_Ctilde}
\cA_{X_1,X_2,X_3} =\bigoplus_{(i_1,i_2,i_3)\in I_1\times I_2\times I_3} \cA_{M_{i_1},M_{i_2},M_{i_3}}^U\otimes\cA_{W_{i_1},W_{i_2},W_{i_3}}^V,
\end{equation}
where $\cA_{M_{i_1},M_{i_2},M_{i_3}}^U$ and $\cA_{W_{i_1},W_{i_2},W_{i_3}}^V$ are the associativity isomorphisms in $\cU$ and $\cV$.

\begin{thm}\label{thm:Ctilde_vertex_monoidal}
Under Assumption \ref{assum:fus_1}, assume also that $\cD$ is closed under finite direct sums and that $\cU$ and $\cV$ admit the braided monoidal category structure of \cite{HLZ8}. Then the $\CC$-linear additive full subcategory $\til{\cD}\subseteq\cD$, whose objects are finite direct sums of vector space tensor products of modules in $\cU$ and $\cV$, admits the braided monoidal category structure of \cite{HLZ8}, with fusion products as in \eqref{eqn:fus_prod_in_Ctilde} and \eqref{eqn:fus_prod_of_morph_in_Ctilde}, unit isomorphisms \eqref{eqn:unit_in_Ctilde}, braiding isomorphisms \eqref{eqn:braiding_in_Ctilde}, and associativity isomorphisms \eqref{eqn:assoc_in_Ctilde}.
\end{thm}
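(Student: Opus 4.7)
The plan is to verify the three ingredients needed for a braided monoidal structure on $\til{\cD}$: existence and bi-additivity of fusion products (which determines the structure up to the associativity data), construction of associativity isomorphisms satisfying the HLZ convergence condition, and verification of the triangle, pentagon, and hexagon identities plus the naturality of all structure maps.

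First I would record that Theorem \ref{thm:dist_fus_over_tens} together with bi-additivity of fusion products \cite[Proposition 4.24]{HLZ3} shows that the direct sum in \eqref{eqn:fus_prod_in_Ctilde}, equipped with $\cY_{X_1,X_2}=\sum_{(i_1,i_2)}\cY_{X_1,X_2}^{(i_1,i_2)}$, is a fusion product of $X_1$ and $X_2$ in $\cD$; since this object lies in $\til{\cD}$, the universal property in $\cD$ restricts to the universal property in $\til{\cD}$. Then \eqref{eqn:fus_prod_of_morph_in_Ctilde} follows from the defining identity \eqref{eqn:fus_prod_of_morph_char}: the right-hand side of \eqref{eqn:fus_prod_of_morph_in_Ctilde} intertwines $\cY_{X_1,X_2}$ with $\cY_{X_1',X_2'}$ summand by summand, since $(f_{i_1}\tens_U f_{i_2})\circ\cY_{M_{i_1},M_{i_2}}=\cY_{M_{i_1}',M_{i_2}'}\circ(f_{i_1}\otimes f_{i_2})$ and similarly in $\cV$. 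Formulas \eqref{eqn:unit_in_Ctilde} and \eqref{eqn:braiding_in_Ctilde} then follow from the characterizations \eqref{eqn:unit_char} and \eqref{eqn:braid_char} upon noting that $Y_{M\otimes W}(u\otimes\vac_V,x)(m\otimes w)=Y_M(u,x)m\otimes w$, $Y_{M\otimes W}(\vac_U\otimes v,x)(m\otimes w)=m\otimes Y_W(v,x)w$, and $e^{xL(-1)}=e^{xL_U(-1)}\otimes e^{xL_V(-1)}$ on any vector space tensor product $U\otimes V$-module.

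The substantive step is the construction of the associativity isomorphism \eqref{eqn:assoc_in_Ctilde}. By $\CC$-bilinearity of $\tens$ and naturality, it suffices to treat the case $X_k=M_k\otimes W_k$; the general formula \eqref{eqn:assoc_in_Ctilde} then follows by taking direct sums. The key calculation is that for $m_k\in M_k$, $w_k\in W_k$, and $r_1>r_2>r_1-r_2>0$, the factorization
\begin{equation*}
\cY_{X_1,X_2\tens X_3}(m_1\otimes w_1,r_1)\cY_{X_2,X_3}(m_2\otimes w_2,r_2)(m_3\otimes w_3)
\end{equation*}
\begin{equation*}
=\cY^U_{M_1,M_2\tens_U M_3}(m_1,r_1)\cY^U_{M_2,M_3}(m_2,r_2)m_3\,\otimes\,\cY^V_{W_1,W_2\tens_V W_3}(w_1,r_1)\cY^V_{W_2,W_3}(w_2,r_2)w_3
\end{equation*}
holds termwise, and analogously for the right-hand side of \eqref{eqn:assoc_char} with parameters $r_1-r_2$ and $r_2$. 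Since each tensor factor converges absolutely in its respective algebraic completion by the HLZ assumption on $\cU$ and $\cV$, the product converges absolutely in $\overline{X_1\tens(X_2\tens X_3)}$ and $\overline{(X_1\tens X_2)\tens X_3}$; here one uses that each generalized double $(L_U(0),L_V(0))$-eigenspace of these modules is finite-dimensional and equals the tensor product of the corresponding eigenspaces in $\cU$ and $\cV$. Applying $\cA^U_{M_1,M_2,M_3}\otimes\cA^V_{W_1,W_2,W_3}$ to the first composition and using the defining relation \eqref{eqn:assoc_char} separately in $\cU$ and $\cV$ then gives \eqref{eqn:assoc_char} in $\til{\cD}$. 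The main obstacle, and the only point demanding technical care, is checking that absolute convergence in $\overline{M_1\tens_U(M_2\tens_U M_3)}$ and in $\overline{W_1\tens_V(W_2\tens_V W_3)}$ genuinely implies absolute convergence of the tensor-product series in $\overline{X_1\tens(X_2\tens X_3)}$; but this is immediate once one fixes a weight $h\in\CC$ and sums the finitely many terms contributing to the doubly-graded eigenspaces of total conformal weight $h$.

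Finally, naturality of $\cA$, $l$, $r$, $\cR$ on $\til{\cD}$ follows from naturality in $\cU$ and $\cV$ together with \eqref{eqn:fus_prod_of_morph_in_Ctilde}. The triangle, pentagon, and hexagon axioms reduce to the corresponding axioms in $\cU$ and $\cV$: each side of each axiom is, summand-by-summand in \eqref{eqn:fus_prod_in_Ctilde}, the vector-space tensor product of the corresponding morphism in $\cU$ with the corresponding morphism in $\cV$, so equality in $\til{\cD}$ is equivalent to the conjunction of the axioms in $\cU$ and $\cV$, both of which hold by assumption. This completes the proof that $\til{\cD}$ is a braided monoidal category with the stated structure maps.
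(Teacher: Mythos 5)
Your proposal is correct and follows essentially the same route as the paper's proof: the heart of both arguments is the factorization of the triple composition of intertwining operators in $\til{\cD}$ as the vector-space tensor product of the corresponding compositions in $\cU$ and $\cV$, with absolute convergence inherited from the factors, so that the tensor product of the associativity isomorphisms satisfies \eqref{eqn:assoc_char}, after which naturality and the triangle, pentagon, and hexagon identities reduce summand-by-summand to the corresponding statements in $\cU$ and $\cV$. Your extra remark justifying why convergence of the two factors yields convergence of the tensor-product series is a slightly more careful articulation of a point the paper asserts without elaboration, but it is not a different argument.
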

\begin{proof}
We first check that the associativity isomorphisms in $\til{\cD}$ are those specified in \cite{HLZ6, HLZ8}, that is, they satisfy \eqref{eqn:assoc_char}. 
To prove that the associativity isomorphism \eqref{eqn:assoc_in_Ctilde} satisfies \eqref{eqn:assoc_char}, take any $(i_1,i_2,i_3)\in I_1\times I_2\times I_3$ and any $m_{i_1}\otimes w_{i_1}\in M_{i_1}\otimes W_{i_1}\subseteq X_1$, $m_{i_2}\otimes w_{i_2}\in M_{i_2}\otimes W_{i_2}\subseteq X_2$, and $m_{i_3}\otimes w_{i_3}\in M_{i_3}\otimes W_{i_3}\subseteq X_3$. Then
\begin{align*}
&\overline{\cA_{X_1,X_2,X_3}}\left(\cY_{X_1,X_2\tens X_3}(m_{i_1}\otimes w_{i_1}, r_1)\cY_{X_2,X_3}(m_{i_2}\otimes w_{i_2}, r_2)(m_{i_3}\otimes w_{i_3})\right)\nonumber\\
& = \overline{\cA_{X_1,X_2,X_3}\circ q_{X_1\tens(X_2\tens X_3)}^{(i_1,i_2,i_3)}}\left(\cY_{M_{i_1},M_{i_2}\tens_U M_{i_3}}(m_{i_1},r_1)\cY_{M_{i_2},M_{i_3}}(m_{i_2},r_2)m_{i_3}\otimes \right.\nonumber\\
&\hspace{10em}  \left. \otimes\cY_{W_{i_1},W_{i_2}\tens_V W_{i_3}}(w_{i_1},r_1)\cY_{W_{i_2},W_{i_3}}(w_{i_2},r_2)w_{i_3}\right)\nonumber\\
& = \overline{q_{(X_1\tens X_2)\tens X_3}^{(i_1,i_2,i_3)}}\left(\cY_{M_{i_1}\tens_U M_{i_2}, M_{i_3}}(\cY_{M_{i_1},M_{i_2}}(m_{i_1},r_1-r_2)m_{i_2},r_2)m_{i_3}\otimes \right.\nonumber\\
&\hspace{10em} \left. \otimes\cY_{W_{i_1}\tens_V W_{i_2}, W_{i_3}}(\cY_{W_{i_2},W_{i_3}}(w_{i_1},r_1-r_2)w_{i_2},r_2)w_{i_3}\right)\nonumber\\
& = \cY_{X_1\tens X_2,X_3}(\cY_{X_1,X_2}(m_{i_1}\otimes w_{i_1}, r_1-r_2)(m_{i_2}\otimes w_{i_2}), r_2)(m_{i_3}\otimes w_{i_3})
\end{align*}
for $r_1,r_2\in\RR_+$ such that $r_1>r_2>r_1-r_2$. In particular, all the above compositions of intertwining operators are absolutely convergent since they are tensor products of convergent compositions of intertwining operators in $\cU$ and $\cV$. Thus $\til{\cD}$ admits the vertex algebraic associativity isomorphisms of \cite{HLZ6, HLZ8}, and it is straightforward to show from \eqref{eqn:assoc_char} that these associativity isomorphisms are natural.

We also need to check that $\til{\cD}$ equipped with the fusion products \eqref{eqn:fus_prod_in_Ctilde} and \eqref{eqn:fus_prod_of_morph_in_Ctilde}, unit isomorphisms \eqref{eqn:unit_in_Ctilde}, braiding isomorphisms \eqref{eqn:braiding_in_Ctilde}, and associativity isomorphisms \eqref{eqn:assoc_in_Ctilde} is a braided monoidal category (and in fact admits the braided monoidal category structure of \cite{HLZ8}). It remains to check the triangle, pentagon, and hexagon identities. But these identities are straightforward from the corresponding identities in $\cU$ and $\cV$ together with the characterizations of the structure isomorphisms in $\til{\cD}$. For example, for the triangle identity, given $X_1=\bigoplus_{i_1\in I_1} M_{i_1}\otimes W_{i_1}$ and $X_2=\bigoplus_{i_2\in I_2} M_{i_2}\otimes W_{i_2}$ in $\til{\cD}$, we have
\begin{align*}
(r_{X_1} & \tens\Id_{X_2})\circ\cA_{X_1,U\otimes V,X_2} \nonumber\\ & =\bigg(\bigoplus_{i_1\in I_1} (r_{M_{i_1}}^U\otimes r_{W_{i_1}}^V)\tens\bigoplus_{i_2\in I_2} (\Id_{M_{i_2}}\otimes\Id_{W_{i_2}})\bigg)\circ\bigoplus_{(i_1',i_2')\in I_1\times I_2} \cA^U_{M_{i_1'}, U, M_{i_2'}}\otimes\cA^V_{W_{i_1'},V,W_{i_2'}}\nonumber\\
& =\bigoplus_{(i_1,i_2)\in I_1\times I_2} (r^U_{M_{i_1}}\tens_U\Id_{M_{i_2}})\circ\cA^U_{M_{i_1}, U, M_{i_2}}\otimes(r^V_{W_{i_1}}\tens_V \Id_{W_{i_2}})\circ\cA^V_{W_{i_1},V,W_{i_2}}\nonumber\\
& =\bigoplus_{(i_1,i_2)\in I_1\times I_2} (\Id_{M_{i_1}}\tens_U l_{M_{i_2}}^U)\otimes (\Id_{W_{i_1}}\tens_V l_{W_{i_2}}^V)\nonumber\\
& =\bigoplus_{i_1\in I_1}(\Id_{M_{i_1}}\otimes\Id_{W_{i_1}})\tens\bigoplus_{i_2\in I_2}(l^U_{M_{i_2}}\otimes l^V_{W_{i_2}}) = \Id_{X_1}\tens l_{X_2}
\end{align*}
using first \eqref{eqn:unit_in_Ctilde} and \eqref{eqn:assoc_in_Ctilde}, then \eqref{eqn:fus_prod_of_morph_in_Ctilde}, then the triangle identities in $\cU$ and $\cV$, then \eqref{eqn:fus_prod_of_morph_in_Ctilde} again, and finally \eqref{eqn:unit_in_Ctilde} again. The pentagon and hexagon identities can be proved similarly.
\end{proof}

To extend the braided monoidal structure from $\til{\cD}$ to $\cD$, we reimpose Assumption \ref{assum:fus_2}:
\begin{thm}\label{thm:C_vertex_monoidal}
Under Assumptions \ref{assum:fus_1} and \ref{assum:fus_2}, if $\cU$ and $\cV$ admit the braided monoidal category structure of \cite{HLZ8}, then so does $\cD$.
\end{thm}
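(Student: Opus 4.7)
The plan is to extend the braided monoidal structure on $\til{\cD}$ provided by Theorem \ref{thm:Ctilde_vertex_monoidal} to all of $\cD$, exploiting the fact from Assumption \ref{assum:fus_2} that every object of $\cD$ is a cokernel of a morphism in $\til{\cD}$. By Theorem \ref{thm:fus_prods_in_C}, fusion products exist for all pairs in $\cD$. I first promote $\tens$ to a bifunctor $\cD\times\cD\rightarrow\cD$ by using the universal property of $(X_1\tens X_2,\cY_{X_1,X_2})$ to define $f_1\tens f_2$ via \eqref{eqn:fus_prod_of_morph_char} for morphisms $f_i:X_i\rightarrow Y_i$ in $\cD$; functoriality is immediate, and right exactness of $\tens$ in each variable follows from the cokernel construction of $X_1\tens X_2$ in Theorem \ref{thm:fus_prods_in_C}. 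The unit object is $U\otimes V$, which lies in $\mathrm{Ob}(\til{\cD})$.

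Next, I construct the unit, braiding, and associativity morphisms on $\cD$ via the universal property of fusion products. For $X\in\mathrm{Ob}(\cD)$, the vertex operator $Y_X$ is an intertwining operator of type $\binom{X}{U\otimes V\;X}$, inducing $l_X:(U\otimes V)\tens X\rightarrow X$ characterized by \eqref{eqn:unit_char}; similarly obtain $r_X$ from the standard skew-symmetry intertwining operator $v\otimes w\mapsto e^{xL(-1)}Y_X(v,-x)w$. For $X_1,X_2\in\mathrm{Ob}(\cD)$, the map $e^{xL(-1)}\cY_{X_2,X_1}(\cdot,e^{\pi i}x)(\cdot)$ is an intertwining operator of type $\binom{X_2\tens X_1}{X_1\;X_2}$ by the skew-symmetry construction of \cite{HLZ2}, inducing $\cR_{X_1,X_2}$ satisfying \eqref{eqn:braid_char}.

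The main obstacle is the associativity isomorphism: by \cite{HLZ6}-\cite{HLZ8}, its existence via \eqref{eqn:assoc_char} requires verifying technical convergence and expansion conditions for iterated intertwining operators among $X_1,X_2,X_3\in\mathrm{Ob}(\cD)$. Specifically, the products $\cY_{X_1,X_2\tens X_3}(w_1,r_1)\cY_{X_2,X_3}(w_2,r_2)w_3$ and $\cY_{X_1\tens X_2,X_3}(\cY_{X_1,X_2}(w_1,r_1-r_2)w_2,r_2)w_3$ must converge absolutely to elements of the appropriate algebraic completions whenever $r_1>r_2>r_1-r_2>0$, with the former being the natural expansion of the latter in the indicated domain. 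To verify these, I choose surjections $p_i:P_i\twoheadrightarrow X_i$ with $P_i\in\mathrm{Ob}(\til{\cD})$ (available by Assumption \ref{assum:fus_2}); by \eqref{eqn:fus_prod_of_morph_char}, the intertwining operators $\cY_{X_i,X_j\tens X_k}$ factor through $\cY_{P_i,P_j\tens P_k}$ composed with the $U\otimes V$-module homomorphism $p_i\tens(p_j\tens p_k)$, and similarly for the iterate. Since Theorem \ref{thm:Ctilde_vertex_monoidal} provides the associativity isomorphism for $P_1,P_2,P_3$, the convergence and expansion conditions hold in $\til{\cD}$ and descend along these surjective maps to yield $\cA_{X_1,X_2,X_3}$ satisfying \eqref{eqn:assoc_char} in $\cD$.

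To conclude, I show that $l_X$, $r_X$, $\cR_{X_1,X_2}$, and $\cA_{X_1,X_2,X_3}$ are isomorphisms, and that the triangle, pentagon, and hexagon identities hold. Both facts follow from a short five-lemma argument: for any $X\in\mathrm{Ob}(\cD)$, a presentation $Q\rightarrow P\rightarrow X\rightarrow 0$ with $P,Q\in\mathrm{Ob}(\til{\cD})$ induces, via right exactness of $\tens$ in each variable, right exact sequences with which each structure morphism is compatible by naturality (itself immediate from the universal-property characterizations \eqref{eqn:unit_char}, \eqref{eqn:braid_char}, \eqref{eqn:assoc_char}). Since the corresponding morphisms on $P$ and iterated fusion products of $P$'s are isomorphisms by Theorem \ref{thm:Ctilde_vertex_monoidal}, so are their images on $X$ and iterated fusion products of $X$'s. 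The triangle, pentagon, and hexagon identities in $\cD$ then follow from those in $\til{\cD}$ by naturality combined with the surjectivity of the fusion products of the maps $p_i$, which again uses right exactness of $\tens$.
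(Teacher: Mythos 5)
Your proposal is correct and follows essentially the same route as the paper: fusion products from Theorem \ref{thm:fus_prods_in_C}, unit and braiding isomorphisms from the universal-property characterizations, and the associativity isomorphism obtained by descending $\cA_{P_1,P_2,P_3}$ along surjections from objects of $\til{\cD}$, with coherence checked via naturality and surjectivity. The only step you state loosely --- that $\cA_{P_1,P_2,P_3}$ actually descends, i.e.\ carries $\ker\bigl(p_1\tens(p_2\tens p_3)\bigr)$ into $\ker\bigl((p_1\tens p_2)\tens p_3\bigr)$ --- is exactly what the paper's commutative diagram of right exact rows (built by iterating Lemma \ref{lem:cokernel_of_tens_prod} and using naturality of $\cA$ on $\til{\cD}$) supplies, and your appeal to presentations and right exactness in the last paragraph amounts to the same argument.
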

\begin{proof}
By Theorem \ref{thm:fus_prods_in_C}, $\cD$ is closed under fusion products, and thus admits fusion products of morphisms as in \eqref{eqn:fus_prod_of_morph_char}, natural unit isomorphisms as in \eqref{eqn:unit_char}, and natural braiding isomorphisms as in \eqref{eqn:braid_char}. We still need to construct natural associativity isomorphisms in $\cD$ which satisfy the triangle, pentagon, and hexagon identities.

For $X_1, X_2, X_3\in\mathrm{Ob}(\cD)$, there are (as in the proof of Theorem \ref{thm:fus_prods_in_C})  right exact sequences
\begin{equation}\label{eqn:right_exact_Xi}
Q_i \xrightarrow{q_i} P_i \xrightarrow{p_i} X_i\longrightarrow 0
\end{equation}
for $i=1,2,3$, where all $P_i,Q_i\in\mathrm{Ob}(\til{\cD})$. The triple tensor product modules then fit into the following diagram:
\begin{equation*}
\xymatrixcolsep{4pc}
\xymatrix{
Q_{1(23)} \ar[r]^(.4){F} \ar[d]^{\cA\oplus\cA\oplus\cA} & P_1\tens (P_2\tens P_3) \ar[r]^(.48){p_1\tens (p_2\tens p_3)} \ar[d]^{\cA_{P_1,P_2,P_2}} & X_1\tens (X_2\tens X_3) \ar[r] \ar@{-->}[d]^{\exists\,!\,\cA_{X_1,X_2,X_3}} & 0\\
Q_{(12)3} \ar[r]^(.4){G} & (P_1\tens P_2)\tens P_3 \ar[r]^(.48){(p_1\tens p_2)\tens p_3} & (X_1\tens X_2)\tens X_3 \ar[r] & 0\\
}
\end{equation*}
where
\begin{align*}
Q_{1(23)} & = (Q_1\tens(P_2\tens P_3))\oplus (P_1\tens(Q_2\tens P_3))\oplus (P_1\tens(P_2\tens Q_3))\nonumber\\
Q_{(12)3} & = ((Q_1\tens P_2)\tens P_3)\oplus ((P_1\tens Q_2)\tens P_3)\oplus ((P_1\tens P_2)\tens Q_3)
\end{align*}
and
\begin{align*}
F & = (q_1\tens(\Id_{P_2}\tens\Id_{P_3}))\circ\pi_1+(\Id_{P_1}\tens(q_2\tens\Id_{P_3}))\circ\pi_2+(\Id_{P_1}\tens(\Id_{P_2}\tens q_3))\circ\pi_3, \nonumber\\
G & = ((q_1\tens\Id_{P_2})\tens\Id_{P_3})\circ\pi_1+((\Id_{P_1}\tens q_2)\tens\Id_{P_3})\circ\pi_2+((\Id_{P_1}\tens\Id_{P_2})\tens q_3)\circ\pi_3
\end{align*}
(the $\pi$'s refer to the obvious projections). The rows of the diagram are right exact by iterating Lemma \ref{lem:cokernel_of_tens_prod}, and the left square of the diagram commutes because the associativity isomorphisms in $\til{\cD}$ (from Theorem \ref{thm:Ctilde_vertex_monoidal}) are natural. Thus the existence and uniqueness of $\cA_{X_1,X_2,X_3}$ follows from the universal property of the cokernel $(X_1\tens(X_2\tens X_3), p_1\tens(p_2\tens p_3))$. Moreover, $\cA_{X_1,X_2,X_3}$ satisfies \eqref{eqn:assoc_char} because the maps $p_1$, $p_2$, and $p_3$ are surjective, and for all $p_1(b_1)\in X_1$, $p_2(b_2)\in X_2$, and $p_3(b_3)\in X_3$,
\begin{align*}
\overline{\cA_{X_1,X_2,X_3}} & \left(\cY_{X_1,X_2\tens X_3}(p_1(b_1),r_1)\cY_{X_2,X_3}(p_2(b_2),r_2)p_3(b_3)\right)\nonumber\\
& =\overline{\cA_{X_1,X_2,X_3}\circ(p_1\tens(p_2\tens p_3))}\left(\cY_{P_1,P_2\tens P_3}(b_1,r_1)\cY_{P_2,P_3}(b_2,r_2)b_3\right)\nonumber\\
& =\overline{((p_1\tens p_2)\tens p_3)\circ\cA_{P_1,P_2,P_3}}\left(\cY_{P_1,P_2\tens P_3}(b_1,r_1)\cY_{P_2,P_3}(b_2,r_2)b_3\right)\nonumber\\
& =\overline{((p_1\tens p_2)\tens p_3)}\left(\cY_{P_1\tens P_2, P_3}(\cY_{P_1,P_2}(b_1,r_1-r_2)b_2,r_2)b_3\right)\nonumber\\
& =\cY_{X_1\tens X_2,X_3}(\cY_{X_1,X_2}(p_1(b_1),r_1-r_2)p_2(b_2),r_2)p_3(b_3)
\end{align*}
for $r_1,r_2\in\RR_+$ such that $r_1>r_2>r_1-r_2$. It is then straightforward from \eqref{eqn:assoc_char} that these associativity isomorphisms in $\cD$ are natural.

Finally, the unit, associativity, and braiding isomorphisms in $\cD$ should satisfy the triangle, pentagon, and hexagon identities. But this is straightforward since by Theorem \ref{thm:Ctilde_vertex_monoidal} the triangle, pentagon, and hexagon identities are satisfied in $\til{\cD}$. For example, to prove the triangle identity for $X_1,X_2\in\mathrm{Ob}(\cD)$, fix right exact sequences as in \eqref{eqn:right_exact_Xi} and consider:
\begin{equation*}
\xymatrixcolsep{3.2pc}
\xymatrixrowsep{2.5pc}
\xymatrix{
P_1\tens P_2 \ar[d]^{p_1\tens p_2} \ar[r]^(.36){\Id_{P_1}\tens l_{P_2}^{-1}} & P_1\tens((U\otimes V)\tens P_2) \ar[d]^{p_1\tens(\Id_{U\otimes V}\tens p_2)} \ar[r]^{\cA_{P_1,U\otimes V,P_2}} & (P_1\tens(U\otimes V))\tens P_2 \ar[d]^{(p_1\tens\Id_{U\otimes V})\tens p_2} \ar[r]^(.63){r_{P_1}\tens\Id_{P_2}} & P_1\tens P_2 \ar[d]^{p_1\tens p_2} \\
X_1\tens X_2 \ar[r]^(.36){\Id_{X_1}\tens l_{X_2}^{-1}} & X_1\tens((U\otimes V)\tens X_2) \ar[r]^{\cA_{X_1,U\otimes V,X_2}} & (X_1\tens(U\otimes V))\tens X_2 \ar[r]^(.63){r_{X_1}\tens\Id_{X_2}} & X_1\tens X_2 \\
}
\end{equation*}
The diagram commutes by naturality of the unit and associativity isomorphisms in $\cD$, so the triangle identity for $X_1$, $X_2$ follows from the triangle identity for $P_1$, $P_2$ in $\til{\cD}$ and the surjectivity of $p_1\tens p_2$. The pentagon and hexagon identities in $\cD$ can be proved similarly.
\end{proof}

\begin{remark}
Under the assumptions of Theorem \ref{thm:C_vertex_monoidal}, $\cD$ is a braided tensor category if it is an abelian category. Moreover, $\cD$ has a ribbon twist $\theta$ given by $e^{2\pi i L(0)}$. For $M\in\mathrm{Ob}(\cU)$ and $W\in\mathrm{Ob}(\cV)$, we have $\theta_{M\otimes W}=e^{2\pi iL_U(0)}\otimes e^{2\pi iL_V(0)} =\theta_M\otimes\theta_W$.
\end{remark}

\subsection{\texorpdfstring{$\cD(\cU,\cV)$}{D(U,V)} as a braided tensor category}

Suppose $\cU$ and $\cV$ are locally finite braided tensor categories of $U$- and $V$-modules, respectively. Then by Theorem \ref{thm:D(U,V)_is_Del_prod}, the category $\cD(\cU,\cV)$ of $U\otimes V$-modules is a Deligne tensor product of $\cU$ and $\cV$ and thus inherits braided tensor category structure from $\cU$ and $\cV$ as discussed in Section \ref{subsec:prelim_tens_cat}. We will now use Theorem \ref{thm:C_vertex_monoidal} to show that under mild additional conditions, this braided tensor structure on $\cD(\cU,\cV)$ agrees with that of \cite{HLZ1}-\cite{HLZ8} specified by intertwining operators. In contrast with \cite[Theorem 5.5]{CKM2}, we will not require semisimplicity for either $\cU$ or $\cV$ here.

We will need the following general theorem which shows that a functor admits the structure of a tensor functor if it is a tensor functor on a suitable monoidal subcategory. Although this theorem is probably known, for completeness we include a proof in Appendix \ref{app:ext_is_(br)_tens}.
\begin{thm}\label{thm:ext_is_(br)_tens}
Suppose $\cD$ and $\cC$ are (not necessarily rigid) tensor categories with right exact fusion products, $\til{\cD}$ is a monoidal subcategory of $\cD$, and $\cF:\cD\rightarrow\cC$ is a right exact functor such that $\cG=\cF\vert_{\til{\cD}}$ is a tensor functor. Assume also that for any $X_1,X_2\in\mathrm{Ob}(\cD)$, there is an abelian full subcategory $\cD_{X_1,X_2}\subseteq\cD$ that contains $X_1$, $X_2$ and has enough projectives, such that every projective object in $\cD_{X_1,X_2}$ is an object of $\til{\cD}$, and such that $\cD_{X_i,X_i}\subseteq\cD_{X_1,X_2}$ for $i=1,2$. Then $\cF$ is also a tensor functor. If $\cD$ and $\cC$ are also braided and $\cG$ is a braided tensor functor, then $\cF$ is braided, and if $\cD$ and $\cC$ have ribbon twists $\theta$ such that $\cG(\theta_P)=\theta_{\cG(P)}$ for all $P\in\mathrm{Ob}(\til{\cD})$, then $\cF(\theta_X)=\theta_{\cF(X)}$ for all $X\in\mathrm{Ob}(\cD)$.
\end{thm}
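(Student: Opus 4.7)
The plan is to build the tensor-functor structure $J^\cF_{X_1,X_2}\colon\cF(X_1)\tens\cF(X_2)\rightarrow\cF(X_1\tens X_2)$ by descent from the already-given tensor structure $J^\cG_{P_1,P_2}\colon\cG(P_1)\tens\cG(P_2)\rightarrow\cG(P_1\tens P_2)$ on $\til{\cD}$, using projective resolutions inside the finite subcategories $\cD_{X_1,X_2}$. This is in close analogy with the construction of associativity isomorphisms in the proof of Theorem \ref{thm:C_vertex_monoidal}.

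First, for any $X_1,X_2\in\mathrm{Ob}(\cD)$, use enough projectives in $\cD_{X_1,X_2}$ to choose right exact sequences $Q_i\xrightarrow{q_i}P_i\xrightarrow{p_i}X_i\rightarrow 0$ for $i=1,2$, with $P_i,Q_i$ projective in $\cD_{X_1,X_2}$ and hence objects of $\til{\cD}$. By Lemma \ref{lem:cokernel_of_tens_prod} applied first in $\cD$ and then (after applying the right exact functor $\cF$) in $\cC$, the right exact sequence
\begin{equation*}
(Q_1\tens P_2)\oplus(P_1\tens Q_2)\xrightarrow{F}P_1\tens P_2\xrightarrow{p_1\tens p_2}X_1\tens X_2\rightarrow 0
\end{equation*}
maps under $\cF$ to a right exact sequence in $\cC$, while tensoring the image sequences under $\cF$ yields a parallel right exact sequence ending in $\cF(X_1)\tens\cF(X_2)$. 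The isomorphisms $J^\cG$ on the projective terms $P_1\tens P_2$ and $(Q_1\tens P_2)\oplus(P_1\tens Q_2)$ intertwine these two sequences by naturality of $J^\cG$, so the universal property of the cokernel produces a unique isomorphism $J^\cF_{X_1,X_2}\colon\cF(X_1)\tens\cF(X_2)\rightarrow\cF(X_1\tens X_2)$ fitting into the diagram.

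Next I would verify that $J^\cF_{X_1,X_2}$ is independent of the chosen resolutions and is natural in both variables. Independence is standard: two resolutions admit a common refinement, lifted using projectivity, and $J^\cG$ is natural on $\til{\cD}$. For naturality, given $f_i\colon X_i\rightarrow Y_i$, I would enlarge the ambient subcategory so that it contains $X_1,X_2,Y_1,Y_2$ (pass to $\cD_{X_1\oplus Y_1, X_2\oplus Y_2}$ or an analogous finite subcategory), lift each $f_i$ to a morphism of resolutions using projectivity of $P_i$ and $Q_i$, and then transport naturality of $J^\cG$ through the cokernel. The coherence axioms (the unit and pentagon diagrams for $J^\cF$) then follow by evaluating on a triple of projective resolutions in a common $\cD_{X_1,X_2,X_3}$-type subcategory, using the corresponding axioms for $J^\cG$ together with the surjectivity of $p_1\tens p_2\tens p_3$; this is essentially the same cokernel argument that establishes the triangle and pentagon identities in Theorem \ref{thm:C_vertex_monoidal}. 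The braided case is identical: the hexagon for $\cR$ in $\cD$ and $\cC$ together with the hexagon for $J^\cG$ on $\til{\cD}$ gives the hexagon for $J^\cF$ after precomposing with the surjection $\cF(p_1)\tens\cF(p_2)$.

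Finally, for the ribbon statement, naturality of $\theta$ gives $\theta_X\circ p=p\circ\theta_P$ for any surjection $p\colon P\twoheadrightarrow X$ with $P$ projective in some $\cD_{X,X}$. Applying $\cF$ and using $\cF(\theta_P)=\theta_{\cF(P)}$ from the hypothesis together with naturality of $\theta$ in $\cC$, one obtains
\begin{equation*}
\cF(\theta_X)\circ\cF(p)=\cF(p)\circ\cF(\theta_P)=\cF(p)\circ\theta_{\cF(P)}=\theta_{\cF(X)}\circ\cF(p),
\end{equation*}
and $\cF(\theta_X)=\theta_{\cF(X)}$ follows because $\cF(p)$ is an epimorphism (it is a cokernel of $\cF(q)$). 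The main obstacle I anticipate is the bookkeeping needed to make $J^\cF$ truly natural on all of $\cD$: one must consistently choose resolutions across morphisms $f\colon X_1\rightarrow Y_1$, $g\colon X_2\rightarrow Y_2$, and show the induced map on cokernels is independent of the lift. This is resolved by the usual projective-resolution comparison argument, but it requires keeping track of several intertwining diagrams simultaneously.
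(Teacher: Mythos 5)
Your proposal is correct and follows essentially the same route as the paper's proof in Appendix B: define the structure isomorphism on $\cF$ by the universal property of the cokernel of a projective presentation drawn from $\cD_{X_1,X_2}$ (whose projectives lie in $\til{\cD}$, where $J^{\cG}$ is available), deduce naturality and the coherence/braiding axioms by lifting morphisms through projectives and precomposing with the surjections $\cF(p_1)\tens\cF(p_2)$, and prove the twist statement by exactly the displayed computation. The only cosmetic difference is that the paper fixes one resolution per object (in $\cD_{X,X}$) once and for all, so the independence-of-resolution check you anticipate is absorbed into the naturality argument via an auxiliary projective $P_f$ in $\cD_{X_1,X_2}$.
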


We can now prove one of the main results of this paper:
\begin{thm}\label{thm:D(U,V)_braid_equiv}
Let $U$ and $V$ be vertex operator algebras, and let $\cU$ and $\cV$ be categories of grading-restricted generalized $U$-modules and $V$-modules, respectively.
\begin{enumerate}

\item Suppose the following conditions hold:
\begin{itemize}

\item The categories $\cU$ and $\cV$ are closed under subquotients and finite direct sums, and all modules in $\cU$ and $\cV$ have finite length.

\item The categories $\cU$ and $\cV$ admit the vertex algebraic braided tensor category structure of \cite{HLZ1}-\cite{HLZ8}; in particular $U\in\mathrm{Ob}(\cU)$ and $V\in\mathrm{Ob}(\cV)$.

\item If $\mathcal{Y}$ is a $U$-module intertwining operator of type $\binom{X}{M_1\,M_2}$ where $M_1, M_2\in\mathrm{Ob}(\cU)$ and $X\in\mathrm{Ob}(\mathcal{D(\cU,\cV)})$, then $\im\,\cY\in\mathrm{Ob}(\mathcal{U})$.
Similarly, if $\mathcal{Y}$ is a $V$-module intertwining operator of type $\binom{X}{W_1\,W_2}$ where $W_1, W_2\in\mathrm{Ob}(\cV)$ and $X\in\mathrm{Ob}(\mathcal{D}(\cU,\cV))$, then $\im\,\cY\in\mathrm{Ob}(\mathcal{V})$.

\end{itemize}
Then $\cD(\cU,\cV)$ admits the braided tensor category structure with ribbon twist of \cite{HLZ1}-\cite{HLZ8}, and $\cD(\cU,\cV)$ is braided tensor equivalent to the Deligne tensor product $\cU\otimes\cV$ equipped with its braided tensor category structure with ribbon twist inherited from $\cU$ and $\cV$.

\item Suppose the following additional conditions hold:
\begin{itemize}

\item The vertex operator algebras $U$ and $V$ are self-contragredient.

\item The categories $\cU$ and $\cV$ are closed under contragredient modules.

\item All simple modules in $\cU$ and $\cV$ are rigid.

\end{itemize}
Then $\cD(\cU,\cV)$ is a braided ribbon category with duals given by contragredients.

\end{enumerate}

\end{thm}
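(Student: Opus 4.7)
The plan for part (1) is first to verify the hypotheses of Theorem \ref{thm:C_vertex_monoidal} for the triple $(\cU,\cV,\cD(\cU,\cV))$, and then to upgrade the abelian equivalence of Theorem \ref{thm:D(U,V)_is_Del_prod} to a braided tensor equivalence via Theorem \ref{thm:ext_is_(br)_tens}. Assumption \ref{assum:fus_1} holds: every object of $\cD(\cU,\cV)$ decomposes into generalized $L_U(0)$- and $L_V(0)$-eigenspaces because each of its singly-generated $U$- (resp.~$V$-) submodules lies in $\cU$ (resp.~$\cV$); Proposition \ref{prop:tens_prod_in_D(U,V)} supplies $M\otimes W\in\cD(\cU,\cV)$; the finite-length hypothesis puts conformal weights in finitely many cosets of $\CC/\ZZ$; and the fourth bullet of the assumption is precisely the third hypothesis of the theorem. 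Assumption \ref{assum:fus_2} follows from the fact that $\cU$ and $\cV$ have all fusion products, that $\cD(\cU,\cV)$ is closed under direct sums and cokernels by closure of $\cU$, $\cV$ under subquotients, and from Corollary \ref{cor:cokernel_of_D_tilde}. Theorem \ref{thm:C_vertex_monoidal} then equips $\cD(\cU,\cV)$ with the HLZ braided monoidal structure, which is a braided tensor structure by abelianness, with ribbon twist $e^{2\pi iL(0)}$ satisfying $\theta_{M\otimes W}=\theta_M\otimes\theta_W$.

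For the braided tensor equivalence, let $\cF\colon\cU\otimes\cV\to\cD(\cU,\cV)$ denote the abelian equivalence of Theorem \ref{thm:D(U,V)_is_Del_prod}, which intertwines the two $\otimes$-bifunctors. Comparing the inherited Deligne formulas \eqref{eqn:fus_prod_in_Del_prod}--\eqref{eqn:braid_in_Del_prod} with the HLZ formulas \eqref{eqn:fus_prod_in_Ctilde}--\eqref{eqn:assoc_in_Ctilde} of Theorem \ref{thm:Ctilde_vertex_monoidal} on basic objects $M\otimes W$, the fusion, unit, associativity, braiding, and ribbon data all match term-by-term under $\cF$, so $\cF$ restricts to a braided tensor equivalence (intertwining ribbon twists) between $\til{\cU\otimes\cV}$ and $\til{\cD}(\cU,\cV)$. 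I then apply Theorem \ref{thm:ext_is_(br)_tens} with the finite abelian subcategories $\cD_{X_1,X_2}\subseteq\cD(\cU,\cV)$ of \eqref{eqn:DX1X2X3_def}, whose projective objects lie in $\til{\cD}(\cU,\cV)$ by Theorem \ref{thm:proj_covers_in_C}, together with their $\cF^{-1}$-images in $\cU\otimes\cV$, to extend to a braided tensor equivalence on all of $\cU\otimes\cV$.

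For part (2), Proposition \ref{prop:D(U,V)_simple} identifies the simple objects of $\cD(\cU,\cV)$ with $M\otimes W$ for $M$ simple in $\cU$ and $W$ simple in $\cV$; since both are rigid by hypothesis, such a simple is rigid in $\cD(\cU,\cV)$ with dual $M^*\otimes W^*$, and by the uniqueness of duals together with the canonical contragredient pairing furnished by the self-contragredience of $U$ and $V$, one has $M^*\cong M'$, $W^*\cong W'$, and hence $(M\otimes W)^*\cong(M\otimes W)'$. The main obstacle is extending rigidity from simples to all objects of $\cU$ and $\cV$; the plan is to argue by d\'evissage along a composition series, using the exactness of the contragredient functor and of $\tens_U$ on either side by a rigid object, to assemble evaluation and coevaluation morphisms for an arbitrary $W$ from those of its simple constituents and verify the triangle identities. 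Once $\cU$ and $\cV$ are rigid tensor categories with contragredient duals, the Deligne product $\cU\otimes\cV$ is rigid with duals given by Deligne products of duals, and transporting this structure through the braided tensor equivalence of part (1) identifies $\cD(\cU,\cV)$ as a rigid braided tensor category with duals given by contragredients---hence a braided ribbon category.
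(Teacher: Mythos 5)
Part (1) of your proposal is essentially the paper's argument: verify Assumptions \ref{assum:fus_1} and \ref{assum:fus_2}, invoke Theorem \ref{thm:C_vertex_monoidal}, match the structure isomorphisms \eqref{eqn:fus_prod_in_Del_prod}--\eqref{eqn:braid_in_Del_prod} against \eqref{eqn:fus_prod_in_Ctilde}--\eqref{eqn:assoc_in_Ctilde} on the subcategory of direct sums of tensor product modules, and extend via Theorem \ref{thm:ext_is_(br)_tens}; whether one phrases this as an equivalence $\cF:\cU\otimes\cV\rightarrow\cD(\cU,\cV)$ or as the identity functor between two tensor structures on the same category is immaterial.

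Part (2) has a genuine gap at exactly the step you identify as ``the main obstacle.'' Extending rigidity from simple objects to all finite-length objects by d\'evissage along a composition series is not a routine argument, and the sketch you give would not go through as stated: a priori the fusion product is only \emph{right} exact in each variable, so given a short exact sequence $0\rightarrow W_1\rightarrow W\rightarrow W_2\rightarrow 0$ with $W_1,W_2$ rigid, one cannot simply tensor the sequence with a candidate dual and splice together evaluation and coevaluation morphisms --- the sequences needed to define and compare these morphisms are not known to be exact precisely because $W$ is not yet known to be rigid (tensoring by a rigid object is exact, but the inductive step forces you to tensor by objects whose rigidity is the thing being proved). This chicken-and-egg problem is why the paper does not attempt a d\'evissage at all: it first establishes that $\cD(\cU,\cV)$ is closed under contragredients (Proposition \ref{prop:D(U,V)_contra}, which your sketch does not invoke for $\cD(\cU,\cV)$ itself), and then cites \cite[Theorem 4.4.1]{CMY2}, a nontrivial theorem tailored to vertex-algebraic tensor categories which says that closure under contragredients plus rigidity of all simple objects already implies rigidity of the whole category, with duals given by contragredients. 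Your identification of the simple objects of $\cD(\cU,\cV)$ and of their duals $M'\otimes W'$ agrees with the paper; but to complete the proof you should replace the d\'evissage plan (and the detour of first rigidifying all of $\cU$ and $\cV$ and then passing through the Deligne product) by a direct appeal to that result applied to $\cD(\cU,\cV)$.
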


\begin{proof}
For part (1), the first three conditions of Assumption \ref{assum:fus_1} hold exactly as in the proof of Corollary \ref{cor:UV_closed_under_fusion}, as do the two conditions of Assumption \ref{assum:fus_2}. We are assuming the fourth condition in Assumption \ref{assum:fus_1}, so $\cD(\cU,\cV)$ admits the braided tensor category structure of \cite{HLZ8} by Theorem \ref{thm:C_vertex_monoidal}. Moreover, $\cD(\cU,\cV)\cong\cU\otimes\cV$ as categories by Theorem \ref{thm:D(U,V)_is_Del_prod}, with the Deligne tensor product functor $\otimes:\cU\times\cV\rightarrow\cD(\cU,\cV)$ identified with the tensor product of vector spaces. Thus $\cD(\cU,\cV)$ admits two braided tensor category structures: the vertex algebraic structure of \cite{HLZ8} characterized by intertwining operators (we denote its fusion product by $\tens_{\cD(\cU,\cV)}$), and the braided tensor structure on the Deligne tensor product inherited from the braided tensor structures on $\cU$ and $\cV$ (we denote its fusion product by $\tens_{\cU\otimes\cV}$). We need to show that these two braided tensor category structures are equivalent.

First, comparing \eqref{eqn:fus_prod_in_Del_prod}, \eqref{eqn:fus_prod_of_morph_in_Del_prod}, and \eqref{eqn:fus_prod_in_P} with \eqref{eqn:fus_prod_in_Ctilde} and \eqref{eqn:fus_prod_of_morph_in_Ctilde} (and also using \eqref{eqn:homs_between_Del_prods} and Proposition \ref{prop:proj_cover_homs} together with the $\CC$-bilinearity of $\tens_{\cD(\cU,\cV)}$ and $\tens_{\cU\otimes\cV}$), we see that there is a natural isomorphism 
\begin{equation*}
X_1\tens_{\cD(\cU,\cV)} X_2 \xrightarrow{\cong} X_1\tens_{\cU\otimes\cV} X_2
\end{equation*}
when $X_1, X_2\in\mathrm{Ob}(\til{\cD})$, where $\til{\cD}$ is the subcategory defined in the proof of Theorem \ref{thm:D(U,V)_is_Del_prod}. 
 Moreover, comparing \eqref{eqn:unit_in_Del_prod}, \eqref{eqn:assoc_in_Del_prod}, and \eqref{eqn:braid_in_Del_prod} with \eqref{eqn:unit_in_Ctilde}, \eqref{eqn:assoc_in_Ctilde}, and \eqref{eqn:braiding_in_Ctilde}, and using the $\CC$-bilinearity of $\tens_{\cD(\cU,\cV)}$ and $\tens_{\cU\otimes\cV}$ as well as naturality, this natural isomorphism identifies the unit, associativity, and braiding isomorphisms for $\tens_{\cD(\cU,\cV)}$ with those for $\tens_{\cU\otimes\cV}$. Thus $\Id_{\cD(\cU,\cV)}$ restricted to the monoidal subcategory $\til{\cD}$ has the structure of a braided tensor equivalence between the two braided monoidal structures on $\til{\cD}$. To verify the remaining assumption of Theorem \ref{thm:ext_is_(br)_tens} in this setting, define $\cD_{X_1,X_2}$ for $X_1,X_2\in\mathrm{Ob}(\cD(\cU,\cV))$ by taking $X_2=X_3$ in \eqref{eqn:DX1X2X3_def}. This category has enough projectives, and all of its projective objects are contained in $\til{\cD}$, by Theorem \ref{thm:proj_covers_in_C}. Theorem \ref{thm:ext_is_(br)_tens} now shows that $\cF=\Id_{\cD(\cU,\cV)}$ is a braided tensor equivalence between $\cD(\cU,\cV)$ and $\cU\otimes\cV$; also, the ribbon twists on $\cD(\cU,\cV)$ and $\cU\otimes\cV$ are the same since
\begin{equation*}
\theta_{M\otimes W} = e^{2\pi i L(0)} = e^{2\pi i L_U(0)}\otimes e^{2\pi i L_V(0)} = \theta_M\otimes\theta_W
\end{equation*}
 for $M\in\mathrm{Ob}(\cU)$ and $W\in\mathrm{Ob}(\cV)$. This proves part (1) of the theorem.
 
For Part (2), Proposition \ref{prop:D(U,V)_contra} shows that $\cD(\cU,\cV)$ is closed under contragredients. Then $\cD(\cU,\cV)$ will be rigid (and thus also ribbon) by \cite[Theorem 4.4.1]{CMY2} once we show that every simple module in $\cD(\cU,\cV)$ is rigid. In fact, every simple module in $\cD(\cU,\cV)$ has the form $M\otimes W$ where $M\in\mathrm{Ob}(\cU)$ and $W\in\mathrm{Ob}(\cV)$ are simple. By assumption, $M$ and $W$ are rigid, and their duals are necessarily the contragredients $M'$ and $W'$ (see for example the discussion in \cite[Section 4.3]{CMY2}). Since $\cU$ and $\cV$ are closed under contragredients, $M'\otimes W'$ is an object of $\cD(\cU,\cV)$, and it follows from the characterization of fusion products, unit isomorphisms, and associativity isomorphisms in $\cD(\cU,\cV)$ that $M'\otimes W'$ is the dual of $M\otimes W$, with evaluation and coevaluation given by
 \begin{align*}
 e_{M\otimes W} & = e_M\otimes e_W: (M'\otimes W')\tens (M\otimes W) =(M'\tens_U M)\otimes(W'\tens_V W)\longrightarrow U\otimes V, \nonumber\\
 i_{M\otimes W} & = i_M\otimes i_W: U\otimes V\longrightarrow (M\otimes W)\tens (M'\otimes W') =(M\tens_U M')\otimes(W\tens_V W').
 \end{align*}
 Thus $M\otimes W$ is rigid in $\cD(\cU,\cV)$, completing the proof of the theorem.
\end{proof}

For examples of $\cU$ and $\cV$ that satisfy the conditions of Theorem \ref{thm:D(U,V)_braid_equiv}, we take $\cU$ and $\cV$ to be subcategories of the $C_1$-cofinite categories $\cC^1_U$ and $\cC^1_V$, as in Corollary \ref{cor:UV_closed_under_fusion}. Recall from Section \ref{subsec:prelim_VOA} that $\cC_U^1$ and $\cC_V^1$ admit the braided tensor category structure of \cite{HLZ8} if they are closed under contragredient modules, and thus in this setting, so do subcategories $\cU$ and $\cV$ which are closed under fusion products.
\begin{cor}\label{cor:C1_subcat_braid_tens}
Let $U$ and $V$ be vertex operator algebras such that the $C_1$-cofinite categories $\cC_U^1$ and $\cC_V^1$ are closed under contragredients.
\begin{enumerate}
\item Let $\cU\subseteq\cC_U^1$ and $\cV\subseteq\cC_V^1$ be locally finite abelian categories of grading-restricted generalized $U$-modules and $V$-modules which contain $U$ and $V$, respectively, and are closed under subquotients and fusion products. Then $\cD(\cU,\cV)$ admits the braided tensor category structure with ribbon twist of \cite{HLZ1}-\cite{HLZ8} and is braided tensor equivalent to $\cU\otimes\cV$.

\item If in addition $U$ and $V$ are self-contragredient, $\cU$ and $\cV$ are closed under contragredients, and all simple objects of $\cU$ and $\cV$ are rigid, then $\cD(\cU,\cV)$ is a braided ribbon category with duals given by contragredients.
\end{enumerate}
\end{cor}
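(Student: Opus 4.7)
The plan is to deduce the corollary directly from Theorem \ref{thm:D(U,V)_braid_equiv} by verifying its hypotheses in our $C_1$-cofinite setting; essentially no new ideas beyond what appears in the proof of Corollary \ref{cor:UV_closed_under_fusion} are required.

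For part (1), I would check the three bulleted conditions of Theorem \ref{thm:D(U,V)_braid_equiv}(1) one by one. Closure of $\cU$ and $\cV$ under subquotients is assumed; closure under finite direct sums follows from their being abelian; and every object has finite length because they are locally finite. Next, I would argue that $\cU$ and $\cV$ admit the HLZ braided tensor category structure. Indeed, by Theorem \ref{thm:C1_cofin_prop}, $\cC_U^1$ and $\cC_V^1$ admit this structure with ribbon twist, and since $\cU$ contains $U$ and is closed under fusion products, subquotients (hence kernels and images of the unit, associativity, braiding, and twist isomorphisms), and finite direct sums, it inherits the full braided tensor structure as a subcategory of $\cC_U^1$; likewise for $\cV$.

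The remaining condition is the one about intertwining operators, and this is where I would repeat the short argument from the proof of Corollary \ref{cor:UV_closed_under_fusion}. Given an intertwining operator $\cY$ of type $\binom{X}{M_1\,M_2}$ with $M_1,M_2\in\mathrm{Ob}(\cU)$ and $X\in\mathrm{Ob}(\cD(\cU,\cV))$, the image $\im\,\cY$ is $C_1$-cofinite by \cite[Corollary 2.12]{CMY1}; hence by the universal property of the fusion product in $\cC_U^1$, there is a surjection $M_1\tens_U M_2\twoheadrightarrow\im\,\cY$, where the fusion product is computed in $\cC_U^1$ but lies in $\cU$ because $\cU$ is closed under fusion products. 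Since $\cU$ is closed under quotients, $\im\,\cY\in\mathrm{Ob}(\cU)$. The argument on the $V$-side is identical. Theorem \ref{thm:D(U,V)_braid_equiv}(1) then delivers the HLZ braided tensor category structure with ribbon twist on $\cD(\cU,\cV)$ together with the braided tensor equivalence $\cD(\cU,\cV)\simeq\cU\otimes\cV$.

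Part (2) is immediate: the three additional hypotheses there (self-contragredience of $U$ and $V$, closure of $\cU$ and $\cV$ under contragredients, and rigidity of all simple objects of $\cU$ and $\cV$) are precisely the additional assumptions of part (2) of the present corollary, so Theorem \ref{thm:D(U,V)_braid_equiv}(2) applies and yields that $\cD(\cU,\cV)$ is a braided ribbon category with duals given by contragredient modules. No step here is genuinely difficult; the only non-formal ingredient is the use of the $C_1$-cofiniteness of images of intertwining operators between $C_1$-cofinite modules from \cite{CMY1, Mi-C1-cofin}, which is exactly the mechanism that lets closure of $\cU,\cV$ under fusion products upgrade to closure under images of arbitrary intertwining operators into modules in $\cD(\cU,\cV)$.
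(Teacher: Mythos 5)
Your proposal is correct and follows essentially the same route as the paper: the paper likewise deduces the corollary by checking the hypotheses of Theorem \ref{thm:D(U,V)_braid_equiv}, citing the discussion preceding the corollary for the first two conditions of part (1), the proof of Corollary \ref{cor:UV_closed_under_fusion} (i.e., the $C_1$-cofiniteness of images of intertwining operators from \cite{CMY1}) for the third, and Theorem \ref{thm:D(U,V)_braid_equiv}(2) for part (2). Your write-up just makes these verifications more explicit.
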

\begin{proof}
The first two conditions in part (1) of Theorem \ref{thm:D(U,V)_braid_equiv} hold by the discussion preceding the corollary and because $\cU$ and $\cV$ are locally finite abelian categories. The third condition holds also, by the proof of Corollary \ref{cor:UV_closed_under_fusion}, so the first conclusion follows from part (1) of Theorem \ref{thm:D(U,V)_braid_equiv}. The second conclusion follows from part (2) of Theorem \ref{thm:D(U,V)_braid_equiv}.
\end{proof}

We use Theorem \ref{thm:D(U,V)_for_C1_cofinite} (recall also Corollary \ref{cor:C1_UxV}) to specialize $\cU=\cC_U^1$ and $\cV=\cC_V^1$:
\begin{thm}\label{thm:C1_UxV}
If $U$ and $V$ are $\NN$-graded and $\cC_U^1$ and $\cC_V^1$ are closed under contragredients, then $\cC_{U\otimes V}^1$, equipped with the braided tensor category structure of \cite{HLZ1}-\cite{HLZ8}, is braided tensor equivalent to $\cC_U^1\otimes\cC_V^1$. If in addition $U$ and $V$ are self-contragredient and all simple $C_1$-cofinite $U$- and $V$-modules are rigid, then $\cC_{U\otimes V}^1$ is a braided ribbon category.
\end{thm}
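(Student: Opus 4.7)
The plan is to deduce both statements by specializing Corollary \ref{cor:C1_subcat_braid_tens} to the case $\cU=\cC_U^1$ and $\cV=\cC_V^1$, and then invoking Theorem \ref{thm:D(U,V)_for_C1_cofinite} to identify $\cD(\cC_U^1,\cC_V^1)$ with $\cC_{U\otimes V}^1$. There is no new technical input; the work consists of checking hypotheses that have already been established earlier in the paper.

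First I would verify the hypotheses of Corollary \ref{cor:C1_subcat_braid_tens}(1) with $\cU=\cC_U^1$, $\cV=\cC_V^1$. By Theorem \ref{thm:C1_cofin_prop}(1), the assumption that $\cC_U^1$ and $\cC_V^1$ are closed under contragredients implies they are locally finite abelian categories closed under submodules, and they are obviously closed under quotients and finite direct sums, hence under subquotients. They contain $U$ and $V$ trivially, and closure under fusion products follows from Miyamoto's theorem (or \cite[Corollary 2.12]{CMY1}): the image of a surjective intertwining operator between $C_1$-cofinite modules is $C_1$-cofinite, so the fusion product in the HLZ structure of Theorem \ref{thm:C1_cofin_prop}(2) stays inside $\cC_U^1$ (respectively $\cC_V^1$). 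Thus Corollary \ref{cor:C1_subcat_braid_tens}(1) gives that $\cD(\cC_U^1,\cC_V^1)$ admits the HLZ braided tensor category structure with ribbon twist and is braided tensor equivalent to $\cC_U^1\otimes\cC_V^1$.

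Next I would invoke Theorem \ref{thm:D(U,V)_for_C1_cofinite} to identify $\cD(\cC_U^1,\cC_V^1)$ with $\cC_{U\otimes V}^1$. Its hypotheses are exactly that $U$, $V$ are $\NN$-graded (assumed) and that $\cC_U^1$, $\cC_V^1$ are closed under submodules with all objects of finite length, both of which are part of Theorem \ref{thm:C1_cofin_prop}(1). Composing the two equivalences yields the first statement: $\cC_{U\otimes V}^1$ is braided tensor equivalent to $\cC_U^1\otimes\cC_V^1$.

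For the ribbon assertion, under the extra hypotheses that $U$, $V$ are self-contragredient, that $\cC_U^1$, $\cC_V^1$ are closed under contragredients (which is part of the standing assumption), and that all simple $C_1$-cofinite $U$- and $V$-modules are rigid, I would directly apply Corollary \ref{cor:C1_subcat_braid_tens}(2) with $\cU=\cC_U^1$, $\cV=\cC_V^1$. This gives that $\cD(\cC_U^1,\cC_V^1)=\cC_{U\otimes V}^1$ is a braided ribbon category with duals given by contragredients, completing the proof. The only step that is not entirely mechanical is justifying closure of $\cC_U^1$ and $\cC_V^1$ under fusion products, but this has already been handled in the proof of Corollary \ref{cor:UV_closed_under_fusion}, so there is essentially no obstacle to overcome here.
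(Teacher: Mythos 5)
Your proposal is correct and follows exactly the route the paper takes: Theorem \ref{thm:C1_UxV} is stated in the paper as the specialization of Corollary \ref{cor:C1_subcat_braid_tens} to $\cU=\cC_U^1$, $\cV=\cC_V^1$, combined with the identification $\cD(\cC_U^1,\cC_V^1)=\cC_{U\otimes V}^1$ from Theorem \ref{thm:D(U,V)_for_C1_cofinite} (and Corollary \ref{cor:C1_UxV}). Your verification of the hypotheses (local finiteness and closure under subquotients from Theorem \ref{thm:C1_cofin_prop}, closure under fusion products from Miyamoto's theorem as in the proof of Corollary \ref{cor:UV_closed_under_fusion}) is precisely what the paper relies on.
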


\section{\texorpdfstring{$C_2$}{C2}-cofinite examples}\label{sec:C2_cofinite_examples}

If $V$ is an $\NN$-graded $C_2$-cofinite vertex operator algebra, then the category $\rep(V)$ of grading-restricted generalized $V$-modules is a finite abelian category and a braided tensor category \cite{Hu-C2}, and it is easy to see from the spanning set of \cite[Lemma 2.4]{Mi-mod-inv} that $\rep(V)=\cC_V^1$. If $V$ is also simple and self-contragredient, and $\rep(V)$ is a rigid tensor category, then $\rep(V)$ is a (generally non-semisimple) modular tensor category \cite[Main Theorem 1]{McR-rat}. Thus Theorem \ref{thm:C1_UxV} specializes to $C_2$-cofinite vertex operator algebras as follows, where we use induction on $n$ to generalize from the tensor product of two vertex operator algebras to the tensor product of finitely many:
 \begin{thm}\label{thm:Del_prod_of_C2}
 If $V_1, V_2,\ldots, V_n$ are $\NN$-graded $C_2$-cofinite vertex operator algebras, then
 \begin{equation*}
 \rep(V_1\otimes V_2\otimes\cdots\otimes V_n)\cong\rep(V_1)\otimes\rep (V_2)\otimes\cdots\otimes\rep(V_n)
 \end{equation*}
 as braided tensor categories with ribbon twist. If $V_i$ is also simple and self-contragredient for $i=1,2,\ldots, n$, and all simple objects of $\rep(V_i)$ are rigid, then $\rep(V_1\otimes V_2\otimes\cdots \otimes V_n)$ is a not necessarily semisimple modular tensor category.
 \end{thm}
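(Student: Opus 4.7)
The plan is to proceed by induction on $n$. The case $n=1$ is trivial, and the key case $n=2$ follows almost directly from Theorem \ref{thm:C1_UxV}: for $\NN$-graded $C_2$-cofinite $V_1$ and $V_2$, the tensor product $V_1\otimes V_2$ is again $\NN$-graded and $C_2$-cofinite (using $V_1\otimes V_2/C_2(V_1\otimes V_2)\cong V_1/C_2(V_1)\otimes V_2/C_2(V_2)$), and by Remark \ref{rem:RepV=C1_for_C2} we have $\rep(V_i)=\cC_{V_i}^1$ and $\rep(V_1\otimes V_2)=\cC_{V_1\otimes V_2}^1$. Each $\cC_{V_i}^1=\rep(V_i)$ is closed under contragredients since contragredients of grading-restricted generalized modules are grading-restricted generalized. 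Theorem \ref{thm:C1_UxV} then supplies a braided tensor equivalence $\rep(V_1\otimes V_2)\cong\rep(V_1)\otimes\rep(V_2)$ that preserves ribbon twists, the latter matching since
\[
\theta_{M_1\otimes M_2}=e^{2\pi i L(0)}=e^{2\pi i L_{V_1}(0)}\otimes e^{2\pi i L_{V_2}(0)}=\theta_{M_1}\otimes\theta_{M_2}
\]
for $M_i\in\rep(V_i)$. For the inductive step, I would write $V_1\otimes\cdots\otimes V_n=(V_1\otimes\cdots\otimes V_{n-1})\otimes V_n$, apply the $n=2$ result to this decomposition (using that the first factor is again $\NN$-graded and $C_2$-cofinite), feed in the inductive hypothesis for $V_1\otimes\cdots\otimes V_{n-1}$, and conclude by associativity of the Deligne tensor product recorded in Section \ref{subsec:prelim_loc_fin}.

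For the modular tensor category claim, set $W:=V_1\otimes\cdots\otimes V_n$. The first task is to verify the hypotheses of \cite[Main Theorem 1]{McR-rat}. Self-contragredience of $W$ follows from $W'\cong V_1'\otimes\cdots\otimes V_n'\cong W$, and simplicity of $W$ follows from simplicity of each $V_i$ together with \cite[Theorem 4.7.4]{FHL} applied to $W$ viewed as a module over itself. The remaining condition to establish is rigidity of $\rep(W)$. Under the braided tensor equivalence $\rep(W)\cong\rep(V_1)\otimes\cdots\otimes\rep(V_n)$ from the first part, any simple object of $\rep(W)$ corresponds, via the iterated form of \cite[Theorem 4.7.4]{FHL}, to some $M_1\otimes\cdots\otimes M_n$ with $M_i\in\rep(V_i)$ simple. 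Each $M_i$ is rigid by hypothesis, and then $M_1\otimes\cdots\otimes M_n$ is rigid in the Deligne tensor product with dual $M_1^*\otimes\cdots\otimes M_n^*$ and evaluation and coevaluation given by the tensor products $e_{M_1}\otimes\cdots\otimes e_{M_n}$ and $i_{M_1}\otimes\cdots\otimes i_{M_n}$, as recorded at the end of Section \ref{subsec:prelim_tens_cat}.

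With every simple object of the finite braided tensor category $\rep(W)$ rigid, \cite[Theorem 4.4.1]{CMY2}, the same result invoked at the end of the proof of Theorem \ref{thm:D(U,V)_braid_equiv}, upgrades $\rep(W)$ to a rigid, hence braided ribbon, tensor category. Then \cite[Main Theorem 1]{McR-rat} applies and yields that $\rep(W)$ is a (not necessarily semisimple) modular tensor category, completing the proof. The argument is essentially bookkeeping; the only real point requiring care is confirming at each step that the relevant structural properties ($\NN$-grading, $C_2$-cofiniteness, simplicity, self-contragredience, closure under contragredients, and rigidity of simples) are inherited under tensor products of vertex operator algebras and under Deligne tensor products, so that Theorem \ref{thm:C1_UxV}, \cite[Theorem 4.4.1]{CMY2}, and \cite[Main Theorem 1]{McR-rat} can be invoked cleanly at each inductive step.
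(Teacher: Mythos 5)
Your proposal is correct and follows essentially the same route as the paper, which likewise obtains the result by specializing Theorem \ref{thm:C1_UxV} via $\rep(V_i)=\cC_{V_i}^1$ (Remark \ref{rem:RepV=C1_for_C2}), inducting on $n$, and invoking \cite[Main Theorem 1]{McR-rat} after establishing rigidity through \cite[Theorem 4.4.1]{CMY2}. The only minor imprecision is the displayed identification of $C_2$-quotients: what one actually needs (and what holds) is the inclusion $C_2(V_1)\otimes V_2+V_1\otimes C_2(V_2)\subseteq C_2(V_1\otimes V_2)$, which already gives $C_2$-cofiniteness of the tensor product.
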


 Probably the best understood $\NN$-graded $C_2$-cofinite vertex operator algebras with non-semisimple modules are the triplet $W$-algebras $\cW_p$ for $p\in\ZZ_{\geq 2}$ \cite{Ka, AM, TW}. We can now use Theorem \ref{thm:Del_prod_of_C2} to study the representation theory of the simple current extensions of tensor products of $\cW_p$ discussed in \cite[Section 4.1.2]{CKM1}. To do so, we recall some notation from \cite{TW}: $X_1^+$ denotes $\cW_p$ considered as a module for itself, and $X_1^-$ is the non-trivial self-dual simple current $\cW_p$-module such that $X_1^-\tens X_1^-\cong X_1^+$ (see \cite[Theorem 35]{TW}). Thus choosing $p_1,p_2,\ldots p_d\in\ZZ_{\geq 2}$, Theorem \ref{thm:dist_fus_over_tens} implies that   $X_1^{\varepsilon_1}\otimes\cdots\otimes X_1^{\varepsilon_d}$ is a self-dual simple current $\cW_{p_1}\otimes\cdots\otimes\cW_{p_d}$-module for any choice of $\varepsilon_i\in\lbrace\pm\rbrace$. We write
 \begin{equation*}
 X_1^S=X_1^{\varepsilon_1}\otimes X_1^{\varepsilon_2}\otimes\cdots\otimes X_1^{\varepsilon_d}
 \end{equation*}
where $S=\lbrace i\mid \varepsilon_i=-\rbrace$.

The power set $P(d)$ of $\lbrace 1,2,\ldots, d\rbrace$ is an $\FF_2$-vector space with addition given by symmetric difference, and an $\FF_2$-subspace $C\subseteq P(d)$ is called a \textit{binary linear code}. Given a binary linear code $C$, we set
\begin{equation*}
\cW_{p_1,\ldots,p_d}^C = \bigoplus_{S\in C} X_1^S.
\end{equation*}
From \cite[Section 4.1.2]{CKM1}, one observes that $\cW_{p_1,\ldots,p_d}^C$ is a simple subalgebra of a certain lattice abelian intertwining algebra (see \cite[Chapter 12]{DL}). The lattice is even, and thus the abelian intertwining algebra is actually a vertex operator algebra, if and only if $\sum_{i\in S} p_i\in 4\ZZ$ for all $S\in C$. Moreover, $\cW^C_{p_1,\ldots,p_d}$ is $\NN$-graded if and only if the lowest conformal weight of $X_1^S$ is a non-negative integer for all $S\in C$. Since the lowest conformal weight of $X_1^{\varepsilon_i}$ is $\frac{3p_i-2}{4}$, the lowest weight of $X_1^S$ is
\begin{equation*}
\sum_{i\in S} \frac{3p_i-2}{4} =\frac{3}{4}\sum_{i\in S} p_i -\frac{\vert S\vert}{2}.
\end{equation*}
Thus assuming $\sum_{i\in S} p_i\in 4\ZZ$, $X_1^S$ is $\NN$-graded if and only if $\vert S\vert\in 2\ZZ$. We conclude that $\cW_{p_1,\ldots, p_d}^C$ is a simple $\NN$-graded self-contragredient $C_2$-cofinite vertex operator algebra if and only if $C$ is an even binary linear code and $\sum_{i\in S} p_i\in 4\ZZ$ for all $S\in C$.

\begin{thm}\label{thm:ext_of_Wp_tens}
Let $C\subseteq P(d)$ be an even binary linear code and suppose $p_1,\ldots,p_d\in\ZZ_{\geq 2}$ satisfy $\sum_{i\in S} p_i\in 4\ZZ$ for all $S\in C$. Then $\rep(\cW_{p_1,\ldots, p_d}^C)$ is a non-semisimple modular tensor category.
\end{thm}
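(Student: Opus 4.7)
The plan is to realize $A:=\cW_{p_1,\ldots,p_d}^C$ as a rigid commutative algebra in the modular tensor category $\rep(U)$, where $U:=\cW_{p_1}\otimes\cdots\otimes\cW_{p_d}$, and then combine vertex operator algebra extension theory \cite{HKL,CKM1} with \cite[Main Theorem 1]{McR-rat}. First I will observe that each triplet algebra $\cW_{p_i}$ is simple, self-contragredient, $\NN$-graded, and $C_2$-cofinite, with $\rep(\cW_{p_i})$ rigid by \cite{TW} and hence a non-semisimple modular tensor category by \cite[Main Theorem 1]{McR-rat}. Theorem \ref{thm:Del_prod_of_C2} then yields a braided ribbon equivalence
\begin{equation*}
\rep(U)\cong\rep(\cW_{p_1})\otimes\cdots\otimes\rep(\cW_{p_d}),
\end{equation*}
under which each $X_1^S=X_1^{\varepsilon_1}\otimes\cdots\otimes X_1^{\varepsilon_d}$ is a rigid, self-dual simple current in the non-semisimple modular tensor category $\rep(U)$.

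Next, the hypotheses that $C$ is even and $\sum_{i\in S}p_i\in 4\ZZ$ for all $S\in C$ force every lowest conformal weight of $X_1^S$ to be a nonnegative integer, so $A=\bigoplus_{S\in C}X_1^S$ is an $\NN$-graded, simple, self-contragredient, $C_2$-cofinite vertex operator algebra extending $U$, the $C_2$-cofinite property being inherited from $U$ along the finite-length extension. By vertex operator algebra extension theory \cite{HKL,CKM1}, $A$ then carries the structure of a commutative, associative, haploid algebra in $\rep(U)$: integer conformal weights encode the trivial self-monodromy required for commutativity, and simplicity of $A$ as a vertex operator algebra is equivalent to haploidity $\hom_{\rep(U)}(U,A)=\CC$. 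Because $A$ is a finite direct sum of rigid simple currents, it is rigid as an object of $\rep(U)$, and the standard computation of categorical dimensions for simple-current extensions yields $\dim_{\rep(U)}A\neq 0$.

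Once this setup is in place, \cite[Theorem 3.65]{CKM1} will identify the category $\rep^0 A$ of local $A$-modules in $\rep(U)$ with $\rep(A)$ as braided tensor categories and will show that $\rep^0 A$ is rigid, with duals given by rigid duals in $\rep(U)$. Thus $\rep(A)=\rep(\cW_{p_1,\ldots,p_d}^C)$ is a rigid tensor category, and \cite[Main Theorem 1]{McR-rat} applied to $A$, which is simple, $\NN$-graded, self-contragredient, and $C_2$-cofinite with rigid module category, concludes that $\rep(\cW_{p_1,\ldots,p_d}^C)$ is a non-semisimple modular tensor category. The main obstacle will be verifying that the extension-theoretic equivalence $\rep^0 A\simeq\rep(A)$ transports the rigid ribbon tensor structure in this $C_2$-cofinite non-semisimple setting; this reduces to the conditions of haploidity and nonzero categorical dimension for $A$, both established above, and to the compatibility of restriction from $\rep(A)$ with the local $A$-module structure in $\rep(U)$, which is standard once $U$ is $C_2$-cofinite so that both $\rep(U)$ and $\rep(A)$ are finite braided tensor categories.
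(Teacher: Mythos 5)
Your overall strategy coincides with the paper's: establish rigidity of $\rep(U)$ for $U=\cW_{p_1}\otimes\cdots\otimes\cW_{p_d}$ via Theorem \ref{thm:Del_prod_of_C2} and the rigidity of each $\rep(\cW_{p_i})$ from \cite{TW}, realize $A=\cW_{p_1,\ldots,p_d}^C$ as a haploid commutative algebra in $\rep(U)$ with non-zero categorical dimension, transport rigidity to $\rep(A)\cong\rep^{0}(A)$ via \cite{KO, HKL, CKM1}, and conclude with \cite[Main Theorem 1]{McR-rat}. However, there is a genuine gap at the step where you claim that ``the standard computation of categorical dimensions for simple-current extensions yields $\dim_{\rep(U)}A\neq 0$.'' In a non-semisimple ribbon category a self-dual invertible object need not have categorical dimension $+1$; it can equal $-1$ (and this actually happens for the simple currents $X_1^-$ of the triplet algebras). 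Writing $d_S=\dim_{\rep(U)}X_1^S=\prod_{i\in S}\dim X_1^-$, the assignment $S\mapsto d_S$ is a character $C\rightarrow\lbrace\pm 1\rbrace$, so
\begin{equation*}
\dim_{\rep(U)}A=\sum_{S\in C}d_S
\end{equation*}
equals $\vert C\vert$ if this character is trivial and equals $0$ otherwise. For an arbitrary direct sum of self-dual simple currents the sum can therefore vanish, so non-vanishing is exactly the point that needs an argument, not a routine computation; your proposal supplies no reason why the character is trivial for the codes and $p_i$ allowed by the hypotheses.

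The paper closes this gap differently: since $U\subseteq A$ is the fixed-point subalgebra of the automorphism group $\hom(C,\CC^\times)$ acting on $A$, \cite[Proposition 4.15]{McR-orb} gives $\dim_{\rep(U)}A=\vert C\vert\neq 0$ directly, bypassing any computation of the individual dimensions $d_S$. To repair your argument along your own lines you would instead have to show that the integrality conditions ($\vert S\vert$ even and $\sum_{i\in S}p_i\in 4\ZZ$) force $d_S=1$ for every $S\in C$, e.g.\ by relating the dimension of a self-dual simple current to its twist and self-braiding; as written, the assertion is unsupported and is the one place where the proof could actually fail. The remaining steps (rigidity of finite direct sums of rigid objects, haploidity from simplicity, the use of \cite[Theorem 3.65]{CKM1} and \cite[Main Theorem 1]{McR-rat}) are in line with the paper's proof.
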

\begin{proof}
$\rep(\cW_{p_1,\ldots, p_d}^C)$ is non-semisimple because each $\rep(W_{p_i})$ is non-semisimple; for example, the lattice vertex operator algebra mentioned above is not semisimple as a $\cW_{p_1,\ldots, p_d}^C$-module. Now in view of \cite[Main Theorem 1]{McR-rat}, it remains to show that the braided tensor category $\rep(\cW_{p_1,\ldots,p_d}^C)$ is rigid. 

First, $\rep(\cW_{p_1}\otimes\cdots\otimes\cW_{p_d})$ is rigid by \cite[Theorem 40]{TW} (see also \cite[Theorem 7.6]{MY}) and Theorem \ref{thm:Del_prod_of_C2}. Then because $\cW_{p_1,\ldots,p_d}^C$ is a simple current extension, $\cW_{p_1}\otimes\cdots\otimes\cW_{p_d}\subseteq\cW_{p_1,\ldots,p_d}^C$ is the fixed-point subalgebra of an automorphism group isomorphic to $\hom(C,\CC^\times)$. By \cite[Proposition 4.15]{McR-orb}, this means that $\cW_{p_1,\ldots,p_d}^C$ has categorical dimension $\vert C\vert\neq 0$ in the ribbon category $\rep(\cW_{p_1}\otimes\cdots\otimes\cW_{p_d})$, and then by \cite[Theorem 3.2]{HKL} and \cite[Lemma 1.20]{KO}, $\cW_{p_1,\ldots,p_d}^C$ is a ``rigid $\rep(\cW_{p_1}\otimes\cdots\otimes\cW_{p_d})$-algebra'' in the sense of \cite[Definition 1.11]{KO}. It then follows from \cite[Theorem 1.15]{KO}, \cite[Theorem 3.4]{HKL}, and \cite[Theorem 3.65]{CKM1} that $\rep(\cW_{p_1,\ldots,p_d}^C)$ is rigid, as required.
\end{proof}

We can now use the detailed tensor structure of each $\rep(\cW_{p_i})$ from \cite{TW} (see also \cite[Section 7]{MY}), Theorem \ref{thm:Del_prod_of_C2}, and the vertex operator algebra extension theory of \cite{CKM1} to classify simple and projective objects in $\rep(\cW_{p_1,\ldots,p_d}^C)$, and to calculate their fusion products. For simplicity, we carry out these details only for one family of examples of $\cW_{p_1,\ldots,p_d}^C$, namely the even symplectic fermion vertex operator algebras $SF_d^+$.

For $d\in\ZZ_+$, the symplectic fermion superalgebra $SF_d$ was first introduced in the physics literature \cite{Ka2, Ka3, GK} and is the affine vertex operator superalgebra associated to a $2d$-dimensional purely odd Lie superalgebra $\mathfrak{h}$ equipped with a symplectic form. The even vertex operator subalgebra $SF_d^+\subseteq SF_d$ is $C_2$-cofinite \cite{Ab}, and $SF_1^+\cong\cW_2$ \cite{Ka3,GK}. In general, $SF_d\cong SF_1^{\otimes d}$ as vertex operator superalgebras. In \cite{Ru}, Runkel constructed a braided tensor category $\cS\cF_d$ which he conjectured to be braided tensor equivalent to $\rep(SF_d^+)$. The category $\cS\cF_d$ is braided tensor equivalent to the finite-dimensional representation category of a factorizable ribbon quasi-Hopf algebra \cite{GR, FGR}, and thus is a non-semisimple modular tensor category.

Theorem \ref{thm:ext_of_Wp_tens} now implies that $\rep(SF_d^+)$ 
is also a non-semisimple modular tensor category, consistent with Runkel's conjecture. Indeed, $SF_1\cong X_1^+\oplus X_1^-$ as an $SF_1^+\cong\cW_2$-module. Thus as a $\cW_2^{\otimes d}$-module,
\begin{equation*}
SF_d\cong SF_1^{\otimes d}\cong\bigoplus_{S\in P(d)} X_1^S,
\end{equation*}
where $X_1^S\subseteq SF_d^+$ if and only if $\vert S\vert\in 2\ZZ$. That is, $SF_d^+ \cong \cW_{2,\ldots, 2}^{E(d)}$
where $E(d)$ is the binary linear code consisting of all even-cardinality subsets of $\lbrace 1,2,\ldots, d\rbrace$. So we obtain:
\begin{cor}\label{cor:symplectic_fermions}
For all $d\in\ZZ_+$, $\rep(SF_d^+)$ equipped with the braided tensor category structure of \cite{HLZ8} is a non-semisimple modular tensor category.
\end{cor}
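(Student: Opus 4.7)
The plan is to reduce this corollary to a direct application of Theorem \ref{thm:ext_of_Wp_tens} by exhibiting $SF_d^+$ as an algebra of the form $\cW_{p_1,\ldots,p_d}^C$ and checking the hypotheses. All of the structural work has already been done in the paragraph preceding the corollary, so only a short verification is needed.

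First, I would recall the identification $SF_d^+ \cong \cW_{2,\ldots,2}^{E(d)}$ established above, where each $p_i = 2$ and $C = E(d) \subseteq P(d)$ is the code of even-cardinality subsets. This identification uses the isomorphism $SF_1^+ \cong \cW_2$ of \cite{Ka3, GK}, the factorization $SF_d \cong SF_1^{\otimes d}$, and the decomposition $SF_1 \cong X_1^+ \oplus X_1^-$ as a $\cW_2$-module, together with Theorem \ref{thm:dist_fus_over_tens} (or more precisely, the fact that tensor products of self-dual simple currents are self-dual simple currents).

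Next, I would check the two hypotheses of Theorem \ref{thm:ext_of_Wp_tens}. The code $E(d)$ is clearly $\FF_2$-linear, being the kernel of the parity functional $S \mapsto |S| \bmod 2$ on $P(d)$, and it is even in the sense required by that theorem. For the arithmetic condition, note that for any $S \in E(d)$,
\begin{equation*}
\sum_{i \in S} p_i \;=\; 2|S| \;\in\; 4\ZZ,
\end{equation*}
since $|S|$ is even by definition of $E(d)$. Both hypotheses are therefore satisfied.

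Applying Theorem \ref{thm:ext_of_Wp_tens} then immediately gives that $\rep(SF_d^+) = \rep(\cW_{2,\ldots,2}^{E(d)})$ is a non-semisimple modular tensor category. There is no real obstacle here; the substantive work is all contained in Theorem \ref{thm:ext_of_Wp_tens}, whose proof in turn relies on the Deligne product result (Theorem \ref{thm:Del_prod_of_C2}) to identify $\rep(\cW_2^{\otimes d})$ with $\rep(\cW_2)^{\otimes d}$ and hence deduce rigidity and modularity from the rigidity of each $\rep(\cW_{p_i})$ together with simple current extension theory via \cite{HKL, KO, CKM1}.
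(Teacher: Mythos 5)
Your proposal is correct and follows the same route as the paper: identify $SF_d^+\cong\cW_{2,\ldots,2}^{E(d)}$ via $SF_1^+\cong\cW_2$ and $SF_d\cong SF_1^{\otimes d}$, then apply Theorem \ref{thm:ext_of_Wp_tens}. Your explicit verification that $\sum_{i\in S}p_i=2|S|\in 4\ZZ$ for $S\in E(d)$ is the only hypothesis the paper leaves implicit, and it checks out.
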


We now use the tensor structure of $\rep(\cW_2)$ from \cite{TW} (see also \cite[Section 7]{MY}) and the extension theory of \cite{CKM1} to classify simple and projective objects of $\rep(SF_d^+)$ and to compute their fusion products. These results are not entirely new, since simple $SF_d^+$-modules were classified in \cite{Ab}. Also, the fusion rules, that is, the dimensions of spaces of intertwining operators, for all triples of simple $SF_d^+$-modules were determined in \cite{AA}, although these do not completely determine the fusion products of simple $SF_d^+$-modules. In any case, we can derive all such results rather easily using vertex operator algebra extensions.

%
%

To begin, we recall some results on $\rep(\cW_2)$. There are four simple modules in $\rep(\cW_2)$ denoted $X_i^\varepsilon$ for $i=1,2$ and $\varepsilon =\pm$. The two simple modules $X_2^\pm$ are projective, while $X_1^\pm$ have projective covers $P_1^\pm$ of length four, both of which have two composition factors isomorphic to $X_1^+$ and two isomorphic to $X_1^-$. Fusion products of simple and projective $\cW_2$-modules are given by
\begin{equation*}
X_1^{\varepsilon_1}\tens X_i^{\varepsilon_2} \cong X_i^{\varepsilon_1\varepsilon_2},\qquad X_2^{\varepsilon_1}\tens X_2^{\varepsilon_2} \cong P_1^{\varepsilon_1\varepsilon_2}
\end{equation*}
for $i=1,2$, $\varepsilon_1,\varepsilon_2=\pm$, and
\begin{equation*}
X_1^{\varepsilon_1}\tens P_1^{\varepsilon_2} \cong P_1^{\varepsilon_1\varepsilon_2},\qquad X_2^{\varepsilon_1}\tens P_1^{\varepsilon_2} \cong 2\cdot X_2^+\oplus 2\cdot X_2^-
\end{equation*}
for $\varepsilon_1,\varepsilon_2=\pm$. We can then use associativity to calculate
\begin{align*}
P_1^{\varepsilon_1}\tens P_1^{\varepsilon_2} & \cong (X_2^+\tens X_2^{\varepsilon_1})\tens P_1^{\varepsilon_2} \cong X_2^+\tens(X_2^{\varepsilon_1}\tens P_1^{\varepsilon_2})\nonumber\\
& \cong X_2^+\tens(2\cdot X_2^+\oplus 2\cdot X_2^-)\cong 2\cdot P_1^+\oplus 2\cdot P_1^-
\end{align*}
for $\varepsilon_1,\varepsilon_2=\pm$.

Next, we recall from \cite{KO, CKM1} that there is an braided tensor functor of induction
\begin{equation*}
\cF: \rep(\cW_2^{\otimes d})^0 \longrightarrow \rep(SF_d^+),
\end{equation*}
given by $\cF(W)= SF_d^+\tens W$ on objects, where $\rep(\cW_2^{\otimes d})^0\subseteq\rep(\cW_2^{\otimes d})$ is the full subcategory of objects such that $\cF(W)$ is actually an $SF_d^+$-module (and not a ``non-local'' or ``twisted'' $SF_d^+$-module). More specifically (see for example \cite[Proposition 2.65]{CKM1}), $\rep(\cW_2^{\otimes d})^0$ is the M\"{u}ger centralizer of $SF_d^+$: it consists of all $\cW_2^{\otimes d}$-modules $W$ such that the double braiding $\cR^2_{X_1^S, W}$ is the identity on $X_1^S\tens W$ for all $S\in E(d)$. Because $SF_d^+$ is a rigid $\cW_2^{\otimes d}$-module, the induction functor is exact. Another important property of induction is Frobenius reciprocity: there is a natural isomorphism
\begin{equation*}
\hom_{\cW_2^{\otimes d}}(W, X) \cong \hom_{SF_d^+}(\cF(W), X)
\end{equation*}
for any $\cW_2^{\otimes d}$-module $W$ and $SF_d^+$-module $X$.

\begin{thm}\label{thm:sym_ferm_properties}
For $d\in\ZZ_+$, let $\rep(SF_d^+)$ be the non-semisimple modular tensor category of grading-restricted generalized $SF_d^+$-modules.
\begin{enumerate}
\item There are four simple objects in $\rep(SF_d^+)$ up to isomorphism, given by
\begin{equation*}
\cX_i^\varepsilon = \cF(X_i^\varepsilon\otimes X_i^+\otimes\cdots\otimes X_i^+)
\end{equation*}
for $i=1,2$ and $\varepsilon=\pm$.

\item For $\varepsilon=\pm$, the simple module $\cX_2^\varepsilon$ is projective in $\rep(SF_d^+)$, while $\cX_1^\varepsilon$ has a projective cover
\begin{equation*}
\cP_1^\varepsilon = \cF(P_1^\varepsilon\otimes P_1^+\otimes\cdots\otimes P_1^+)
\end{equation*}
of length $2^{2d}$, with $2^{2d-1}$ composition factors isomorphic to $\cX_1^\pm$ for each sign choice.

\item Fusion products of simple and projective modules in $\rep(SF_d^+)$ as as follows:
\begin{equation*}
\cX_1^{\varepsilon_1}\tens\cX_i^{\varepsilon_2} \cong \cX_i^{\varepsilon_1\varepsilon_2},\qquad \cX_1^{\varepsilon_1}\tens\cP_1^{\varepsilon_2} \cong \cP_1^{\varepsilon_1\varepsilon_2}
\end{equation*}
for $i=1,2$ and $\varepsilon_1,\varepsilon_2=\pm$,
\begin{equation*}
\cX_2^{\varepsilon_1}\tens \cX_2^{\varepsilon_2} \cong \cP_1^{\varepsilon_1\varepsilon_2},\qquad \cX_2^{\varepsilon_1}\tens \cP_1^{\varepsilon_2}\cong 2^{2d-1}\cdot \cX_2^+\oplus 2^{2d-1}\cdot \cX_2^-
\end{equation*}
for $\varepsilon_1,\varepsilon_2=\pm$, and
\begin{equation*}
\cP_1^{\varepsilon_1}\tens\cP_1^{\varepsilon_2}\cong 2^{2d-1}\cdot\cP_1^+\oplus 2^{2d-1}\cdot\cP_1^-
\end{equation*}
for $\varepsilon_1,\varepsilon_2=\pm$.
\end{enumerate}
\end{thm}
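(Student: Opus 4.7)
The plan is to reduce the entire theorem to the tensor structure of $\rep(\cW_2)$ via the braided tensor equivalence $\rep(\cW_2^{\otimes d})\cong\rep(\cW_2)^{\otimes d}$ of Theorem \ref{thm:Del_prod_of_C2}, and then transfer the result to $\rep(SF_d^+)$ through the induction functor $\cF$. Iterating Proposition \ref{prop:D(U,V)_simple} and Theorem \ref{thm:proj_covers_in_C}, the simples of $\rep(\cW_2^{\otimes d})$ are the vector space tensor products $\bigotimes_{j=1}^d X_{i_j}^{\varepsilon_j}$, and each has projective cover $\bigotimes_j P_{i_j}^{\varepsilon_j}$ where I set $P_2^{\varepsilon_j}:=X_2^{\varepsilon_j}$. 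By Theorem \ref{thm:dist_fus_over_tens}, fusion products in $\rep(\cW_2^{\otimes d})$ factor componentwise. Finally, $SF_d^+$ is rigid as a $\cW_2^{\otimes d}$-module (see the proof of Theorem \ref{thm:ext_of_Wp_tens}), so the restriction functor $\rep(SF_d^+)\to\rep(\cW_2^{\otimes d})$ is exact and $\cF$ is an exact tensor functor that preserves projectives.

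I would first classify simples. Using the conformal weights $h_{X_1^+}=0,\,h_{X_1^-}=1,\,h_{X_2^+}=-\tfrac{1}{8},\,h_{X_2^-}=\tfrac{3}{8}$ and the balancing equation, one computes $\cR^2_{X_1^-,X_1^\pm}=1$ and $\cR^2_{X_1^-,X_2^\pm}=-1$; since the braiding factors componentwise, a simple $\bigotimes_j X_{i_j}^{\varepsilon_j}$ M\"{u}ger-centralizes every $X_1^S$ with $S\in E(d)$ iff $|\{j\in S:i_j=2\}|$ is even for every even $S$, which forces all $i_j=1$ or all $i_j=2$. The group $E(d)$ acts by flipping signs in even subsets of coordinates, giving four orbits indexed by $i\in\{1,2\}$ and the total parity $\prod_j\varepsilon_j$; by standard simple-current extension theory (see \cite{CKM1}), these correspond to four simple $SF_d^+$-modules, which I take as the definition of $\cX_i^\varepsilon$. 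For part (2), $\cX_2^\varepsilon=\cF(X_2^\varepsilon\otimes X_2^+\otimes\cdots)$ is projective because its argument is, and it is simple by the classification. For $\cP_1^\varepsilon$, exactness of $\cF$ preserves projectivity, and Frobenius reciprocity gives
\begin{equation*}
\hom_{SF_d^+}(\cP_1^\varepsilon,\cY)\cong\hom_{\cW_2^{\otimes d}}(P_1^\varepsilon\otimes P_1^+\otimes\cdots,\mathrm{Res}\,\cY),
\end{equation*}
which is $\CC$ when $\cY=\cX_1^\varepsilon$ and $0$ for the other three simples, since only $\mathrm{Res}\,\cX_1^\varepsilon$ contains the unique simple quotient $X_1^\varepsilon\otimes X_1^+\otimes\cdots$ of $P_1^\varepsilon\otimes P_1^+\otimes\cdots$; thus $\cP_1^\varepsilon$ has a unique simple quotient and Proposition \ref{prop:proj_cover_indecomposable} identifies it as the projective cover. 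Counting $\cW_2^{\otimes d}$-composition factors of $\cF(P_1^\varepsilon\otimes P_1^+\otimes\cdots)$ as $|E(d)|\cdot 4^d=2^{3d-1}$ and dividing by the $|E(d)|=2^{d-1}$ simples in each $E(d)$-orbit yields $SF_d^+$-length $2^{2d}$, split evenly into $2^{2d-1}$ copies of $\cX_1^+$ and of $\cX_1^-$ by sign parity.

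For part (3), monoidality of $\cF$ and componentwise fusion reduce every product to a $\rep(\cW_2)$-calculation. For instance $\cX_1^{\varepsilon_1}\tens\cX_i^{\varepsilon_2}=\cF(\bigotimes_j(X_1^{\delta_{1,j}}\tens X_i^{\delta_{2,j}}))=\cF(X_i^{\varepsilon_1\varepsilon_2}\otimes X_i^+\otimes\cdots)=\cX_i^{\varepsilon_1\varepsilon_2}$, and similarly for $\cX_1^{\varepsilon_1}\tens\cP_1^{\varepsilon_2}$ and $\cX_2^{\varepsilon_1}\tens\cX_2^{\varepsilon_2}$. The two remaining products expand via $X_2^{\delta_{1,j}}\tens P_1^{\delta_{2,j}}\cong 2X_2^+\oplus 2X_2^-$ (respectively $P_1^{\delta_{1,j}}\tens P_1^{\delta_{2,j}}\cong 2P_1^+\oplus 2P_1^-$), yielding $(2X_2^+\oplus 2X_2^-)^{\otimes d}\cong\bigoplus_{\vec{\delta}\in\{\pm\}^d}2^d\,X_2^{\vec{\delta}}$ and then $\cF(X_2^{\vec{\delta}})=\cX_2^{\prod_j\delta_j}$ (and analogously $\cF(P_1^{\vec{\delta}})=\cP_1^{\prod_j\delta_j}$, using $X_1^S\tens P_1^{\vec{\delta}}\cong P_1^{\vec{s}(S)\cdot\vec{\delta}}$ and the general isomorphism $\cF(X_1^S\tens W)\cong\cF(W)$ to absorb the $E(d)$-twists); counting the $2^{d-1}$ sign vectors of each parity gives $2^{2d-1}\cX_2^+\oplus 2^{2d-1}\cX_2^-$ and $2^{2d-1}\cP_1^+\oplus 2^{2d-1}\cP_1^-$. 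The main obstacle I anticipate is ensuring that the non-semisimple modules $P_1^\varepsilon\otimes P_1^+\otimes\cdots$ actually lie in $\rep(\cW_2^{\otimes d})^0$, so that their inductions are genuine (rather than twisted) $SF_d^+$-modules; this needs closure of the local subcategory under extensions, which I would derive from the general theory of local modules for \'etale algebras in braided tensor categories (\cite{KO,CKM1}).
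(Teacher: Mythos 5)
Your proposal follows essentially the same route as the paper's proof: classify the simple objects by computing the M\"uger centralizer of $SF_d^+$ via the balancing equation, identify the four inductions through the $E(d)$-orbit structure (the paper phrases this via Frobenius reciprocity), use Frobenius reciprocity to see that induction preserves projectives and that $\cP_1^\varepsilon$ has $\cX_1^\varepsilon$ as its unique simple quotient, and reduce all fusion products to componentwise $\rep(\cW_2)$ computations via monoidality of $\cF$ and Theorem \ref{thm:dist_fus_over_tens}. The numerology ($2^{2d}$ and $2^{2d-1}$) and all of the fusion computations check out, and your treatment of $\cP_1^{\varepsilon_1}\tens\cP_1^{\varepsilon_2}$ via $\cF(P_1^{\vec{\delta}})\cong\cP_1^{\prod_j\delta_j}$ is actually more explicit than the paper's.

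The one weak link is exactly the point you flag: why the non-semisimple module $P_1^\varepsilon\otimes P_1^+\otimes\cdots\otimes P_1^+$ lies in $\rep(\cW_2^{\otimes d})^0$. Your proposed justification --- closure of the local (equivalently, M\"uger-centralizing) subcategory under extensions --- is not available from the general theory: if $0\to A\to B\to C\to 0$ with $A$ and $C$ centralizing an object $J$, then $\cR^2_{J,B}-\Id$ kills $J\tens A$ and has image in $J\tens A$, so it is square-zero but need not vanish; categories of local modules are not closed under extensions in general, and neither \cite{KO} nor \cite{CKM1} supplies such a statement. The fix is easy and stays inside your framework: the M\"uger centralizer is closed under fusion products, and $P_1^{\delta_1}\otimes\cdots\otimes P_1^{\delta_d}\cong(X_2^{\mu_1}\otimes\cdots\otimes X_2^{\mu_d})\tens(X_2^{\nu_1}\otimes\cdots\otimes X_2^{\nu_d})$ componentwise, with both factors already shown to be local since all lower indices equal $2$; equivalently, monodromy with the invertible object $X_1^S$ is multiplicative under fusion, so $\cR^2_{X_1^S,\,X_2^{\vec{\mu}}\tens X_2^{\vec{\nu}}}$ is the product of two signs $-1$ raised to even powers. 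With that substitution your argument is complete.
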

\begin{proof}
For part (1), \cite[Proposition 4.5]{CKM1} shows that an $SF_d^+$-module is simple if and only if it is the induction of a simple $\cW_2^{\otimes d}$-module in $\rep(\cW_2^{\otimes d})^0$. Thus we first need to determine which simple objects $W$ of $\rep(\cW_2^{\otimes d})$ double-braid trivially with $X_1^S$ for all $S\in E(d)$. Setting $X_1^S= X_1^{\delta_1}\otimes\cdots\otimes X_1^{\delta_d}$ (so $\delta_i=-$ precisely for $i\in S$) and $W=X_{i_1}^{\varepsilon_1}\otimes\cdots\otimes X_{i_d}^{\varepsilon_d}$, we calculate the double braiding using the balancing equation \eqref{eqn:balancing} and Theorem \ref{thm:Del_prod_of_C2}:
\begin{align*}
\cR^2_{X_1^S,W} & = \theta_{X_1^S\tens W}\circ(\theta_{X_1^S}^{-1}\tens\theta_W^{-1}) =\bigotimes_{k=1}^d \theta_{X_1^{\delta_k}\tens X_{i_k}^{\varepsilon_k}}\circ(\theta_{X_1^{\delta_k}}^{-1}\tens\theta_{X_{i_k}^{\varepsilon_k}}^{-1}) = \prod_{k=1}^d e^{2\pi i(h_{i_k}^{\delta_k\varepsilon_k}-h_1^{\delta_k}-h_{i_k}^{\varepsilon_k})},
\end{align*}
where $h_i^\pm=\frac{1}{8}(i-3)(i-3\pm 2)$ is the lowest conformal weight of $X_i^\pm$. Since $\delta_k\varepsilon_k =\varepsilon_k$ if $k\notin S$ and $h_1^\pm\in\ZZ$, it follows that
\begin{equation*}
\cR_{X_1^S,W}^2 =\prod_{k\in S} e^{2\pi i(h_{i_k}^{-\varepsilon_k} -h_{i_k}^{\varepsilon_k})} =\prod_{k\in S} e^{\pi i(-\varepsilon_k(i_k-3))} =\exp\bigg(-\pi i\sum_{k\in S} \varepsilon_k i_k\bigg),
\end{equation*}
where the last equality uses $\vert S\vert\in 2\ZZ$. Thus $\cF(W)$ is a module in $\rep(SF_d^+)$ if and only if $\sum_{k\in S} \varepsilon_k i_k\in 2\ZZ$ for all $S\in E(d)$. This occurs if and only if $i_j=i_k$ for all $1\leq j,k\leq d$, since $\lbrace j, k\rbrace\in E(d)$. Thus any simple $SF_d^+$ is isomorphic to some $\cF(X_i^{\varepsilon_1}\otimes\cdots\otimes X_i^{\varepsilon_d})$ for $i=1,2$ and $\varepsilon_k=\pm$.

Moreover, by Frobenius reciprocity, $\cF(X_i^{\varepsilon_1}\otimes\cdots\otimes X_i^{\varepsilon_d})\cong\cF(X_j^{\delta_1}\otimes\cdots\otimes X_j^{\delta_d})$ if and only if $X_i^{\varepsilon_1}\otimes\cdots\otimes X_i^{\varepsilon_d}$ occurs as a $\cW_2^{\otimes d}$-submodule of
\begin{equation*}
\cF(X_j^{\delta_1}\otimes\cdots\otimes X_j^{\delta_d}) \cong SF_d^+\tens(X_j^{\delta_1}\otimes\cdots\otimes X_j^{\delta_d})\cong\bigoplus_{S\in E(d)} X_1^S\tens(X_j^{\delta_1}\otimes\cdots\otimes X_j^{\delta_d}).
\end{equation*}
Because $\vert S\vert\in 2\ZZ$ for each $S\in E(d)$, $X_i^{\varepsilon_1}\otimes\cdots\otimes X_i^{\varepsilon_d}$ is then a submodule of $\cF(X_j^{\delta_1}\otimes\cdots\otimes X_j^{\delta_d})$ if and only if $i=j$ and the numbers of $\varepsilon_k$ which equal $+$ or $-$ agree modulo $2$ with the numbers of $\delta_k$ which equal $+$ or $-$. Thus each simple $SF_d^+$-module is isomorphic to exactly one of the four simple modules listed in part (1).

For part (2), Frobenius reciprocity implies that $\cF$ takes projective $\cW_2^{\otimes d}$-modules to projective $SF_d^+$-modules (see for example \cite[Lemma 17]{ACKR}). Thus since $P_1^\varepsilon\otimes P_1^+\otimes\cdots\otimes\cP_1^+$ and $X_2^\varepsilon\otimes X_2^+\otimes\cdots\otimes X_2^+$ are projective $\cW_2^{\otimes d}$-modules by Theorem \ref{thm:proj_covers_in_C}, and since $P_1^\varepsilon\otimes P_1^+\otimes\cdots\otimes P_1^+$ is an object of $\rep(\cW_2^{\otimes d})^0$ by the same calculation as for $X_1^\varepsilon\otimes X_1^+\otimes\cdots\otimes X_1^+$, $\cP_1^\varepsilon$ and $\cX_2^\varepsilon$ are projective in $\rep(SF_d^+)$. Also, $\cP_1^\varepsilon$ has length $2^{2d}$ because each $P_1^\pm$ has length $4$ and because $\cF$ is exact and takes simple $\cW_2^{\otimes d}$-modules to simple $SF_d^+$-modules. Moreover, since $P_1^{\pm}$ has two composition factors isomorphic to $X_1^\pm$ for both sign choices, half the composition factors of $P_1^\varepsilon\otimes P_1^+\otimes\cdots\otimes P_1^+$ induce to $\cX_1^+$ and half induce to $\cX_1^-$.

 Now by Proposition \ref{prop:proj_cover_indecomposable}, $\cP_1^\varepsilon$ will be a projective cover of $\cX_1^\varepsilon$ if it is indecomposable and surjects onto $\cX_1^\varepsilon$. To prove this,  we will show that if $\cX$ is a simple $SF_d^+$-module, then there is a non-zero (and one-dimensional) space of homomorphisms $\cP_1^\varepsilon\rightarrow\cX$ if and only $\cX\cong\cX_1^\varepsilon$. Indeed, taking $\cX=\cX_i^\delta$, Frobenius reciprocity and Proposition \ref{prop:proj_cover_homs} imply
\begin{align*}
\hom_{SF_d^+}(\cP_1^\varepsilon, \cX) & \cong\hom_{SF_d^+}(\cF(P_1^\varepsilon\otimes P_1^+\otimes\cdots\otimes P_1^+), \cF(X_i^\delta\otimes X_i^+\otimes\cdots\otimes X_i^+))\nonumber\\ 
& \cong\bigoplus_{S\in E(d)}\hom_{\cW_2^{\otimes d}}(P_1^\varepsilon\otimes P_1^+\otimes\cdots\otimes P_1^+, X_1^S\tens(X_i^\delta\otimes X_i^+\otimes\cdots\otimes X_i^+))\nonumber\\
& \cong\hom_{\cW_2}(P_1^\varepsilon, X_i^\delta)\otimes\bigotimes_{k=2}^d \hom_{\cW_2}(P_1^+, X_i^+).
\end{align*}
This space is indeed non-zero (and spanned by $p_{X_1^\varepsilon}\otimes p_{X_1^+}\otimes\cdots\otimes p_{X_1^+}$) if and only if $i=1$ and $\delta=\varepsilon$. Thus $\cP_1^\varepsilon$ is a projective cover of $\cX_1^\varepsilon$.

For part (3), we use Theorem \ref{thm:dist_fus_over_tens} together with the fact that induction preserves fusion products. Since the calculations are straightforward, we only illustrate them in the most interesting case: for any $\varepsilon_1,\varepsilon_2$,
\begin{align*}
\cX_2^{\varepsilon_1}\tens\cP_1^{\varepsilon_2} & = \cF(X_2^{\varepsilon_1}\otimes X_2^+\otimes\cdots\otimes X_2^+)\tens\cF(P_1^{\varepsilon_2}\otimes P_1^+\otimes\cdots\otimes P_1^+)\nonumber\\
& \cong\cF((X_2^{\varepsilon_1}\tens P_1^{\varepsilon_2})\otimes(X_2^+\tens P_1^+)\otimes\cdots\otimes(X_2^+\tens P_1^+))\nonumber\\
& \cong\cF((2\cdot X_2^+\oplus 2\cdot X_2^-)^{\otimes d}).
\end{align*}
Introducing the notation $X_2^S$ for $S\in P(d)$ analogous to $X_1^S$, we thus have
\begin{align*}
\cX_2^{\varepsilon_1}\tens\cP_1^{\varepsilon_2}\cong 2^d\cdot\bigoplus_{S\in P(d)} \cF(X_2^S).
\end{align*}
Since $\cF(X_2^S)\cong\cX_2^+$ if $\vert S\vert\in 2\ZZ$ and $\cF(X_2^S)\cong \cX_2^-$ if $\vert S\vert\in 2\ZZ+1$, it follows that $\cX_2^{\varepsilon_1}\tens\cP_1^{\varepsilon_2}$ contains $2^{2d-1}$ copies each of $\cX_2^+$ and  $\cX_2^-$.
\end{proof}

\begin{remark}
 Recalling the conjectural equivalence $\rep(SF_d^+)\cong\cS\cF_d$ as braided tensor categories, Theorem \ref{thm:sym_ferm_properties}(3) compares well with the fusion products in $\cS\cF_d$ given in \cite[Theorem 3.13]{Ru}. The dictionary between $\rep(SF_d^+)$ and $\cS\cF_d$ is as follows. As a category,
\begin{equation*}
\cS\cF_d = \rep(\mathfrak{h})\oplus s\cV ec
\end{equation*}
where $\mathfrak{h}$ is a $2d$-dimensional purely odd Lie superalgebra, and all morphisms in $\cS\cF_d$ preserve $\ZZ/2\ZZ$-gradings. Then $\cX_1^+$ and $\cP_1^+$ correspond to the trivial even $\mathfrak{h}$-module $\CC^{1\vert 0}$ and the $\mathfrak{h}$-module $U(\mathfrak{h})$, respectively, while $\cX_2^+$ corresponds to the vector superspace $\CC^{1\vert 0}$. The modules $\cX_1^-$, $\cP_1^-$, and $\cX_2^-$ correspond to the parity reversals of these objects in $\cS\cF_d$.
\end{remark}

\begin{remark}
The conjectural braided tensor equivalence $\rep(SF_d^+)\cong\cS\cF_d$ has recently been proved in the case $d=1$ \cite{CLR,GN}, in which case $SF_1^+\cong\cW_2$, and $\cS\cF_1$ is equivalent to the representation category of a quasi-Hopf modification of the small quantum group $u_i(\mathfrak{sl}_2)$ at a fourth root of unity \cite{GR}. Theorem \ref{thm:Del_prod_of_C2} now suggests an approach to proving the conjectured equivalence for general $d$. Namely, Theorem \ref{thm:Del_prod_of_C2}, together with \cite{HKL, CLR, GN}, shows that $SF_d^+$ is a commutative algebra in the braided tensor category
\begin{equation*}
\rep(\cW_2)^{\otimes d} \cong \rep(u_i(\mathfrak{sl}_2))^{\otimes d}.
\end{equation*}
This means $\rep(SF_d^+)$ is equivalent to the braided tensor category $\rep^{\text{loc}}(A)$ of local modules for a certain commutative algebra $A$ in $\rep(u_i(\mathfrak{sl}_2))^{\otimes d}$. On the other hand, $\cS\cF_d$ is braided tensor equivalent to the representation category of a certain factorizable ribbon quasi-Hopf algebra $Q(d)$ \cite{FGR}. So if it could be shown that $\rep(Q(d))\cong\rep^{\text{loc}}(A)$ as braided tensor categories, then the equivalence $\rep(SF_d^+)\cong\cS\cF_d$ would be proved.
\end{remark}

\appendix

\section{Proof of Theorem \ref{thm:gen_extend_to_right_exact}}\label{app:extend_to_right_exact}

Before proving Theorem \ref{thm:gen_extend_to_right_exact} in full generality, we first prove the special case that $\cD$ has enough projectives, every projective object is contained in $\til{\cD}$, and $\cD_{X_1,X_2,X_3}=\cD$ for all $X_1,X_2,X_3\in\mathrm{Ob}(\cD)$:
\begin{thm}\label{thm:extend_to_right_exact}
Let $\cD$ and $\cC$ be $\CC$-linear abelian categories such that $\cD$ has enough projectives, let $\til{\cD}\subseteq\cD$ be an additive full subcategory that contains all projective objects in $\cD$, and let $\cG: \til{\cD}\rightarrow\cC$ be a $\CC$-linear functor such that for any $X\in\mathrm{Ob}(\til{\cD})$, there is a right exact sequence
\begin{equation*}
Q_X\xrightarrow{q_X} P_X\xrightarrow{p_X} X\longrightarrow 0,
\end{equation*}
where $Q_X$ and $P_X$ are projective in $\cD$, such that $(\cG(X),\cG(p_X))$ is a cokernel of $\cG(q_X)$ in $\cC$. Then there is a unique (up to natural isomorphism) right exact $\CC$-linear functor $\cF: \cD\rightarrow\cC$ such that $\cF\vert_{\til{\cD}} \cong \cG$.
\end{thm}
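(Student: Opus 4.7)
The plan is the classical extension-by-projective-resolutions argument, adapted to the setting where $\cG$ is already defined on the larger subcategory $\til{\cD}$ containing all projectives. Because $\cD$ has enough projectives, for each $X\in\mathrm{Ob}(\cD)$ I fix a right exact sequence $Q_X\xrightarrow{q_X} P_X\xrightarrow{p_X} X\to 0$ with $P_X, Q_X$ projective, and define $\cF(X)$ to be a cokernel of $\cG(q_X)$ in $\cC$, with structure map $\varphi_X\colon\cG(P_X)\twoheadrightarrow\cF(X)$. When $X\in\mathrm{Ob}(\til{\cD})$, the hypothesis on $\cG$ exhibits $\cG(X)$ as a cokernel of $\cG(q_X')$ for some (possibly different) presentation; once independence of the construction from the choice of presentation is established in the next step, this yields a canonical natural isomorphism $\cF\vert_{\til{\cD}}\cong\cG$.

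To define $\cF$ on a morphism $f\colon X\to Y$, I lift $f$ through the chosen presentations: projectivity of $P_X$ produces $\widetilde{f}\colon P_X\to P_Y$ with $p_Y\circ\widetilde{f}=f\circ p_X$; since $\widetilde{f}\circ q_X$ lands in $\im q_Y=\ker p_Y$, projectivity of $Q_X$ produces $\widehat{f}\colon Q_X\to Q_Y$ with $q_Y\circ\widehat{f}=\widetilde{f}\circ q_X$. Applying $\cG$ and the universal property of the cokernel $(\cF(Y),\varphi_Y)$ then yields an induced morphism $\cF(f)\colon\cF(X)\to\cF(Y)$. The key technical point, which I expect to be the main obstacle, is well-definedness of $\cF(f)$: any two candidate lifts $\widetilde{f},\widetilde{f}'$ differ by a morphism factoring through $\ker p_Y=\im q_Y$, which by projectivity of $P_X$ lifts to a homotopy $h\colon P_X\to Q_Y$ with $q_Y\circ h=\widetilde{f}-\widetilde{f}'$; applying $\cG$ and reducing modulo $\im\cG(q_Y)$, the two induced maps on cokernels coincide. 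The same chain-homotopy argument, together with $\CC$-linearity of $\cG$, gives functoriality and $\CC$-linearity of $\cF$, as well as independence of the natural isomorphism $\cF\vert_{\til{\cD}}\cong\cG$ from the chosen presentations.

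Right exactness will be established by a horseshoe-type construction. Given a right exact sequence $X_1\xrightarrow{a} X_2\xrightarrow{b} X_3\to 0$ in $\cD$, I factor it through the short exact sequence $0\to\im a\to X_2\to X_3\to 0$; the horseshoe lemma then provides projective presentations of $\im a$, $X_2$, and $X_3$ whose projective terms sit in compatible short exact sequences, and applying $\cG$ (which preserves finite direct sums) and taking cokernels row-by-row yields a right exact sequence $\cF(\im a)\to\cF(X_2)\to\cF(X_3)\to 0$ in $\cC$. Combining with the surjection $X_1\twoheadrightarrow\im a$, which induces an epimorphism $\cF(X_1)\twoheadrightarrow\cF(\im a)$ by a direct comparison of presentations, gives the desired right exactness $\cF(X_1)\to\cF(X_2)\to\cF(X_3)\to 0$. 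Uniqueness is then immediate: any right exact $\CC$-linear functor $\cF'\colon\cD\to\cC$ with $\cF'\vert_{\til{\cD}}\cong\cG$ must send $Q_X\to P_X\to X\to 0$ (whose first two terms lie in $\til{\cD}$) to a right exact sequence beginning with $\cG(Q_X)\to\cG(P_X)$, so $\cF'(X)$ is a cokernel of $\cG(q_X)$ and is therefore canonically isomorphic to $\cF(X)$; naturality in $X$ assembles these isomorphisms into $\cF'\cong\cF$.
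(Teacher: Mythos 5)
Your construction of $\cF$ on objects and morphisms, the chain-homotopy argument for well-definedness, and the uniqueness argument are all essentially identical to the paper's. Where you genuinely diverge is in two places. First, the paper sidesteps your ``independence of presentation'' lemma by making rigid choices up front: it sets $P_X=X$, $Q_X=0$ for projective $X$ and uses the hypothesized presentation for $X\in\mathrm{Ob}(\til{\cD})$, so that $\cF\vert_{\til{\cD}}=\cG$ on the nose; your route through comparison of arbitrary presentations is more work but is the standard one and is needed anyway for your right-exactness step. Second, and more substantially, for right exactness the paper does \emph{not} use the horseshoe lemma: given $X\xrightarrow{f}\tilX\xrightarrow{c}C\to 0$ it verifies directly that $(\cF(C),\cF(c))$ satisfies the universal property of the cokernel of $\cF(f)$, via a chain of projectivity liftings ($\tilg_c$, $h$, $\tilh$, $\tilg_f$, $k$, $\tilk$ in the paper's notation). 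Your horseshoe approach is correct: factoring through $0\to\im a\to X_2\to X_3\to 0$, building compatible presentations with split middle terms, using that $\cG$ preserves finite direct sums, and passing to cokernels of a morphism of (split) right exact rows does yield $\cF(\im a)\to\cF(X_2)\to\cF(X_3)\to 0$ right exact --- though note this step silently relies on your presentation-independence lemma twice more, once to identify the cokernel of $\cG(Q_{\im a}\oplus Q_{X_3})\to\cG(P_{\im a}\oplus P_{X_3})$ with $\cF(X_2)$ and once to see that $\cF$ of the epimorphism $X_1\twoheadrightarrow\im a$ is epic (by re-presenting $\im a$ as a quotient of $P_{X_1}$). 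The trade-off is that your argument leans on standard homological machinery and is shorter to state, while the paper's is longer but entirely self-contained, using nothing beyond projectivity and the cokernel universal property.
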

\begin{proof}
Let $\cP\subseteq\cD$ be the additive full subcategory of projective objects. Since $\cD$ has enough projectives, for any $X\in\mathrm{Ob}(\cD)$, we can fix a right exact sequence
\begin{equation*}
 Q_X \xrightarrow{q_X} P_X \xrightarrow{p_X}  X \longrightarrow 0
\end{equation*}
such that $Q_X, P_X\in\mathrm{Ob}(\cP)$. Because $P_X$ is projective, we can also choose for any morphism $f: X\rightarrow\tilX$ in $\cD$ a morphism $g_f: P_X\rightarrow P_{\tilX}$ such that the diagram
\begin{equation*}
\xymatrixcolsep{3pc}
\xymatrixrowsep{2pc}
\xymatrix{
P_X \ar[d]^{p_X} \ar[r]^{g_f} & P_{\tilX} \ar[d]^{p_{\tilX}}\\
X \ar[r]^f & \tilX\\
}
\end{equation*}
commutes. For $X\in\mathrm{Ob}(\cP)$, we choose $P_X=X$, $p_X=\Id_X$ and $Q_X=0$, $q_X=0$. These choices force $g_f=f$ when $f$ is a morphism in $\cP$. More generally, for $X\in\mathrm{Ob}(\til{\cD})$, we choose $P_X$, $Q_X$, $p_X$, and $q_X$ such that $(\cG(X),\cG(p_X))$ is a cokernel of $\cG(q_X)$.

Now since $\cF$ should be a right exact functor that extends $\cG$, we define $\cF$ on objects by
\begin{equation*}
 \cF(X) = \coker\,\cG(q_X),
\end{equation*}
with cokernel surjection $c_X: \cG(P_X)\rightarrow\cF(X)$. For $X\in\mathrm{Ob}(\til{\cD})$, we take $\cF(X)=\cG(X)$ and $c_X=\cG(p_X)$. For any morphism $f: X\rightarrow\tilX$ in $\cD$, we want $\cF(f)$ to be the unique morphism, induced by the universal property of cokernels, such that the diagram
\begin{equation*}
 \xymatrix{
 \cG(P_X) \ar[r]^{\cG(g_f)} \ar[d]^{c_X} & \cG(P_{\tilX}) \ar[d]^{c_{\tilX}} \\
 \cF(X) \ar[r]^{\cF(f)} & \cF(\tilX)
 }
\end{equation*}
commutes. For $\cF(f)$ to exist, we need $c_{\tilX}\circ\cG(g_f\circ q_X)=0$. To check this, we first have
\begin{equation*}
 p_{\tilX}\circ g_f\circ q_X=f\circ p_X\circ q_X=0,
\end{equation*}
implying
\begin{equation*}
 \mathrm{Im}(g_f\circ q_X)\subseteq\ker p_{\tilX}=\mathrm{Im}\,q_{\tilX}.
\end{equation*}
Thus by projectivity of $Q_X$, there is a morphism $h_f: Q_X\rightarrow Q_{\tilX}$ such that the diagram
\begin{equation*}
\xymatrixcolsep{3pc}
 \xymatrix{
 Q_X \ar[r]^{h_f} \ar[d]^{q_X} & Q_{\tilX} \ar[d]^{q_{\tilX}} \\
 P_X \ar[r]^{g_f} & P_{\tilX}\\
 }
\end{equation*}
commutes. Thus
\begin{equation*}
 c_{\tilX}\circ\cG(g_f\circ q_X)=c_{\tilX}\circ\cG(q_{\tilX}\circ h_f) =0,
\end{equation*}
as required. When $X,\til{X}\in\mathrm{Ob}(\til{\cD})$,
\begin{equation*}
\cF(f)\circ c_X = c_{\til{X}}\circ\cG(g_f)=\cG(p_{\til{X}}\circ g_f)=\cG(f\circ p_X)=\cG(f)\circ c_X,
\end{equation*}
which implies $\cF(f)=\cG(f)$. Thus $\cF\vert_{\til{\cD}}=\cG$ on both objects and morphisms.

We claim that $\cF(f): \cF(X)\rightarrow\cF(\tilX)$ in $\cC$ is independent of the choice of morphism $g_f: P_X\rightarrow P_{\tilX}$ in $\cP$. That is,
we need to show that if $g_f,\tilg_f: P_X\rightarrow P_{\tilX}$ satisfy
 \begin{equation*}
  p_{\tilX}\circ g_f=f\circ p_X=p_{\tilX}\circ\tilg_f,
 \end{equation*}
then
\begin{equation*}
 c_{\tilX}\circ\cG(g_f)=c_{\tilX}\circ\cG(\tilg_f),
\end{equation*}
equivalently $c_{\tilX}\circ\cG(g_f-\tilg_f)=0$ since $\cG$ is $\CC$-linear. Indeed, $p_{\tilX}\circ(g_f-\tilg_f)=0$ yields
\begin{equation*}
 \mathrm{Im}(g_f-\tilg_f)\subseteq\ker p_{\tilX}=\mathrm{Im}\,q_{\tilX}.
\end{equation*}
Thus because $P_X$ is projective in $\cD$, there is a morphism $g: P_X\rightarrow Q_{\tilX}$ such that
\begin{equation*}
 g_f-\tilg_f=q_{\tilX}\circ g.
\end{equation*}
Thus 
\begin{equation*}
 c_{\tilX}\circ\cG(g_f-\tilg_f)=c_{\tilX}\circ\cG(q_{\tilX}\circ g) =0
\end{equation*}
since by definition $c_{\tilX}\circ\cG(q_{\tilX})=0$. This proves the claim.

It is now easy to check that $\cF: \cD\rightarrow\cC$ as defined above is a $\CC$-linear functor such that $\cF\vert_{\til{\cD}}=\cG$. Indeed, $\cF(\Id_X)=\Id_{\cF(X)}$ and $\cF(f_1\circ f_2)=\cF(f)\circ\cF(g)$ because $\cG$ is a functor and because the above claim shows that we may assume $g_{\Id_X}=\Id_{P_X}$ and $g_{f_1\circ f_2}=g_{f_1}\circ g_{f_2}$. Similarly, to show $\cF$ is $\CC$-linear, $\cF(c_1f_1+c_2f_2)=c_1\cF(f_1)+c_2\cF(f_2)$ for $c_1,c_2\in\CC$ because $\cG$ is $\CC$-linear and because we may assume $g_{c_1f_1+c_2f_2}=c_1 g_{f_1}+c_2 g_{f_2}$. To show that $\cF$ is the desired functor, we still need to show that $\cF$ is right exact, and that if $\til{\cF}: \cD\rightarrow\cC$ is any right exact functor such that $\til{\cF}\vert_{\til{\cD}}\cong\cG$, then $\til{\cF}\cong\cF$.

We now show that $\cF$ is right exact. Thus consider a right exact sequence
\begin{equation*}
 X\xrightarrow{f} \tilX \xrightarrow{c} C\rightarrow 0
\end{equation*}
in $\cD$, that is, $(C,c)$ is a cokernel of $f: X\rightarrow\tilX$. We need to show that $(\cF(C),\cF(c))$ is a cokernel of $\cF(f)$ in $\cC$. Recall that $\cF(f)$ and $\cF(c)$ are defined using morphisms $g_f$ and $g_c$ between projective modules in $\cD$, such that the diagram
\begin{equation*}
\xymatrixrowsep{1.5pc}
\xymatrixcolsep{2.5pc}
 \xymatrix{
 & Q_{\tilX} \ar[d]^{q_{\tilX}} & Q_C \ar[d]^{q_C} & \\
 P_X \ar[r]^{g_f} \ar[d]^{p_X} & P_{\tilX} \ar[d]^{p_{\tilX}} \ar[r]^{g_c} & P_C 
 \ar[d]^{p_C}& \\
 X \ar[r]^f & \tilX \ar[r]^c & C  \ar[d] \ar[r] & 0\\
 && 0 &\\
 }
\end{equation*}
commutes, with the bottom row and right column in particular right exact. Because $c\circ p_{\tilX}$ is surjective and $P_C$ is projective, there is a morphism $\tilg_c:P_C\rightarrow P_{\tilX}$ such that
\begin{equation*}
 p_C=c\circ p_{\tilX}\circ\tilg_c.
\end{equation*}
Since we get $0$ when we pre-compose both sides of this equation with $q_C$, and because $p_X$ is surjective,
\begin{equation*}
 \mathrm{Im}(p_{\tilX}\circ\tilg_c\circ q_C)\subseteq\ker c=\mathrm{Im}\,f=\mathrm{Im}(f\circ p_X).
\end{equation*}
Thus because $Q_C$ is projective, we get a morphism $h: Q_C\rightarrow P_X$ such that
\begin{equation*}
 p_{\tilX}\circ\tilg_c\circ q_C=f\circ p_X\circ h =p_{\tilX}\circ g_f\circ h.
\end{equation*}
Then
\begin{equation*}
 \mathrm{Im}(\tilg_c\circ q_C-g_f\circ h)\subseteq\ker p_{\tilX}=\mathrm{Im}\,q_{\tilX}.
\end{equation*}
Again because $Q_C$ is projective, there is a morphism $\tilh: Q_C\rightarrow Q_{\tilX}$ such that
\begin{equation*}
 \tilg_c\circ q_C-g_f\circ h = q_{\tilX}\circ\tilh.
\end{equation*}
In particular, recalling that $(\cF(\tilX),c_{\tilX})$ is the cokernel of $\cG(q_{\tilX})$, we get
\begin{equation}\label{eqn:F_exact_calc}
 c_{\tilX}\circ\cG(\tilg_c\circ q_C-g_f\circ h) =c_{\tilX}\circ\cG(q_{\tilX})\circ\cG(\tilh) =0.
\end{equation}

We can now show that $(\cF(C),\cF(c))$ is a cokernel of $\cF(f)$. Thus suppose that $F: \cF(\tilX)\rightarrow Y$ is a morphism in $\cC$ such that $F\circ\cF(f)=0$. We need to show that there is a unique morphism $G: \cF(C)\rightarrow Y$ such that
\begin{equation*}
 F=G\circ\cF(c).
\end{equation*}
For the existence, \eqref{eqn:F_exact_calc} and the definition of $\cF(f)$ imply that,
\begin{align*}
 F\circ c_{\tilX}\circ\cG(\tilg_c)\circ\cG(q_C) & = F\circ c_{\tilX}\circ\cG(g_f)\circ\cG(h) =F\circ\cF(f)\circ c_X\circ\cG(h)=0.
\end{align*}
Recalling that $(\cF(C),c_C)$ is by definition the cokernel of $\cG(q_C)$, this means there is a unique morphism $G: \cF(C)\rightarrow Y$ such that
\begin{equation*}
 F\circ c_{\tilX}\circ\cG(\tilg_c)=G\circ c_C.
\end{equation*}
The morphism $G$ also satisfies
\begin{align*}
 G\circ\cF(c)\circ c_{\tilX}= G\circ c_C\circ\cG(g_c) =F\circ c_{\tilX}\circ\cG(\tilg_c\circ g_c).
\end{align*}
Thus by surjectivity of $c_{\tilX}$, we will get $G\circ\cF(c)= F$ provided $F\circ c_{\tilX}\circ\cG(\tilg_c\circ g_c)=F\circ c_{\tilX}$. 

To show this, recall that by definition,
\begin{equation*}
c\circ p_{\tilX}\circ\tilg_c\circ g_c=p_C\circ g_c=c\circ p_{\tilX}.
\end{equation*}
Thus using the surjectivity of $p_X$,
\begin{equation*}
 \mathrm{Im}\,p_{\tilX}\circ(\tilg_c\circ g_c-\id_{P_{\tilX}})\subseteq\ker c=\mathrm{Im}\,f=\mathrm{Im}(f\circ p_X)=\mathrm{Im}(p_{\tilX}\circ g_f).
\end{equation*}
Then by projectivity of $P_{\tilX}$, there is a map $\tilg_f: P_{\tilX}\rightarrow P_X$ such that 
\begin{equation*}
 p_{\tilX}\circ(\tilg_c\circ g_c-\id_{P_{\tilX}}) =p_{\tilX}\circ g_f\circ\tilg_f,
\end{equation*}
and then
\begin{equation*}
 \mathrm{Im}(\tilg_c\circ g_c-\id_{P_{\tilX}}-g_f\circ\tilg_f)\subseteq\ker p_{\tilX}=\mathrm{Im}\,q_{\tilX}.
\end{equation*}
Again by projectivity of $P_{\tilX}$, there is a map $k: P_{\tilX}\rightarrow Q_{\tilX}$ such that
\begin{equation*}
 \tilg_c\circ g_c-\id_{P_{\tilX}}-g_f\circ\tilg_f=q_{\tilX}\circ k.
\end{equation*}
So now
\begin{align*}
 F\circ c_{\tilX}\circ\cG(\tilg_c\circ g_c) & =F\circ c_{\tilX}\circ\cG(\id_{P_{\tilX}}+g_f\circ\tilg_f+q_{\tilX}\circ k)\nonumber\\
 & = F\circ c_{\tilX}+F\circ c_{\tilX}\circ\cG(g_f)\circ\cG(\tilg_f)+F\circ c_{\tilX}\circ\cG(q_{\tilX})\circ\cG(k)\nonumber\\
 & =F\circ c_{\tilX}+F\circ\cF(f)\circ c_X\circ\cG(\tilg_f)\nonumber\\
 & =F\circ c_{\tilX},
\end{align*}
since $c_{\tilX}\circ\cG(q_{\tilX})=0$ and $F\circ\cF(f)=0$. This completes the proof that $F=G\circ\cF(c)$.

To show that $G$ is unique, it is enough to show that any $G$ satisfying $F=G\circ\cF(c)$ satisfies $F\circ c_{\tilX}\circ\cG(\tilg_c)=G\circ c_C$ as well (since $G$ is unique subject to this latter condition). In fact, the condition $F=G\circ\cF(c)$ implies
\begin{equation*}
 F\circ c_{\tilX}\circ\cG(\tilg_c)=G\circ\cF(c)\circ c_{\tilX}\circ\cG(\tilg_c)=G\circ c_C\circ\cG(g_c\circ\tilg_c),
\end{equation*}
so it is enough to show $c_C\circ\cG(g_c\circ\tilg_c)=c_C$. To show this, note that
\begin{equation*}
 p_C\circ g_c\circ\tilg_c =c\circ p_{\tilX}\circ\tilg_c=p_C
\end{equation*}
by the definitions, so
\begin{equation*}
 \mathrm{Im}(g_c\circ\tilg_c-\id_{P_C})\subseteq\ker p_C=\mathrm{Im}\,q_C.
\end{equation*}
Because $P_C$ is projective, there is thus a map $\tilk: P_C\rightarrow Q_C$ such that 
\begin{equation*}
 g_c\circ\tilg_c-\id_{P_C}=q_C\circ \tilk,
\end{equation*}
and then
\begin{equation*}
 c_C\circ\cG(g_c\circ\tilg_c)=c_C\circ\cG(\id_{P_C}+q_C\circ\tilk)=c_C+c_C\circ\cG(q_C)\circ\cG(\tilk)=c_C,
\end{equation*}
as required. This completes the proof that $(\cF(C),\cF(c))$ is a cokernel of $\cF(f)$ in $\cC$, that is, the functor $\cF$ is right exact.

Finally, we need to show that if $\widetilde{\cF}: \cD\rightarrow\cC$ is any right exact functor such that $\widetilde{\cF}\vert_{\til{\cD}}\cong\cG$, then $\widetilde{\cF}\cong\cF$. Thus fix a natural isomorphism $\beta:\widetilde{\cF}\vert_{\til{\cD}}\rightarrow\cG\rightarrow\cF\vert_{\til{\cD}}$. We will extend $\beta$ to a natural isomorphism $\alpha: \widetilde{\cF}\rightarrow\cF$. As before, for $X\in\mathrm{Ob}(\cD)$, we have a right exact sequence
\begin{equation*}
 Q_X\xrightarrow{q_X} P_X\xrightarrow{p_X} X\longrightarrow 0,
\end{equation*}
with $Q_X,P_X\in\mathrm{Ob}(\cP)\subseteq\mathrm{Ob}(\til{\cD})$, and $\cF(X)=\coker\,\cG(q_X)$ in $\cC$. Because $\widetilde{\cF}$ and $\cF$ are right exact, the commutative diagram
\begin{equation*}
 \xymatrixcolsep{3pc}
 \xymatrix{
 \widetilde{\cF}(Q_X) \ar[d]^{\beta_{Q_X}} \ar[r]^{\widetilde{\cF}(q_X)} & \widetilde{\cF}(P_X) \ar[d]^{\beta_{P_X}} \ar[r]^{\widetilde{\cF}(p_X)} & \widetilde{\cF}(X) \ar[r] \ar@{-->}[d]^{\exists\,!\,\alpha_X} & 0\\
 \cF(Q_X) \ar[r]^{\cF(q_X)} & \cF(P_X) \ar[r]^{\cF(p_X)} & \cF(X) \ar[r] & 0\\
 }
\end{equation*}
has right exact rows, where the existence of $\alpha_X$ follows from the naturality of $\beta$ and the universal property of the cokernel $(\til{\cF}(X),\til{\cF}(p_X))$. Because $\beta$ is a natural isomorphism and $(\cF(X),\cF(p_X)$ is a cokernel of $\cF(q_X)$, it easy to see that $\alpha_X$ is invertible.

To show that the isomorphisms $\alpha_X$ define a natural isomorphism $\alpha$, suppose $f: X\rightarrow\tilX$ is a morphism in $\cD$. Recall that there is a morphism $g_f: P_X\rightarrow P_{\til{X}}$ such that
\begin{equation*}
 f\circ p_X=p_{\tilX}\circ g_f.
\end{equation*}
Thus
\begin{align*}
 \cF(f)\circ\alpha_X\circ\widetilde{\cF}(p_X) & = \cF(f)\circ \cF(p_X)\circ\beta_{P_X} = \cF(p_{\til{X}})\circ\cF(g_f)\circ\beta_{P_X}\nonumber\\
 & = \cF(p_{\tilX})\circ\beta_{P_{\tilX}}\circ\widetilde{\cF}(g_f) = \alpha_{\tilX}\circ\widetilde{\cF}(p_{\tilX}\circ g_f)\nonumber\\
 & =\alpha_{\tilX}\circ\widetilde{\cF}(f\circ p_X) =\alpha_{\tilX}\circ\widetilde{\cF}(f)\circ\widetilde{\cF}(p_X).
\end{align*}
Since the cokernel morphism $\widetilde{\cF}(p_X)$ is surjective, it follows that $\cF(f)\circ\alpha_X=\alpha_{\tilX}\circ\widetilde{\cF}(f)$, that is, $\alpha:\widetilde{\cF}\rightarrow\cF$ is a natural isomorphism. This completes the proof of Theorem \ref{thm:extend_to_right_exact}.
\end{proof}

\begin{remark}\label{rem:beta_extends_uniquely}
The conclusion of the proof of Theorem \ref{thm:extend_to_right_exact} shows that any natural isomorphism $\beta:\til{\cF}\vert_{\til{\cD}}\rightarrow\cF\vert_{\til{\cD}}$ extends uniquely to a natural isomorphism $\alpha:\til{\cF}\rightarrow\cF$. Indeed, $\alpha\vert_{\til{\cD}}=\beta$ because for $X\in\mathrm{Ob}(\til{\cD})$, the diagrams
\begin{equation*}
\xymatrixcolsep{4pc}
\xymatrix{
  \widetilde{\cF}(P_X) \ar[d]^{\beta_{P_X}} \ar[r]^{\widetilde{\cF}(p_X)} & \widetilde{\cF}(X)  \ar[d]^{\alpha_X} \\
  \cF(P_X) \ar[r]^{\cF(p_X)} & \cF(X) \\
}\qquad\qquad \xymatrix{
  \widetilde{\cF}(P_X) \ar[d]^{\beta_{P_X}} \ar[r]^{\widetilde{\cF}(p_X)} & \widetilde{\cF}(X)  \ar[d]^{\beta_X} \\
  \cF(P_X) \ar[r]^{\cF(p_X)} & \cF(X) \\
}
\end{equation*}
commute by the construction of $\alpha_X$ and the naturality of $\beta$, respectively, and thus $\alpha_X=\beta_X$ by surjectivity of $\til{\cF}(p_X)$. The extension $\alpha$ is unique because if $\til{\alpha}:\til{\cF}\rightarrow\cF$ is any natural isomorphism such that $\til{\alpha}\vert_{\til{\cD}}=\beta$, then
\begin{equation*}
\til{\alpha}_X\circ\til{\cF}(p_X)=\cF(p_X)\circ\til{\alpha}_{P_X}=\cF(p_X)\circ\beta_{P_X} = \alpha_X\circ\til{\cF}(p_X)
\end{equation*}
for any $X\in\mathrm{Ob}(\cD)$. Thus $\til{\alpha}_X=\alpha_X$ since $\til{\cF}(p_X)$ is surjective.
\end{remark}

We now return to the general setting of Theorem \ref{thm:gen_extend_to_right_exact}: $\cD$ and $\cC$ are $\CC$-linear abelian categories, $\til{\cD}\subseteq\cD$ is an additive full subcategory, and $\cG: \til{\cD}\rightarrow \cC$ is a $\CC$-linear functor.
For any unordered triple $X_1,X_2,X_3\in\mathrm{Ob}(\cD)$, we have an abelian full subcategory $\cD_{X_1,X_2,X_3}$ that contains $X_1$, $X_2$, $X_3$ and has enough projectives; moreover, all projective objects in $\cD_{X_1,X_2,X_3}$ are objects of $\til{\cD}$, and
\begin{equation*}
\cD_{X_i,X_i,X_i}\subseteq\cD_{X_i,X_i, X_j}=\cD_{X_i,X_j,X_j}\subseteq\cD_{X_1,X_2,X_3}
\end{equation*}
for all $i,j=1,2,3$. To simplify notation, we set $\cD_X=\cD_{X,X,X}$ for any $X\in\mathrm{Ob}(\cD)$, and we set $\cD_{X_1,X_2}=\cD_{X_1,X_1,X_2}=\cD_{X_1,X_2,X_2}$ for any $X_1,X_2\in\mathrm{Ob}(\cC)$.

Now by Theorem \ref{thm:extend_to_right_exact}, for any $X_1,X_2,X_3\in\mathrm{Ob}(\cD)$, there is a unique right exact functor 
$\cF_{X_1,X_2,X_3}:\cD_{X_1,X_2,X_3}\rightarrow\cC$ such that 
\begin{equation*}\cF_{X_1,X_2,X_3}\vert_{\til{\cD}\cap\cD_{X_1,X_2,X_3}}\cong\cG\vert_{\til{\cD}\cap\cD_{X_1,X_2,X_3}}.
\end{equation*}
From Remark \ref{rem:equal_vs_nat_iso}, or from the proof of Theorem \ref{thm:extend_to_right_exact}, we may assume for convenience that this natural isomorphism of functors on $\til{\cD}\cap\cD_{X_1,X_2,X_3}$ is a precise equality. To simplify notation, set $\cF_X=\cF_{X,X,X}$ for $X\in\mathrm{Ob}(\cD)$, and $\cF_{X_1,X_2}=\cF_{X_1,X_1,X_2}=\cF_{X_1,X_2,X_2}$ for $X_1,X_2\in\mathrm{Ob}(\cD)$. Then for $i,j=1,2,3$,
\begin{align*}
(\cF_{X_1,X_2,X_3}\vert_{\cD_{X_i,X_j}})\vert_{\til{\cD}\cap\cD_{X_i,X_j}} & =(\cF_{X_1,X_2,X_3}\vert_{\til{\cD}\cap\cD_{X_1,X_2,X_3}})\vert_{\til{\cD}\cap\cD_{X_i,X_j}}\nonumber\\
& = (\cG\vert_{\til{\cD}\cap\cD_{X_1,X_2,X_3}})\vert_{\til{\cD}\cap\cD_{X_i,X_j}} = \cG\vert_{\til{\cD}\cap\cD_{X_i,X_j}}.
\end{align*}
Thus as in Remark \ref{rem:beta_extends_uniquely}, the identity natural isomorphism
\begin{equation*}
\Id: (\cF_{X_1,X_2,X_3}\vert_{\cD_{X_i,X_j}})\vert_{\til{\cD}\cap\cD_{X_i,X_j}} \longrightarrow\cF_{X_i,X_j}\vert_{\til{\cD}\cap\cD_{X_i,X_j}}
\end{equation*}
extends uniquely to a natural isomorphism
\begin{equation*}
\alpha^{(i,j)}: \cF_{X_1,X_2,X_3}\vert_{\cD_{X_i,X_j}}\longrightarrow\cF_{X_i,X_j}.
\end{equation*}
Similarly, there is a natural isomorphism
\begin{equation*}
\alpha^{(i)}: \cF_{X_i,X_j}\vert_{\cD_{X_i}}\longrightarrow\cF_{X_i}
\end{equation*}
uniquely extending the identity natural isomorphism of
\begin{equation*}
\cF_{X_i,X_j}\vert_{\til{\cD}\cap\cD_{X_i}} =\cG\vert_{\til{\cD}\cap\cD_{X_i}}=\cF_{X_i}\vert_{\til{\cD}\cap\cD_{X_i}}.
\end{equation*}
For any permutation $(i,j,k)$ of $(1,2,3)$, Remark \ref{rem:beta_extends_uniquely} shows that there is an equality of natural isomorphisms
\begin{equation}\label{eqn:nat_isos_coherent}
\alpha^{(i)}\circ\alpha^{(i,j)} = \alpha^{(i)}\circ\alpha^{(i,k)}: \cF_{X_1,X_2,X_3}\vert_{\cD_{X_i}}\longrightarrow\cF_{X_i},
\end{equation}
since both compositions restrict to the identity on $\til{\cD}\cap\cD_{X_i}$.

We now define the desired functor $\cF: \cD\rightarrow\cC$ by $\cF(X)=\cF_X(X)$ for any $X\in\mathrm{Ob}(\cD)$; for a morphism $f: X_1\rightarrow X_2$ in $\cD$, we define $\cF(f)$ by commutativity of the diagram
\begin{equation*}
\xymatrixcolsep{4pc}
\xymatrix{
\cF_{X_1,X_2}(X_1) \ar[r]^{\cF_{X_1,X_2}(f)} \ar[d]^{\alpha^{(1)}_{X_1}} & \cF_{X_1,X_2}(X_2) \ar[d]^{\alpha^{(2)}_{X_2}}\\
\cF_{X_1}(X_1) \ar[r]^{\cF(f)} & \cF_{X_2}(X_2)\\
}
\end{equation*}
Then $\cF(\Id_X)=\Id_{\cF(X)}$ for any $X\in\mathrm{Ob}(\cD)$, and $\cF$ defines a $\CC$-linear map on morphisms because $\cF_{X_1,X_2}$ does for all $X_1,X_2\in\mathrm{Ob}(\cD)$. To show that $\cF$ respects compositions, suppose $f_1: X_1\rightarrow X_2$ and $f_2: X_2\rightarrow X_3$ are morphisms in $\cD$. Then the definitions, the equation \eqref{eqn:nat_isos_coherent}, and naturality of $\alpha^{(1,2)}$ and $\alpha^{(2,3)}$ imply that
\begin{equation*}
\xymatrixcolsep{5.5pc}
\xymatrix{
\cF_{X_1,X_2,X_3}(X_1) \ar[r]^{\cF_{X_1,X_2,X_3}(f_1)} \ar[d]^{\alpha^{(1)}_{X_1}\circ\alpha^{(1,3)}_{X_1}=\alpha^{(1)}_{X_1}\circ\alpha^{(1,2)}_{X_1}} & \cF_{X_1,X_2,X_3}(X_2) \ar[r]^{\cF_{X_1,X_2,X_3}(f_2)} \ar[d]^{\alpha^{(2)}_{X_2}\circ\alpha^{(1,2)}_{X_2}=\alpha^{(2)}_{X_2}\circ\alpha^{(2,3)}_{X_2}} & \cF_{X_1,X_2,X_3}(X_3) \ar[d]^{\alpha^{(3)}_{X_3}\circ\alpha^{(2,3)}_{X_3}=\alpha^{(3)}_{X_3}\circ\alpha^{(1,3)}_{X_3}} \\
\cF_{X_1}(X_1) \ar[r]_{\cF(f_1)} & \cF_{X_2}(X_2) \ar[r]_{\cF(f_2)} & \cF_{X_3}(X_3) \\
}
\end{equation*}
commutes. This diagram, combined with 
\begin{equation*}
\cF_{X_1,X_2,X_3}(f_2)\circ\cF_{X_1,X_2,X_3}(f_1)=\cF_{X_1,X_2,X_3}(f_2\circ f_1)
\end{equation*}
 and the naturality of $\alpha^{(1,3)}$, implies that $\cF(f_2)\circ\cF(f_1)=\cF(f_2\circ f_1)$. Thus $\cF$ is a functor, and the same commutative diagram combined with the right exactness of $\cF_{X_1,X_2,X_3}$ implies that $\cF$ is right exact.
 
We now show that $\cF\vert_{\til{\cD}}=\cG$. For $X\in\mathrm{Ob}(\til{\cD})$, we have
\begin{equation*}
\cF(X)=\cF_X(X)=\cG(X)
\end{equation*}
since $\cF_X\vert_{\til{\cD}\cap\cD_X}=\cG\vert_{\til{\cD}\cap\cD_X}$. For a morphism $f: X_1\rightarrow X_2$ in $\til{\cD}$, we have
\begin{equation*}
\cF(f)=\alpha_{X_2}^{(2)}\circ\cF_{X_1,X_2}(f)\circ(\alpha_{X_1}^{(1)})^{-1}=\Id_{\cG(X_2)}\circ\cG(f)\circ\Id_{\cG(X_1)}=\cG(f)
\end{equation*}
since $\cF_{X_1,X_2}\vert_{\til{\cD}\cap\cD_{X_1,X_2}}=\cG$ and $\alpha^{(i)}\vert_{\til{\cD}\cap\cD_{X_i}}$ is the identity natural isomorphism of $\cG\vert_{\til{\cD}\cap\cD_{X_i}}$ for $i=1,2$. Thus $\cF\vert_{\til{\cD}}=\cG$ on both objects and morphisms.

Finally, we need to show that if $\til{\cF}:\cD\rightarrow\cC$ is any $\CC$-linear right exact functor such that $\til{\cF}\vert_{\til{\cD}}\cong\cG$, then $\til{\cF}\cong\cF$. Thus fix a natural isomorphism $\beta: \til{\cF}\vert_{\til{\cD}}\rightarrow\cG$. For any $X\in\mathrm{Ob}(\cD)$, $\beta$ restricts to a natural isomorphism 
\begin{equation*}
\til{\cF}\vert_{\til{\cD}\cap\cD_X}\longrightarrow\cG\vert_{\til{\cD}\cap\cD_X} =\cF_X\vert_{\til{\cD}\cap\cD_X},
\end{equation*}
so by the uniqueness assertion in Theorem \ref{thm:extend_to_right_exact} and Remark \ref{rem:beta_extends_uniquely}, $\beta\vert_{\til{\cD}\cap\cD_X}$ extends uniquely to a natural isomorphism
\begin{equation*}
\eta^{(X)}: \til{\cF}\vert_{\cD_X}\longrightarrow\cF_X.
\end{equation*}
Thus for all $X\in\mathrm{Ob}(\cD)$, we can define
\begin{equation*}
\eta_X =\eta^{(X)}_X: \til{\cF}(X)\longrightarrow\cF_X(X)=\cF(X).
\end{equation*}
To show that the $\eta_X$ define a natural isomorphism $\eta: \til{\cF}\rightarrow\cF$, let $f: X_1\rightarrow X_2$ be a morphism in $\cD$. Since $\cD_{X_i}$ for $i=1,2$ has enough projectives, we can fix surjections $p_i: P_i\rightarrow X_i$ such that $P_i$ is projective in $\cD_{X_i}$, and thus is also an object of $\til{\cD}$. Since $\alpha^{(i)}$ for $i=1,2$ extends the identity isomorphism of $\cG\vert_{\til{\cD}\cap\cD_{X_i}}$, we have a commutative diagram
\begin{equation*}
\xymatrixcolsep{4pc}
\xymatrix{
\til{\cF}(P_i) \ar[r]^{\beta_{P_i}} \ar[d]^{\til{\cF}(p_i)} & \cG(P_i) \ar[d]^{\cF_{X_i}(p_i)} \ar[rd]^{\cF_{X_1,X_2}(p_i)} &\\
\til{\cF}(X_i) \ar[r]_{\eta_{X_i}} & \cF_{X_i}(X_i) & \cF_{X_1,X_2}(X_i) \ar[l]^{\alpha^{(i)}_{X_i}} \\
}
\end{equation*}
Now fix a surjection $p_{1,2}: P_{1,2}\rightarrow X_1$ such that $P_{1,2}\in\mathrm{Ob}(\til{\cD}\cap\cD_{X_1,X_2})$ is projective in $\cD_{X_1,X_2}$. By projectivity, there are morphisms $\til{p}_1: P_{1,2}\rightarrow P_1$ and $g_f: P_{1,2}\rightarrow P_2$ such that
\begin{equation*}
p_{1,2}= p_1\circ\til{p}_1,\qquad f\circ p_{1,2}=p_2\circ g_f.
\end{equation*}
Then we calculate
\begin{align*}
\cF(f)\circ\eta_{X_1}\circ\til{\cF}(p_{1,2}) & = \alpha^{(2)}_{X_2}\circ\cF_{X_1,X_2}(f)\circ(\alpha^{(1)}_{X_1})^{-1}\circ\eta_{X_1}\circ\til{\cF}(p_1)\circ\til{\cF}(\til{p}_1)\nonumber\\
& =\alpha^{(2)}_{X_2}\circ\cF_{X_1,X_2}(f)\circ\cF_{X_1,X_2}(p_1)\circ\beta_{P_1}\circ\til{\cF}(\til{p}_1)\nonumber\\
& =\alpha^{(2)}_{X_2}\circ\cF_{X_1,X_2}(f\circ p_1)\circ\cG(\til{p}_1)\circ\beta_{P_{1,2}}\nonumber\\
& =\alpha^{(2)}_{X_2}\circ\cF_{X_1,X_2}(p_2)\circ\cG(g_f)\circ\beta_{P_{1,2}}\nonumber\\
& =\cF_{X_2}(p_2)\circ\beta_{P_2}\circ\til{\cF}(g_f)\nonumber\\
& =\eta_{X_2}\circ\til{\cF}(p_2)\circ\til{\cF}(g_f) = \eta_{X_2}\circ\til{\cF}(f)\circ\til{\cF}(p_{1,2}).
\end{align*}
Since $p_{1,2}$ is surjective and $\til{\cF}$ is right exact, $\til{\cF}(p_{1,2})$ is also surjective, and thus $\cF(f)\circ\eta_{X_1}=\eta_{X_2}\circ\til{\cF}(f)$. That is, $\eta$ is a natural isomorphism. This completes the proof of Theorem \ref{thm:gen_extend_to_right_exact}.

\section{Proof of Theorem \ref{thm:ext_is_(br)_tens}}\label{app:ext_is_(br)_tens}

We assume $\cD$ and $\cC$ are tensor categories with right exact fusion products, $\til{\cD}$ is a monoidal subcategory of $\cD$, and $\cF:\cD\rightarrow\cC$ is a right exact functor such that $\cF\vert_{\til{\cD}}$ is a tensor functor. This means there is an isomorphism $\varphi:\cF(\vac_\cD)\rightarrow\vac_\cC$ and a natural isomorphism
\begin{equation*}
G: \tens_\cC\circ(\cF\vert_{\til{\cD}}\times\cF\vert_{\til{\cD}})\rightarrow(\cF\circ\tens_\cD)\vert_{\til{\cD}\times\til{\cD}}
\end{equation*}
which are suitably compatible with the unit and associativity isomorphisms of $\cD$ and $\cC$. We also assume that any $X\in\mathrm{Ob}(\cD)$ is contained in an abelian full subcategory $\cD_{X,X}\subseteq\cD$ which has enough projectives, such that every projective object of $\cD_{X,X}$ is contained in $\til{\cD}$. Thus we may fix a right exact sequence
\begin{equation*}
Q_X\xrightarrow{q_X} P_X\xrightarrow{p_X} X\longrightarrow 0
\end{equation*}
such that $Q_X$ and $P_X$ are projective in $\cD_{X,X}$, and thus are also objects of $\til{\cD}$.

Also, for any morphism $f: X_1\rightarrow X_2$ in $\cD$, we are assuming that $X_1$ and $X_2$ are contained in an abelian full subcategory $\cD_{X_1,X_2}$ which contains $\cD_{X_1,X_1}$ and $\cD_{X_2,X_2}$ and has enough projectives, such that every projective object of $\cD_{X_1,X_2}$ is  contained in $\til{\cD}$. Thus we may fix a surjection $p_f: P_f\twoheadrightarrow X_1$ such that $P_f$ is projective in $\cD_{X_1,X_2}$. By projectivity, there are morphisms $q^{(i)}_{f}: P_f\rightarrow P_{X_i}$ for $i=1,2$ such that the diagram
\begin{equation}\label{eqn:gf_def}
\xymatrixcolsep{4pc}
\begin{matrix}
\xymatrix{
 & P_f \ar[ld]_{q_f^{(1)}} \ar[r]^{q_f^{(2)}} \ar[d]^{p_f} & P_{X_2} \ar[d]^{p_{X_2}} \\
 P_{X_1} \ar[r]^(.52){p_{X_1}} & X_1 \ar[r]^{f} & X_2 \\
}
\end{matrix}
\end{equation}
commutes.

Now for $W, X\in\mathrm{Ob}(\cD)$, we want to define an isomorphism $F_{W,X}: \cF(W)\tens_\cC\cF(X)\rightarrow\cF(W\tens_\cD X)$ by commutativity of the diagram
\begin{equation*}
\xymatrixrowsep{2pc}
\xymatrixcolsep{1.5pc}
\xymatrix{
(\cF(Q_W)\tens_\cC \cF(P_X))\oplus(\cF(P_W)\tens_\cC\cF(Q_X)) \ar[r]^(.55){\Gamma} \ar[d]^{(\cF(q_W)\tens\Id)\circ\pi_1+(\Id\tens\cF(q_X))\circ \pi_2} & \cF((Q_W\tens_\cD P_X)\oplus(P_W\tens_\cD Q_X)) \ar[d]^{\cF((q_W\tens\Id)\circ\pi_1+(\Id\tens q_X)\circ\pi_2)}\\
 \cF(P_W)\tens_\cC\cF(P_X) \ar[r]_{G_{P_W,P_X}}\ar[d]^{\cF(p_W)\tens_\cC\cF(p_X)} & \cF(P_W\tens_\cD P_X) \ar[d]^{\cF(p_W\tens_\cD p_X)}\\ \cF(W)\tens_\cC\cF(X) \ar@{-->}[r]_{\exists\,!\,F_{W,X}} \ar[d] & \cF(W\tens_\cD X) \ar[d] \\ 0 & 0\\
}
\end{equation*}
Here $\pi_1$ and $\pi_2$ are the obvious projections, and
\begin{equation*}
\Gamma = \cF(\varepsilon_1)\circ G_{Q_W,P_X}\circ\pi_1 + \cF(\varepsilon_2)\circ G_{P_W,Q_X}\circ\pi_2
\end{equation*}
is an isomorphism, where $\varepsilon_1$ and $\varepsilon_2$ are the obvious inclusions. The columns of the diagram are right exact by Lemma \ref{lem:cokernel_of_tens_prod} and the right exactness of $\tens_\cC$, $\tens_\cD$, and $\cF$. Thus the existence and uniqueness of $F_{W,X}$ follows from the universal property of the cokernel $(\cF(W)\tens_\cC\cF(X), \cF(p_W)\tens_\cC\cF(p_X))$. Since $\Gamma$ and $G_{P_W,P_X}$ are invertible, so is $F_{W,X}$.

To show that the isomorphisms $F_{W,X}$ define a natural isomorphism, consider morphisms $f: W_1\rightarrow W_2$ and $g: X_1\rightarrow X_2$ in $\cD$. Then using \eqref{eqn:gf_def}, 
\begin{align*}
F_{W_2,X_2}\circ(\cF(f)&\,\tens_\cC\cF(g))\circ(\cF(p_f)\tens_\cC\cF(p_g))\nonumber\\
&  = F_{W_2,X_2}\circ(\cF(p_{W_2})\tens_\cC\cF(p_{X_2}))\circ(\cF(q_f^{(2)})\tens_\cC\cF(q_g^{(2)}))\nonumber\\
& =\cF(p_{W_2}\tens_\cD p_{X_2})\circ G_{P_{W_2},P_{X_2}}\circ(\cF(q_f^{(2)})\tens_\cC\cF(q_g^{(2)}))\nonumber\\
& =\cF(p_{W_2}\tens_\cD p_{X_2})\circ\cF(q_f^{(2)}\tens_\cD q_g^{(2)})\circ G_{P_{f},P_{g}}\nonumber\\
& =\cF(f\tens_\cD g)\circ\cF(p_{W_1}\tens_\cD p_{X_1})\circ\cF(q_f^{(1)}\tens_\cD q_g^{(1)})\circ G_{P_{f},P_{g}}\nonumber\\
& = \cF(f\tens_\cD g)\circ\cF(p_{W_1}\tens_\cD p_{X_1})\circ G_{P_{W_1}, P_{X_1}}\circ(\cF(q_f^{(1)})\tens_\cC\cF(q_g^{(1)}))\nonumber\\
& = \cF(f\tens_\cD g)\circ F_{W_1,X_1}\circ(\cF(p_{W_1})\tens_\cC\cF(p_{X_1}))\circ(\cF(q_f^{(1)})\tens_\cC\cF(q_g^{(1)}))\nonumber\\
& =\cF(f\tens_\cD g)\circ F_{W_1,X_1}\circ(\cF(p_f)\tens_\cC\cF(p_g)).
\end{align*}
Since $\cF(p_f)\tens_\cC\cF(p_g)$ is surjective, due to the right exactness of $\cF$ and $\tens_\cC$, this implies $F$ is natural, as required.

To prove that the natural isomorphism $F$ is compatible with the left unit isomorphisms of $\cD$ and $\cC$, we need to show that the diagram
\begin{equation*}
\xymatrixcolsep{4pc}
\xymatrix{
\cF(\vac_\cD)\tens_\cC\cF(X) \ar[r]^{F_{\vac_{\cD},X}} \ar[d]^{\varphi\tens_\cC\Id_{\cF(X)}} & \cF(\vac_\cD\tens_\cD X) \ar[d]^{\cF(l_X)} \\
\vac_\cC\tens_\cC\cF(X) \ar[r]^{l_{\cF(X)}} & \cF(X)\\
}
\end{equation*}
commutes for all $X\in\mathrm{Ob}(\cD)$. In fact, because $G$ is natural and compatible with the left unit isomorphisms in $\til{\cD}$ and $\cC$,
\begin{align*}
\cF(l_X)  \circ F_{\vac_\cD,X} & \circ(\cF(p_{\vac_\cD})\tens_\cC\cF(p_X)) = \cF(l_X)\circ\cF(p_{\vac_\cD}\tens_\cD p_X)\circ G_{P_{\vac_\cD}, P_X}\nonumber\\
& =\cF(l_X)\circ\cF(\Id_{\vac_\cD}\tens_\cD p_X)\circ\cF(p_{\vac_\cD}\tens_\cD\Id_{P_X})\circ G_{P_{\vac_\cD},P_X}\nonumber\\
& =\cF(p_X)\circ\cF(l_{P_X})\circ G_{\vac_\cD,P_X}\circ(\cF(p_{\vac_\cD})\tens_\cC\Id_{\cF(P_X)})\nonumber\\
& =\cF(p_X)\circ l_{\cF(P_X)}\circ((\varphi\circ\cF(p_{\vac_\cD}))\tens_\cC\Id_{\cF(P_X)})\nonumber\\
& =l_{\cF(X)}\circ(\varphi\tens_\cC\Id_{\cF(X)})\circ(\cF(p_{\vac_\cD})\tens_\cC\cF(p_X)).
\end{align*}
Thus because $\cF(p_{\vac_\cD})\tens_\cC\cF(p_X)$ is surjective (by the right exactness of $\cF$ and $\tens_\cC$),
\begin{equation*}
\cF(l_X)\circ F_{\vac_\cD,X}=l_{\cF(X)}\circ(\varphi\tens_\cC\Id_{\cF(X)}).
\end{equation*}
Similarly, $F$ is compatible with the right unit, associativity, and braiding isomorphisms (if any) of $\cD$ and $\cC$, so $F$ and $\varphi$ give $\cF$ the structure of a (braided) tensor functor.

Finally, if $\cD$ and $\cC$ have ribbon twists such that $\cF(\theta_P)=\theta_{\cF(P)}$ for all $P\in\mathrm{Ob}(\til{\cD})$, then naturality of the twist implies that
\begin{align*}
\cF(\theta_X)\circ\cF(p_X)=\cF(p_X)\circ\cF(\theta_{P_X})=\cF(p_X)\circ\theta_{\cF(P_X)} =\theta_{\cF(X)}\circ\cF(p_X)
\end{align*}
for all $X\in\mathrm{Ob}(\cD)$. This implies $\cF(\theta_X)=\theta_{\cF(X)}$ since $\cF(p_X)$ is surjective by the right exactness of $\cF$. This completes the proof of Theorem \ref{thm:ext_is_(br)_tens}.

\end{document}